\setlist[enumerate]{leftmargin=*,labelindent=.5pc}
\def\CC{\mathbb{C}}
\def\GG{\mathbb{G}}
\def\HH{\mathbb{H}}
\def\PP{\mathbb{P}}
\def\RR{\mathbb{R}}
\def\ZZ{\mathbb{Z}}
\def\QQ{\mathbb{Q}}
\def\TT{{\mathbb{T}}}
\def\scrF{\mathscr{F}}
\def\F{\scrF}
\def\Gen{\mathfrak{G}}
\def\Ac{\mathcal{A}}
\def\Gc{\mathcal{G}}
\def\Lc{\mathcal{L}}
\def\Mc{\mathcal{M}}
\def\Nc{\mathcal{N}}
\def\Oc{\mathcal{O}}
\def\Pc{\mathcal{P}}
\def\Tc{\mathcal{T}}
\def\Uc{\mathcal{U}}
\def\Xc{\mathcal{X}}
\def\Yc{\mathcal{Y}}
\def\Ab{\mathbf{A}}
\def\Cb{\mathbf{C}}
 \def\Aut{\operatorname{Aut}\nolimits}
 \def\conf{{\on{conf}}}
  \def\Conf{{\on{Conf}}}
\def\Ho{\operatorname{Ho}}
 \def\Hom{\operatorname{Hom}}
 \def\id{\operatorname{id}}
 \def\Id{\operatorname{Id}\nolimits}
 \def\Ker{\operatorname{Ker}\nolimits}
 \def\Mod{\operatorname{Mod}}
\def\NC{\operatorname{NC}}
\def\Ob{\operatorname{Ob}\nolimits}
\def\on{\operatorname}
\def\pt{\operatorname{pt}\nolimits}
\def\pure{{\on{pure}}}
\def\Res{\operatorname{Res}\nolimits}
\def\Vect{\operatorname{Vect}}
\def\3x3{\operatorname{3}\times\operatorname{3}}
\def\1{{\bf 1}}
\def\lra{\longrightarrow}
\def\lla{\longleftarrow}
\def\(({(\hskip -1mm (}
\def\)){)\hskip -1mm )}
\def\((({(\hskip -1mm (}
\def\))){)\hskip -1mm )}
\def\bbs{{\backslash\hskip -1mm \backslash}}
\def\Bbs {{\bigl\backslash \hskip -.1cm \bigl\backslash}}
\def\be{\begin{equation}}
\def\ee{\end{equation}}
\def\ed{\end{document}}
\def\ind{\varinjlim{}}
\def\pro{\varprojlim{}}
\def\hopro{\operatorname{ho}\!\varprojlim{}}
 \def\k{\mathbf{k}}
\def\bbs{\backslash \hskip -.1cm \backslash}
\def\Bbs {\bigl\backslash \hskip -.1cm \bigl\backslash}
\def\op{{\operatorname{op}}}
\def\Fr{\on{Fr}}
\def\Fun{\operatorname{Fun}}
\def\Gstr{\GG\on{Str}}
\def\Gcstr{G_\conf\on{Str}}
\def\ho{\operatorname{ho}\!}
\def\hopro{\ho \underleftarrow{\lim}{}}
\def\Hom{\operatorname{Hom}}
\def\Klein{\on{Klein}}
\def\N{\operatorname{N}}
\def\O{{\mathcal O}}
\def\orr{{\on{or}}}
\def\p{\mathbbm{p}}
\def\Riem{\on{Riem}}
\def\Cat{{\mathcal Cat}}
\def\Groupoids{{\bf Grpds}}
\def\cm{\langle m \rangle}
\def\cn{\langle n \rangle}
\def\Top{ {\mathcal Top}}
\def\Teich{\on{Teich}}
\def\Set{ {\mathcal Set}}
\def\FC{\operatorname{FC}}
\def\B{ {\EuScript B}}
\def\C{{\EuScript C}}
\def\D{{\EuScript D}}
\def\Top { {\mathcal Top}}
\def\Set{ {\mathcal Set}}
\def\<<{\langle {}\hskip -.1cm {}\langle}
\def\>>{\rangle \hskip -.1cm \rangle}
\def\Yc{{\mathcal Y}}
\def\FC{\on{FC}}
\def\Tr{\on{Tr}}
\def\centerarc[#1](#2)(#3:#4:#5)  { \draw[#1] ($(#2)+({#5*cos(#3)},{#5*sin(#3)})$) arc (#3:#4:#5); }
\def\Zt{\mathbb{Z}/2\mathbb{Z}}
\def\BZt{{\mathcal B}\mathbb{Z}/2\mathbb{Z}}
\def\B{{\mathcal B}}
\def\Cat{{\EuScript Cat}}
\def\dgcat{\on{dgcat}}
\def\dgcatt{\dgcat^{(2)}}
\def\MF{\on{MF}}
\def\cm{\langle m \rangle}
\def\cn{\langle n \rangle}
\def\dm{\overline{\langle m \rangle}}
\def\dn{\overline{\langle n \rangle}}
\def\pn{\widetilde{n}}
\def\ppm{\widetilde{m}}
\def\kcn{\widetilde{n}_N}
\def\kcm{\widetilde{m}_N}
\def\G{ {\mathcal G}}
\def\DGgraph{\DG-\on{Graph}}
\def\Diff{\on{Diff}}
\def\Isom{\on{Isom}}
\def\DG{ {\Delta \Gen}}
\def\DW{ {\Delta {\mathfrak W}}}
\def\CG{{ {\EuScript C}_G}}
\def\Homeo{{ \on{Homeo}}}
\def\FSet{{\bf FSet}}
 \def\Tess{\on{Tess}}
  \def\Bar{{\on{Bar}}}
  \def\bar{{\on{bar}}}
\setlist[enumerate,1]{label=(\arabic{*})}
\setlist[enumerate,2]{label=(\alph{*})}
\setlist[enumerate,3]{label=(\roman{*})}
\newtheorem{thm}{Theorem}[section]
\newtheorem{lem}[thm]{Lemma}
\newtheorem{cor}[thm]{Corollary}
\newtheorem{prop}[thm]{Proposition}
\theoremstyle{definition}
\newtheorem{defi}[thm]{Definition}
\newtheorem{exa}[thm]{Example}
\newtheorem{exas}[thm]{Examples}
\theoremstyle{remark}
\newtheorem{rem}[thm]{Remark}
\newtheorem{refo}[thm]{Reformulation}
\newtheorem{rems}[thm]{Remarks}
\numberwithin{equation}{section}
\title{Crossed simplicial groups and structured surfaces}
\author{T. Dyckerhoff \footnote{Hausdorff Center for Mathematics, Bonn, email:{\tt
dyckerho@math.uni-bonn.de}}, M. Kapranov \footnote{Kavli IPMU, Japan, email:{\tt mikhail.kapranov@ipmu.jp}}}
    \newenvironment{dedication}
        {\vspace{6ex}\begin{quotation}\begin{center}\begin{em}}
        {\par\end{em}\end{center}\end{quotation}\vspace{10ex}}
\renewcommand{\thesection}{\Roman{section}}
\begin{document}
\maketitle

\begin{dedication}Dedicated to the memory of Jean-Louis Loday
\end{dedication}

\begin{abstract}
	We propose a generalization of the concept of a ribbon graph suitable to provide
	combinatorial models for marked surfaces equipped with a G-structure. Our main insight is 
	that the necessary combinatorics is neatly captured in the concept of a crossed simplicial group 
	as introduced, independently, by Krasauskas and Fiedorowicz-Loday. In this context, Connes'
	cyclic category leads to ribbon graphs while other crossed simplicial groups naturally yield
	different notions of structured graphs which model unoriented, N-spin, framed, etc,
	surfaces.
	Our main result is that structured graphs provide orbicell decompositions of the respective
	G-structured moduli spaces. As an application, we show how, building on our theory of
	2-Segal spaces, the resulting theory can be used to construct categorified state sum
	invariants of G-structured surfaces.
\end{abstract}

\newpage

\tableofcontents
\numberwithin{equation}{section}

\addcontentsline{toc}{section}{Introduction}

\section*{Introduction}

Ribbon graphs form a fundamental tool in the combinatorial study of moduli spaces of Riemann
surfaces and of the associated mapping class groups \cite{kontsevich-feynman, penner:book}.
Similarly, they appear in string theory and in perturbative expansions of matrix integrals
\cite{marino}.\\

The first goal of this paper is to propose a generalization of ribbon graphs which governs, in an
analogous way, the geometry and topology of {\em structured surfaces}. By these we mean
$C^\infty$-surfaces $S$, possibly with boundary, equipped with a nonempty set $M \subset S$ of
marked points together with a reduction of the structure group of the tangent bundle
$T_{S\setminus M}$ along a fixed Lie group homomorphism
\begin{equation}
  \label{eq:GG} { \p}: \GG \lra GL(2,\RR).
\end{equation}
We assume that $\mathbbm p$ is a {\em connective covering}: a not necessarily surjective unramified covering
such that the preimage of the component of identity $GL^+(2,\RR) \subset GL(2,\RR)$ is connected. 
This implies that a $\GG$-structure on $S$ is a discrete datum. A $GL^+(2,\RR)$-structure is an orientation; 
in the case when $\GG$ is an $N$-fold covering of $GL^+(2,\RR)$, a $\GG$-structure is known
as an $N$-{\em spin structure}, etc. Fixing a topological type of a $\GG$-structured marked surface
$(S,M)$, we then have the structured mapping class group $\Mod^\GG(S,M)$. 

For a more analytic point of view, consider the subgroup of conformal linear transformations
\[
\on{Conf}(2) \,\,=\,\, (\ZZ/2)\ltimes \CC^* \,\,\subset \,\, GL(2,\RR)
\]
which is homotopy equivalent to $GL(2,\RR)$, so that connective coverings of the two groups are in bijection.
Denoting by $G_\conf$ the preimage of $\on{Conf}(2)$ in $\GG$, we can consider surfaces with
$G_\conf$-structure which are essentially algebro-geometric objects: if $\GG$
preserves orientation, we obtain Riemann surfaces, otherwise Klein surfaces \cite{alling-greenleaf}.
Such objects have moduli spaces $\Mc^G$ (more precisely, stacks) which are algebro-geometric
counterparts of the groups $\Mod^\GG(S)$. 

These groups and moduli spaces are best known for the ``standard" case $\GG=GL^+(2,\RR)$ (oriented
surfaces, Riemann surfaces), see, e.g., \cite{farb-margalit}. Other cases are attracting
increasingly more interest in recent years. For example, in the unoriented case $\GG=GL(2,\RR)$,
the orbifold $\Mc^G$ is well known to be  the real locus of the moduli stack of algebraic curves
\cite{natanzon, seppala-sorvali, liu}. 
The unoriented mapping class groups, although classical \cite {mangler, singerman, birman-chillinworth},
have some of their important properties established only recently \cite {randal-williams:nonorient, wahl}. 
The situation is similar for $N$-spin mapping class groups \cite{randal-williams:spin}; the corresponding moduli
spaces $\Mc^G$ of $N$-spin Riemann surfaces \cite{jarvis} provide important examples of integrable hierarchies
and cohomological field theories \cite{jarvis-kimura-vaintrob}.\\ 

All this makes it desirable to have a flexible combinatorial formalism extending that of ribbon
graphs to the case of an arbitrary $\GG$ as above.  Our main observation is that the ingredients of
such a formalism can be found in the concept of a {\em crossed simplicial group} (due to
Fiedorowicz-Loday \cite{fiedorowicz-loday} and Krasauskas \cite{krasauskas}). A crossed simplicial
group is a certain category $\Delta\Gen$ with objects $[n]$, $n\geq 0$, containing the simplicial
category $\Delta$. It turns out that each connective covering $\GG$ as in \eqref{eq:GG} has its
associated crossed simplicial group $\Delta\Gen$. The prime example is the {\em cyclic category}
$\Lambda$ introduced by A. Connes \cite{connes} as the foundation of cyclic homology. For this
category $\Gen_n = \Aut([n]) = \ZZ/(n+1)$ is the cyclic group. This matches the data of a cyclic ordering on the
set of halfedges incident to a vertex of a ribbon graph. Thus $\Lambda$ can be said to ``govern"
the world of oriented surfaces. 

More generally, for each $\GG$ as above, with associated crossed simplicial group $\Delta\Gen$, we
introduce the concept of a $\Delta\Gen$-structured graph. We show that any embedding of a graph
$\Gamma$ into a $\GG$-structured surface induces a $\DG$-structure on $\Gamma$ (Proposition
\ref{prop:emb-graph-str}).
We further prove (Theorem \ref{thm:g-graph}) that the nerve of the category formed by
$\DG$-structured graphs and their contractions, is homotopy equivalent to the union of the
classifying spaces of the groups $\Mod^\GG(S,M)$ for all topological types of stable marked surfaces
$(S,M)$. 

For example, for unoriented surfaces, the cyclic category $\Lambda$ is replaced by the {\em dihedral
category} $\Xi$, see \cite{loday}. Applying our formalism to $\Xi$, we get a concept known as a {\em
M\"obius graph} \cite {braun, mulase-waldron, mulase-yu} but formulated in a somewhat more
conceptual way.\\

The same way ribbon graphs can be utilized to construct invariants of oriented surfaces,
$\DG$-structured graphs provide means to construct invariants of $\GG$-structured surfaces. While this
includes generalizations of $2$-dimensional oriented topological field theories constructed from
Frobenius algebras, we are mainly interested in a ``categorified'' variant of this construction:
A functor $X:\DG^{\op} \to \C$ can be seen as a simplicial object $X$ in $\C$ together with extra structure
given by an action of $\Gen_n = \Aut([n])$ on $X_n$ for every $n$. 
We can evaluate such $X$ on any $\DG$-structured graph $\Gamma$ to obtain an object $X(\Gamma)$ in $\C$. Assuming that $X$
satisfies a certain combinatorial descent condition ($2$-Segal condition) introduced in
\cite{HSS1}, we can think of $X(\Gamma)$ as the global sections of a combinatorial sheaf on the
surface $(S,M)$ modelled by $\Gamma$, so that $X(\Gamma) \cong X(S,M)$ is independent on the chosen
graph $\Gamma$. Further, if $\C$ carries a model structure, then we have a derived
variant of this construction which generalizes the invariants of \cite{HSS-triangulated} obtained
from $2$-Segal cyclic objects.

For example, a $2$-Segal dihedral object $X$ associates to every stable
marked unoriented surface $(S,M)$, an object $X(S,M)$ with a coherent 
action of the unoriented mapping class group of $(S,M)$.

A cyclic 2-Segal object $X$ in $\C$ can be seen as a nonlinear, categorical analog of a
Frobenius algebra $A$. If $\C=\Set$, then 1-simplices, i.e., elements of $X_1$ are
analogous to elements of $A$, and the number of 2-simplices $\sigma\in X_2$ with three given
boundary 1-simplices $a,b,c$ corresponds  to the cyclically invariant scalar product $(ab,c)$. The
construction of \cite{HSS-triangulated} is thus a ``categorification" of the celebrated fact that
Frobenius algebras (and, more general, Calabi-Yau dg-categories) give rise to invariants of oriented
marked surfaces.  In other words, it fits into  Table \ref{table:invariants} summarizing various
types of ``invariants" and their meaning. 

\begin{table}
  \renewcommand{\arraystretch}{2}
  \begin{tabular}{ p{5cm} | p{5cm}| p{5cm}  }
  Source of invariants & Type of invariants & Precise meaning
  \\
  \hline\hline
  Usual Frobenius algebras & Numerical invariants & Elements of $H^0$ of moduli spaces
    \\ 
    \hline
    Calabi-Yau algebras and categories & Cohomological invariants &  Higher cohomology classes on
    moduli
    spaces  \cite{kontsevich-feynman}\cite{costello-TFT}
 \\ 
    \hline 
    Modular categories & Vector space-valued invariants (fusion data)
    & Local systems of vector spaces on moduli spaces
    \cite{bakalov-kirillov}
      \\ 
    \hline
    Cyclic 2-Segal objects & Categorical invariants & Local systems of objects of $\C$ on moduli spaces
    \\
    \hline
    \end{tabular}
       \caption{ 
       Various ``invariants" of oriented surfaces.}\label{table:invariants}
\end{table}

It is natural therefore to expect that our approach can be developed to include  ``structured"
analogs of all the other rows in Table \ref{table:invariants}. In this paper we discuss only the
structured analog of the concept of a Frobenius algebra, leaving other contexts for future work.
This analog is based on the following concept.

Let $H$ be a group equipped with a {\em parity}, by which we mean a homomorphism
\[
\rho: H \lra \ZZ/2.
\]
Let us write $H_{0}$ and $H_{1}$ for the preimages of $0$ and $1$ whose elements are called even and
odd, respectively. One can then introduce the concept of a {\em twisted action} of $H$ on an
associative algebra so that even elements of $H$ act by automoprhisms while odd elements of $H$ act
by anti-automorphisms. 
One can similarly speak about twisted actions of $H$ on a category: even group elements of 
act by covariant functors while odd elements act by contravariant functors. 
In the situation of a group $\GG$ and a crossed simplicial group $\Delta\Gen$ as above, the group
$\Gen_0$ comes with a natural parity. It turns out
that considering algebras and categories with twisted $\Gen_0$-action allows us to extend many
known classical results to the $\GG$-structured situation. In particular:

\begin{enumerate}
\item[(1)] For a category $\C$ with a twisted $\Gen_0$-action we have a natural ``nerve" $\N^\GG(\C)$ which
is a $\Delta\Gen$-set. 

\item[(2)] For an algebra $A$ with a twisted $\Gen_0$-action, the Hochschild complex $C_\bullet^{\on{Hoch}}(A)$
has a natural structure of a $\Delta\Gen$-vector space.

\end{enumerate}

We expect that a Frobenius algebra (resp. a  Calabi-Yau category) with a twisted $\Gen_0$-action
gives rise to a numerical (resp. cohomological, in the sense of Table \ref{table:invariants})
invariant of $\GG$-structured marked surfaces. The results of \cite{alex-natanzon, novak-runkel} for
$\DG=\Xi, \Lambda_2$ (unoriented surfaces, 2-spin surfaces) support this expectation.\\
 

\noindent
{\bf Acknowledgements.} We thank David Ayala, Jeffrey Giansiracusa, Ralph Kaufmann, Maxim
Kontsevich, Jacob Lurie, Martin Markl, Sergei Natanzon, Sebastian Novak, Ingo Runkel, and Bertrand To\"en for inspiring
discussions. This work was carried out while T.D. was a Titchmarsh Fellow at the University
of Oxford.

\newpage

\section{Crossed simplicial groups and planar Lie groups}

\numberwithin{equation}{subsection}

\subsection{Basic definitions}

At the basis of combinatorial topology lies the {\em simplex category} $\Delta$ whose objects are
the finite ordinals $[n]=\{0,1,\dots, n\}$, $n \ge 0$, with morphisms given by monotone maps.
{\em Simplicial objects} in a category $\Cb$ are functors $\Delta^\op\to\Cb$. In his axiomatization
of cyclic homology, A. Connes introduced a category $\Lambda$ which can be thought of as a hybrid of
$\Delta$ and the family $\{ \ZZ /(n+1) \}$ of finite cyclic groups which appear as the automorphism
groups of the objects of $\Lambda$, see \cite{connes, loday}. The formal interplay between the two
is captured in the notion of a crossed simplicial group introduced in \cite{fiedorowicz-loday,
krasauskas}.

\begin{defi} \label{defi:csg} A {\em crossed simplicial group} is a category $\Delta \Gen$ equipped with an embedding
	$i:\Delta \to \Delta \Gen$ such that:
	\begin{enumerate}
		\item the functor $i$ is bijective on objects,
		\item any morphism $u: i[m] \to i[n]$ in $\Delta \Gen$ can be uniquely expressed as 
			a composition $i(\phi) \circ g$ where $\phi: [m] \to [n]$ is a morphism in
			$\Delta$ and $g$ is an automorphism of $i[m]$ in $\Delta \Gen$.
	\end{enumerate}
\end{defi}

We will refer to the representation $u=i(\phi) \circ g$ in (2) as the {\em canonical factorization}
of $u$. To keep the notation light, we will usually leave the embedding $i$ implicit, 
referring to the objects of $\Delta \Gen$ as $[n]$, $n\geq 0$.
To every crossed simplicial group $\Delta \Gen$, we can associate a sequence of groups
\begin{equation}\label{eq:crossed}
	\Gen_n = \Aut_{\Delta\Gen}([n]). 
\end{equation}
Further, by Property (2), any diagram
\[
	\xymatrix{ & [n]\ar[d]^{g} \\ [m] \ar[r]_{\phi} & [n]}
\]
where $\phi$ in $\Delta$ and $g \in \Gen_n$, can be uniquely completed to a commutative diagram in
$\DG$
\begin{equation}
	\label{eq:can-square}
	\xymatrix{ [m]\ar[d]_{\phi^*g} \ar[r]^{g^*\phi} & [n]\ar[d]^{g} \\ [m] \ar[r]_{\phi} & [n]}
\end{equation}
with $g^*\phi$ in $\Delta$ and $\phi^* g \in \Gen_n$. These data satisfy the following
compatibilities.

\begin{prop}\label{prop:skeleton}
	\begin{enumerate}[label=(\alph*)]
		\item For every morphism $\phi: [m] \to [n]$ in $\Delta$, the association
		$g\mapsto \phi^* g$ defines a
		 map of sets
			\[
				\phi^*: \Gen_n \lra \Gen_m,\; g \mapsto \phi^*g
			\]
		The association $\phi \mapsto \phi^*$ makes the family $\Gen=(\Gen_n)_{n\geq 0}$ 
		into a simplicial set. The maps $\phi^*$ preserve unit elements but not necessarily the group structure.

		\item For objects $[m],[n]$, the association
			\[
				(\phi,g) \mapsto g^*\phi
			\]
			determines a right action of $\Gen_n$ on the set $\Hom_{\Delta}([m],[n])$. 
			In the case $m=n$, this action preserves the identity morphism.

		\item In addition, we have the identities
			\begin{align*}
			\phi^*(g \circ h) & = \phi^*g \circ (g^*\phi)^*h,\\
			g^*(\phi \circ \psi) & = g^*\phi \circ (\phi^*g)^*\psi.
			\end{align*}
			
			\item Conversely, any sequence of groups $\Gen=(\Gen_n)$
	with operations
	\[
	(\phi,g) \mapsto (\phi^*(g),g^*(\phi))
	\]
	satisfying the compatibilities  (a) - (c),  uniquely determines a
	crossed simplicial group.

	\end{enumerate}
\end{prop}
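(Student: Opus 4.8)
\emph{Proof strategy.} Parts (a)--(c) will all come from a single observation: each claimed identity is obtained by computing one morphism of $\DG$ in two ways as (simplicial arrow)\,$\circ$\,(automorphism) and invoking the uniqueness of that presentation --- equivalently, the uniqueness of the completion of \eqref{eq:can-square}, which is itself just the canonical factorization of $g^{-1}\circ i(\phi)$, so that $\phi^*g$ and $g^*\phi$ literally encode that factorization. Concretely: comparing the completion of the corner $(\phi=\id_{[n]},\,g)$, resp.\ $(\phi,\,g=e_n)$, with the tautological one gives $g^*\id_{[n]}=\id_{[n]}$, $\id_{[n]}^*g=g$, $e_n^*\phi=\phi$, $\phi^*e_n=e_m$, which is the unit content of (a) and (b) plus the fact that $\id_{[n]}^*$ is the identity map. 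Completing the corner with bottom arrow $\phi\circ\psi$ by first completing along $\phi$ and then along $\psi$, and comparing with the direct completion, yields (reading off the two components) $g^*(\phi\circ\psi)=g^*\phi\circ(\phi^*g)^*\psi$ and $(\phi\circ\psi)^*g=\psi^*(\phi^*g)$; doing the same for the corner whose right-hand arrow is $g\circ h$ yields $\phi^*(g\circ h)=\phi^*g\circ(g^*\phi)^*h$ and $(g\circ h)^*\phi=h^*(g^*\phi)$. The relations $(\phi\circ\psi)^*=\psi^*\circ\phi^*$ and $(g\circ h)^*\phi=h^*(g^*\phi)$ are exactly the simplicial-set claim in (a) and the associativity of the right action in (b), and the other two relations are the identities of (c).

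For part (d) the plan is to build $\DG$ by hand from the data and then observe that this inverts the passage from a crossed simplicial group to its associated data. Given $(\Gen_\bullet,\phi^*,g^*)$ satisfying (a)--(c), let $\DG$ be the category with objects $[n]$, $n\ge 0$, and
\[
\Hom_{\DG}([m],[n]):=\bigl\{\,(\phi,g)\ \big|\ \phi\in\Hom_\Delta([m],[n]),\ g\in\Gen_m\,\bigr\},
\]
where $(\phi,g)$ stands for ``$i(\phi)\circ g$''. The composition law is then forced: writing $\psi':=(g^{-1})^*\psi$ (chosen so that in the resulting category one will have $g\circ i(\psi)=i(\psi')\circ(\psi'^*g)$), reassociation of $i(\phi)\circ g\circ i(\psi)\circ h$ dictates
\[
(\phi,g)\circ(\psi,h):=\bigl(\phi\circ\psi',\ (\psi'^*g)\cdot h\bigr),\qquad (\psi,h)\colon[k]\to[m],\ \ (\phi,g)\colon[m]\to[n].
\]
The identity of $[n]$ is $(\id_{[n]},e_n)$, and $i\colon\Delta\to\DG$ sends $\phi\colon[m]\to[n]$ to $(\phi,e_m)$.

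It then remains to check that $\DG$ is a category, that $i$ exhibits it as a crossed simplicial group, and that the two constructions are mutually inverse. Associativity of the composition is the heart of the matter: both bracketings of $(\phi,g)\circ(\psi,h)\circ(\chi,k)$ unwind, via the displayed formula, into words in $\phi^*$ and $g^*$, and the two words agree by an application of each of (a), (b), (c); carrying this out while keeping straight which $\Gen_j$ and which $\Hom_\Delta(-,-)$ every symbol lives in --- and the $g\leftrightarrow g^{-1}$ flips dictated by the orientation of \eqref{eq:can-square} --- is the step I expect to be most laborious. The remaining checks are routine: $(\id_{[n]},e_n)$ is a two-sided unit by the unit clauses of (a)--(b) and $\id^*=\id$; $i$ is a functor because the unit of $\Gen_n$ acts trivially ($e_n^*\psi=\psi$) and $\psi^*$ preserves units; $i$ is injective on objects and faithful; and $(\phi,g)=i(\phi)\circ(\id_{[m]},g)$ is the unique factorization required by Definition~\ref{defi:csg}(2), the automorphism group $\Aut_{\DG}([m])$ being $\{(\id_{[m]},g):g\in\Gen_m\}\cong\Gen_m$ (one uses that $g^*$ fixes $\id_{[m]}$). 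Finally, for the word ``uniquely'': in any crossed simplicial group the canonical factorization identifies $\Hom_{\DG}([m],[n])$ with $\Hom_\Delta([m],[n])\times\Gen_m$, and the two-stage computation from (c) forces composition in these coordinates to be the displayed formula, so $\DG$ is recovered from its data; conversely the operations read off from the reconstructed $\DG$ agree with the given ones, again by a check of the orientation convention in \eqref{eq:can-square}. (Alternatively one could deduce (d) from the general construction of the bicrossed, i.e.\ Zappa--Sz\'ep, product of $\Delta$ with the groupoid $\coprod_n\Gen_n$, whose matching conditions are exactly (a)--(c); but since the direct verification is short I would carry it out explicitly.)
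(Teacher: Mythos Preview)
The paper's proof is a bare citation to \cite{fiedorowicz-loday}, Proposition~1.6, with no argument given; your proposal supplies the actual proof, and it is correct and is essentially the standard one (indeed the one in the cited reference): parts (a)--(c) follow by stacking canonical-factorization squares and invoking uniqueness, and (d) by building $\DG$ with the forced composition law and checking associativity from the identities. There is nothing to compare beyond noting that you have written out what the paper outsources.
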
	
\begin{proof}\cite{fiedorowicz-loday}, Proposition 1.6.\end{proof}

	\begin{exa}\label{exa:deviation}
Note that a simplicial group is a particular example of a crossed simplicial group, corresponding to
trivial actions of $\Gen_n$ on $\Hom_\Delta([m], [n])$. Therefore, in general, these actions and the identities
in Proposition \ref{prop:skeleton}(c), describe the ``deviation" of  $\Gen$  from being a simplicial
group. 
\end{exa}

\begin{exa}\label{ex:extreme} 
Let us point out the following special cases of canonical factorization in a crossed simplicial
group $\DG$:
\[
\Hom_\DG([n], [0]) \cong \Gen_n, \quad \Hom_\DG([0], [n]) \cong \{0,1,\dots, n\} \times \Gen_0. 
\]
The first identification follows from the fact that $\Hom_\Delta([n], [0]) = \pt$. Note that the
simplicial set structure on $\{\Gen_n\}$ can be deduced immediately from this identification:
$\Hom_\DG(-, [0])$ is a contravariant functor on $\DG$ and, by restriction, on $\Delta$. 
The second identification follows from the canonical identification of $\Hom_\Delta([0], [n])$ with
$\{0,1,\cdots, n\}$ given by evaluation.
\end{exa}

The following proposition implies that any crossed simplicial group $\DG$ has a natural forgetful
functor into the category of sets, so that the objects of $\DG$ can be interpreted as sets equipped with extra
structure. This point of view will be elaborated in Chapter \ref{sec:order}. 

\begin{prop}\label{prop:forget}
Let $\DG$ be a crossed simplicial group. Then we have a functor
\[
	\lambda: \DG \lra \Set, \; [n] \mapsto \Hom_{\DG}([0], [n])/\Gen_0
\]
where $\lambda([n])$ can be canonically identified with the set $\Hom_{\Delta}([0],[n]) \cong \{0,1,\dots,n\}$.
In particular, we obtain, for every object $[n]$ of $\DG$, a canonical group homomorphism
\[
	\lambda_{n}: \Gen_n \lra S_{n+1}.
\]
\end{prop}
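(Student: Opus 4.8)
The plan is to realize $\lambda$ as a quotient of the corepresentable functor $\Hom_{\DG}([0],-)$. First I would observe that $[n]\mapsto\Hom_{\DG}([0],[n])$, $v\mapsto(u\mapsto v\circ u)$, is a covariant functor $\DG\to\Set$, and that the group $\Gen_0=\Aut_{\DG}([0])$ acts on it from the right by precomposition: $u\cdot h:=u\circ h$ for $h\in\Gen_0$. Because composition is associative, $(v\circ u)\circ h=v\circ(u\circ h)$, so each structure map $v\circ-$ is $\Gen_0$-equivariant; hence the objectwise orbit sets $\lambda([n]):=\Hom_{\DG}([0],[n])/\Gen_0$, together with the induced maps $\lambda(v)\colon[u]\mapsto[v\circ u]$, form a functor $\lambda\colon\DG\to\Set$. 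The functor axioms $\lambda(\id)=\id$ and $\lambda(w\circ v)=\lambda(w)\circ\lambda(v)$ are immediate from the corresponding identities for postcomposition.

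Next I would compute the values of $\lambda$ using the canonical factorization of Definition~\ref{defi:csg}(2), in the form recorded in Example~\ref{ex:extreme}: every $u\colon[0]\to[n]$ in $\DG$ can be written uniquely as $u=i(\phi)\circ g$ with $\phi\in\Hom_{\Delta}([0],[n])$ and $g\in\Gen_0$, and precomposing $u$ with $h\in\Gen_0$ replaces the pair $(\phi,g)$ by $(\phi,gh)$. Therefore the $\Gen_0$-orbits on $\Hom_{\DG}([0],[n])$ are exactly the fibers of the map $u=i(\phi)\circ g\mapsto\phi$, so that $[u]\mapsto\phi$ is a well-defined bijection $\lambda([n])\xrightarrow{\sim}\Hom_{\Delta}([0],[n])$, and the target is canonically $\{0,1,\dots,n\}$ via evaluation at $0\in[0]$. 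Using that for $\phi\colon[0]\to[m]$ and $\psi\colon[m]\to[n]$ in $\Delta$ one has $i(\psi)\circ\bigl(i(\phi)\circ g\bigr)=i(\psi\circ\phi)\circ g$, one checks that under this identification the restriction $\lambda|_{\Delta}$ is the usual forgetful functor $[n]\mapsto\{0,\dots,n\}$; this is the precise content of the word ``canonically'' in the statement.

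Finally, applying the functor $\lambda$ to the automorphisms of the object $[n]$ gives a group homomorphism $\lambda_n\colon\Gen_n=\Aut_{\DG}([n])\to\Aut_{\Set}\bigl(\lambda([n])\bigr)=\Aut_{\Set}(\{0,\dots,n\})=S_{n+1}$, which is the last assertion. In this largely formal argument the only step needing genuine care --- and hence the main obstacle --- is verifying that the bijection $\lambda([n])\cong\{0,\dots,n\}$ is natural in $[n]$, i.e.\ that it intertwines $\lambda$ with the forgetful functor on $\Delta$ and correctly records, on automorphisms, the $\Gen_n$-action on $\Hom_{\Delta}([0],[n])$ coming from the canonical squares~\eqref{eq:can-square}; this amounts to a routine but slightly delicate bookkeeping with canonical factorizations, kept under control by the identities of Proposition~\ref{prop:skeleton}(c).
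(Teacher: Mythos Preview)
Your proposal is correct and follows essentially the same approach as the paper, which simply says the result ``follows from the second identification in Example~\ref{ex:extreme}'' (i.e., the canonical factorization $\Hom_{\DG}([0],[n])\cong\{0,\dots,n\}\times\Gen_0$). You have merely unpacked this one-line reference into the explicit verification that the quotient of the corepresentable functor by the $\Gen_0$-action is well-defined and canonically identified with $\{0,\dots,n\}$.
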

\begin{proof}
	 Follows from the second identification in Example \ref{ex:extreme}. 
\end{proof}

\subsection{The Weyl crossed simplicial group}
\label{sub:weyl}

Let $J$ be a finite set. A {\em signed linear order} on $J$ consists of
\begin{enumerate}
	\item a linear order on $J$,
	\item a map of sets $\varepsilon: J \to \Zt$.
\end{enumerate}

We introduce a category $\DW$ with objects given by the sets $\{0,1,\dots,n\}$, $n \ge 0$. A
morphism $f: I \to J$ is given by a map of underlying sets together with the choice of a signed
linear order on each fiber $f^{-1}(j)$. Composition of morphisms is obtained by forming {\em
lexicographic signed linear orders}: Given $f: I \to J$ and $g: J \to K$, we have, for $k \in K$, a
linear order on $(gf)^{-1}(k)$ obtained by declaring $i_1 \le i_2$ if 
\begin{enumerate}
	\item either $f(i_1) = f(i_2)$ and $i_1 \le i_2$ with respect to the linear order on the fiber of $f$,
	\item or $f(i_1) \ne f(i_2)$ and $f(i_1) \le f(i_2)$ with respect to the linear order on the
		fiber of $g$.
\end{enumerate}
The sign of $i \in (gf)^{-1}(k)$ is obtained by setting $\varepsilon_{gf}(i) = \varepsilon_f(i) +
\varepsilon_g(f(i))$.

\begin{prop} The category $\DW$ is a crossed simplicial group with 
	\[
		\Gen_n = \Aut_{\DW}([n]) \cong W_{n+1}
	\]
	where $W_{n+1}$ denotes the signed permutation group of $\{0,1,\dots,n\}$ also known as the
	wreath product $\Zt\wreath S_{n+1}$.
\end{prop}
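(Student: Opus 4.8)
The plan is to verify the two axioms of Definition~\ref{defi:csg} for the category $\DW$, and then identify the automorphism groups. First I would check that $\DW$ is genuinely a category: the only nonformal point is associativity of composition, which amounts to checking that iterating the lexicographic construction is associative. For the underlying set maps this is automatic; for the linear orders on fibers one observes that the lexicographic order attached to a triple composition $h\circ g\circ f$ is governed by comparing, for $i_1,i_2$ in a common fiber, first the values under $f$, then (if equal) under the fiber order of $f$ having broken the tie\dots\ — more precisely, the order is the one induced by the injection $i\mapsto (g f(i), f(i), i)$ into the lexicographically ordered product, and this description is manifestly associative. For the signs, associativity is just associativity of addition in $\Zt$, since $\varepsilon_{h g f}(i)=\varepsilon_f(i)+\varepsilon_g(f(i))+\varepsilon_h(gf(i))$ no matter how one brackets. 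Identities are the identity set maps with the unique (trivial) order and zero sign on each singleton fiber.

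Next I would produce the embedding $i\colon \Delta\to\DW$. A monotone map $\phi\colon[m]\to[n]$ has each fiber $\phi^{-1}(j)$ an interval of $[m]$, hence canonically linearly ordered; equip it with the zero sign. One checks $i$ preserves composition — a monotone map composed with a monotone map induces on each fiber exactly the lexicographic order of the two interval orders, and all signs stay zero — and that $i$ is faithful and bijective on objects. So axiom~(1) holds.

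For axiom~(2), the canonical factorization: given $u\colon[m]\to[n]$ in $\DW$, I want to write $u=i(\phi)\circ g$ with $\phi\in\Delta$ and $g\in\Aut_\DW([m])$. Define $\phi$ to have the same underlying map as $u$ but with the \emph{interval} order and zero signs on each fiber (so $\phi$ is monotone). Then $g:=i(\phi)^{-1}\dots$ — rather, $g$ is the automorphism of $[m]$ that, on each fiber $u^{-1}(j)=\phi^{-1}(j)$, is the order isomorphism from the interval order to the order prescribed by $u$, carrying the sign prescribed by $u$, and on the level of the underlying set of $[m]=\coprod_j u^{-1}(j)$ is the induced bijection. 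By construction $i(\phi)\circ g=u$ (composing, the fiber over $j$ acquires the lexicographic order built from $g$'s order on $u^{-1}(j)$ and the trivial tie-break, which is just $u$'s order, and signs add $g$'s sign to $0$). Uniqueness: if $i(\phi')\circ g'=u$ then comparing underlying maps forces $\phi'=u$ as a set map, hence $\phi'=\phi$ (interval order, zero sign is the only monotone lift), and then $g'=i(\phi)^{-1}\circ u=g$. This establishes that $\DW$ is a crossed simplicial group.

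Finally, $\Gen_n=\Aut_\DW([n])$: an automorphism is a bijection $\sigma$ of $\{0,\dots,n\}$ together with, for each $j$, a signed order on the singleton fiber $\sigma^{-1}(j)$ — i.e.\ just an element of $\Zt$. So the data are a permutation $\sigma\in S_{n+1}$ and a function $\{0,\dots,n\}\to\Zt$, and composing two such multiplies the permutations and adds signs twisted by the permutation, which is exactly the multiplication in the wreath product $\Zt\wr S_{n+1}=W_{n+1}$. I expect the main obstacle to be the bookkeeping in verifying associativity of the lexicographic composition and, relatedly, checking the composition identity $i(\phi)\circ i(\psi)=i(\phi\circ\psi)$ cleanly; the ``canonical injection into a lexicographic product'' reformulation is the device that keeps this honest rather than a case analysis. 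Everything else is formal.
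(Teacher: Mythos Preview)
Your overall strategy matches the paper's (which simply says the unique factorization property ``is a direct consequence of the definition of $\DW$''), and your treatment of associativity, the embedding $i$, and the identification $\Aut_{\DW}([n])\cong W_{n+1}$ are all fine. However, your construction of the canonical factorization contains a genuine error.

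You write: ``Define $\phi$ to have the same underlying map as $u$ but with the interval order and zero signs on each fiber (so $\phi$ is monotone).'' This is false: the underlying set map of a general $u\colon[m]\to[n]$ in $\DW$ need not be monotone, and putting the standard order and zero signs on its fibers does not change that. For instance, take $u\colon[1]\to[1]$ with underlying map swapping $0$ and $1$; no choice of fiber data makes this lie in $i(\Delta)$. Consequently your assertion $u^{-1}(j)=\phi^{-1}(j)$ and your uniqueness argument (``comparing underlying maps forces $\phi'=u$ as a set map'') both fail, since the underlying map of $i(\phi)\circ g$ is $\phi\circ\sigma$, not $\phi$.

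The fix is the standard one: take $\phi$ to be the \emph{unique monotone} map $[m]\to[n]$ with $|\phi^{-1}(j)|=|u^{-1}(j)|$ for all $j$ (so its fibers are consecutive intervals), and let the underlying bijection $\sigma$ of $g$ be determined on each $u^{-1}(j)$ as the order isomorphism from $u$'s prescribed linear order on $u^{-1}(j)$ to the standard order on the interval $\phi^{-1}(j)$; transport the signs of $u$ to $g$. Then $i(\phi)\circ g=u$ as you intended, and uniqueness follows because any factorization forces $\phi$ to have these fiber cardinalities (hence equals the unique monotone such map) and then $\sigma$ and the signs are pinned down by the computation you already sketched.
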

\begin{proof} We have to verify the unique factorization property which is a direct consequence of the
	definition of $\DW$.
\end{proof}

Following Krasauskas, we call $\DW$ the {\em Weyl crossed simplicial group}, since the group $W_{n}$
is the Weyl group of the root system $B_n$ (or $C_n$). Its fundamental importance stems
from the following result (\cite{krasauskas,fiedorowicz-loday}).

\begin{thm}\label{thm:classification} Let $\DG$ be a crossed simplicial group.
\begin{enumerate}
	\item There is a canonical functor $\pi: \DG \to \DW$.
	\item For every $n \ge 0$, we have an induced short exact sequence of groups
		\[
			1 \lra \Gen_n' \lra \Gen_n \lra \Gen_n'' \lra 1
		\]
	      where $\Gen_n'$ and $\Gen_n''$ denote kernel and image, respectively, of the induced
	      homomorphism $\pi_n: \Gen_n \to W_{n+1}$. The short exact sequences assemble to a
	      sequence of functors
	      \[
			\DG' \lra \DG \lra \DG'' 
	      \]
	      where $\DG'$ is a simplicial group (Example \ref{exa:deviation}) and $\DG'' \subset \DW$ is a crossed
	      simplicial subgroup of $\DW$.
\end{enumerate}
\end{thm}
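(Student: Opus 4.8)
The plan is to put essentially all of the weight on part~(1): part~(2) will follow formally from it together with the uniqueness of canonical factorizations. Since $\pi$ is to be the identity on objects and to restrict to the tautological inclusion $\Delta\hookrightarrow\DW$, the canonical factorization of Definition~\ref{defi:csg} forces $\pi$ to be determined by a family of maps $\pi_n\colon\Gen_n\to W_{n+1}=\Aut_{\DW}([n])$; and, writing morphisms of $\DG$ in canonical form and expanding $\pi(u\circ v)=\pi(u)\circ\pi(v)$ (equivalently, using the skeletal calculus of Proposition~\ref{prop:skeleton}), such a family assembles to a functor if and only if each $\pi_n$ is a group homomorphism and the family intertwines the two structure operations:
\[
\pi_m(\phi^{*}g)=\phi^{*}\bigl(\pi_n(g)\bigr)\qquad\text{and}\qquad g^{*}\phi=\bigl(\pi_n(g)\bigr)^{*}\phi
\]
for all $\phi\colon[m]\to[n]$ in $\Delta$ and $g\in\Gen_n$, the right-hand operations being those of $\DW$. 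So the task is to produce these $\pi_n$ and verify the two identities.

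To build $\pi_n$ I would exploit $W_{n+1}\cong(\Zt)^{\,n+1}\rtimes S_{n+1}$, so that $\pi_n(g)$ is a pair: an underlying permutation of $\{0,\dots,n\}$ and a sign function $\varepsilon^{g}\colon\{0,\dots,n\}\to\Zt$. For the permutation one takes $\lambda_n(g)\in S_{n+1}$ supplied by Proposition~\ref{prop:forget}. The sign function must be read off from the part of the structure that $\lambda$ forgets --- concretely, from the elements $(\delta^{i})^{*}g\in\Gen_{n-1}$ recording the ``twist'' induced by $g$ on the codimension-one faces, propagated down the cofaces to dimension~$0$ and composed with a canonical parity homomorphism $\rho\colon\Gen_0\to\Zt$ extracted from the low-degree factorizations $\Hom_{\DG}([1],[0])\cong\Gen_1$ and $\Hom_{\DG}([0],[n])\cong\{0,\dots,n\}\times\Gen_0$ (cf.\ Example~\ref{ex:extreme}); this is the construction of Krasauskas and Fiedorowicz--Loday \cite{fiedorowicz-loday}. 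Concretely I would first pin down $\rho$ and check it is a homomorphism, then define $\varepsilon^{(-)}$ and verify that $\pi_n=(\lambda_n,\varepsilon^{(-)})$ is a homomorphism into $W_{n+1}$, which boils down to a crossed-homomorphism identity of the shape $\varepsilon^{gh}(i)=\varepsilon^{g}(i)+\varepsilon^{h}(\lambda_n(g)(i))$ flowing from Proposition~\ref{prop:skeleton}(c).

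The main obstacle is precisely the compatibility of the $\pi_n$, i.e.\ the two displayed identities --- the statement that $\pi$ respects composition. This is a cocycle bookkeeping driven by the ``deviation'' relations of Proposition~\ref{prop:skeleton}(c): one expands $\phi^{*}(g\circ h)$ and $g^{*}(\phi\circ\psi)$ on the $\DG$ side and matches them, term by term, against the analogous relations in $\DW$, where they follow at once from the definition of lexicographic signed linear orders in Section~\ref{sub:weyl}. I expect this to be conceptually routine but genuinely laborious; restricting the verification to the generating cofaces and codegeneracies $\delta^{i},\sigma^{i}$ of $\Delta$ and to small $n$ keeps it finite.

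Granting part~(1), part~(2) is formal. Put $\Gen_n'=\ker\pi_n$ and $\Gen_n''=\pi_n(\Gen_n)\subseteq W_{n+1}$. Applying $\pi$ to the canonical square~\eqref{eq:can-square} gives $\pi_m(\phi^{*}g)=\phi^{*}(\pi_n(g))$, and, the right-hand side being already a canonical factorization in $\DW$, uniqueness of such factorizations also yields $g^{*}\phi=(\pi_n(g))^{*}\phi$ for $\phi$ in $\Delta$. Three consequences: (i)~if $g\in\Gen_n'$ then $g^{*}\phi=\phi$ for all $\phi\colon[m]\to[n]$ in $\Delta$, so by Proposition~\ref{prop:skeleton}(c) and the unit-preservation of $\phi^{*}$ the maps $\phi^{*}$ restrict to group homomorphisms $\Gen_n'\to\Gen_m'$ acting trivially on $\Delta$, whence $\DG'$ is a simplicial group (Example~\ref{exa:deviation}); (ii)~since $\phi^{*}$ carries $\Gen_n''$ into $\Gen_m''$ and the elements $g^{*}\phi$ remain in $\Delta$, the subcategory of $\DW$ generated by $\Delta$ and the $\Gen_n''$ inherits the unique-factorization property and is a crossed simplicial subgroup $\DG''\subseteq\DW$; (iii)~the first isomorphism theorem gives $1\to\Gen_n'\to\Gen_n\to\Gen_n''\to1$ for each $n$, and these are compatible with the simplicial operations by construction, so they assemble into the sequence of functors $\DG'\to\DG\to\DG''$.
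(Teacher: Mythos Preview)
Your plan is sound and part~(2) is handled essentially as in the paper---if anything more cleanly, since you use the single compatibility $g^{*}\phi=(\pi_n(g))^{*}\phi$ uniformly rather than treating faces and degeneracies separately.

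The one substantive divergence is in how you manufacture the sign component of $\pi_n(g)$. You propose pulling $g$ back along cofaces down to $\Gen_0$ and then applying a parity $\rho\colon\Gen_0\to\Zt$. The paper does something different and more direct: it goes \emph{up} via degeneracies. For each vertex $i$ one forms the canonical square for $g\in\Gen_n$ against the degeneracy $s_i\colon[n{+}1]\to[n]$, obtaining $s_j^{*}g\in\Gen_{n+1}$ with $j=\lambda_n(g)(i)$, and then declares $\varepsilon_i=0$ or $1$ according to whether the permutation $\lambda_{n+1}(s_j^{*}g)\in S_{n+2}$ restricts to an order-preserving or order-reversing bijection on the two-element fibre $s_i^{-1}(i)=\{i,i{+}1\}$. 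This buys two things. First, it uses only the functor $\lambda$ to $\Set$ already in hand, so no auxiliary parity on $\Gen_0$ is needed; in the paper that parity is developed only later (\S\ref{section:semiconstant}), and while there is no logical circularity, your route would require you to set it up here from scratch. Second, it makes the last step of part~(2) fall out of the very definition: $\lambda(g)=\id$ forces $g^{*}$ to fix face maps, and each $\varepsilon_i=0$ says exactly that the degeneracy-pullback acts trivially on the fibre, so $g^{*}$ fixes degeneracies too---hence $\DG'$ is a genuine simplicial group. Your coface-plus-parity recipe can presumably be made to work, but as written it is underspecified (which chain of cofaces gives the sign at $i$? the indexing of $v_i^{*}g$ against $\varepsilon_i$ involves a twist by $\lambda_n(g)$ that needs care), and the degeneracy trick sidesteps all of that bookkeeping.
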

\begin{proof} 
	Consider the functor
		\[
			\lambda: \DG \lra \Set, \; [n] \mapsto \Hom_{\DG}([0], [n])/\Gen_0
		\]
	from Proposition \ref{prop:forget}. 
	We claim that $\lambda$ admits a canonical factorization over the forgetful functor $\DW \to \Set$. 
	Note that, via $\lambda$, the group $\Gen_n$ acts canonically on the set
	$\Hom_{\Delta}([0],[n]) = \{0,1,\dots,n\}$ of vertices of the combinatorial $n$-simplex. 
	A vertex $i \in \{0,1,\dots,n\}$ can be canonically identified with the corresponding degeneracy map
	\[
		s_i: [n+1] \lra [n],\; j \mapsto \begin{cases} j & \text{for $j \le i$,}\\
			j-1 & \text{for $j > i$.} \end{cases}
	\] 
	Explicitly, given $g \in \Gen_n$ and $i \in \{0,1,\dots,n\}$, we have a commutative square
	\[
		\xymatrix{
			[n+1] \ar[d]_{s_j^*g} \ar[r]^{s_i} & [n] \ar[d]^g\\
			[n+1] \ar[r]^{s_j} & [n]
		}
	\]
	where $j = \lambda(g)(i)$. Clearly, the map $\lambda(s_j^*g)$ induces a map from the fiber
	$s_i^{-1}(i) = \{i,i+1\}$ to the fiber $s_j^{-1}(j) = \{j,j+1\}$ which is either order
	preserving or order reversing. Therefore, we can lift $\lambda(g) \in S_{n+1}$ to a signed permutation 
	\[
		\widetilde{\lambda}(g) = (\varepsilon_0, \varepsilon_1, \dots, \varepsilon_n ;
		\lambda(g)) \in \Zt \wreath S_{n+1}
	\]
	where 
	\[
		\varepsilon_i = \begin{cases} 0 & \text{if $\lambda(s_j^*g): \{i,i+1\} \to \{j,j+1\}$ is order preserving,}\\
						1 & \text{if $\lambda(s_j^*g): \{i,i+1\} \to \{j,j+1\}$ is order reversing.}
				\end{cases}
	\]
	Using the unique factorization property in $\DG$, it is straightforward to verify 
	that the association $g \mapsto \widetilde{\lambda}(g)$ extends to provide a functor
	\[
		\widetilde{\lambda}: \DG \lra \DW
	\]
	commuting with the forgetful functors to $\Set$, which proves (1). The remaining statements
	are easy to verify, the only nonobvious point being that $\DG'$ is in fact noncrossed
	simplicial. But this follows from the construction of $\widetilde{\lambda}$: For $g \in
	\Gen_n$, the condition $\lambda(g) = \id \in S_{n+1}$ implies that $g^*$ fixes all face maps
	$[n-1] \to [n]$. The condition that all signs $\varepsilon_i$ of the lift
	$\widetilde{\lambda(g)}$ are $0$ implies that $g^*$ fixes all degeneracy maps $[n+1] \to
	[n]$. But this implies that $\DG''$ is a simplicial group (cf. Example \ref{exa:deviation}).
\end{proof}

According to Theorem \ref{thm:classification}, the classification of crossed simplicial groups up
to extensions by simplicial groups therefore reduces to the classification of crossed simplicial
subgroups of the Weyl crossed simplicial group $\DW$. There are precisely $7$ such subgroups called
{\em fundamental crossed simplicial groups}. We provide a list in Table \ref{table:types}, following the terminology
introduced in \cite{krasauskas}.
Therefore, every crossed simplicial group has a {\em type} given by its image in $\DW$. For example
a crossed simplicial group of {\em trivial} type is a simplicial group. Note that the
$7$ subgroups of $\DW$ can be further distinguished according to their {\em growth rate}:
\begin{enumerate}
	\item {\em Constant:} For the trivial and reflexive groups, the cardinality of $\Gen_n$ is constant.

	\item {\em Tame:} For the cyclic and dihedral groups, the size of $\Gen_n$ grows 
 		linearly with $n$.

	\item {\em Wild:} For the remaining subgroups, the size of $\Gen_n$ grows exponentially.  
\end{enumerate}

\begin{table}[h]
\begin{center}
\begin{tabular}{l|l}
	Type & Subgroup of $\DW$\\
	\hline
	Trivial & $ \{1\}$\\
	Reflexive & $ \{\Zt\}$\\
	Cyclic & $ \{\ZZ/(n+1)\ZZ \}$\\
	Dihedral & $ \{D_{n+1}\}$\\
	Symmetric & $ \{S_{n+1}\}$\\
	Reflexosymmetric & $ \{\Zt \ltimes S_{n+1}\}$\\
	Weyl & $ \{\Zt \wreath S_{n+1} \}$
\end{tabular}
\end{center}
\caption{The $7$ types of crossed simplicial groups.}
\label{table:types}
\end{table}

\subsection{Semiconstant crossed simplicial groups and twisted group actions}
\label{section:semiconstant}

In this section, we will study a particular class of crossed simplicial groups which are of reflexive type in
the sense of Table \ref{table:types}. 

Let $\omega_n: [n]\to [0]$ be the unique morphism from $[n]$ to $[0]$ in $\Delta$.  
Let $\DG$ be a crossed simplicial group.  The first equality in
Proposition \ref{prop:skeleton}(c)
  together with
the uniqueness of $\omega_n$ implies that the pullback map
\[
	\omega_n^*: \Gen_0 \lra \Gen_n
\]
is a group homomorphism. 

\begin{defi}
 A crossed simplicial group $\DG$  is called {\em
semiconstant} if, for every $n \ge 0$, the homomorphism 
$
	\omega_n^*: \Gen_0 \lra \Gen_n,
$
  is an
isomorphism. 

\end{defi}

Note that this condition implies that, for every map $\phi: [m] \to [n]$ in $\Delta$,
the corresponging map $\phi^*: \Gen_n \to \Gen_m$ is a group isomorphism. We may use $\omega_n^*$
to identify $\Gen_n$ with $\Gen_0$. Via this identification we have, for every $g \in \Gen_0$ and
every morphism $\phi: [m] \to [n]$ in $\Delta$, a commutative square
\begin{equation}\label{eq:semiconstant}
	\xymatrix{ [m] \ar[d]_{\phi^*g = g}\ar[r]^{g^*\phi} & [n]\ar[d]^g\\
		[m] \ar[r]^{\phi} & [n]}
\end{equation}
in $\DG$. Therefore, the simplicial set $\Gen_{\bullet}$ corresponding to a semiconstant crossed
simplicial group is a constant simplicial group. However, the action $\phi \mapsto g^*\phi$ may 
be nontrivial which is an additional datum.

\begin{exa}\label{ex:semiconst-emb}
Any crossed simplicial group $\DG$ contains the semiconstant crossed simplicial group
	$\Delta \{\Gen_0\}$ generated by $\Gen_0$: we restrict ourselves to those automorphisms of $[n]$
	which are pullbacks of automorphisms of $[0]$ along  $\omega_n$. 

\end{exa}

The goal of this section is to relate semiconstant crossed simplicial groups to twisted group
actions. We will start by introducing some terminology. A (strict) action of a group $G$ on a small
category $\C$, is defined to be a homomorphism $G \to \Aut_{\Cat}(\C)$ where $\Cat$ denotes the
category of small categories.

\begin{defi}\label{defi:semidirect} Let $\C$ be a category equipped with an action of a group $G$. We define a category $G
	\ltimes \C$ called the {\em semidirect product} of $G$ and $\C$. The objects of $G \ltimes
	\C$ are the objects of $\C$, a morphism from $x$ to $y$ is given by a pair $(g, \phi)$ where
	$g \in G$ and $\phi: g.x \to y$ is a morphism in $\C$. The composition of morphisms $(g,
	\phi): x \to y$ and $(g', \phi'): y \to z$ is the morphism $(g'g, \phi' \circ g'.\phi)$
\end{defi}

\begin{rem} The action of $G$ on $\C$ can be interpreted as a functor $\B G \to \Cat$ where $\B G$
	denotes the groupoid with one object and automorphism group $G$. In this context, the semidirect 
	product $G \ltimes \C$ equipped with its natural functor to $\B G$ is known as the Grothendieck 
	construction. 
\end{rem}

\begin{prop}\label{prop:semi} Let $G$ be a group acting on $\Delta$. Then the corresponding semidirect product $G
	\ltimes \Delta$ is a semiconstant crossed simplicial group. Vice versa, any semiconstant
	crossed simplicial group is isomorphic to a semidirect product $\Gen_0 \ltimes \Delta$.
\end{prop}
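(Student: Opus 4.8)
The plan is to verify the two directions separately. For the first direction, suppose $G$ acts on $\Delta$ and form $\DG := G \ltimes \Delta$. By Definition~\ref{defi:semidirect}, the objects of $\DG$ are those of $\Delta$, so the inclusion $i: \Delta \to \DG$ sending $\phi$ to $(1,\phi)$ is bijective on objects. For the unique factorization property, observe that a morphism $(g,\phi): [m] \to [n]$ decomposes as $i(\phi) \circ (g, \id_{g.[m]})$; here I would note that $g.[m] = [m]$ since $G$ acts on $\Delta$ by automorphisms and automorphisms of $[m]$ in $\Delta$ are trivial, so each $g$ fixes every object. Thus $(g, \id_{[m]})$ is an automorphism of $[m]$ in $\DG$, and the factorization $(g,\phi) = i(\phi) \circ (g,\id)$ is the canonical one. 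Uniqueness follows because any automorphism of $[m]$ in $\DG$ has the form $(h, \psi)$ with $\psi$ an automorphism of $[m]$ in $\Delta$, hence $\psi = \id$, so the automorphism is $(h,\id)$, and reading off $\phi$ and then $g$ from $i(\phi)\circ(g,\id) = (g,\phi)$ is forced. Therefore $\DG$ is a crossed simplicial group with $\Gen_n = \{(g,\id_{[n]}) : g \in G\} \cong G$, and I must check it is semiconstant: the map $\omega_n^*: \Gen_0 \to \Gen_n$ sends $g$ to the automorphism obtained by completing the square \eqref{eq:can-square} against $\omega_n$, which by the description of composition in $G \ltimes \Delta$ is again $(g,\id_{[n]})$; under the identifications $\Gen_0 \cong G \cong \Gen_n$ this is the identity of $G$, hence an isomorphism. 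So $\DG$ is semiconstant.

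For the converse, let $\DG$ be a semiconstant crossed simplicial group. The discussion preceding Example~\ref{ex:semiconst-emb} already extracts the essential data: the identification $\omega_n^*: \Gen_0 \xrightarrow{\sim} \Gen_n$ makes $\Gen_\bullet$ a constant simplicial group $G := \Gen_0$, and for each $g \in G$ and each $\phi: [m] \to [n]$ in $\Delta$ we have the commutative square \eqref{eq:semiconstant} defining $g^*\phi \in \Hom_\Delta([m],[n])$. I would first show that $g \mapsto (\phi \mapsto g^*\phi)$ defines an action of $G$ on the category $\Delta$. That each $g^*(-)$ is functorial follows from the second identity in Proposition~\ref{prop:skeleton}(c), namely $g^*(\phi\circ\psi) = g^*\phi \circ (\phi^*g)^*\psi$, together with the semiconstancy relation $\phi^*g = g$; this collapses to $g^*(\phi\circ\psi) = g^*\phi \circ g^*\psi$, and it preserves identities by Proposition~\ref{prop:skeleton}(b). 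That $(gh)^*\phi = g^*(h^*\phi)$ follows from the first identity in Proposition~\ref{prop:skeleton}(c), $\phi^*(g\circ h) = \phi^*g \circ (g^*\phi)^*h$, which is not directly what we want; instead I would stack the two squares \eqref{eq:semiconstant} for $h$ (over $\phi$) and for $g$ (over $h^*\phi$) and read off the bottom-to-top composite, using uniqueness of the completion \eqref{eq:can-square} to identify the resulting square with the one for $gh$ over $\phi$. Hence $G$ acts on $\Delta$.

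It then remains to produce an isomorphism of crossed simplicial groups $\Gen_0 \ltimes \Delta \xrightarrow{\sim} \DG$. On objects both sides are the objects of $\Delta$, and on morphisms I send $(g,\phi): [m] \to [n]$, where $\phi: g.[m] = [m] \to [n]$, to the composite $\phi \circ (\omega_m^* g) = \phi \circ g \in \Hom_{\DG}([m],[n])$. That this is a functor is exactly the compatibility of composition in the semidirect product with the square \eqref{eq:semiconstant}: the composite of $(g,\phi): [m]\to[n]$ and $(g',\phi'): [n]\to[p]$ is $(g'g, \phi'\circ g'.\phi)$, and under our map this goes to $\phi' \circ (g'^*\phi) \circ (g'g)$, while the composite of the images is $\phi' \circ g' \circ \phi \circ g = \phi' \circ (g'^*\phi) \circ g' \circ g$ after one application of \eqref{eq:semiconstant} to push $g'$ past $\phi$; these agree. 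The map is bijective on morphisms precisely by the canonical factorization in $\DG$ (Definition~\ref{defi:csg}(2)) combined with $\Gen_m \cong \Gen_0$, so it is an isomorphism of categories, and it commutes with the embeddings of $\Delta$, hence is an isomorphism of crossed simplicial groups. The main obstacle is bookkeeping: keeping the several square-completion identities of Proposition~\ref{prop:skeleton} straight and being careful that ``action on $\Delta$'' here genuinely means an action fixing all objects (so the $g.[m]=[m]$ simplifications are legitimate), rather than getting tangled in the covariance conventions; no step is deep, but the verification that $g^*(-)$ is a left action and is functorial is where a sign or an order-of-composition slip is easiest.
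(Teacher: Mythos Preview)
Your approach matches the paper's (which is a two-line sketch), and the overall strategy is correct, but you have exactly the order-of-composition slips you warned about. When you stack the square for $g$ over $h^*\phi$ on top of the square for $h$ over $\phi$, the right-hand column composes top-to-bottom to $h\circ g=hg$, not $gh$; uniqueness then gives $(hg)^*\phi=g^*(h^*\phi)$, i.e.\ the \emph{right} action already recorded in Proposition~\ref{prop:skeleton}(b). Correspondingly, \eqref{eq:semiconstant} reads $g\circ(g^*\phi)=\phi\circ g$, so pushing $g'$ past $\phi$ yields $g'\circ\phi=\bigl((g')^{-1}\bigr)^{*}\phi\circ g'$, not $(g'^*\phi)\circ g'$. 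Both are repaired at once by taking the left $\Gen_0$-action on $\Delta$ to be $g\cdot\phi:=(g^{-1})^*\phi$; your functoriality check for $(g,\phi)\mapsto\phi\circ g$ then goes through with $g'\cdot\phi$ in place of $g'^*\phi$, and in the first direction one finds that the induced $g^*\phi$ in $G\ltimes\Delta$ is $g^{-1}\cdot\phi$, closing the loop.

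A smaller point: triviality of $\Aut_\Delta([m])$ does not by itself force a category automorphism of $\Delta$ to fix $[m]$; the correct reason is that $[m]$ is the unique object with $\lvert\Hom_\Delta([0],-)\rvert=m+1$.
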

\begin{proof} 
	The semidirect product $G \ltimes \Delta$ has the unique factorization property:
	$(g, \phi) = (1, \phi) \circ (g, \id)$ making it a semiconstant crossed simplicial group.
	Given a semiconstant crossed simplicial group $\DG$, it follows from
	\eqref{eq:semiconstant} that $\Gen_0$ acts on $\Delta$. It immediately follows from the
	definition that we may identify $\DG$ with $\Gen_0 \ltimes \Delta$.
\end{proof}

There is an involution $\daleth$ on $\Delta$ which is the identity on objects and maps a morphism 
$\phi:[m] \to [n]$ to the {\em opposite morphism} $\phi^{\op}:[m] \to [n]$ defined by 
$\phi(i) = n+1 - \phi(m+1-i)$. If we replace $\Delta$ by the equivalent larger category
$\mathbf\Delta$ of all nonempty finite ordinals, then $\daleth$ can be defined more naturally as
the functor ${\mathbf\Delta} \to{\mathbf\Delta}$ sending each ordinal $(I, \leq)$ to the opposite
ordinal $(I, \geq)$. 

\begin{prop} We have $\Aut_{\Cat}(\Delta) \cong \Zt$ with generator given by $\daleth$. In
	particular, any action of a group $G$ on $\Delta$ factors via a homomorphism $G \to \Zt$ 
	over the action of $\Zt$ on $\Delta$ given by $\daleth$.
\end{prop}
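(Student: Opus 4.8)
The plan is to exhibit $\daleth$ as a nontrivial automorphism of $\Delta$ and then show that any automorphism of $\Delta$ is either the identity or $\daleth$. First I would note that $\daleth$ is indeed a functor: it fixes objects, and since $\phi^{\op}(i) = n+1-\phi(m+1-i)$ is a composition of the order-reversing bijection on $[m]$, the map $\phi$, and the order-reversing bijection on $[n]$, it is visibly monotone; functoriality $(\psi\circ\phi)^{\op} = \psi^{\op}\circ\phi^{\op}$ is the routine check that conjugating by the order-reversal respects composition, and $\daleth^2 = \id$ because applying order-reversal twice is the identity on each ordinal. Since $\daleth$ swaps the two face maps $\delta^0,\delta^1\colon [0]\to[1]$, it is not the identity functor, so $\langle \daleth\rangle \cong \Zt$ sits inside $\Aut_{\Cat}(\Delta)$.

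The heart of the argument is that $\Aut_{\Cat}(\Delta)$ is exhausted by $\{\id,\daleth\}$. Let $F$ be an autoequivalence of $\Delta$; after composing with an isomorphism we may assume $F$ is an automorphism. The key structural input is that $[0]$ is characterized categorically: it is the unique object admitting exactly one endomorphism (equivalently, $[0]$ is terminal, being the only object with a morphism from every $[n]$ and only the identity endomorphism). Hence $F[0]=[0]$. Next, $[1]$ is characterized as the unique object $X$ for which $\Hom([0],X)$ has exactly two elements and $X$ has no nontrivial automorphisms together with the right size of $\Hom([n],X)$; more robustly, one can pin down the whole object set by an inductive characterization of $[n]$ via the number of monomorphisms (injective monotone maps) $[0]\to[n]$, which equals $n+1$ and is categorical once one knows monomorphisms are the injective monotone maps. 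The upshot is $F[n]=[n]$ for all $n$. Then $F$ acts on the two-element set $\Hom_\Delta([0],[1]) = \{\delta^0,\delta^1\}$; if $F$ fixes both, I claim $F=\id$, and if $F$ swaps them, $\daleth^{-1}\circ F$ fixes both, reducing to the first case.

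To finish, suppose $F$ is an automorphism fixing both points of $\Hom([0],[1])$. Every object $[n]$ is a colimit (iterated pushout along faces) of copies of $[1]$ and $[0]$ — concretely $[n]$ is the coequalizer/gluing of $n$ copies of $[1]$ along $n-1$ copies of $[0]$, the ``spine'' decomposition — and every morphism in $\Delta$ is determined by its effect on vertices, i.e. by precomposition with the maps $[0]\to[n]$, which in turn are built from $\delta^0,\delta^1$ by composition. Since $F$ is a functor fixing all objects, fixing the two vertex inclusions into $[1]$, and preserving composition, it fixes every map $[0]\to[n]$, hence fixes the image of every morphism on vertices, hence fixes every morphism. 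Therefore $F=\id$, proving $\Aut_{\Cat}(\Delta)=\{\id,\daleth\}\cong\Zt$. The final sentence of the proposition is then immediate: an action of $G$ on $\Delta$ is a homomorphism $G\to\Aut_{\Cat}(\Delta)\cong\Zt$, which is exactly a homomorphism $G\to\Zt$ composed with the $\daleth$-action.

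The step I expect to be the main obstacle is the rigidity claim that $F$ fixes all objects — one must argue carefully that the ordinals $[n]$ are categorically distinguishable inside $\Delta$ (the cleanest route being the count of monomorphisms out of the terminal object, after checking that ``monomorphism in $\Delta$'' $=$ ``injective monotone map''); the remainder is a soft functoriality argument once that skeleton is in place.
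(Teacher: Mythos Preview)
The paper itself offers no proof beyond ``Well known'', so there is nothing to compare strategies against; your outline is the standard argument and is essentially correct. The identification of $[0]$ as the terminal object, and then of $[n]$ via $|\Hom_\Delta([0],[n])|=n+1$, is the right way to show that any automorphism fixes all objects.

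There is, however, a genuine gap in your final rigidity step. You assert that once $F$ fixes the two maps $[0]\to[1]$ it must fix every map $[0]\to[n]$, because those are ``built from $\delta^0,\delta^1$ by composition''. But $\delta^0,\delta^1$ both have codomain $[1]$ and cannot be composed with one another; any factorization $[0]\to[1]\to[n]$ uses a map $[1]\to[n]$ which you have not yet shown to be fixed by $F$, so the argument is circular as written. (The spine colimit you invoke exists in $\Set_\Delta$, not in $\Delta$, so it does not rescue this.)

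The clean repair is to let $\sigma_n\in S_{n+1}$ be the permutation that $F$ induces on $\Hom_\Delta([0],[n])\cong\{0,\dots,n\}$. Since a morphism $\phi:[m]\to[n]$ is determined by its effect on vertices, functoriality forces $F(\phi)=\sigma_n\circ\phi\circ\sigma_m^{-1}$ as a map of sets. Now run this over all $\phi:[1]\to[n]$, i.e.\ over all pairs $i\le j$ in $[n]$: monotonicity of $F(\phi)$ shows that if $\sigma_1=\id$ then every $\sigma_n$ is order-preserving, hence the identity, while if $\sigma_1$ is the swap then every $\sigma_n$ is order-reversing. This yields $F=\id$ or $F=\daleth$ and closes the argument.
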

\begin{proof} Well known.
\end{proof}

\begin{rem} As a corollary, we obtain that any semidirect product $G \ltimes \Delta$ admits a
	canonical functor to $\Zt \ltimes \Delta$. Interpreting this in the general context of
	crossed simplicial groups says that for a semiconstant crossed simplicial group $\DG$, the
	canonical functor into the Weyl crossed simplicial group of Theorem \ref{thm:classification} 
	factors through the semiconstant crossed simplicial group $\Delta \{\Zt\}$.
	Thus, in terms of the classification of crossed simplicial groups, semiconstant crossed
	simplicial groups are extensions of $\Delta \{\Zt\}$ by constant simplicial groups.
\end{rem}

Let $\C$ be a category and let $\BZt$ denote the groupoid with one object and automorphism group
$\Zt$. We define a {\em parity} on $\C$ to be a functor $\C \to \BZt$. Explicitly,
we are given a partition $\Hom(x,y) = \Hom(x,y)_0 \amalg \Hom(x,y)_1$ of every morphism set into
{\em even} and {\em odd} morphisms such that the composite of morphisms of the same parity is even
while the composite of morphisms of opposite parity is odd. Categories with parity naturally form a
category where morphisms are given by parity preserving functors, i.e., commutative diagrams of
\[
	\xymatrix@C=1ex{ \C \ar[rr]\ar[dr] & & \D\ar[dl]\\
	& \BZt. & }
\]

\begin{exas} 
	\begin{enumerate}
		\item A {\em parity} on a group $G$ is defined to be a homomorphism $G \to \Zt$. We
			obtain a category with parity by passing to groupoids $\B G \to
			\BZt$.
		\item Let $\D$ be a category equipped with an involutive functor $\tau: \D \to \D$.
			We may interpret $\tau$ as an action of $\Zt$ on $\D$. The corresponding 
			semidirect product $\Zt \ltimes \D \to \BZt$ is a category with parity.
	\end{enumerate}
\end{exas}

\begin{exas} \label{exas:semi} We give some examples of categories with parities arising as semidirect products with
	$\Zt$.
	\begin{enumerate}
		\item Consider the category $\Cat$ of small categories. The involution $\tau: \C
			\mapsto \C^{\op}$ gives rise to a semidirect product $\Zt \ltimes \Cat$.
			We can consider enriched variants of this construction. For example, given a
			field $\k$, we obtain a semidirect product $\Zt \ltimes \Cat_\k$ where $\Cat_\k$ denotes the
			category of small $\k$-linear categories (categories enriched over $\Vect_\k$).
		\item As we have seen above, the simplex category $\Delta$ has a natural involution
			$\daleth$ and we obtain a corresponding semidirect product $\Zt \ltimes \Delta$.
		\item Let $\C$ be any category. The involution $\daleth$ of $\Delta$ induces an
			involution on the category $\C_{\Delta}$ of simplicial objects in $\C$. We
			obtain a semidirect product $\Zt \ltimes \C_{\Delta}$.
	\end{enumerate}
\end{exas}

We have an adjunction 
\[
		\FC: \Set_{\Delta} \longleftrightarrow \Cat: \N
\]
where $\FC$ associates to a simplicial set $K$ the free category $\FC(K)$ generated by $K$ and $\N$ takes a
small category $\C$ to its nerve $\N(\C)$. Note that this adjunction is compatible with the
involutions $\daleth$ and $\tau$ so that we have the following consequence.

\begin{prop} We have an adjunction
\[
	\xymatrix@C=2ex{
		\FC: \Zt \ltimes \Set_{\Delta} \ar[dr]\ar@{<->}[rr]&  &\ar[dl] \Zt \ltimes \Cat: \N\\
		 & \B \Zt &  
	}
\]
of categories with parity.
\end{prop}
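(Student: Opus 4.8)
The plan is to promote the classical adjunction $\FC\dashv\N$ to the two semidirect products by checking that it is strictly equivariant for the $\Zt$-actions used to form them --- that by $\daleth$ on $\Set_\Delta$, and that by $(-)^{\op}$ on $\Cat$ --- and then to transport it along the Grothendieck construction $(-)\mapsto\Zt\ltimes(-)$.

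First I would record the equivariance of the two functors. Passing to the larger category $\mathbf\Delta$ of nonempty finite ordinals, on which $\daleth$ is the opposite-ordinal functor, one has the strict identity $\N(\C^{\op}) = \N(\C)^{\daleth}$, since an order-preserving map $(I,\le)\to\C^{\op}$ is literally the same datum as an order-preserving map $(I,\ge)\to\C$. Dually, from the standard presentation of the free category one reads off $\FC(K^{\daleth}) = \FC(K)^{\op}$: both sides have objects $K_0$ and generating arrows $K_1$, while passing from $K$ to $K^{\daleth}$ interchanges the two face maps on $K_1$ and reverses the composition relations coming from $K_2$, which is exactly what forming the opposite category does. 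So $\FC$ and $\N$ are strictly $\Zt$-equivariant, and one then checks that the unit $\eta$ and counit $\eps$ of $\FC\dashv\N$ are equivariant natural transformations (e.g.\ $\eta_{K^{\daleth}} = (\eta_K)^{\daleth}$ under the identification $\N\FC(K^{\daleth}) = (\N\FC K)^{\daleth}$ just obtained), which is immediate since $\eta$ and $\eps$ are built from identities on objects and generators. Equivalently, $\FC\dashv\N$ underlies an adjunction in the $2$-category $\Fun(\B\Zt,\Cat)$ of $\Zt$-equivariant categories.

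The lift is then formal. Using the semidirect-product description of morphisms from Definition~\ref{defi:semidirect}, define $\FC_{\ltimes}\colon\Zt\ltimes\Set_\Delta\to\Zt\ltimes\Cat$ by $K\mapsto\FC(K)$ on objects and $(g,\phi)\mapsto(g,\FC(\phi))$ on morphisms --- legitimate because equivariance identifies $\FC(g.K)$ with $g.\FC(K)$, so that $\FC(\phi)\colon\FC(g.K)\to\FC(L)$ reads as a morphism $g.\FC(K)\to\FC(L)$ --- and likewise $\N_{\ltimes}(g,\psi)=(g,\N(\psi))$; functoriality of both again reduces to strict equivariance. Both commute with the projections to $\B\Zt$, i.e.\ are parity-preserving. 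Take $\eta^{\ltimes}_K = (0,\eta_K)$ and $\eps^{\ltimes}_\C = (0,\eps_\C)$; these are even morphisms, hence legitimate $2$-cells over $\B\Zt$, their naturality in the semidirect products follows from the naturality and equivariance of $\eta$ and $\eps$, and the two triangle identities for $\FC_\ltimes\dashv\N_\ltimes$ are those for $\FC\dashv\N$ read off componentwise. This yields the asserted adjunction of categories with parity; conceptually it is nothing but the statement that the Grothendieck construction is a $2$-functor $\Fun(\B\Zt,\Cat)\to\Cat/\B\Zt$, so it carries the adjunction of the previous paragraph to an adjunction over $\B\Zt$.

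I do not expect a genuine obstacle: as the sentence preceding the statement already signals, this is a ``compatibility is inherited by a functorial construction'' result. The only points requiring care are (i) arranging $\N(\C^{\op}) = \N(\C)^{\daleth}$ as an honest equality, which is why one works with $\mathbf\Delta$ rather than the skeletal $\Delta$; and (ii) if one prefers to retain only the canonical isomorphism $\FC(K^{\daleth})\cong\FC(K)^{\op}$ rather than an equality, verifying that it is the mate of the $\N$-side identity --- equivalently, that it squares to the identity --- so that the triangle identities still collapse. Both are short verifications with no real content.
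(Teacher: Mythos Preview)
Your proposal is correct and follows exactly the approach the paper has in mind: the paper does not give a separate proof but simply states the proposition as an immediate consequence of the sentence preceding it, namely that the classical adjunction $\FC\dashv\N$ is compatible with the involutions $\daleth$ and $(-)^{\op}$. Your write-up spells out precisely what that compatibility means and how it transports through the semidirect-product (Grothendieck) construction, which is all there is to it.
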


Given a group $G$ and a category $\D$, both equipped with parity, we define a {\em twisted 
action} of $G$ on an object $x$ of $\D$ to be a parity preserving homomorphism $G \to
\Aut_{\D}(x,x)$ or, in other words, a functor
\[
	\xymatrix@C=1ex{ \B G \ar[rr]\ar[dr] & & \D\ar[dl]\\
	& \BZt & }
\]
over $\BZt$.

\begin{exas} \begin{enumerate} 
	\item From Example \ref{exas:semi}(1), we obtain the concept of a twisted group
		actions on a $\k$-linear category. Explicitly, this means that even group elements act as 
		covariant functors while odd group elements act via contravariant functors. As a special case, given
		by a $\k$-linear category with one object, we obtain the concept of a twisted group action on an
		associative $\k$-algebra. 
	\item Example \ref{exas:semi}(3) provides us with the concept of a twisted group action on a
		simplicial set (or more generally simplicial object). Here, even group elements act
		in an the usual orientation preserving way: all face and degeneracy relations are
		respected. Odd group elements act in an orientation reversing way so that all face
		and degeneracy relations are reversed according to the involution $\daleth: \Delta \to
		\Delta$.
	\end{enumerate}
\end{exas}

\begin{prop}\label{prop:main} Let $\DG$ be a semiconstant crossed simplicial group. Then, for any category $\C$, 
	we have a natural equivalence of categories
	\[
		\C_{\DG} \overset{\cong}{\lra} \Gen_0-\C_{\Delta}, \; K \mapsto K_{| \Delta} 
	\]
	where 
	\[
		\Gen_0-\C_{\Delta} = \Fun_{\BZt}(\B \Gen_0, \Zt \ltimes \C_{\Delta})
	\]
	denotes the category of simplicial objects in $\C$ equipped with a twisted
	group action of $\Gen_0 \to \Aut(\Delta) \cong \Zt$.
\end{prop}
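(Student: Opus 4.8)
The plan is to unwind both sides of the claimed equivalence and exhibit mutually inverse functors, using Proposition \ref{prop:semi} to replace $\DG$ by the concrete semidirect product $\Gen_0 \ltimes \Delta$. Since $\DG \cong \Gen_0 \ltimes \Delta$ (canonically, by Proposition \ref{prop:semi}), a functor $K: \DG^{\op} \to \C$ is the same datum as a functor $(\Gen_0 \ltimes \Delta)^{\op} \to \C$. The first step is therefore to understand $(\Gen_0 \ltimes \Delta)^{\op}$: by the Grothendieck-construction description in Definition \ref{defi:semidirect} and the preceding remark, a functor out of a semidirect product $G \ltimes \C$ into $\D$ is the same as a functor out of $\C$ together with compatible $G$-equivariance data. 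Concretely, restricting $K$ along the embedding $\Delta \hookrightarrow \DG$ (equivalently $\Delta \hookrightarrow \Gen_0 \ltimes \Delta$, $\phi \mapsto (1,\phi)$) yields a simplicial object $K_{|\Delta}$ in $\C$, and the generators $(g,\id): [n] \to [n]$ for $g \in \Gen_0$ supply, via $K$, automorphisms of $K_{|\Delta}$ that by the composition law of Definition \ref{defi:semidirect} assemble into a $\Gen_0$-action.

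**Next I would** identify exactly what structure this $\Gen_0$-action carries. The key point is the commuting square \eqref{eq:semiconstant}: for $g \in \Gen_0$ and $\phi: [m] \to [n]$ in $\Delta$ we have $\phi \circ (\text{left }g) = (\text{right }g) \circ (g^*\phi)$ in $\DG$, so applying $K^{\op}$ converts this into the statement that the automorphism $K(g)$ intertwines $K(\phi)$ with $K(g^*\phi)$. In other words, the action of $g$ on the simplicial object $K_{|\Delta}$ is \emph{not} simplicial but twisted by the automorphism $\phi \mapsto g^*\phi$ of $\Delta$. By the Remark following Proposition \ref{prop:semi} (or directly, since $\Aut_{\Cat}(\Delta) \cong \Zt$ via $\daleth$), this automorphism of $\Delta$ is $\id$ if $g$ is even and $\daleth$ if $g$ is odd, where the parity of $g$ is recorded by the canonical homomorphism $\Gen_0 \to \Zt$ coming from the $\DW$-type of $\DG$. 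Thus the data extracted from $K$ is precisely a functor $\B\Gen_0 \to \Zt \ltimes \C_\Delta$ over $\B\Zt$, i.e., an object of $\Gen_0\text{-}\C_\Delta = \Fun_{\BZt}(\B\Gen_0, \Zt \ltimes \C_\Delta)$, and this assignment is plainly functorial in $K$, extending morphisms of $\DG$-objects to morphisms of twisted simplicial objects.

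**For the inverse,** given a twisted $\Gen_0$-action on a simplicial object $Y$, I would reconstruct $K$ by the canonical factorization property of Definition \ref{defi:csg}: every morphism $u: [m] \to [n]$ in $\DG$ factors uniquely as $i(\phi) \circ g$ with $\phi \in \Delta$ and $g \in \Gen_m$; setting $K(u) = Y(g)^{\op} \circ Y(\phi)^{\op}$ (adjusting for $\op$-variance) and using the semiconstant identification $\omega_m^*: \Gen_0 \xrightarrow{\sim} \Gen_m$ to interpret $g$ as an element of $\Gen_0$, one checks that $K$ is a well-defined functor — the verification that $K$ respects composition is exactly a rewriting of the cocycle/compatibility identities in Proposition \ref{prop:skeleton}(c), which under the semiconstant hypothesis reduce to the twisted-equivariance axiom for $Y$. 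These two constructions are visibly mutually inverse: restricting the reconstructed $K$ to $\Delta$ gives back $Y$ with its action, and conversely the factorization $u = i(\phi)\circ g$ shows any $K$ is determined by $K_{|\Delta}$ together with the $\Gen_0$-action.

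**The main obstacle** is the bookkeeping of variances and the twist: one must be careful that $\C_\DG$ means $\Fun(\DG^{\op},\C)$, so all the squares \eqref{eq:can-square}, \eqref{eq:semiconstant} get their arrows reversed under $K$, and the twisting automorphism $\daleth$ of $\Delta$ appears on the correct side; getting the coherence of the $\Gen_0$-action right — that $K(gh) = K(g) \circ (\text{twisted } K(h))$ rather than $K(g) \circ K(h)$ — is where the first identity of Proposition \ref{prop:skeleton}(c), $\phi^*(g\circ h) = \phi^*g \circ (g^*\phi)^*h$, is genuinely used. Everything else is formal unwinding of the Grothendieck construction, so I would present the argument by first stating the equivalence $\C_{\Gen_0 \ltimes \Delta} \cong \Gen_0\text{-}\C_\Delta$ as an instance of the universal property of the semidirect product (functors out of a Grothendieck construction $=$ equivariant functors), and then invoke Proposition \ref{prop:semi} to conclude.
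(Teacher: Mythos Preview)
Your proposal is correct and follows exactly the approach the paper has in mind: the paper's own proof is the single line ``This follows immediately from unravelling the definitions,'' and what you have written is precisely that unravelling, carried out via the identification $\DG \cong \Gen_0 \ltimes \Delta$ of Proposition~\ref{prop:semi} together with the canonical factorization. Your care with the twist via \eqref{eq:semiconstant} and the variance bookkeeping is exactly the content the paper suppresses.
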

\begin{proof} This follows immediately from unravelling the definitions.
\end{proof}

\newpage

\subsection{Planar crossed simplicial groups}\label{sub:cross-examples}
 
In this section, we introduce a certain class of crossed simplicial groups called {\em planar
crossed simplicial groups}. In terms of the type classification of Table \ref{table:types}, planar
crossed simplicial groups can be of cyclic or dihedral type. They correspond, very precisely, to Lie
groups that appear as structure groups of surfaces. We start by listing them all in detail. 

\begin{exa}\label{ex:cyclic}
 	The {\bf cyclic category} $\Lambda$ has objects $\cn$, $n \ge 0$, while the set of
	morphisms from $\cm$ to $\cn$ can be described as follows. Let $C_n$ denote the topological
	space given by the unit circle in $\CC$ equipped with the subset of marked points $\{0, 1,
	\dots, n\}$, embedded via the map $k \mapsto \exp(2\pi i k /(n+1))$. A morphism $f: \cn \to
	\cm$ in $\Lambda$ is a homotopy class of monotone maps $C_m \to C_n$ of degree $1$ such
	that $f(\{0,1,\dots,m\}) \subset \{0,1,\dots,n\}$. The category $\Delta$ is contained in
	$\Lambda$ given by restricting to those morphisms $f: \cn \to \cm$ such that any homotopy
	inverse of $f$, relative to the marked points, maps the oriented arc between $m$ and $0$ on
	$C_m$ into the arc between $n$ and $0$ on $C_n$.
	The family of groups $\Gen$ associated to $\Lambda$ via \eqref{eq:crossed} is the family of cyclic
	groups $\{\Gen_n=\ZZ/(n+1)\}$.
\end{exa}

\begin{exa}\label{ex:dihedral}
 The {\bf dihedral category} $\Xi$ has objects $\dn$, $n \ge 0$. 
	Morphisms from $\dm$ to $\dn$ are homotopy classes of monotone 
	maps $C_m \to C_n$ of degree  $\pm 1$  such that $f(\{0,1,\dots,m\}) \subset
	\{0,1,\dots,n\}$. The category $\Xi$ naturally contains $\Lambda$ and hence also $\Delta$.
	The family of groups $\Gen$ associated to $\Xi$ via \eqref{eq:crossed} is the family of dihedral 
	groups
	  \[
	 \Gen_n = D_{n+1} = \langle \omega, \tau |\, \omega ^2=1, \tau^{n+1}=1, 
	 \omega\tau\omega^{-1} = \tau^{-1}\rangle 
	 \]	
	 with $|D_{n+1}| = 2(n+1)$.
\end{exa}
	  
\begin{exa} The {\bf paracyclic category} $\Lambda_{\infty}$ has objects $\pn$, $n \ge 0$. 
	Morphisms from $\ppm$ to $\pn$  are maps $f: \ZZ \to \ZZ$ which
	preserve the standard linear order and satisfy, for every $l \in \ZZ$, the condition $f(l +
	m+1) = f(l) + n + 1$. The category $\Delta$ can be found in $\Lambda_{\infty}$ by
	considering only morphisms $f:\ppm \to \pn$ such that 
	\[
	 f(\{0,1,\dots, m\}) \subset 
	\{0,1,\dots,n\}.
	\] 
	Construction \eqref{eq:crossed} yields the constant family
	of infinite cyclic groups $\{\Gen_n=\ZZ\}$. While this category was introduced in
	 \cite{fiedorowicz-loday}, we borrow
	the terminology {\em paracyclic} from \cite{getzler-jones}.
\end{exa}

\begin{exa}\label{ex:para-dihedral} 
The {\bf paradihedral category} $\Xi_\infty$ has objects $\widehat n$, $n\geq 0$.
Morphisms from $\widehat m$ to $\widehat n$  are maps $f: \ZZ \to \ZZ$ which either preserve or reverse
	 the standard linear order and satisfy, for every $l \in \ZZ$, the condition 
	 \[
	 f(l +m+1) = \begin{cases} 
	 f(l) + n + 1, \text{ if $f$ preserves the order}, \\
	 f(l)-n-1,\text { if $f$ reverses the order}.
	 \end{cases}
	\]
	Thus $\Xi_\infty\supset \Lambda_\infty\supset\Delta$. 
	Construction \eqref{eq:crossed} yields the constant family
	of infinite dihedral groups 
	  \[
	 \Gen_n = D_{\infty} = \langle \omega, \tau |\, \omega ^2=1, \omega\tau\omega^{-1} = \tau^{-1}\rangle. 
	 \]	
\end{exa}

For $N\geq 1$, we define the functor
\[
\on{sd}_N: \Delta\lra\Delta, \quad [n-1] \mapsto [Nn-1]
\]
which takes a monotone map $\phi:[m] \to [n]$ to the $N$-fold concatenation of $\phi$ with itself.
Given a simplicial set $X$, the 
{\em $N$-fold (edgewise) subdivision} of $X$ is
the simplicial set $\on{sd}^*_N(X)$
obtained by precomposing $X:\Delta^\op\to\Set$ with $\on{sd}_N$.
Geometrically, $\on{sd}^*_N(X)$ is obtained from $X$ by subdividing each simplex into several simplices
such that
\begin{enumerate}
\item Each edge is subdivided into $N$ intervals by introducing $N-1$ intermediate vertices. 

\item No other new vertices  inside simplices of dimension $\geq 2$ are introduced.
\end{enumerate}
In particular, the geometric realizations of $X$ and $\on{sd}_N^*X$ are canonically homeomorphic.
See  \cite[\S 1]{bokstedt-et-al} \cite[E.6.4.3]{loday} for more details.
 
\begin{exa} 
 \label{ex:subdivided} The {\bf $N$-subdivided categories $\Lambda_N, \Xi_N$.}
 Given a crossed simplicial group $\Delta\Gen$ with the associated simplicial set $\Gen$,
 one can form the simplicial set $\on{sd}_N^*\Gen$ and ask whether it can be
 completed to a new crossed simplicial group $\Delta\on{sd}_N^*\Gen$. 
 It was observed  in \cite[Ex.7]{fiedorowicz-loday} that this is indeed so
 for $\Delta\Gen$ equal to $\Lambda$ and $\Xi$.
 The corresponding crossed simplicial groups will be denoted $\Lambda_N, \Xi_N$. 
 They can be defined explicitly as follows.
 
 (a) The {\bf $N$-cyclic category} $\Lambda_{N}$ has objects $\kcn$, $n \ge 0$. A
	morphism from $\kcm$ to $\kcn$ is  an equivalence class of maps 
	$f: \ZZ \to \ZZ$ which preserve the standard linear order and satisfy, for every $l \in \ZZ$, the condition $f(l +
	m+1) = f(l) + n + 1$. A pair of maps $f,g$ are considered equivalent if there exists an integer $r$ such that
	$f - g = rN(n+1)$. The category $\Delta$ can be found in $\Lambda_{N}$ by
	considering only those morphisms $f:\kcm \to \kcn$ such that 
	\[
	f(\{0,1,\dots, m\}) \subset \{0,1,\dots,n\}.
	\]
	Thus $\Gen_n= \{\ZZ/N(n+1)\ZZ\}$.  
	 
	A description of $\Lambda_N$ which is more in line with Example \ref{ex:cyclic} can be obtained
	as follows. Fix an $N$-fold cover $\widetilde{C} \to C$ of the unit circle in $\CC$. Then a
	morphism in $\Lambda_N$ between $\kcm$ and $\kcn$ can be described as a homotopy class of
	monotone maps $C_m$ to $C_n$ of degree $1$, preserving the marked points, together with a lift to $\widetilde{C}$.
	
	(b) The {\bf $N$-dihedral category} $\Xi_N$ is obtained by modifying the definition of $\Lambda_N$, 
	 allowing $f$ to either preserve or
	reverse the standard linear order and imposing the condition on $f(l+m+1)$ as in Example 
	\ref{ex:para-dihedral}. We have a topological description analogous to the one for the
	$N$-cyclic category.
\end{exa}

\begin{exa}\label{ex:quaternionic}
The {\bf quaternionic category} $\nabla$ has objects $\check n$, $n\geq 0$.
To describe morphisms, we first introduce the {\em quaternionic groups}
\[
   Q_n  \,\,=\,\,  \langle w, \tau |\, w^2=\tau^n , \tau^{2n}=w^4=1, w\tau w^{-1} = \tau^{-1}\rangle,
   \quad |Q_n|=4n.
\]
    Here $n\geq 1$ is an integer.  Thus $Q_1=\ZZ/4$. Let $\HH$  be the skew field of quaternions.
    For any $r$, we denote by   $\mu_r \subset \CC^* \subset \HH^*$ the group of $r$th roots of 1.
    Then $Q_n$, $n\geq 2$, can be identified with the subgroup in $\HH^*$ generated by $\mu_{2n}$
    and $j$. In other words,   $Q_n$ is, for $n\geq 2$, a finite subgroup in $SU_2$ of type $D$ in
    the standard ADE classification (and in that context is also sometimes referred to, confusingly,
    as the ``dihedral group").  Another  traditional name for $Q_n$ is the {``dicyclic group"}
    \cite[\S 7.2]{coxeter}.  We have the central extension 
    \begin{equation}\label{eq:p_n} 1\to
	    \ZZ/2\lra Q_n\buildrel \pi_n \over \lra D_n\to 1, \quad \pi_n(\omega) = \omega
    \end{equation} 
    For $n\geq 3$, $D_n$ embeds into the group of automorphisms of $\mu_n\simeq
    \ZZ/n$, while $Q_n\subset\HH^*$ normalizes $\mu_n$, so $\pi_n$ is obtained by looking at the
    conjugation action of $Q_n$.

 The morphisms of $\nabla$ are defined to be generated by the morphisms in $\Delta$
 and by elements of the groups $\Aut_\nabla(\check n):=Q_{n+1}$ subject to the relations
 spelled out in \cite{loday}, Prop. 6.3.4(e) (proof). Thus, for $\nabla$ we have
 \[
 \Gen_n = Q_{n+1}, \quad |\Gen_n|= 4(n+1). 
 \]
 \end{exa}
 
 \begin{exa}\label{ex:N-quat}
  The {\bf $N$-quaternionic category} $\nabla_N$ is the crossed simplicial group
  obtained from $\nabla$ by $N$-fold subdivision, similarly to $\Lambda_N$
  and $\Xi_N$. In other words, the simplicial set associated to $\nabla_N$
  is $(Q_{N(n+1)})_{n\geq 0} = \on{sd}_N^*\bigl( (Q_{n+1})_{n\geq 0}\bigr)$,
  see \cite{fiedorowicz-loday}, Ex. 7.

\end{exa}

\begin{defi}   The crossed simplicial groups from Examples
\ref{ex:cyclic}-\ref{ex:N-quat} will be called the {\em planar crossed simplicial groups}. 
\end{defi}

Let $\Delta \Gen$ be a crossed simplicial group.  A  functor $X: (\Delta \Gen)^{\op} \to \C$
with values in a category $\C$ will be called a $\DG$-{\em object} of $\C$. 
  Via the embedding $i: \Delta \to \Delta \Gen$, we can describe $X$ as
the simplicial object $i^* X$ equipped with additional structure: For every $n \ge 0$, the object
$X_n$ carries an action of the group $(\Gen_n)^{\op}$, compatible with the simplicial structure according to the
relations in $\Delta \Gen$. In the case when $\Delta\Gen$ is one of the planar crossed simplicial
groups $\Lambda$,$\Xi$, $\Lambda_\infty$, etc, we will speak about {\em cyclic, dihedral,
paracyclic, etc,} objects in $\C$. 

For example, the simplicial set $\Gen$ extends naturally to a $\DG$-object, as it follows from
Example \ref{ex:extreme}.

\begin{exa} A paracyclic object $X: (\Lambda_{\infty})^{\op} \to \C$ can be described as a
simplicial object $i^* X$ equipped with automorphisms $t_n: X_n\to X_n$, $n \ge 0$, satisfying the relations 
\begin{align*}
	\partial_i t_n & = \begin{cases}
	\partial_n & \text{for $i = 0$,}\\
	t_{n-1}\partial_{i-1} & \text{for $1 \le i \le n$,}
	\end{cases} & 
	s_i t_n & = \begin{cases}
	t_{n+1}^2 s_n & \text{for $i = 0$,}\\ 
	t_{n+1}s_{i-1} & \text{for $1 \le i \le n$.}
	\end{cases} 
\end{align*}
An $N$-cyclic object admits an identical description with the additional condition that the automorphism
$t_n$, $n \ge 0$, has order $N(n+1)$. In particular, for $N=1$ we obtain an explicit description of cyclic objects.
\end{exa}

\subsection{Relation to planar Lie groups}

\subsubsection{Connective coverings and 2-groups}
In this paper, all topological groups will be assumed to have the homotopy type of a CW-complex. 
Let $G$ be a topological group. We denote by $G_e\subset G$ the connected component of the identity. 
The two structures on $G$ give rise to the following algebraic data:
\begin{enumerate}
\item[(1)] The possibly nonabelian group $P=\pi_0(G)$.

\item[(2)] The abelian group $A=\pi_1(G_e, e)$ equipped with a natural $P$-action induced by conjugation in $G$.

\item[(3)] The cohomology class $\gamma=\gamma_G\in H^3(P,A)$ defined as follows. For every connected
component $a\in P=\pi_0(G)$, we choose a representative $g_a\in G$. For every pair $a,b\in P$,
we choose a path $\xi_{a,b}$ joining $g_ag_b$ and $g_{ab}$. We denote by $\xi_{a,b}^{-1}$ the same path
run in the opposite direction. Then, for every triple $a,b,c\in P$, we have a loop
in the component $abc$:
\[
\xymatrix{
g_a g_b g_c \ar[r]^{\xi_{a,b}\cdot g_c}& g_{ab}g_c
\ar[d]^{\xi_{ab, c}}
\\
g_a g_{bc}\ar[u]^{g_a\cdot \xi_{b,c}^{-1}}&\ar[l]^{\xi_{a,bc}^{-1}} g_{abc}
}
\]
By multiplying this loop pointwise on the left with $g_{abc}^{-1}$, we obtain a loop $L(a,b,c)$ in
$G_e$ based at $e$. We denote by $\gamma(a,b,c) \in A$ the class of $L(a,b,c)$. 
The collection $\{ \gamma(a,b,c) \}$ forms a 3-cocycle of $P$ with coefficients in $A$ 
whose cohomology class is independent on the choices.
\end{enumerate}

\begin{defi}
A morphism $p:\widetilde G\to G$ of topological groups is called a {\em connective covering},
if $p$ is a covering of its image, and $p^{-1}(G_e)$ is connected, that is,
coincides with $\widetilde G_e$. A {\em proper covering} is a surjective connective covering. 
\end{defi}

Note that the condition $p^{-1}(G_e)=\widetilde G_e$ is equivalent to saying that the induced homomorphism
$p_*:\pi_0(\widetilde G)\to\pi_0(G)$ is injective. 

We denote by $\on{Con}(G)$ the category of proper coverings of $G$, with morphisms being morphisms
of topological groups commuting with the projections to $G$. Any $\widetilde G\in\on{Con}(G)$ gives rise to subgroups
\begin{equation}
	\label{eq:tilde-P-A}
	\begin{gathered}
		\widetilde P \,\,=\,\,\pi_0(\widetilde G)\,\, \subset\,\, P\,\,=\,\,\pi_0(G),
		\\
		\widetilde A \,\,=\,\, \pi_1(\widetilde G_e,e)\,\, \subset\,\, A\,\,=\,\,\pi_1(G_e,e),
	\end{gathered}
\end{equation}
with $\widetilde A$ being $P$-invariant. 

The structure of connective coverings of a given $G$ is classically known to depend on the algebraic
data $(P,A,\gamma)$ above. We refer to \cite{taylor, brown-mucuk} for detailed background and formulate
here the following summary result.

\begin{prop}\label{prop:summary-covers}
\begin{itemize}
\item[(a)] Suppose we are given subgroups $\widetilde P\subset P$, $\widetilde A\subset A$ so that $\widetilde A$ 
is $P$-invariant. For the existence of a connective covering $\widetilde G$ realizing $\widetilde P$ and $\widetilde A$
as in \eqref{eq:tilde-P-A}, it is necessary and sufficient that the image of $\gamma_G$ in $H^3(\widetilde P, A/\widetilde A)$
be $0$.

\item[(b)] In particular, $G$ has a ``universal covering" (a proper covering $\widetilde G$ with $\widetilde G_e$
simply connected), if and only if $\gamma_G=0$. 

\item[(c)] If $f: G_1\to G_2$ is a morphism of topological groups which is a homotopy equivalence of topological
spaces, then pullback under $f$ gives an equivalence of categories
\[
f^*: \on{Con}(G_2)\lra\on{Con}(G_1). 
\]
\end{itemize}
\end{prop}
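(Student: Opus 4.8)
The plan is to reduce all three statements to standard obstruction theory for the homotopy $2$-type of $G$, i.e.\ for the triple $(P,A,\gamma_G)$, following \cite{taylor, brown-mucuk}. The key preliminary step is the dictionary: the classifying space $BG$ is a connected pointed CW-space with $\pi_1(BG)\cong P$, $\pi_2(BG)\cong A$ carrying the given $P$-action, and first Postnikov invariant $\gamma_G\in H^3(P,A)$, and the functors $B$ and $\Omega$ (with simplicial-group rigidification) identify the relevant homotopy categories of topological groups and of pointed connected spaces. Under this dictionary I would identify a connective covering $p\colon\widetilde G\to G$ realizing subgroups $\widetilde P\subset P$ and $\widetilde A\subset A$ with a map $q\colon B\widetilde G\to BG$ that is an isomorphism on $\pi_1$ onto $\widetilde P$, injective on $\pi_2$ with image $\widetilde A$, and an isomorphism on $\pi_n$ for $n\geq 3$; equivalently, $B\widetilde G$ is the cover of $BG$ associated to $\widetilde P\subset P$, further modified at the second Postnikov stage so as to replace $A$ by $\widetilde A$. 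Turning this correspondence into an equivalence of categories, natural in $G$, is exactly the technical content of \cite{taylor, brown-mucuk}, which I would invoke rather than reprove; this, rather than the arguments below, is the main obstacle, the delicate points being the ambiguity of the topological group structure on a fixed covering space (governed by $H^2(\widetilde P,A/\widetilde A)$) and the rigidification of loop spaces into strict topological groups.

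Granting the dictionary, part (a) becomes a short cohomological computation. Let $G_{\widetilde P}\subset G$ be the subgroup consisting of the components lying in $\widetilde P$; by naturality of the class $\gamma$ under the inclusion $G_{\widetilde P}\hookrightarrow G$, its Postnikov invariant is the restriction $k\in H^3(\widetilde P,A)$ of $\gamma_G$ along $\widetilde P\hookrightarrow P$. A connective covering realizing $(\widetilde P,\widetilde A)$ corresponds, via $q$, to a two-stage Postnikov system with $\pi_1=\widetilde P$, $\pi_2=\widetilde A$ together with a map to $BG_{\widetilde P}$ that induces $\widetilde A\hookrightarrow A$ on $\pi_2$; by naturality of Postnikov invariants such a system exists if and only if $k$ lies in the image of the map $H^3(\widetilde P,\widetilde A)\to H^3(\widetilde P,A)$ induced by $\widetilde A\hookrightarrow A$. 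Necessity is naturality applied to $q$, and sufficiency is the construction of $B\widetilde G$ as the homotopy fibre product of $BG_{\widetilde P}\to\tau_{\leq 2}BG_{\widetilde P}$ with the two-stage system carrying a chosen lift of $k$; then $\widetilde G:=\Omega B\widetilde G$ with its evident map to $G$ is checked directly to be a covering of $G_{\widetilde P}$ with $p^{-1}(G_e)=\widetilde G_e$ connected. Finally, since $\widetilde A$ is $P$-invariant (hence $\widetilde P$-invariant), the quotient $A/\widetilde A$ is a $\widetilde P$-module and the long exact sequence of $0\to\widetilde A\to A\to A/\widetilde A\to 0$,
\[
\cdots\longrightarrow H^3(\widetilde P,\widetilde A)\longrightarrow H^3(\widetilde P,A)\longrightarrow H^3(\widetilde P,A/\widetilde A)\longrightarrow\cdots
\]
identifies that image with the kernel of $H^3(\widetilde P,A)\to H^3(\widetilde P,A/\widetilde A)$; hence the criterion becomes exactly that the image of $\gamma_G$ in $H^3(\widetilde P,A/\widetilde A)$ vanishes, which proves (a).

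Part (b) is the special case $\widetilde P=P$, $\widetilde A=0$ of part (a): a universal covering is precisely a connective covering realizing $(P,0)$, and since $H^3(P,A/0)=H^3(P,A)$ its existence is equivalent to $\gamma_G=0$. For part (c), a homotopy equivalence $f\colon G_1\to G_2$ of topological groups induces a weak homotopy equivalence $Bf\colon BG_1\to BG_2$; pullback along a weak equivalence is an equivalence between the homotopy categories of objects mapping into $BG_1$, respectively $BG_2$, hence between the full subcategories cut out by the injectivity conditions on $\pi_1$ and $\pi_2$ from the first paragraph, and under the dictionary this pullback functor is carried precisely to $f^*\colon\on{Con}(G_2)\to\on{Con}(G_1)$. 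Here one also uses that $f^{-1}(G_{2,e})=G_{1,e}$, as $f$ is bijective on $\pi_0$, so that $f^*$ indeed preserves connectedness of identity components. Therefore $f^*$ is an equivalence of categories, completing the proof.
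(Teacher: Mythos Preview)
Your proposal is correct. The paper itself does not prove this proposition at all: it is stated as a ``summary result'' with the proof deferred entirely to the references \cite{taylor, brown-mucuk}, so any comparison of approaches is really a comparison with those sources rather than with the paper.

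That said, your sketch is essentially the argument underlying those references, phrased in the language of Postnikov towers and classifying spaces rather than in the more hands-on crossed-module language of \cite{brown-mucuk} or the extension-theoretic language of \cite{taylor}. Your construction of $B\widetilde G$ as the homotopy fibre product of $BG_{\widetilde P}\to\tau_{\le 2}BG_{\widetilde P}$ with a chosen two-stage system is the clean modern way to handle the higher homotopy of $G_e$ uniformly, and your use of the long exact sequence in $H^3(\widetilde P,-)$ to convert ``lies in the image of $H^3(\widetilde P,\widetilde A)$'' into ``dies in $H^3(\widetilde P,A/\widetilde A)$'' is exactly right. You are also appropriately candid that the genuine work---the equivalence between connective coverings of $G$ and the corresponding diagrams of spaces over $BG$, together with the bookkeeping of the $H^2(\widetilde P,A/\widetilde A)$-ambiguity in group structures---is what is being imported from the cited literature rather than reproved.
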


We note that algebraic data $(P,A,\gamma)$ as above appear in the classification of the following related
types of objects:
\begin{enumerate}
	\item {\bf Connected homotopy $2$-types,} i.e., homotopy types of connected CW-complexes $X$
with $\pi_{> 2}(X,x)=0$. In this context, $P=\pi_1(X,x)$, $A=\pi_2(X,x)$. 
A topological group $G$ produces such $X$ by forming the classifying space $BG$ and then 
killing $\pi_{> 2}$. 

\item {\bf 2-groups} (also known as {\em non-symmetric Picard categories}. By definition, a 2-group
is an essentially small monoidal category $({\bf G}, \otimes, {\bf 1})$ in which each object is invertible
(up to isomorphism) with respect to $\otimes$ and each morphism is invertible with respect to composition.
In this case
\[
P\,\,=\,\,\on{Pic}({\bf G}) \,\,=\,\, \bigl( \Ob({\bf G})/\on{iso}, \otimes\bigr), \quad A=\Aut_{\bf G}({\bf 1}). 
\]
A topological group $G$ produces a 2-group ${\bf G}=\Pi_1(G)$, the fundamental groupoid
with all $g\in G$ taken as base points, with $\otimes$ given by the product in $G$.
Conversely, a 2-group $\bf G$ has a realization (loop space of the classifying space) $|{\bf G}|$
which can be realized as a topological group with $\pi_0=G, \pi_1=A$.

\item {\bf Crossed modules,}
i.e.,  data consisting of a morphism of groups
\[
\bigl\{ K^{-1} \buildrel\partial \over\lra K^0\bigr\}
\]
and an action $\epsilon: K^0\to \Aut(K^{-1})$ satisfying the axioms
\[
\begin{gathered}
\epsilon(\partial(k_{-1}))(k'_{-1}) \,\,=\,\, k'_{-1} \cdot k_{-1} \cdot (k'_1)^{-1},\quad k_{-1}, k'_{-1} \in K^{-1},
\\
\partial(k_{-1})\cdot k_0 \cdot \partial(k_{-1})^{-1} \,\,=\,\, \partial (\epsilon(k_0)(k_{-1})),
\quad
k_0\in K^0, k_{-1}\in K^{-1}.
\end{gathered}
\]
These axioms ensure that $\on{Im}(\partial)$ is a normal subgroup in $K^0$, so we have the
group $P=K^0/\on{Im}(\partial)$, and that $A=\Ker(\partial)$ lies in the center of $K^{-1}$,
in particular, $A$ is abelian (and $P$ acts on $A$ via $\epsilon$). 
A topological group $G$ defines a crossed module by taking $K^0=G$ and $K^{-1}=\widetilde{G}_e$
to be the universal covering of $G_e$ with base point $e$, see \cite{brown-mucuk} and references therein.
Note the similarity between the above axioms of a crossed module and the identities holding in
any crossed simplicial group (Proposition \ref{prop:skeleton}(c)). 
\end{enumerate}

 \subsubsection{Planar Lie groups}\label{subsub:plan-lie}
 
 Let $S^1$ be the standard unit circle in $\CC$, and  $\Homeo(S^1)$ be the group of all self-homeomorphisms
 of $S^1$ with the standard (compact-open) topology.
 We have morphisms of topological groups
 \[
 O(2) \,\,\hookrightarrow \,\, GL(2,\RR) \lra \Homeo(S^1)
 \]
 which are homotopy equivalences, Therefore, by Proposition \ref{prop:summary-covers}(c), the categories
 of connective coverings of these three groups are naturally identified. 
 
 Connective coverings of $O(2)$ will be called {\em (thin) planar Lie groups}. We will consider
 them as basic objects notation-wise and will denote them by standard Roman letters such as $G$.
 
 Connective coverings of $GL(2, \RR)$ will be called {\em thick planar Lie groups}. We will
 denote them by $G^\diamond$, where $G$ is a thin planar Lie group. For example,
 $SO(2)^\diamond = GL^+(2,\RR)$ is the subgroup of matrices with positive determinant. 
 
 Connective coverings of $\Homeo(S^1)$ will be called {\em planar homeomorphism groups}.
 We will denote them by $\Homeo^G(S^1)$, where $G$ is as above.

We recall the well-known classification of planar Lie groups which is an analog of the theory of
Pin groups  \cite{ABS} extending $O(d)$ by $\ZZ/2$ for any $d$. Remarkably, the groups in this
classification correspond very precisely to planar crossed simplicial groups.  We start with
describing the examples and formulating the correspondence. 
    
\begin{exas}\label{ex:planar}
    \begin{enumerate}[label=(\alph*)]
	    \item The group $SO(2)$ corresponds to the cyclic category $\Lambda$. 
	    \item The group $O(2)$ corresponds to the dihedral category $\Xi$. 
	    \item We denote by $\widetilde{SO}(2)\to SO(2)$ the universal covering of $SO(2)\simeq S^1$.
		It corresponds to the paracyclic category $\Lambda_\infty$.
	    \item We denote 
		   \[
		   \widetilde O(2) = (\ZZ/2)\ltimes \RR \lra O(2) = (\ZZ/2)\ltimes S^1
		   \]
		   the proper covering  of the semi-direct product induced from the universal covering $\RR\to S^1$.
		   It corresponds to the paradihedral category $\Xi_\infty$. 
	   \item Consider the group
		     \[
		  P \,\,=\,\,\bigl\{ q\in\HH^* \bigl| \,\, |q|=1, q S^1 q^{-1} \subset S^1 \bigr\}.
		  \]
		  Here $S^1$ is considered as the unit circle subgroup in $\CC^*\subset\HH^*$.
		   In other words, $P$ is the normalizer of a maximal torus in $SU(2)$. 
		   Like $O(2)$, the group $P$ has  $\pi_0=\ZZ/2$ and  $P_e= S^1$, but the
		   corresponding
		  extension of $\ZZ/2$ by $S^1$  is different from the one provided by $O(2)$. 
		  Similarly to \eqref{eq:p_n},  looking at the conjugation action gives a central extension
		   \[
		   1\to \ZZ/2\lra P \buildrel \pi \over \lra O(2)\to 1
		   \]
		   with $\pi$ a proper covering. This makes  $P$ into a planar Lie group.
		   It corresponds to the quaternionic category $\nabla$. 

	   \item Let $N\geq 1$. We denote the unique $N$-fold proper covering of $SO(2)$ by 
		   \[
		   1\to \ZZ/N \lra \on{Spin}_N(2)\buildrel \phi_N\over \lra SO(2) \to 1.
		   \]
		  As an abstract Lie group, $\on{Spin}_N(2)$ is identified with $SO(2)$, and $\phi_N$
		  is the $N$th power homomorphism. The group $\on{Spin}_N(2)$ corresponds to the
		  $N$-cyclic category $\Lambda_N$.

	    \item Let us represent elements of $O(2) = (\ZZ/2)\ltimes S^1$
		   as pairs $(\alpha, z)=\alpha\cdot z$ with $\alpha\in\ZZ/2$, $z\in S^1$, so that 
		  $\alpha z\alpha^{-1} = z^{-1}$.   Then the map
		     \[
		  f_N: O(2)\lra O(2), \quad (\alpha, z)\mapsto (\alpha, z^N).
		   \]
		   is a (continuous) homomorphism and, moreover, a proper covering with kernel $\ZZ/N$. 
		   We denote this covering
		   \[
		   1\to \ZZ/N \lra \on{Pin}^+_N(2) \buildrel p_N^+ \over\lra O(2)\to 1
		   \]
		   Thus $ \on{Pin}^+_N(2)=O(2)$ as an abstract Lie group, and $p_N^+=f_N$. 
		   The planar Lie group $\on{Pin}^+_N(2)$ corresponds to the $N$-dihedral category $\Xi_N$.
	   \item If $N=2M$ is even, then 
		    the composition $P\buildrel \pi \over\to O(2) \buildrel f_M\over\to O(2)$
		 is a proper covering with kernel $\ZZ/N$. We denote this covering
		  \[
		   1\to \ZZ/N \lra \on{Pin}^-_N(2) \buildrel p_N^- \over\lra O(2)\to 1
		   \]
		   Thus $ \on{Pin}^-_N(2)=Q_\infty$ as an abstract Lie group, and $p_N^-=f_M\pi$. 
		    The planar Lie group $\on{Pin}^-_{N}(2)$ corresponds to the $M$-quaternionic category
		    $\nabla_M$. 
\end{enumerate}
\end{exas}

      \begin{prop}  
      The groups  from Examples \ref{ex:planar}
      exhaust all planar Lie groups up to isomorphism. 
     \qed
    \end{prop}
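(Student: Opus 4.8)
The plan is to classify directly all connective coverings $p\colon \widetilde G \to O(2)$, using the classical fact that a connected covering space of a topological group, together with a chosen lift of the identity, carries a unique compatible group structure; hence connected coverings of a topological group $H$ are classified by the subgroups of $\pi_1(H)$. Throughout I may assume $p(\widetilde G_e)=SO(2)$ --- equivalently that $\widetilde G_e$ is a genuine covering of $SO(2)=O(2)_e$; the only alternative is $\widetilde G_e=\{1\}$, which forces $\widetilde G$ to be a discrete subgroup of $O(2)$ meeting $SO(2)$ trivially, hence $\{1\}$ or $\ZZ/2$, and these degenerate cases are excluded.

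First I would pin down the identity component. Since $SO(2)\cong S^1$ has $\pi_1(S^1)=\ZZ$, the connected covering group $\widetilde G_e$ is either $\RR$, the universal cover $\RR\to S^1$, which is $\widetilde{SO}(2)$; or $S^1$ with $\widetilde G_e\to SO(2)$ the $N$-th power map for a unique $N\geq 1$, which is $\on{Spin}_N(2)$ (so $\on{Spin}_1(2)=SO(2)$). Next, $\pi_0(\widetilde G)\hookrightarrow\pi_0(O(2))=\ZZ/2$; if this is trivial then $\widetilde G=\widetilde G_e$ and we already have one of $\widetilde{SO}(2)$ or $\on{Spin}_N(2)$. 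So I am reduced to the case $\pi_0(\widetilde G)=\ZZ/2$, where $p(\widetilde G)$ is a subgroup of $O(2)$ containing $SO(2)$ together with a non-identity-component element, hence $p$ is surjective --- a proper covering --- and $\widetilde G$ is an extension $1\to\widetilde G_e\to\widetilde G\to\ZZ/2\to 1$ lying over $1\to SO(2)\to O(2)\to\ZZ/2\to 1$.

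For that case I would fix a lift $g$ of the nontrivial element and note that conjugation by $g$ is a continuous automorphism of the abelian group $\widetilde G_e$ which descends (on $\widetilde G_e/\ker(p|_{\widetilde G_e})=SO(2)$) to conjugation by the reflection $p(g)$, i.e.\ to inversion; since the only such automorphism is $t\mapsto -t$ on $\RR$ and $z\mapsto z^{-1}$ on $S^1$, this fixes the conjugation action. As $\widetilde G_e$ is abelian, $g$ commutes with $g^2\in\widetilde G_e$, so $g^2$ is fixed by this inversion, forcing $g^2=0$ when $\widetilde G_e=\RR$ and $g^2\in\{1,-1\}$ when $\widetilde G_e=S^1$. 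A one-line computation (from $zg=gz^{-1}$) gives $(gz)^2=g^2$ for all $z\in\widetilde G_e$, so the ``square class'' $g^2$ is an invariant of the extension and, together with $\widetilde G_e$ and the inversion action, determines $\widetilde G$ as a topological group (and, up to an inner automorphism of $O(2)$, as a covering of $O(2)$). Finally, every element of $O(2)\setminus SO(2)$ is a reflection, so $p(g)^2=1$, i.e.\ $g^2\in\ker(p|_{\widetilde G_e})$; for $\widetilde G_e=\on{Spin}_N(2)$ this kernel is $\mu_N$, so $g^2=-1$ can occur only when $N$ is even. Matching the surviving possibilities with Examples~\ref{ex:planar}: $\widetilde G_e=\RR$ gives $\widetilde G=(\ZZ/2)\ltimes\RR=\widetilde O(2)$; $\widetilde G_e=\on{Spin}_N(2)$ with $g^2=1$ gives the semidirect product, realized as $\on{Pin}^+_N(2)$ (with $\on{Pin}^+_1(2)=O(2)$); and $\widetilde G_e=\on{Spin}_{2M}(2)$ with $g^2=-1$ gives the non-split extension $\on{Pin}^-_{2M}(2)$ (corresponding to the $M$-quaternionic category $\nabla_M$; for $M=1$ this is the group $P$ of Example~\ref{ex:planar}(e)). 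These are precisely the five families in the list, which proves the proposition.

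I expect the main obstacle to be exactly the $\pi_0(\widetilde G)=\ZZ/2$ bookkeeping: confirming that the extension carries no residual freedom beyond the square class $g^2$, and getting the parity constraint right --- that the non-split (``$\on{Pin}^-$'') option occurs exactly for even $N$, which hinges on the requirement that $p$ be a covering \emph{onto} $O(2)$ rather than onto a further quotient. (An alternative, less self-contained route to existence of the coverings is Proposition~\ref{prop:summary-covers}: $O(2)$ admits the universal covering $\widetilde O(2)$, so $\gamma_{O(2)}=0$ and every admissible pair $(\widetilde P,\widetilde A)$ is realizable; but the count of realizations still needs the square-class analysis above.) Everything else is a routine translation between these group-theoretic descriptions and the explicit models of Examples~\ref{ex:planar}.
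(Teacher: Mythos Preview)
Your argument is correct. The paper's own proof is a one-line citation: it observes that the question reduces to classifying proper coverings of $O(2)$ and refers to \cite[\S 8]{taylor} for that classification. You instead give a direct, self-contained argument: first pin down $\widetilde G_e$ via $\pi_1(SO(2))=\ZZ$, then analyse the extension by $\ZZ/2$ using the forced inversion action and the square-class invariant $g^2$. This is a genuinely different (and more elementary) route; it has the advantage that the reader need not consult an external reference, and it makes the parity constraint on $\on{Pin}^-$ transparent. Two small remarks: your caveat ``up to an inner automorphism of $O(2)$'' is unnecessary---replacing the lift $g$ by $gz_0$ moves $p(g)$ through all reflections, so the covering itself is already determined on the nose; and your exclusion of the $0$-dimensional cases $\widetilde G=\{1\},\ZZ/2$ is appropriate (the paper's list contains only $1$-dimensional groups, so these are implicitly outside the scope of the statement), though you might note that this is a convention rather than a consequence of the literal definition of connective covering.
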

    
    \noindent {\sl Proof:} The question reduces to classification of proper coverings of $O(2)$
    for which see, e.g.,  \cite[\S 8]{taylor}. \qed
    
    \vskip .2cm

   \begin{table}
  \renewcommand{\arraystretch}{2}
  \begin{tabular}{ p{3cm} | p{2cm}| p{3cm} | p{4cm} | p{3cm} }
  CS group $\Delta\Gen$ &Sequence of groups
  $\Gen_n$ &Lie group $G=|\Gen|$& Type of $G^\diamond$-structured surfaces& Extra data on associative algebras
  and categories \\
   \hline\hline
   Cyclic category $ \Lambda$   & $\ZZ/(n+1)$ &$SO(2) $ & Oriented surfaces& No data  \\ \hline
    Dihedral category $\Xi$  & $D_{n+1}$&$O(2)$  &Unoriented surfaces & Anti-automorphism
    of order 2
     \\ 
    \hline
    Paracyclic category $\Lambda_\infty$ & $\Gen_n=\ZZ$&$\widetilde{SO}(2)$
     &Framed surfaces & Automorphism
     \\ \hline
     Paradihedral category $\Xi_\infty$& $\Gen_n = D_\infty$ & $\widetilde O(2)$& Surfaces with framing
     on orientation cover & Automorphism $t$ and involution $\omega$ s.t. $\omega t\omega = t^{-1}$.
    \\
    \hline
    $N$-cyclic category $\Lambda_N$&$\ZZ/N(n+1)$ &$\on{Spin}(2)_N$
    &$N$-spin (oriented) surfaces&  Automorphism of order $N$ 
    \\ \hline
    $N$-dihedral category $\Xi_N$& $D_{N(n+1)}$ & $\on{Pin}^+_N(2)$ & Unoriented,  with $\on{Pin}^+_N(2)$-structure&
    Twisted action of $D_N$.
     \\ \hline
    $M$-quaternionic category $\nabla_M$ & $Q_{M(n+1)}$&$\on{Pin}^-_{2M}(2)$ & Unoriented,  with $\on{Pin}^-_{2M}(2)$-structure
    & Twisted action of $Q_M$
       \end{tabular}
       \caption{ 
       Planar crossed simplicial group  and associated structures.}\label{table:corr}
\end{table}

The correspondence between crossed simplicial groups and planar Lie groups are collected in the
first three columns of Table \ref{table:corr}. The fourth column means that a crossed simplicial
group of Lie type ``governs" the combinatorial topology of the corresponding class of structured
surfaces. This will be explained in more details in \S \ref {subsec:structuredgraphs}. The last column will be explained in \S
\ref{subsec:str-nerv} and
\ref {sec:str-hoch}.

We further arrange planar Lie groups and natural homomorphisms between them  on the left in  Table
\ref{table:galois} and observe that these homomorphisms are matched by natural functors between
corresponding crossed simplicial groups. Each homomorphism between Lie groups in the table is either
a surjection, in which case we label the arrow with the kernel of the surjection, or is an inclusion
of a normal subgroup with quotient $\ZZ/2$. Note that the group $\on{Pin}^-_{2M}(2)$, isomorphic to
$P\subset\HH^*$, does not admit a simply-connected proper covering \cite[\S 8]{taylor}. 
    
\vfill\eject
   
\begin{table}[h]
   \[
    \xymatrix{
   &\widetilde{O}(2)
   \ar[dd]_{ \ZZ}
    &
   \\
  \widetilde{SO}(2) \ar[ur]
   \ar[dd]_{ \ZZ}&&
   \\
   &\on{Pin}^+_N(2)
    \ar[dd]_{\ZZ/N}
&\on{Pin}^-_{2M}(2)
\ar[ddl]^{\ZZ/2M}
   \\
   \on{Spin}_N(2)
   \ar[ur]
   \ar[dd]_{\ZZ/N}
  &&
   \\
  & O(2) &\\
   SO(2)
   \ar[ur]&&
   }
\quad\quad\quad
   \xymatrix{
   &\Xi_\infty 
   \ar[dd]_{}\\
  \Lambda_\infty \ar[ur]_{}
   \ar[dd]_{}&&
   \\
   &\Xi_N 
    \ar[dd]_{}
&\nabla_M
\ar[ddl]^{}
   \\
   \Lambda_N
   \ar[ur]_{}
   \ar[dd]_{}
  &&
   \\
  & \Xi &\\
   \Lambda
   \ar[ur]_{}&&
   }
 \]
   \caption{Correspondence between planar crossed simplicial groups and planar Lie groups.}\label{table:galois}
\end{table}
    
The following statement materializes the correspondence and systematizes the results of several authors.
    
    \begin{thm}\label{thm:crossed-groups}
     \begin{enumerate}
    \item[(a)] Let $\Delta\Gen$ be any crossed simplicial group. Then the geometric realization $|\Gen|$
    of the simplicial set $\Gen$ has a natural group structure, making it a topological group.
    Further, the geometric realization $|\N (\Delta\Gen)|$ of the nerve of $\Delta\Gen$
    is homotopy equivalent to the classifying space $B |\Gen|$. 
    
    \item[(b)]  Let $\Delta\Gen$ be a  planar crossed simplicial group. Then:
    \begin{enumerate}
    \item[(b1)]  $|\Gen|=G$ is the
    planar Lie group corresponding to $\Delta\Gen$ in Table \ref{table:corr}. In particular, there
    is a covering $p: G\to O(2)$. 
    
    \item[(b2)] The group $\Gen_n$ is identified with the subgroup $p^{-1}(D_{n+1})$,
    where $D_{n+1}$ is embedded into $O(2)=(\ZZ/2)\ltimes S^1$ as
    $(\ZZ/2)\ltimes\mu_{n+1}$. 
   \end{enumerate}
    
    \item[(c)] Let $X:\Delta\Gen^\op\to\Set$ be any $\Delta\Gen$-set. Then $|i^*X|$, the geometric
    realization of $X$ considered as a simplicial set, has a natural $G$-action. 
    
    \item[(d)] The homotopy category of $G$-spaces can be identified with the localization 
    of  the category $\Fun(\Delta\Gen^\op, \Set)$ along morphisms which give homotopy equivalences
    of the induced simplicial sets. 
     
    \end{enumerate}
    \end{thm}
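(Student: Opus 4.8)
The plan is to treat the four assertions in order, reducing (a), (c) and the essential content of (d) to the foundational work of Fiedorowicz--Loday \cite{fiedorowicz-loday} and concentrating the new effort on (b) and on the model-categorical packaging of (d). For (a): beyond its faces and degeneracies, the simplicial set $\Gen$ carries the levelwise group multiplications $\Gen_n\times\Gen_n\to\Gen_n$ and the crossing data of Proposition~\ref{prop:skeleton}. This is not a simplicial group, but since geometric realization commutes with finite products it endows $|\Gen|$ with a continuous associative multiplication, with unit the image of the (unique) point of the contractible space $|\N\Delta|$ under $|\N\Delta|\to|\N\DG|$ --- contractible because $[0]$ is initial in $\Delta^{\op}$ --- and with inversion induced by $g\mapsto g^{-1}$; this is \cite[\S 3]{fiedorowicz-loday}. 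The equivalence $|\N\DG|\simeq B|\Gen|$ is \cite[\S 5]{fiedorowicz-loday}: heuristically $\DG$ is, up to homotopy, the action category of $|\Gen|$ on the contractible $|\N\Delta|$, so its nerve computes the homotopy orbit $B|\Gen|$; for the semiconstant groups of \S\ref{section:semiconstant} this is literally the Grothendieck construction $\Gen_0\ltimes\Delta$.

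For (b), the covering map arises from a functor $\DG\to\Xi$ to the dihedral category: directly for $\Lambda$ and $\Xi$, and otherwise as one of the composites $\Lambda_N\to\Lambda\to\Xi$, $\Xi_N\to\Xi$, $\Lambda_\infty\to\Lambda_N\to\Lambda\to\Xi$, $\Xi_\infty\to\Xi_N\to\Xi$, $\nabla\to\Xi$, $\nabla_M\to\Xi$ recorded in Table~\ref{table:galois}, the last two covering the surjections $Q_{k}\twoheadrightarrow D_{k}\twoheadrightarrow D_{n+1}$. Granting $|\Gen^{\Xi}|=O(2)$ (established directly from the model of $\Xi$ in Example~\ref{ex:dihedral}), part (a) turns such a functor into a continuous homomorphism $p\colon|\Gen|\to O(2)$, and then (b2) is a matter of unwinding definitions: in each of the seven models $\Gen_n$ is by construction the preimage, under the functor inducing $p$, of $\Aut_{\Xi}$ of the $n$-th object, namely $D_{n+1}=(\ZZ/2)\ltimes\mu_{n+1}\subset O(2)$. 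To finish (b1) I would compute the seven realizations $|\Gen|$ directly from the explicit models of Examples~\ref{ex:cyclic}--\ref{ex:N-quat} --- e.g. for $\Lambda$ the simplicial set $\Gen$ has one nondegenerate simplex in each of dimensions $0$ and $1$ and none above, so $|\Gen|\cong S^1=SO(2)$ and $p$ is the inclusion $SO(2)\hookrightarrow O(2)$ --- and match each with the group of Examples~\ref{ex:planar}, i.e.\ with the entries of Table~\ref{table:corr}; every such $p$ is a local homeomorphism onto its image (being cellwise induced by the covering-type maps $\Gen_n\to D_{n+1}$) with connected preimage of the identity component, hence a connective covering, and Proposition~\ref{prop:summary-covers} confirms that these data determine $|\Gen|$ uniquely. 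One can also argue uniformly by realizing the extension $1\to\Gen'\to\Gen\to\Gen''\to1$ of Theorem~\ref{thm:classification}: $|\Gen''|$ is $SO(2)$ or $O(2)$ according to the type, $|\Gen'|$ is the discrete ``covering group'' ($\ZZ$ or $\ZZ/N$), and the resulting extension of topological groups is $p$.

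For (c): the $\DG$-set $\Gen$ (Example~\ref{ex:extreme}) acts on any $\DG$-set $X$ by the natural ``twisted'' action which levelwise is the $\Gen_n^{\op}$-action on $X_n$ built into the notion of a $\DG$-object; applying the product-preserving functor $|{-}|$ yields $|\Gen|\times|i^{*}X|\to|i^{*}X|$, the axioms following from (a). For (d), consider the nerve--realization adjunction
\[
 |{-}|\colon\Fun(\DG^{\op},\Set)\;\rightleftarrows\;G\text{-}\Top\;\colon\;\on{Sing}^{G},\qquad
 (\on{Sing}^{G}Y)_{n}:=\Hom_{G\text{-}\Top}\!\bigl(|\DG(-,[n])|,\,Y\bigr),
\]
whose existence rests on the identification $|\DG(-,[n])|\cong G\times\Delta^{n}$, with $G$ acting freely on the first factor, which follows from (a) together with the observations that the $\DG$-map $\DG(-,[n])\to\DG(-,[0])=\Gen$ has fibre $\Delta^{n}$ over the unit vertex of $\Gen$ and that realization preserves finite limits. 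In particular the underlying simplicial set of $\on{Sing}^{G}Y$ is the ordinary singular complex $\on{Sing}(Y)$. Now equip $\Fun(\DG^{\op},\Set)$ with the model structure whose weak equivalences and fibrations are created by the forgetful functor $i^{*}$ to simplicial sets (a Cisinski-type structure on a presheaf category) and $G\text{-}\Top$ with the coarse model structure (weak equivalences and fibrations created after forgetting the $G$-action). Then $|{-}|\dashv\on{Sing}^{G}$ is a Quillen adjunction, and it is a Quillen equivalence because, after applying the respective forgetful functors, its unit $K\to\on{Sing}^{G}|K|$ and counit $|\on{Sing}^{G}Y|\to Y$ become the classical comparison maps $i^{*}K\to\on{Sing}|i^{*}K|$ and $|\on{Sing}(Y)|\to Y$, which are weak equivalences; passing to homotopy categories gives the stated identification.

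The genuinely hard input is Part (a) --- the construction of the group structure on $|\Gen|$ and of the equivalence $|\N\DG|\simeq B|\Gen|$, with enough naturality to carry everything downstream --- which is the main theorem of \cite{fiedorowicz-loday}; given it, (c) is formal, (d) is the bookkeeping above whose only delicate point is the identification $|\DG(-,[n])|\cong G\times\Delta^{n}$ making $\on{Sing}^{G}$ restrict to the ordinary singular functor, and in (b) the remaining labour is the finite, case-by-case verification of the seven realizations, whose conceptual content --- the functor to $\Xi$ produces $p$ and identifies $\Gen_n$ with $p^{-1}(D_{n+1})$ --- is uniform.
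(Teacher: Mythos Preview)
Your proposal is correct and, for (a)--(c), tracks the paper's own argument closely: the paper simply cites Fiedorowicz--Loday for (a), says the identification of $|\Gen|$ in (b) is ``immediate from their considerations'' (your case analysis is a reasonable expansion of this), and treats (c) as formal. The only place you diverge is (d). The paper does not give a uniform argument there: it cites the cyclic and paracyclic cases as classical (Dwyer--Hopkins--Kan, Loday), cites Dunn for the dihedral and quaternionic cases, and then reduces the remaining planar cases to these via two observations --- that $\on{sd}_N^*$ does not change the geometric realization (handling $\Lambda_N$, $\Xi_N$, $\nabla_M$), and that $\Xi_\infty\simeq(\ZZ/2)\ltimes\Lambda_\infty$ (handling the paradihedral case). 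Your approach instead packages everything into a single transferred-model-structure/Quillen-equivalence argument, with the only nontrivial input being the identification $|\DG(-,[n])|\cong G\times\Delta^n$. This is cleaner and uniform across all planar $\DG$ at once, at the cost of relying on that $G\times\Delta^n$ identification, which in the cited literature is itself established case-by-case; the paper's approach avoids reproving anything but requires the reader to chase the reductions. One small caution: your remark that inversion on $|\Gen|$ is ``induced by $g\mapsto g^{-1}$'' is a bit glib, since levelwise inversion is not a simplicial map for a genuinely crossed $\Gen$ --- but this is exactly what \cite{fiedorowicz-loday} sorts out, so the citation covers you.
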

    
    \begin{proof} (a)  was shown by Fiedorowicz and Loday \cite{fiedorowicz-loday}.  The identification of
   $|\Gen|$ in the examples of (b) is also immediate from their considerations. The remaining parts
   are well known \cite{DHK} \cite[Ch.7]{loday} in the cyclic and paracyclic cases
    $\Delta \Gen=\Lambda,
   \Lambda_\infty$,
   and were extended by Dunn \cite{dunn} to the dihedral and quaternionic cases. 
   The   $N$-subdivided  cases can be deduced from these using the fact that $\on{sd}_N^*$
   does not change the geometric realization. The paradihedral case can be
   deduced from the paracyclic one by identifying $\Xi_\infty$ with the semi-direct product of
   $\ZZ/2$ and $\Lambda_\infty$. We leave the remaining details to the reader.
\end{proof}


\subsection{Structured nerves }\label{subsec:str-nerv}

In this  and the following section, we explain the last column of Table \ref{table:corr} which puts planar crossed
simplicial groups in correspondence with algebras and categories with additional types of extra data
such as involutions, automorphisms, etc.  For this, we systematize and expand the ideas of
Loday \cite[Ch. 5]{loday} about ``variations on cyclic homology".

\subsubsection{Canonical parity and the $\DG$-nerve}
Recall (Example \ref{ex:semiconst-emb}) that 
 any crossed simplicial group $\DG$ contains the semiconstant crossed simplicial group $\Delta
\{\Gen_0\}$ generated by $\Gen_0$. By Proposition \ref{prop:semi}, the group $\Gen_0$ acts on $\Delta$ and is
hence equipped with a {\em canonical parity} $\Gen_0 \to \Aut(\Delta) \cong \Zt$.  We explicitly
describe the canonical parity of $\Gen_0$ for planar crossed simplicial groups.

\begin{prop}\label{prop:orient-G-0}
Let $\DG$ be a planar crossed simplicial group with the corresponding planar Lie group $p: G \to O(2)$. Then:
\begin{enumerate}
\item[(a)] Let $O(1)=\ZZ/2$ be embedded into $O(2)$ in the standard way. Then $\Gen_0$ is identified,
as a group with parity, with the preimage of $O(1)$ in $G$, so we have the Cartesian square
\[
\xymatrix{
\Gen_0\ar[r]^\epsilon \ar[d]^{\hskip -.7cm \rho} & G\ar[d]^p\\
\ZZ/2=O(1) \ar[r]& O(2). 
}
\]
\item[(b)] Explicitly, the canonical parities of $\Gen_0 = \ZZ, (\ZZ/2)\ltimes \ZZ, \ZZ/N, D_N, Q_N$
are given as follows:  
 the groups  $\ZZ$ and $\ZZ/N$  are equipped with the trivial homomorphism to
 $\ZZ/2$, and the group  $D_N$ and $Q_N$ with homomorphisms
\[
 \begin{gathered}
   \rho_D: D_N\lra  \ZZ/2, \quad \rho_D(\omega)=\overline 1, \,\,\rho_D(\tau)=\overline 0, 
 \\
 \rho_Q: Q_N\lra  \ZZ/2, \quad \rho_Q(w)=\overline 1, \,\,\rho_Q(\tau)=\overline 0,
 \end{gathered}
 \]
 and $(\ZZ/2)\ltimes\ZZ$ with the projection to $\ZZ/2$. 
\end{enumerate}
\end{prop}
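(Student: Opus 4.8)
The plan is to split part~(a) into an isomorphism of underlying groups and a statement about parities, to deduce the former from Theorem~\ref{thm:crossed-groups}(b2), and to reduce the latter to the claim that the canonical parity on $\Gen_0$ is detected by orientation behaviour on the circle; part~(b) then follows by inspecting the coverings listed in Example~\ref{ex:planar}. For the group identification, specialise Theorem~\ref{thm:crossed-groups}(b2) to $n=0$: since $\mu_1=\{1\}$, the subgroup $D_1=(\ZZ/2)\ltimes\mu_1$ of $O(2)=(\ZZ/2)\ltimes S^1$ is precisely the standardly embedded $O(1)=\ZZ/2$, so $\Gen_0=p^{-1}(D_1)=p^{-1}(O(1))$ --- which is exactly the Cartesian square of~(a), with $\epsilon$ the inclusion $\Gen_0\hookrightarrow G$ and $\rho$ the restriction of $p$. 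It then remains to show that this $\rho\colon\Gen_0\to O(1)=\ZZ/2$ is the canonical parity, i.e.\ the homomorphism $\Gen_0\to\Aut_{\Cat}(\Delta)\cong\ZZ/2$ recording the action of $\Gen_0$ on $\Delta$ via the semiconstant subgroup $\Delta\{\Gen_0\}\subset\DG$; by \eqref{eq:semiconstant} that action sends $\phi\colon[m]\to[n]$ in $\Delta$ to the conjugate $\omega_n^*(g)^{-1}\circ\phi\circ\omega_m^*(g)$, where $\omega_k^*\colon\Gen_0\to\Gen_k$ is pullback along the terminal map $[k]\to[0]$.

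I would prove the parity statement using the topological descriptions of Examples~\ref{ex:cyclic}--\ref{ex:N-quat}, in which $[k]$ is modelled by the marked circle $C_k$ (together with a lift to a cyclic cover in the subdivided cases, irrelevant here), $\Delta\subset\DG$ is the subcategory cut out by the order condition recorded there, and the automorphism $\daleth$ of $\Delta$ is realised by precomposition and postcomposition with an orientation-reversing self-homeomorphism of the circle. Under the identifications $\Gen_k=p^{-1}(D_{k+1})$ of Theorem~\ref{thm:crossed-groups}(b2), compatibly with the structure maps $\omega_k^*$, the underlying self-homeomorphism of $C_k$ determined by $\omega_k^*(g)$ is the action of $p(g)\in O(1)\subset O(2)$; hence conjugation by $\omega_\bullet^*(g)$ is the identity automorphism of $\Delta$ when $p(g)$ lies in the identity component $SO(2)$, and equals $\daleth$ otherwise. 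As $O(1)$ meets both components of $O(2)$, this identifies $\rho$ with the composite $\Gen_0\hookrightarrow G\xrightarrow{\,p\,}O(2)\twoheadrightarrow\pi_0(O(2))=\ZZ/2$, which finishes~(a). (One can instead argue formally: the Weyl functor $\widetilde\lambda$ of Theorem~\ref{thm:classification} carries $\Delta\{\Gen_0\}$ into the reflexive crossed simplicial group $\Delta\{\ZZ/2\}=\Delta\{W_1\}$, whose canonical parity is the identity of $\ZZ/2$, so the canonical parity of $\Gen_0$ equals its image under $\pi_0\colon\Gen_0\to W_1=\ZZ/2$; one then matches $\pi_0$ at level~$0$ with the orientation map by the same circle model.)

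Granting~(a), part~(b) is a case-by-case evaluation of the composite $\Gen_0=p^{-1}(O(1))\hookrightarrow G\xrightarrow{\,p\,}O(2)\twoheadrightarrow\pi_0(O(2))=\ZZ/2$ along Example~\ref{ex:planar}. For $\Lambda_\infty$ and $\Lambda_N$ the covering $p$ factors through $SO(2)\subset O(2)$ (here $G=\widetilde{SO}(2)$, resp.\ $\on{Spin}_N(2)$, covers $SO(2)$), so the composite is trivial and $\Gen_0=\ZZ$, resp.\ $\ZZ/N$, carries the trivial parity. For $\Xi_N$ one has $G=\on{Pin}^+_N(2)=O(2)$ with $p=f_N\colon(\alpha,z)\mapsto(\alpha,z^N)$, so $\Gen_0=p^{-1}(O(1))=\{(\alpha,z):z^N=1\}=(\ZZ/2)\ltimes\mu_N=D_N$ and the composite is $(\alpha,z)\mapsto\alpha$, which on the generators of Example~\ref{ex:dihedral} kills the rotation $\tau$ and sends the reflection $\omega$ to $\overline 1$, i.e.\ it is $\rho_D$. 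For $\nabla_M$ one has $G=\on{Pin}^-_{2M}(2)$ with $p=f_M\circ\pi$, where $\pi\colon P=Q_\infty\to O(2)$ is the covering of~\eqref{eq:p_n}; then $\Gen_0=\pi^{-1}(f_M^{-1}(O(1)))=\pi^{-1}(D_M)=Q_M$ and the composite sends $q$ to the $\ZZ/2$-component of $\pi(q)\in D_M$, so $\rho_Q(w)=\overline 1$ (as $\pi(w)$ is a reflection) and $\rho_Q(\tau)=\overline 0$ (as $\pi(\tau)\in\mu_M\subset SO(2)$). Finally, for $\Xi_\infty$, $G=\widetilde O(2)=(\ZZ/2)\ltimes\RR$ with $p$ induced by $\RR\to S^1$, so $\Gen_0=p^{-1}(O(1))=(\ZZ/2)\ltimes\ZZ$ and the composite is the projection to $\ZZ/2$; the cases $\Lambda,\Xi,\nabla$ are the specialisations $N=1$.

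The one delicate step --- and the main obstacle --- is matching the abstract involution $\daleth$ of $\Delta$ with geometric orientation-reversal of the model circles: one must parametrise the $C_k$ compatibly with the embeddings $D_{k+1}\hookrightarrow O(2)$ of Theorem~\ref{thm:crossed-groups}(b2), so that the reflection generating the $\ZZ/2$-summand of $O(2)$ acts on the $k+1$ marked points of $C_k$ by the order reversal $j\mapsto k-j$ (the geometric model of $\daleth$) rather than by $j\mapsto -j\bmod(k+1)$. With that convention in place, the rest is bookkeeping inside the explicit descriptions already recorded in Examples~\ref{ex:cyclic}--\ref{ex:N-quat} and~\ref{ex:planar}.
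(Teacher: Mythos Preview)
Your proof is correct and follows essentially the same overall strategy as the paper: both arrive at part~(b) by case-by-case inspection of the explicit coverings in Example~\ref{ex:planar}, and both reduce part~(a) to identifying $\Gen_0$ inside $G=|\Gen|$. The one genuine difference is in how the embedding $\epsilon$ is produced. The paper builds $\epsilon\colon\Gen_0\to|\Gen|$ from the compatible system of pullback homomorphisms $\omega_n^*\colon\Gen_0\to\Gen_n$ (i.e.\ via the simplicial structure of $\Gen$ and its geometric realisation), and then checks by cases that this is an embedding landing in $p^{-1}(O(1))$ with the correct parity. You instead invoke Theorem~\ref{thm:crossed-groups}(b2) at $n=0$ to obtain the Cartesian square immediately, and then supply a separate argument (via the circle model, or alternatively via the Weyl functor of Theorem~\ref{thm:classification}) that the restriction of $p$ agrees with the canonical parity. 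Your route is more direct for the group-theoretic identification, and your parity argument is more explicit than the paper's bare ``case by case analysis''; the trade-off is that you must implicitly assume the compatibility of the identifications in (b2) with the maps $\omega_k^*$, which is exactly what the paper's construction of $\epsilon$ makes manifest. Both approaches are sound, and the delicate point you flag at the end (aligning $\daleth$ with the correct reflection in $O(2)$) is the same bookkeeping hidden in the paper's case analysis.
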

\begin{proof}We recall that $G$ is identified with $|\Gen|$ by Theorem 
\ref{thm:crossed-groups}(b1). A homomorphism $\epsilon: \Gen_0\to |\Gen|$
is obtained from the compatible system of homomorphisms $\omega_n^*: \Gen_0\to \Gen_n$. 
The verification that $\epsilon$ is an embedding with the required properties, is obtained using case
by case analysis.
\end{proof}

Let $\DG$ be any crossed simplicial group, and let $n\geq 0$. We call the representable $\DG$-set
\[
\DG^n := \Hom_\DG(-, [n]): \DG^\op\lra\Set
\]
the $n$-dimensional {\em $\DG$-simplex}. In particular, for $\Lambda$, $\Xi$, etc, we will speak
about {\em cyclic simplices}, {\em dihedral simplices}, etc. As an immediate corollary of the
results of Section \ref{section:semiconstant}, we have the following statement.

\begin{prop}\label{prop:cosimplex} For any crossed simplicial group $\DG$, we have a natural functor 
	\[
		\DG \lra \Gen_0-\Cat,\; [n] \mapsto \FC(\DG^n|_{\Delta})
	\]
	where 
	\[
		\Gen_0-\Cat = \Fun_{\BZt}(\B \Gen_0, \Zt \ltimes \Cat)
	\]
	denotes the category of small categories equipped with a twisted
	action of $\Gen_0$.
\end{prop}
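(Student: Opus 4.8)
The plan is to produce the functor by combining three pieces that have already been assembled in Sections \ref{section:semiconstant} and \ref{subsec:str-nerv}: the inclusion of the semiconstant subgroup, the equivalence relating twisted actions to the semidirect product picture, and the free-category functor $\FC$. First I would recall from Example \ref{ex:semiconst-emb} that $\DG$ contains the semiconstant crossed simplicial group $\Delta\{\Gen_0\}$; by Proposition \ref{prop:semi} and the following remarks this is $\Gen_0\ltimes\Delta$, so the group $\Gen_0$ acts on $\Delta$ and carries its canonical parity $\Gen_0\to\Aut(\Delta)\cong\Zt$. Applying Proposition \ref{prop:main} with $\C=\Set$ to the semiconstant crossed simplicial group $\Delta\{\Gen_0\}$ itself, or rather unwinding the definitions directly, the restriction to $\Delta$ of the $n$-dimensional $\DG$-simplex $\DG^n|_{\Delta}=\Hom_\DG(-,[n])|_\Delta$ is a simplicial set which inherits a twisted $\Gen_0$-action: an odd element $g\in\Gen_0$ acts on $\Hom_\Delta(-,[n])$ through the composite of the precomposition by $g^*(-)$ from Proposition \ref{prop:skeleton}(b) with the opposite-structure involution $\daleth$, and the identities of Proposition \ref{prop:skeleton}(c) together with \eqref{eq:semiconstant} guarantee this is a genuine twisted action.

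Next I would push this through the functor $\FC$. The adjunction $\FC:\Set_\Delta\leftrightarrow\Cat:\N$ is compatible with the involutions $\daleth$ and $\tau=(-)^{\op}$, as noted after Examples \ref{exas:semi}; consequently $\FC$ sends a simplicial set with twisted $\Gen_0$-action to a small category with twisted $\Gen_0$-action, i.e. gives a functor $\Gen_0\text{-}\Set_\Delta\to\Gen_0\text{-}\Cat$ over $\B\Zt$. Composing, the assignment $[n]\mapsto\FC(\DG^n|_\Delta)$ is an object of $\Gen_0\text{-}\Cat$ for each $n$. Functoriality in $[n]$ over $\DG$ is the remaining point: a morphism $u:[m]\to[n]$ in $\DG$ induces, by contravariance of $\Hom_\DG(-,[n])$ in the second slot being replaced by covariance here, a map of $\DG$-sets $\DG^m\to\DG^n$, hence a map of the restricted simplicial sets; one must check that this map intertwines the twisted $\Gen_0$-actions, which is exactly the content of the canonical-square relations \eqref{eq:can-square} applied to elements of $\Gen_0$ pulled back along $\omega_n$, i.e. to the semiconstant part. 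Applying $\FC$ then yields a morphism in $\Gen_0\text{-}\Cat$, and compatibility with composition of morphisms in $\DG$ follows from functoriality of $\Hom_\DG(-,-)$ and of $\FC$.

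The main obstacle, and the only step requiring genuine care rather than bookkeeping, is verifying that the $\Gen_0$-action on $\DG^n|_\Delta$ is parity-compatible and that morphisms in $\DG$ respect it: one must confirm that an odd element of $\Gen_0$ really does reverse all face and degeneracy relations in the sense of the involution $\daleth$ when acting on $\Hom_\Delta(-,[n])$, so that the resulting assignment factors through $\B\Zt$ correctly. This is where Proposition \ref{prop:orient-G-0} and the explicit description of the canonical parity become the conceptual input, though the verification itself is the routine manipulation of the relations in Proposition \ref{prop:skeleton}(b)--(c) restricted to the image of $\omega_n^*$. Since the problem statement says this follows as an immediate corollary of the results of Section \ref{section:semiconstant}, I would present the argument compactly: identify $\DG^n|_\Delta$ with the simplicial set underlying $\FC^{-1}$ of the appropriate free category, invoke Proposition \ref{prop:main} to read off the twisted action, and invoke the $\FC\dashv\N$ compatibility to land in $\Gen_0\text{-}\Cat$, leaving the diagram-chase of \eqref{eq:can-square} to the reader.
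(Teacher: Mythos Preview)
Your proposal is correct and follows essentially the same route as the paper, but you are working much harder than necessary. The paper's proof is a single line: compose the Yoneda embedding $\DG \to \Set_{\DG}$ with restriction along the inclusion $\Delta\{\Gen_0\}\hookrightarrow\DG$ to land in $\Set_{\Delta\{\Gen_0\}}$, then apply the equivalence of Proposition~\ref{prop:main} to reach $\Gen_0\text{-}\Set_\Delta$, and finally apply $\FC$ (which, as noted just before Proposition~\ref{prop:main}, is parity-compatible). Because each step is a functor between categories, functoriality in $[n]$ and compatibility of the twisted $\Gen_0$-actions with morphisms in $\DG$ are automatic; there is no need to chase the canonical squares \eqref{eq:can-square} by hand or to re-verify that odd elements reverse face and degeneracy relations, since that is precisely what Proposition~\ref{prop:main} packages.

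Two minor points: first, you restrict $\DG^n$ to $\Delta$ and then separately argue for the $\Gen_0$-action, whereas restricting instead to $\Delta\{\Gen_0\}$ in one step is what makes Proposition~\ref{prop:main} apply directly and eliminates your ``main obstacle''. Second, your appeal to Proposition~\ref{prop:orient-G-0} is misplaced: that result concerns planar crossed simplicial groups specifically, while the present proposition holds for arbitrary $\DG$; the canonical parity on $\Gen_0$ comes purely from the action on $\Delta$ via Proposition~\ref{prop:semi}, with no planarity needed.
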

\begin{proof}  Denoting $\on{Res}$ the restriction from $\DG$ to $\Delta\{\Gen_0\}$,  we obtain
	a sequence of functors
	\[
		\DG \overset{\text{Yoneda}}{\lra} \Set_{\DG} \overset{\on{Res^*}}{\lra}
		\Set_{\Delta \{ \Gen_0 \}} \overset{\ref{prop:main}}{\lra} \Gen_0-\Set_{\Delta}
			\overset{\FC}{\lra}
			\Gen_0-\Cat
	\]
	whose composite is the desired functor.
\end{proof}

As a consequence, we obtain the following result. 

\begin{thm}\label{thm:G-nerve}
Let $\DG$ be any crossed simplicial group. Equip $\Gen_0$ with its canonical parity.
Then to each small category $\C$ with a twisted $\Gen_0$-action, there is an associated
$\DG$-set $\N^\DG(\C)$ called the {\em $\DG$-nerve} of $\C$ such that
\[
	\N^\DG(\C)_n = \Hom_{\Gen_0-\Cat}(\FC(\DG^n_{|\Delta}), \C).
\]
\end{thm}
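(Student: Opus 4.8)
The plan is to deduce the theorem formally from Proposition \ref{prop:cosimplex}. That proposition supplies, for any crossed simplicial group $\DG$, a functor $F: \DG \lra \Gen_0-\Cat$ sending $[n]$ to $\FC(\DG^n_{|\Delta})$, where $\Gen_0$ carries its canonical parity. So, given a small category $\C$ equipped with a twisted $\Gen_0$-action for the same canonical parity, i.e.\ an object of $\Gen_0-\Cat = \Fun_{\BZt}(\B\Gen_0, \Zt\ltimes\Cat)$, I would simply set
\[
  \N^\DG(\C) \,:=\, \Hom_{\Gen_0-\Cat}(F(-),\C),
\]
that is, the composite of $F^\op: \DG^\op \to (\Gen_0-\Cat)^\op$ with $\Hom_{\Gen_0-\Cat}(-,\C): (\Gen_0-\Cat)^\op \to \Set$. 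This is tautologically a functor $\DG^\op\to\Set$, hence a $\DG$-set, and evaluating on $[n]$ and using $F[n]=\FC(\DG^n_{|\Delta})$ gives exactly the asserted formula $\N^\DG(\C)_n = \Hom_{\Gen_0-\Cat}(\FC(\DG^n_{|\Delta}),\C)$. One should note in passing that $\Hom_{\Gen_0-\Cat}$ between two small $\Gen_0$-categories really is a set, so that $\N^\DG(\C)$ takes values in $\Set$.

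The next step is to read off the ambient structure so as to justify the name. The underlying simplicial set $i^*\N^\DG(\C)$ has face and degeneracy maps obtained by precomposing with $F$ applied to the coface and codegeneracy maps of $\Delta$; the level $\N^\DG(\C)_n$ carries the $(\Gen_n)^\op$-action obtained by precomposing with $F(g)$ for $g\in\Gen_n=\Aut_\DG([n])$; and every relation in the definition of a $\DG$-set holds because $F$ is a functor, so all of the structure on $\N^\DG(\C)$ is inherited from the functoriality of $F$ established in Proposition \ref{prop:cosimplex}. It is likewise immediate that $\C\mapsto\N^\DG(\C)$ is itself functorial, since postcomposition with a morphism in $\Gen_0-\Cat$ yields a morphism of the associated $\DG$-sets. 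For orientation one can also observe, using the adjunction $\FC\dashv\N$ and the freeness of $\FC$, that an element of $\N^\DG(\C)_n$ amounts to a map of simplicial sets $\DG^n_{|\Delta}\to\N(\C)$ intertwining the two twisted $\Gen_0$-actions; specializing to $\DG=\Lambda$, where $\Gen_0$ is trivial, recovers the cyclic nerve of $\C$.

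I do not expect a genuine obstacle: the theorem is essentially a Yoneda-type restatement of Proposition \ref{prop:cosimplex}. The one point I would take care to verify explicitly is the parity bookkeeping, namely that the $\Gen_0$-action on $\FC(\DG^n_{|\Delta})$ produced in Proposition \ref{prop:cosimplex} (built from the action of $\Gen_0$ on $\Delta$ of Proposition \ref{prop:semi}, via the semiconstant subgroup of Example \ref{ex:semiconst-emb}) is twisted with respect to precisely the canonical parity $\rho:\Gen_0\to\Zt$. Only once this is confirmed do the functor $F$ and the given $\C$ live in the \emph{same} category $\Gen_0-\Cat$, so that the $\Hom$-set above is the intended one; after that, there is nothing further to prove.
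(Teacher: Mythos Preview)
Your proposal is correct and follows exactly the paper's approach: the paper's proof consists of the single sentence that the result follows by postcomposing the functor of Proposition~\ref{prop:cosimplex} with $\Hom_{\Gen_0-\Cat}(-,\C)$, which is precisely your construction $\N^\DG(\C) = \Hom_{\Gen_0-\Cat}(F(-),\C)$. Your additional remarks on parity bookkeeping and the unwinding of the $\DG$-set structure are sound elaborations but go beyond what the paper records.
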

\begin{proof}
	This follows by postcomposing the functor of Proposition \ref{prop:cosimplex} with
	$\Hom_{\Gen_0-\Cat}(-, \C)$.
\end{proof}

\subsubsection{Examples}
We  now give a more explicit description of $\N^\DG(\C)$ for each planar $\DG$. 

\vskip .2cm

  \noindent {\bf Cyclic case (Connes).} Here $\Gen_0 = \ZZ/1$ is the trivial group. 
  Let $\C$ be a small category. Its {\em cyclic nerve} is the simplicial set $\NC_\bullet\C$
  with $\NC_n\C$ being the set of diagrams
  \be\label{eq:NC}
  x_0\buildrel a_0\over  \lra x_1\buildrel a_1\over \lra \cdots\buildrel a_{n-1}\over \lra x_n
  \buildrel a_n\over \lra x_0. 
  \ee
  As i well-known, the cyclic rotation of such diagrams makes $\NC_\bullet\C$ into a cyclic set.

\vskip .2cm
  
    \noindent {\bf Dihedral case (Loday).} Here $\Gen_0=D_1\cong\ZZ/2$ with the identity homomorphism to
  $\ZZ/2$, so that a twisted $\Gen_0$-action is an involution. 
  
  Let $(\C,*)$ be a small category with  involution,
  which we write  as an isomorphism $*: \C\to\C^\op$, $**=\on{Id}$.  Then $\NC_\bullet\C$ becomes a
  dihedral set, with the generator $\omega_n \in D_{n+1} = \Aut \overline \cn$ sending a diagram
  \eqref{eq:NC} into
  \[
 x_1^*\buildrel a_0^*\over\lra  x_0^*\buildrel a_n^*\over \lra x_{n}^*\buildrel a_{n-1}^*\over  \lra x_{n-1}^*
  \buildrel a_{n-2}^*\over \lra\cdots \buildrel a_2^*\over \lra x_2^* \buildrel a_1^*\over\lra x_1^*. 
  \]

  \vskip .2cm
  
  \noindent {\bf Paracyclic and $N$-cyclic cases.} In the paracyclic case $\Gen_0=\ZZ$ with trivial homomorphism into
  $\ZZ/2$, so a twisted $\Gen_0$-action is an automorphism. In the $N$-cyclic case a twisted $\Gen_0=\ZZ/N$-action
  is an automorphism of order $N$. 
  
  Let $\C$ be a category equipped with a (covariant) automorphism
  $F$. We have then the {\em twisted cyclic nerve} $\NC^F_\bullet\C$, see \cite{HSS1}. This is a simplicial set
  whose $n$-simplices are diagrams
 \be\label{eq:NC^F}
  x_0\buildrel a_0\over  \lra x_1\buildrel a_1\over \lra \cdots 
  \buildrel a_{n-1}\over \lra x_n
  \buildrel a_n\over \lra F(x_0). 
  \ee
  We have the transformation $\tau_n: \NC_n^F\C \to\NC_n^F\C$ sending such a diagram to
  \be\label{eq:tau-twisted}
  F^{-1}(x_n) \buildrel F^{-1}(a_n)\over\lra x_0  
  \buildrel a_0\over  \lra x_1\buildrel a_1\over \lra \cdots
  \buildrel a_{n-2}\over\lra x_{n-1}
  \buildrel a_{n-1}\over \lra x_n =  F(F^{-1}(x_n)). 
  \ee
   One sees directly that the $\tau_n$ make $\NC^F_\bullet \C$ into a paracyclic set. 
   If $F^N=\on{Id} $ for some $N$, then  
   $\tau_n^{N(n+1)}=\on{Id}$ as well, and so $\NC^F_\bullet \C$ is an  $N$-cyclic set. 
   
   The twisted cyclic nerve is the categorical analog of the {\em twisted loop space} $L^F X$
   associated to a topological space $X$ together with a homeomorphism $F: X\to X$. 
   Explicitly, $L^F X$ consists of continuous maps $\gamma: \RR\to X$ such that $\gamma(t+1)= F(\gamma(t))$. 
   
    \vskip .2cm
  
  \noindent {\bf Quaternionic case (Dunn): $\Gen_0=\ZZ/4$.} Quaternionic homology  was originally introduced by
  Loday \cite{loday:HDQ} \cite[\S 5.2]{loday} as a  purely technical modification of dihedral
  homology in the case when $2$ is not invertible in the base ring.
    However, from our perspective, the more natural framework for quaternionic
  homology is provided by
    algebras and categories with a {\em twisted action of $\Gen_0=Q_1=\ZZ/4$},
   i.e., with an {\em anti-automorphism $J$ of order 4}. In this case $J^2$ is an automorphism of
  order 2. This point of view  goes back to Dunn \cite[\S 2]{dunn}. 
  
  \vskip .2cm
  
  More precisely, let $\C$ be a category and $J: \C\to \C$ be a contravariant functor such that $J^4=\on{Id}$. 
  Consider the 2-cyclic set $\NC^{J^2}_\bullet\C$. Define $w=w_n: \NC^{J^2}_n\C\to \NC^{J^2}_n\C$
  by
  \[
  \begin{gathered}
  w_n \bigl\{   x_0\buildrel a_0\over  \lra x_1\buildrel a_1\over \lra \cdots 
  \buildrel a_{n-1}\over \lra x_n
  \buildrel a_n\over \lra J^2 x_0\bigr\} = 
  \\
  \bigl\{ J^{-1}x_1 \buildrel J^{-1}(a_0)\over\lra
  J^{-1}(x_0)\buildrel J(a_n)\over\lra J(x_n)
  \buildrel J(a_{n-1}) \over\lra 
  J(x_{n-1})\buildrel J(a_{n-2})\over\lra\cdots
  \buildrel J(a_1)\over
  \lra J(x_1)= J^2(J^{-1}x_1)   \bigr\} .
    \end{gathered}
  \]
  (Note that the source of $J(a_n)$ is $J(J^2x_0) = J^{-1}x_0$.)
   Then $w_n$ together with $\tau_n$ defined as in \eqref{eq:tau-twisted}, define an action of
  $Q_{n+1}$ on $\NC^{J^2}_n\C$. One verifies that in this way $\NC^{J^2}_\bullet\C$
 becomes a quaternionic set. 
 
  \vskip .2cm
 
 \noindent {\bf $N$-dihedral case: $\Gen_0=D_N$.} Let $\C$ be a category with a twisted action of $D_N$, i.e.,
 with a contravariant functor $\Omega: \C\to \C$ and a covariant functor $T: \C\to\C$ satisfying the relations
 of $D_N$. In particular, $T^N=\on{Id}$,  so we have an $N$-cyclic set $\NC^T_\bullet\C$. 
 For a given $n$, the set $\NC_n^T\C$ is acted upon by the transformation $\tau_n$ as in
 \eqref{eq:tau-twisted} and $\omega_n$ which takes an $n$-simplex 
 \[
  x_0\buildrel a_0\over  \lra x_1\buildrel a_1\over \lra \cdots 
  \buildrel a_{n-1}\over \lra x_n
  \buildrel a_n\over \lra T(x_0) 
 \]
 into 
 \[
 \Omega (x_1) \buildrel \Omega(a_0)\over\lra \Omega (x_0) 
 \buildrel T\Omega (a_n)\over\lra T\Omega(x_n) 
 \buildrel T\Omega (a_{n-1})\over\lra T\Omega(x_{n-1}) 
 \buildrel T\Omega (a_{n-1})\over\lra \cdots
 \buildrel T\Omega (a_1)\over\lra T\Omega(x_1).
  \]
  Note that by the  dihedral relations
  we can write the source of $\Omega(a_n)$ as $\Omega T x_0 = T^{-1}\Omega x_0$, so
  $T\Omega (a_n): \Omega x_0\to T\Omega x_n$.
  The maps $\tau_n$ and $\omega_n$ satisfy the relations of $D_{N(n+1)}$
  and one checks directly that in this way $\NC^T_\bullet\C$ becomes an $N$-dihedral
  set.

 \vskip .2cm
  
  \noindent {\bf $N$-quaternionic case: $\Gen_0=Q_N$.} Let $\C$ be a category with a twisted action of $Q_N$, i.e.,
 with a contravariant functor $W: \C\to \C$ and a covariant functor $T:\C\to\C$,
    satisfying the relations
 of $Q_N$. Then $T^{2N}=\on{Id}$,  so we have
 the $2N$-cyclic set $\NC^{T}_\bullet\C$. We define the action of $Q_{N(n+1)}$
 on $\NC^{T}_n\C$ by transformations $\tau_n$, defined  as in \eqref{eq:tau-twisted},
 and $w_n$ which takes a simplex
 \[
  x_0\buildrel a_0\over  \lra x_1\buildrel a_1\over \lra \cdots 
  \buildrel a_{n-1}\over \lra x_n
  \buildrel a_n\over \lra T(x_0) 
 \]
 into  
 \[
 W(x_1)\buildrel W(a_0))\over\lra W(x_0) \buildrel TW(a_n)\over\lra TW (x_n)\buildrel TW(a_{n-1})\over\lra
 TW(x_{n-1})\lra\cdots \buildrel TW(a_1)\over\lra TW( x_1). 
 \]
As before, the identity $WTW^{-1}=T^{-1}$ implies  that $TW(a_n)$ acts from $W(x_0)$ to $TW(x_n)$. 
The relations of the quaternionic group $Q_{N(n+1)}$ and the compatibility of the actions of the $Q_{N(n+1)}$, $n\geq 0$
and of $\Delta$ are verified directly.

\subsection{Structured Hochschild complexes and $\DG$-Frobenius algebras}\label{sec:str-hoch}

Let $\k$ be a field. By a {\em vector space} (resp. {\em algebra}) we will always mean a $\k$-vector
space, resp. a $\k$-algebra. We denote by $\Vect$ the category of vector spaces.

\subsubsection{The $\DG$-structure on the Hochschild complex}

We have the following enriched variant of Theorem \ref{thm:G-nerve} which we will only formulate
for algebras (instead of more general $\k$-linear categories).

\begin{thm}\label{thm:hochschild}
Let $\DG$ be a planar crossed simplicial group. Let $A$ be a unital associative algebra with a 
twisted action of $\Gen_0$. Then the collection of vector spaces
  \[
 C_\bullet(A) =   \bigl\{ C_n(A) = A^{\otimes (n+1)}\bigr\}_{n\geq 0},
  \]
given by the terms of the Hochschild complex of $A$, naturally forms a $\Delta\Gen$-vector space. 
In particular, the group $\Gen_n$ acts on $C_n(A)$. 
\end{thm}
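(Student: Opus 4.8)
The plan is to reduce the statement to the already-proven Theorem~\ref{thm:G-nerve} (or rather its enriched analogue) by exhibiting the Hochschild chain spaces $C_\bullet(A)$ as the $\DG$-nerve of a suitable $\k$-linear category with a twisted $\Gen_0$-action. Concretely, to a unital algebra $A$ one associates the one-object $\k$-linear category $\underline A$ with $\End(\ast)=A$; a twisted action of $\Gen_0$ on $A$ (even elements acting by algebra automorphisms, odd elements by anti-automorphisms, as in the discussion following Example~\ref{exas:semi}) is precisely a twisted $\Gen_0$-action on $\underline A$ in the sense of Proposition~\ref{prop:main} applied to $\C=\Cat_\k$. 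One then checks that the linearization of the $\DG$-nerve $\N^\DG(\underline A)$, i.e.\ the $\DG$-vector space obtained by taking $\k$-linear spans of the nerve sets, is naturally isomorphic to $C_\bullet(A)$. At the simplicial level this is the classical identification $\NC_n(\underline A)\cong A^{\otimes(n+1)}$ sending a diagram $x_0\buildrel a_0\over\lra x_1\lra\cdots\lra x_n\buildrel a_n\over\lra x_0$ to $a_0\otimes\cdots\otimes a_n$ (with the twisting by an automorphism $F$ matching the twisted cyclic nerve $\NC^F$ of \eqref{eq:NC^F}); the extra $\Gen_n$-actions are then transported along this isomorphism.

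The key steps, in order: \textbf{(i)} Set up the $\k$-linear refinement of Proposition~\ref{prop:cosimplex} and Theorem~\ref{thm:G-nerve}, replacing $\Cat$ by $\Cat_\k$ and $\FC$ by the free $\k$-linear category functor, so that for each $\DG$ one has a functor $\DG\to\Gen_0\text{-}\Cat_\k$, $[n]\mapsto \FC_\k(\DG^n|_\Delta)$, and hence for each $\k$-linear category $\mathcal A$ with twisted $\Gen_0$-action a $\DG$-vector space $\N^\DG_\k(\mathcal A)$ with $\N^\DG_\k(\mathcal A)_n=\Hom_{\Gen_0\text{-}\Cat_\k}(\FC_\k(\DG^n|_\Delta),\mathcal A)$. \textbf{(ii)} Apply this with $\mathcal A=\underline A$ and identify $\N^\DG_\k(\underline A)_n$ with $A^{\otimes(n+1)}=C_n(A)$, compatibly with the simplicial (Hochschild) face and degeneracy maps; this is the content of the classical cyclic-bar identification and needs only that the Hochschild differential's constituent maps are exactly the $\Delta$-structure maps of $\NC_\bullet$. \textbf{(iii)} Transport the $\Gen_n$-action from $\N^\DG_\k(\underline A)_n$ to $C_n(A)$ and record the resulting formulas, which recover the known explicit actions (Connes' cyclic operator $t_n(a_0\otimes\cdots\otimes a_n)=\pm a_n\otimes a_0\otimes\cdots\otimes a_{n-1}$ in the cyclic case, its twisted variants in the para/$N$-cyclic cases, and the involution/anti-automorphism operators $\omega_n$, $w_n$ described in the Examples subsection for the dihedral, quaternionic, $N$-dihedral and $N$-quaternionic cases). \textbf{(iv)} Conclude that all $\DG$-relations hold automatically, since they hold in $\N^\DG_\k(\underline A)$ by Theorem~\ref{thm:G-nerve}.

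The main obstacle is Step~(i): verifying that Proposition~\ref{prop:cosimplex} and Theorem~\ref{thm:G-nerve} genuinely enrich over $\Vect$ without loss. The subtle point is that the free-category functor $\FC$ used there must be replaced by the $\k$-linear free-category functor, and one must check this replacement is still compatible with the involution $\daleth$ on $\Delta$ and with the semidirect-product description of semiconstant crossed simplicial groups (Proposition~\ref{prop:semi}), so that the adjunction of categories with parity used to build the nerve still applies. Equivalently, one must confirm that the natural functor $\DG\to\Gen_0\text{-}\Cat$ of Proposition~\ref{prop:cosimplex}, post-composed with $\mathcal A\mapsto\Hom(-,\mathcal A)$ for a $\k$-linear $\mathcal A$, lands in $\Vect$-valued functors and that this is compatible with the forgetful functor $\Cat_\k\to\Cat$. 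All of this is routine but must be done carefully; once it is in place, Steps~(ii)--(iv) are a direct unravelling of definitions plus the well-documented cyclic-bar computation, with the case-by-case explicit formulas being exactly those already exhibited in the preceding subsection. Alternatively, and perhaps more economically, one can bypass the abstract enrichment and argue directly: linearize the $\DG$-set $\N^\DG(\underline A)$ of Theorem~\ref{thm:G-nerve} termwise over $\k$ — linearization is a functor $\Set\to\Vect$ and hence carries $\DG$-sets to $\DG$-vector spaces — and then simply observe that the resulting $\DG$-vector space has underlying simplicial vector space $\k[\NC_\bullet\underline A]\cong C_\bullet(A)$; this makes Step~(i) unnecessary and reduces the whole proof to the identification in Step~(ii) together with the bookkeeping of Step~(iii).
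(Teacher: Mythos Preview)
Your ``more economical'' alternative at the end contains a genuine error: for the one-object category $\underline A$ the set $\NC_n\underline A$ is the Cartesian product $A^{n+1}$, so its $\k$-linearization is $\k[A^{n+1}]\cong \k[A]^{\otimes(n+1)}$, which is \emph{not} $A^{\otimes(n+1)}$. There is only a surjection $\k[A]^{\otimes(n+1)}\twoheadrightarrow A^{\otimes(n+1)}$ induced by the linear map $\k[A]\to A$, and you would then have to argue separately that the $\DG$-structure descends along it. The same problem infects your Step~(i): the Hom-set $\Hom_{\Gen_0\text{-}\Cat_\k}(\FC_\k(\DG^n|_\Delta),\underline A)$ is a \emph{set} (a product of copies of $A$, one for each generating arrow), not a vector space, and certainly not the tensor power $A^{\otimes(n+1)}$. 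Getting the Hochschild terms out of a nerve-type construction requires a coend or tensor-product formula $\bigoplus_{x_0,\dots,x_n}\C(x_0,x_1)\otimes\cdots\otimes\C(x_n,x_0)$, not a Hom-set; your proposal never sets this up.

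The paper avoids this issue entirely by taking a different and more direct route. It observes that the left $\Gen_n$-action on $\Hom_\DG([0],[n])\cong\Gen_0\times\Hom_\Delta([0],[n])$ permutes the fibers over $\Hom_\Delta([0],[n])$ and acts on each fiber by left translation by some element of $\Gen_0$; this packages into a group homomorphism
\[
L_n:\ \Gen_n\ \lra\ S_{n+1}\wr\Gen_0\ =\ S_{n+1}\ltimes\Gen_0^{\,n+1}
\]
lifting $\lambda_n:\Gen_n\to S_{n+1}$. Since $\Gen_0$ acts $\k$-linearly on $A$, the wreath product $S_{n+1}\wr\Gen_0$ acts linearly on $A^{\otimes(n+1)}$ by permuting tensor factors and acting in each slot; pulling back along $L_n$ gives the $\Gen_n$-action on $C_n(A)$. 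The simplicial compatibilities are then checked directly. This bypasses any nerve construction and works immediately at the level of vector spaces. If you want to repair your approach, either (a) replace the Hom-set in Step~(i) by the appropriate tensor/coend and verify that Theorem~\ref{thm:G-nerve} admits this $\Vect$-enriched refinement, or (b) keep the set-valued nerve, linearize to $\k[A]^{\otimes(n+1)}$, and then prove descent along the surjection to $A^{\otimes(n+1)}$; but in either case you will end up redoing essentially the wreath-product computation that the paper does in one line.
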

\begin{proof}
The cosimplicial structure is obtained by letting face maps act by insertion of $1$ and degeneracy
maps act by multiplication in $A$. Let us construct an action of $\Gen_n$  on $A^{\otimes (n+1)}$.
Consider the wreath product
\[
S_{n+1}\wr \Gen_0 := S_{n+1}\ltimes \Gen_0^{n+1},
\]
where the action of $S_{n+1}$ on $\Gen_0^{n+1}$ is by permuting the Cartesian factors. 
Note that we have a projection map $S_{n+1}\wr\Gen_0 \to S_{n+1}$. We now claim that the
homomorphism $\lambda_n: \Gen_n\to S_{n+1}$ from Proposition \ref{prop:forget} has a natural lifting
to a homomorphism
\[
L_n: \Gen_n \lra S_{n+1}\wr \Gen_0. 
\]
Indeed, by the canonical factorization
\[
\Hom_\DG([0], [n]) = \Gen_0 \times\Hom_\Delta([0], [n]),
\]
and $\Gen_n$ acts on this set on the left by composition of morphisms in $\DG$.  We see, first,
that the action of any $g\in\Gen_n$ takes any fiber of the projection to $\Hom_\Delta([0], [n])$ to
another fiber. The induced permutation of these fibers is precisely $\lambda_n(g)\in S_{n+1}$.
Second, if we identify each fiber with $\Gen_0$, then the identification of one fiber with another
is given by the left multiplication with some element of $\Gen_0$, These elements, together with
$\lambda_n(g)$, give the desired $L_n(g)\in S_{n+1}\wr \Gen_0$. 

Since $\Gen_0$ acts on $A$ as a vector space, we have the natural action of $S_{n+1}\wr\Gen_0$ on
$A^{\otimes(n+1)}$ which combines the permutations of the tensor factors and action of $\Gen_0$ on
each factor. We pull this action back along $L_n$ to get a $\Gen_n$-action on $A^{\otimes(n+1)}$.
The compatibility of these actions with the simplicial structure is verified straightforwardly. 
\end{proof}

\begin{rem} The Hochschild complex of an associative algebra is usually constructed as the
	totalization of a {\em simplicial} $\k$-vector space. This simplicial object can be obtained
	by precomposing the functor $\DG \to \Vect$ from Theorem \ref{thm:hochschild} with
	\[
		\Delta^{\op} \lra \DG^{\op} \overset{D}{\lra} \DG 
	\]
	where $D$ denotes a self-duality of the category $\DG$ (cf. Corollary \ref{cor:duality}
	below). 
\end{rem}

\subsubsection{$\DG$-(co)homology, traces and Frobenius algebras}

We denote by $\DG-\Vect$ the category of $\DG$-vector spaces. Let $\underline\k$ be the constant
$\DG$-vector space with stalk $\k$, i.e., the contravariant functor which takes each object to $\k$
and each morphism to the identity.

Let $A$ be a unital associative algebra with a twisted $\Gen_0$-action. By Theorem
\ref{thm:hochschild}, $C_\bullet(A)$ is an object of $\DG-\Vect$. Following the approach of Connes
and Loday, we define the $\DG$-{\em homology} and {\em cohomology} of $A$ to be
\begin{align*}
H_\bullet^\DG(A) & = \on{Tor}_\bullet^{\DG-\Vect}( \k, C_\bullet(A)),\\
H^\bullet_\DG(A) & = \on{Ext}^\bullet_{\DG-\Vect}(  C_\bullet(A), \k) 
\end{align*}
where $\k$ denotes the constant $\DG$-module.

\begin{exa}
Elements of $H^0_\DG(A) = \on{Hom}_{\DG-\Vect}( C_\bullet(A), \k)$ will be called {\em
$\DG$-traces} on $A$. Thus a $\DG$-trace $\beta$ is a collection of multilinear forms $\beta_n:
A^{\otimes(n+1)}, \k)$, $n\geq 0$. The condition that $\beta$ is a morphism of cosimplicial vector
spaces, means that all the $\beta_n$ are expressible through the linear form $\beta_1$:
\[
\beta_n(a_0, a_1, \cdots, a_n) = \beta_1(a_0a_1\cdots a_n). 
\]
The further conditions on $\beta_1$ express $\Gen_n$-invariance of $\beta_n$ for $n\geq 0$.
Note that these conditions do not necessarily imply that the bilinear form $\beta_2$ is symmetric. 
\end{exa}

\begin{defi}
A {\em $\DG$-Frobenius algebra} is a finite-dimensional unital associative algebra $A$ equipped with
a twisted $\Gen_0$-action and a $\DG$-trace $\beta$ such that the bilinear form $\beta_2$ is
non-degenerate.  
\end{defi}
 
\subsubsection{Planar examples} 
Here we illustrate the above construction for some of the planar $\DG$. 

\vskip .2cm

\noindent {\bf Cyclic case (Connes).} Here $\DG=\Lambda$. 
Let $A$ be an associative algebra. The graded vector space $C_\bullet(A)$ can be seen as a
linearized, 1-object version of the cyclic nerve. The fact that $\NC_\bullet\C$ is cyclic,
corresponds to the fundamental observation of Connes that $C_\bullet (A)$ has a natural structure of
a cyclic vector space. In this structure, the generator $\tau_n$ of $\ZZ/(n+1)=\Aut \cn$ acts by
cyclic rotation
\[
\tau_n(a_0\otimes  \cdots \otimes  a_n) \,\,=\,\,  a_n\otimes a_0\otimes  \cdots \otimes a_{n-1}.
\]
The concept of a $\DG$-trace on $A$ reduces, for $\DG=\Lambda$, to  that of a linear functional
$\beta_1:A\to\k$ such that for each $n$ the multilinear form $\beta_1(a_0\cdots a_n)$ is cyclically
symmetric. As well known, this is equivalent to requiring that $\beta_1(ab)=\beta_1(ba)$, so
$\beta_1$ is the trace in the classical sense and $\beta_2$ is an invariant symmetric bilinear form
on $A$. Further, the concept of a $\Lambda$-Frobenius algebra reduces to the classical one: a not
necessarily commutative, associative unital $\k$-algebra with a trace $\beta_1$ such that
$\beta_1(ab)$ is a nondenerate bilinear form. It is classical that Frobenius algebras give rise to
topological quantum field theories (TQFTs) on oriented surfaces, see \cite{kock} for systematic
exposition. In this correspondence, commutative Frobenius algebras give rise to closed TQFTs defined on
$2$-dimensional oriented cobordisms without any additional data.  Not necessarily commutative
Frobenius algebra give rise to open TQFTs defined on oriented cobordisms together with a choice of marked
points which are combinatorial representatives of punctures, cf. \S \ref{subsec:c-infty}. 
  
\vskip .2cm
  
\noindent {\bf Dihedral case (Loday).} Here $\DG=\Xi$. Let $(A,*)$ be a unital associative algebra
with involution. As observed by Loday \cite[\S 5.2.11]{loday} $C_\bullet (A)$ becomes a dihedral
vector space, with $\omega_n$ acting by 
\[
  \omega_n(a_0\otimes  \cdots \otimes a_n) \,\,=\,\,  a_0^*\otimes a_n^*\otimes a_{n-1}^*\otimes
  \cdots \otimes a_2^* \otimes a_1^*. 
\]
A $\Xi$-trace on $A$ is a linear functional $\beta_1:A\to\k$ satisfying the identities
\[
	\beta_1(a^*)=\beta_1(a), \quad  \beta_1(ab) = \beta_1(ba),
\] 
which imply that $\beta_1(ab)=\beta_1(a^*b^*)$, i.e., that the involution preserves the bilinear
form $\beta_2$ (as well as all the higher $\beta_n$). $\Xi$-Frobenius algebras can be called
``Frobenius algebras with involution". Variations of these were used by Alexeevski and Natanzon
\cite{alex-natanzon} to construct TQFTs on unoriented surfaces. 

\vskip .2cm
  
\noindent {\bf Paracyclic and $N$-cyclic cases.} Here $\DG=\Lambda_\infty$ or $\Lambda_N$.  Suppose
$A$ is  a unital associative algebra equipped with an automorphism $F$. We then have an invertible
transformation $\tau_n$ on $C_n(A)$ given by
\begin{equation}
	\label{eq:tau-tw-alg}
	\tau_n(a_0, \cdots, a_n) = (F^{-1} (a_n), a_0, a_1, \cdots, a_{n-1}). 
\end{equation}
This makes $C_\bullet(A)$ into a paracyclic vector space. If $F^N=\on{Id}$ for some $N$, then
$C_\bullet(A)$ is an $N$-cyclic vector space. 
  
A $\Lambda_\infty$-trace on $A$ is a linear functional $\beta_1: A\to\k$ such that
\[
	\beta_1(F(a))=\beta_1(a), \quad  \beta_1(a\cdot b) = \beta_1(b\cdot F(a)). 
\]
In particular, the form $\beta_2(a,b)=\beta_1(ab)$ is not required to be symmetric, and $F$
intertwines $\beta_2$ and $\beta_2^t$:
\begin{equation}
	\label{eq:nakayama}
	\beta_2(a,b) = \beta_2(b, F(a)). 
\end{equation}
Note that in the Frobenius case ($A$ finite-dimensional and $\beta_2$, considered as a morphism
$A\to A^*$, is an isomorphism), $F$ is defined by \eqref{eq:nakayama} uniquely:
$F=\beta_2^{-1}\beta_2^t$. In this context $F$ is called the {\em Nakayama automorphism} of
$\beta_2$. We get the following.
  
\begin{prop}
\begin{enumerate}
\item[(a)] A $\Lambda_\infty$-Frobenius algebra is the same as a finite-dimensional unital
	associative algebra $A$ together with a linear functional $\beta_1: A\to\k$ such that:
\begin{enumerate}
\item[(a1)] $\beta_2(a,b)=\beta_1(ab)$ is a non-degenerate, not necessarily symmetric bilinear form
	on $A$.

\item[(a2)] The Nakayama automorphism $F$ of $\beta_2$ is an algebra automorphism of $A$. 
\end{enumerate}

\item[(b)] A $\Lambda_N$-Frobenius algebra is a $\Lambda_\infty$-Frobenius algebra such that $F^N=\Id$. 
\end{enumerate}
\qed
\end{prop}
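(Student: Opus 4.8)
The plan is to unwind the definitions of $\DG$-trace and $\DG$-Frobenius algebra in the cases $\DG = \Lambda_\infty$ and $\DG = \Lambda_N$, since the only computation of substance has already been performed in the discussion preceding the statement. For (a), I would begin from two observations recorded there: first, that $\Gen_0 = \ZZ$ carries the trivial parity (Proposition \ref{prop:orient-G-0}), so a twisted $\Gen_0$-action on $A$ is simply an algebra automorphism $F$; and second, that a $\Lambda_\infty$-trace is a linear functional $\beta_1 : A \to \k$ subject to $\beta_1(F(a)) = \beta_1(a)$ and $\beta_1(ab) = \beta_1(bF(a))$. Together these say that a $\Lambda_\infty$-Frobenius algebra is exactly a finite-dimensional unital associative algebra $A$ equipped with an algebra automorphism $F$ and a linear functional $\beta_1$ obeying these two identities, with $\beta_2(a,b) = \beta_1(ab)$ non-degenerate.

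The key step is then to read the second identity as \eqref{eq:nakayama}: writing $\beta_2(x,y) = \beta_1(xy)$, it becomes $\beta_2(a,b) = \beta_2(b, F(a))$, and non-degeneracy of $\beta_2$ forces $F$ to be the Nakayama automorphism $\beta_2^{-1}\beta_2^t$. Hence (a1) is precisely the non-degeneracy hypothesis and (a2) is precisely the hypothesis that this Nakayama automorphism is an algebra automorphism; and the remaining identity $\beta_1 \circ F = \beta_1$ is automatic, being obtained by setting $b = 1$ in $\beta_1(ab) = \beta_1(bF(a))$. Conversely, starting from data as in (a1)--(a2), one takes $F$ to be the Nakayama automorphism of $\beta_2$; by (a2) it is an algebra automorphism and so a twisted $\Gen_0$-action, its defining relation $\beta_2(a,b) = \beta_2(b,F(a))$ is the second trace identity, the first follows as before, and $\beta_2$ is non-degenerate, so $(A, F, \beta)$ is a $\Lambda_\infty$-Frobenius algebra. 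The two passages are manifestly mutually inverse, establishing the bijection in (a).

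For (b), I would note that $\Lambda_N$ differs from $\Lambda_\infty$ only in that $\Gen_0 = \ZZ/N$ and $\Gen_n = \ZZ/N(n+1)$, the generator $\tau_n$ of $\Gen_n$ acting on $C_n(A)$ by the same formula \eqref{eq:tau-tw-alg} as in the paracyclic case; hence the $\Gen_n$-invariance conditions on the $\beta_n$ are unchanged, so a $\Lambda_N$-trace is the same datum as a $\Lambda_\infty$-trace, while a twisted $\Gen_0 = \ZZ/N$-action is exactly an algebra automorphism $F$ with $F^N = \Id$. Combining this with part (a) gives the claim. There is no serious obstacle in any of this; the only point requiring a little care is the bookkeeping around \eqref{eq:nakayama}, i.e. the recognition that the Nakayama relation is one of the two trace identities and that the other is then redundant.
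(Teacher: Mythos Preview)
Your proposal is correct and follows exactly the approach the paper intends: the proposition is marked with a \qed\ immediately after its statement, meaning its proof is just the unwinding of definitions from the preceding discussion, which is precisely what you do. Your observation that the identity $\beta_1\circ F=\beta_1$ is redundant (obtained by setting $b=1$ in the other trace identity) is a nice detail the paper leaves implicit.
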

  
For $N=2$, we obtain the concept which is similar to the Frobenius algebras studied in the work of
Novak and Runkel \cite{novak-runkel} where such algebras are used to construct TQFTs on surfaces
with 2-spin structure. 
  
\vskip .2cm
  
\noindent {\bf Quaternionic case (Dunn):}
Here $\DG=\nabla$. Let $A$ be an associative algebra with an anti-automorphism $J$ of order 4.
Then the transformation
\[
w_n(a_0\otimes \cdots \otimes a_n) \,\,=\,\, J^{-1}(a_0) \otimes J(a_n) \otimes \cdots \otimes J(a_1)
\]
together with $\tau_n$ defined as in \eqref{eq:tau-tw-alg} for $F=J^2$, make $C_\bullet (A)$ into a
quaternionic object in the category of vector spaces. 
 
A $\nabla$-trace on $A$ is a linear functional $\beta: A\to\k$ such  that $\beta_1(Ja) = \beta_1(a)$
and $\beta_2(a,b)=\beta_1(ab)$ satisfies the conditions:
\[
	\beta_2(a,b) = \beta_2(b, J^2 a) = \beta_2(J^{-1} a, Jb).
\]
 
The remaining planar cases can be analyzed in a similar way. 
 
\begin{rems} (a) Let $\DG$ be any planar simplicial group with corresponding planar Lie group $\GG$.
	We expect that the theory of $\DG$-structured graphs developed in \S
	\ref{sec:structuredgraphs} can be used to generalize the results of \cite{kock,
	alex-natanzon, novak-runkel} as follows: A $\DG$-Frobenius algebra gives rise to a numerical
	invariant defined on $\GG$-structured surfaces without boundary and to a TQFT on
	$\GG$-structured 2-dimensional cobordisms  (with marked points  as in  \S
	\ref{subsec:c-infty} allowed in both cases and with additional constraints of commutativity
	removing the dependence on marking points). 

	\vskip .1cm

	(b) Further, one can extend the concept of a $\DG$-Frobenius algebra to the case of
	associative dg (or $A_{\infty}$) algebras, and dg (or $A_\infty$) categories also allowing
	the forms $\beta_1$ and $\beta_2$ to have some degree $d\neq 0$ as in
	\cite{kontsevich-feynman}. The corresponding ``$d$-dimensional $\DG$-Calabi-Yau algebras and
	categories" should then give cohomology classes on moduli spaces of marked $\GG$-structured
	surfaces generalizing \cite{costello-TFT, kontsevich-feynman}.

	We leave both these directions to future work. 
\end{rems}

\newpage
\section{Crossed simplicial groups and generalized orders}\label{sec:order} 

\subsection{$\DG$-structured sets}
   
The simplex category $\Delta$, whose objects are the standard ordinals $[n]$, $n\geq 0$,  can be embedded
into a larger category ${\bf \Delta}$ with objects being all finite nonempty linearly ordered sets.
While this embedding is an equivalence of categories, it is often more convenient to work with the
larger category. In this section, we provide an analogous construction for any crossed simplicial
group $\DG$. We introduce a larger category $\G$ of $\DG$-structured finite sets, fitting into a diagram
of functors with vertical arrows being surjective on objects and horizontal arrows being
equivalences of categories:
\begin{equation}
	\label{eq:Gc}
	\xymatrix{
		\DG \ar[d]_\lambda  \ar[r]^\epsilon& \G \ar[d]^{\lambda_{\G}}\\
		{\bf N}\ar[r]& \FSet
	}
\end{equation}
here $\FSet$ is the category of all nonempty finite sets, $\bf N \subset \FSet$ is the full
subcategory on the standard objects $\{0,1,\dots, n\}$, and $\lambda$ is the functor from
Proposition \ref {prop:forget}. 
 
\subsubsection{Combinatorial model}
\label{sec:combmodel} 
  
If we assume that a category $\Gc$ as in \eqref{eq:Gc} is already constructed, then any object
$I\in\Gc$ is isomorphic to a unique $[n]\in\DG$ and so $\Isom_\Gc([n], I)$ is a right torsor over
$\Gen_n=\Isom_\DG([n], [n])$. This leads to the following two definitions. 
 
\begin{defi}
Let $\DG$ be a crossed simplicial group. A {\em $\DG$-structured set} is a pair $(I, \Oc(I))$, where:
\begin{enumerate}
\item[(1)] $I$ is  a nonempty finite set of some cardinality $n+1$, $n\geq 0$. 
\item[(2)] $\Oc(I)$ is a right $\Gen_n$-torsor together with a map
\[
\rho: \Oc(I) \lra \Isom_{\FSet}\bigl( \{0,1,\dots, n\}, I \bigr)
\]
which is equivariant with respect to the homomorphism $\lambda_n: \Gen_n\to S_{n+1}$
from Proposition \ref {prop:forget}.
\end{enumerate}
\end{defi}

Thus $\rho$ is a {\em reduction of structure group} inducing an isomorphism of $S_{n+1}$-torsors
\[
	\O(I)\times_{\Gen_n} S_{n+1} \overset{\cong}{\lra} \Isom_{\FSet}\bigl( \{0,1,\dots, n\}, I\bigr).
\]
The datum of $\rho$ will be referred to as a $\DG$-{\em structure}, or a $\DG$-{\em order}, on the set $I$.
Elements of $\Oc(I)$ will be called {\em structured frames} of $I$. 
 
\begin{defi}\label{defi:dgstructure}
A morphism of $\DG$-structured sets 
\[
\psi: (I', \Oc(I'))\lra (I, \Oc(I)), \quad |I'|=n'+1, \, |I|=n+1, 
\]
is a datum of morphisms
\[
	\left\{ \psi_{f,f'}\in\Hom_\DG([n'], [n]) \; \big| \; \text{$f'\in \Oc(I')$,
	$f\in\Oc(I)$}\right\},
\]
satisfying, for every $g \in \Gen_{n}$, $g' \in \Gen_{n'}$, the equivariance condition
\[
	\psi_{fg, f'g'} = g^{-1} \circ \psi_{f, f'}\circ g'.
\]
The composition of two morphisms
\[
	(I'', \Oc(I'')) \overset{\psi'}{\lra}  (I', \Oc(I')) \overset{\psi}{\lra} (I, \Oc(I))
\]
is defined by the formula
\[
(\psi\psi')_{f, f''} = \psi_{f,f'} \circ \psi'_{f', f''}
\]
where the right hand is independent on $f'$. 
\end{defi}
 
We denote by $\Gc$ the category of $\DG$-structured sets thus defined.  The following is then
immediate.
 
\begin{prop}
\begin{itemize}
\item[(a)] We have an equivalence of categories $\epsilon:\DG\to\Gc$ sending an object $[n]$
to the $\DG$-structured set $(\{0,1,\dots, n\}, \Gen_n)$.

\item[(b)] For any object $(I, \Oc(I))\in\Gc$, we have a canonical identification of $\Gen_n$-torsors
\[
	\Oc(I) \overset{\cong}{\lra} \Isom_\Gc(\epsilon [n], (I, \Oc(I)))
\]
and of sets
\[
	I \overset{\cong}{\lra} \Hom_\Gc(\epsilon[0], (I, \Oc(I))/\Gen_0. 
\]
\item[(c)]  In particular, the correspondence $(I, \Oc(I))\mapsto I$ gives a functor
	$ \lambda_{\Gc}:\Gc\to  \FSet$ extending $\lambda:\DG\to\bf N$. \qed
\end{itemize}
\end{prop}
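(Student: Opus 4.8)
The plan is to unravel Definition~\ref{defi:dgstructure} together with the definition of $\DG$-structured set preceding it, and to transport the content of Proposition~\ref{prop:forget} and Example~\ref{ex:extreme} along the functor $\epsilon$. Before proving (a)--(c), I would first check that $\epsilon$ is a well-defined functor: on objects it sends $[n]$ to $(\{0,1,\dots,n\},\Gen_n)$ with reduction map $\lambda_n\colon\Gen_n\to S_{n+1}$, and it sends a morphism $u\colon[m]\to[n]$ of $\DG$ to the collection $\epsilon(u)_{g,g'}=g^{-1}\circ u\circ g'$ indexed by $g\in\Gen_n$, $g'\in\Gen_m$; this collection satisfies the equivariance condition of Definition~\ref{defi:dgstructure} on inspection, and compatibility with composition and units is a one-line computation once one notes that the identity endomorphism of an object $(I,\Oc(I))$ has components $\langle f',f\rangle$, the unique element of $\Gen_n$ carrying the structured frame $f$ to the frame $f'$.

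For part (a), full faithfulness is immediate from the equivariance condition: a morphism $\psi\colon\epsilon[m]\to\epsilon[n]$ in $\Gc$ is determined by its single component $\psi_{\id,\id}\in\Hom_\DG([m],[n])$ via $\psi_{g,g'}=g^{-1}\circ\psi_{\id,\id}\circ g'$, and conversely every element of $\Hom_\DG([m],[n])$ arises this way, so $\psi\mapsto\psi_{\id,\id}$ is a bijection inverse to $\epsilon$. For essential surjectivity, given $(I,\Oc(I))$ with $|I|=n+1$, I would fix a frame $f_0\in\Oc(I)$, write each $f\in\Oc(I)$ uniquely as $f_0h$ using the torsor property, and check that $\psi_{f,g}:=h^{-1}\circ g$ defines an isomorphism $\epsilon[n]\to(I,\Oc(I))$ --- it is a morphism directly by the equivariance condition, and it is invertible since each component lies in $\Gen_n=\Isom_\DG([n],[n])$ and composition in $\Gc$ is computed componentwise.

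For part (b), the same bookkeeping applies. If $\psi\colon\epsilon[n]\to(I,\Oc(I))$ is an isomorphism, then as $f$ runs over the $\Gen_n$-torsor $\Oc(I)$ the components $\psi_{f,\id}$ run bijectively over $\Gen_n$, so there is a unique $f$ with $\psi_{f,\id}=\id$; sending $\psi$ to this $f$ gives the asserted bijection $\Isom_\Gc(\epsilon[n],(I,\Oc(I)))\to\Oc(I)$, and a short check shows it intertwines the right action of $\Aut_\Gc(\epsilon[n])=\Gen_n$ by precomposition with the torsor action. The second identification is obtained by transporting the bijection $\Hom_\DG([0],[n])/\Gen_0\cong\{0,1,\dots,n\}$ of Example~\ref{ex:extreme} and Proposition~\ref{prop:forget} along $\epsilon$ and along the isomorphism $(I,\Oc(I))\cong\epsilon[n]$ furnished by any frame; one must then verify that the resulting map into $I$ is independent of the chosen frame, which is where the $\lambda_n$-equivariance of the reduction $\rho$ is used.

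Finally, for part (c), given a morphism $\psi\colon(I',\Oc(I'))\to(I,\Oc(I))$ I would set $\lambda_\Gc(\psi):=\rho(f)\circ\lambda(\psi_{f,f'})\circ\rho(f')^{-1}\colon I'\to I$, where $\lambda$ is the functor of Proposition~\ref{prop:forget} turning $\psi_{f,f'}\in\Hom_\DG([n'],[n])$ into a map $\{0,\dots,n'\}\to\{0,\dots,n\}$. Independence of the pair $(f,f')$ follows from a brief computation combining the equivariance of $\psi$, the fact that $\lambda$ restricts to $\lambda_n$ on $\Gen_n$, and the identity $\rho(fh)=\rho(f)\circ\lambda_n(h)$; functoriality of $\lambda_\Gc$ and the relation $\lambda_\Gc\circ\epsilon=\lambda$ are then clear. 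I do not expect a serious obstacle anywhere; the only point genuinely requiring care is the consistent handling of left versus right torsor actions and of the $\lambda_n$-equivariance of $\rho$, since that equivariance is exactly what makes the ``canonical'' frame-independent identifications in (b) and (c) well defined.
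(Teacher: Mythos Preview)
Your proposal is correct. The paper gives no proof at all---it marks the proposition as immediate with a \qed\ in the statement itself, having introduced it with ``The following is then immediate''---so your detailed unraveling of Definition~\ref{defi:dgstructure} is exactly the kind of verification the authors are leaving to the reader, and your bookkeeping with the equivariance condition, frame choices, and $\lambda_n$-equivariance of $\rho$ is the only reasonable route.
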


In classical combinatorics, it is often important to speak about different orders of a specific type on
a given set, without identifying isomorphic ones. For example, an $N$-element set has $N!$ different
linear orders, all isomorphic to one another. We introduce the corresponding concept for
$\DG$-orders.

\begin{defi}
Let $I$ be a nonempty finite set. The category $\Gc/I$ of $\DG$-orders on $I$ has objects given by all
$\DG$-structures on $I$, and morphisms given by those morphisms $\psi: (I,\Oc)\to (I, \Oc')$
in $\Gc$ which induce the identity map on the set $I$.
\end{defi}

Clearly, $\Gc/I$ is a groupoid. If $|I|=n+1$ and $|\Gen_n|<\infty$, then the number of isomorphism
classes of objects of $\Gc/I$ is
\[
|\pi_0(\Gc/I)| = \frac{(n+1)!}{|\Gen_n|} \cdot |\Gen_n^0|, \quad \text{where
$\Gen_n^0=\Ker(\lambda_n)$.} 
\]
This number can be understood as ``the number of different $\DG$-orders on $I$". Each such order,
represented by an object of $\Gc/I$, has automorphism group isomorphic to $\Gen_n^0$. 

\begin{exa} Consider the crossed simplicial group $\DG = \Xi$ given by the dihedral category.  We
	have $\Gen_0 = \Gen_0^0 = \Zt$ so that, for a singleton set $I$, the category $\Gc/I$ has a
	single isomorphism class of objects and each object has automorphism group $\Zt$.  We have
	$\Gen_1 = \Zt \times \Zt$ and $\Gen_1^0 = \Zt$ so that, on a given set $I$ of cardinality
	$2$, we have one isomorphism class of objects in $\Gc/I$, each object having automorphism
	group $\Zt$. For $|I| = n+1$ with $n \ge 2$, the objects of $\Gc/I$ have trivial
	automorphism group, and there are $\frac{n!}{2}$ isomorphism classes.
\end{exa}

Note that Definition \ref{defi:dgstructure} does not use any special features of $\DG$ except the functor
$\lambda:\DG \to \bf N$. Using the unique factorization property of crossed simplicial groups, we can
give the following reformulation. 

\begin{prop}\label{prop:morphisms-Gc}
The set $\Hom_\Gc((I', \Oc(I')), (I, \Oc(I)))$ is in bijection with the set of pairs
$(\psi_*, \psi^*)$, where $\psi_*: I'\to I$ and $\psi^*: \Oc(I)\to\Oc(I')$ are maps of sets such that:
\begin{itemize}
\item[(i)] For every structured frame $f$ of $I$, the unique map $f^*\psi: \{0,1,\dots, n'\}\to \{0,1,\dots, n\}$ making the
diagram
\[
\xymatrix{
\{0,1,\dots,n'\} 
\ar[d]_{\rho(\psi^*f)}\ar[r]^{f^*\psi }& \{0,1,\dots,n\} 
\ar[d]^{\rho(f)}
\\
I'\ar[r]_{\psi_*}&I
}
\]
commutative is given by a morphism in $\Delta$. 

\item[(ii)] For each frame $f$ of $I$ and $g\in\Gen_n$, we have
\[
\psi^*(f .  g) = \psi^*(f) . (f^*\psi)^*(g), 
\]
where $(f^*\psi)^*: \Gen_n\to\Gen_{n'}$ is the map associated to $f^*\psi$ via the simplicial set
$\Gen$. 
\end{itemize}
\end{prop}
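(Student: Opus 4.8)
The plan is to unwind Definition~\ref{defi:dgstructure} using the unique factorization property of $\DG$ (Definition~\ref{defi:csg}(2)) together with the canonical square~\eqref{eq:can-square}, and to produce two explicit, mutually inverse constructions. Write $|I| = n+1$, $|I'| = n'+1$.

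First I would go from a morphism $\psi = \{\psi_{f,f'}\}$ to a pair $(\psi_*,\psi^*)$. For frames $f\in\Oc(I)$, $f'\in\Oc(I')$ take the canonical factorization $\psi_{f,f'} = i(\phi_{f,f'})\circ g_{f,f'}$ with $\phi_{f,f'}\in\Hom_\Delta([n'],[n])$, $g_{f,f'}\in\Gen_{n'}$. The special case $g=e$ of the equivariance relation in Definition~\ref{defi:dgstructure} gives $\psi_{f,f'g'} = \psi_{f,f'}\circ g'$, so by uniqueness $\phi_{f,f'g'}=\phi_{f,f'}$ and $g_{f,f'g'}=g_{f,f'}\,g'$; hence $\psi^*(f):=f'\cdot g_{f,f'}^{-1}$ is independent of $f'$, lies in $\Oc(I')$, and $\psi_{f,\psi^*(f)}=i(\phi_{f,f'})\in i(\Delta)$. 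Transporting $\lambda(\psi_{f,f'})$ along the frame isomorphisms $\rho(f),\rho(f')$ and using both equivariances together with the $\lambda_n$-equivariance of $\rho$ produces a map $\psi_*\colon I'\to I$ independent of $f,f'$; taking $f'=\psi^*(f)$ shows the square in~(i) commutes with top edge $\phi_{f,f'}\in\Delta$, which is condition~(i) and identifies $f^*\psi$ with $\phi_{f,f'}$. Condition~(ii) then follows by rewriting the canonical factorization of $\psi_{fg,f'}=g^{-1}\circ\psi_{f,f'}$ via~\eqref{eq:can-square}: this yields $g_{fg,f'}=(\phi_{f,f'}^*g)^{-1}g_{f,f'}$, hence $\psi^*(fg)=\psi^*(f)\cdot(f^*\psi)^*(g)$.

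Conversely, given $(\psi_*,\psi^*)$ obeying~(i) and~(ii), for $f\in\Oc(I)$, $f'\in\Oc(I')$ let $g_{f,f'}\in\Gen_{n'}$ be the unique element with $f'=\psi^*(f)\cdot g_{f,f'}$ (using that $\Oc(I')$ is a $\Gen_{n'}$-torsor) and set $\psi_{f,f'}:=i(f^*\psi)\circ g_{f,f'}$. To check the equivariance condition of Definition~\ref{defi:dgstructure}: condition~(ii) computes $g_{fg_0,f'g'}=\big((f^*\psi)^*(g_0)\big)^{-1}g_{f,f'}g'$; then one shows $(fg_0)^*\psi=g_0^*(f^*\psi)$ by applying $\lambda$ to~\eqref{eq:can-square} for $f^*\psi$ and $g_0$, using the $\lambda_n$-equivariance of $\rho$, the square in~(i), and the fact that a morphism of $\Delta$ is determined by its underlying map of vertices; finally $g_0^{-1}\circ i(f^*\psi)=i(g_0^*(f^*\psi))\circ\big((f^*\psi)^*(g_0)\big)^{-1}$ (again~\eqref{eq:can-square}) combines these into $\psi_{fg_0,f'g'}=g_0^{-1}\circ\psi_{f,f'}\circ g'$. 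That the two constructions are mutually inverse is then immediate: the $\Delta$- and $\Gen_{n'}$-parts of a reconstructed family are $\phi_{f,f'}$ and $g_{f,f'}$ by uniqueness of canonical factorizations, while conversely $f'=\psi^*(f)$ forces $g_{f,\psi^*(f)}=e$, so the extracted pair is $\psi^*$ unchanged together with $\psi_*$ recovered from the square in~(i).

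The one genuinely load-bearing identity — and the step I expect to need the most care — is $(fg_0)^*\psi=g_0^*(f^*\psi)$, expressing that the assignment $f\mapsto f^*\psi$ is compatible with the crossed-simplicial operations; it rests squarely on~\eqref{eq:can-square} and on simplicial operators being determined by their effect on vertices. Everything else is formal bookkeeping, the recurring chore being to confirm that the choices of structured frame drop out of the final formulas.
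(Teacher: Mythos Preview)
Your proof is correct and follows essentially the same route as the paper's: extract $\psi_*$ via the functor $\lambda_{\Gc}$, extract $\psi^*(f)$ by canonically factoring $\psi_{f,f'}$ and observing that the result is independent of the auxiliary frame $f'$, then reverse the construction. The paper states the forward map and ``leave[s] the remaining verifications to the reader'', whereas you actually carry them out; in particular your identification $(fg_0)^*\psi = g_0^*(f^*\psi)$ via $\lambda$ applied to the canonical square~\eqref{eq:can-square} is exactly the ingredient the paper's sketch silently relies on.
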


Note that the diagram in (i) can be seen as an analog of the diagram
\eqref{eq:can-square} describing the canonical factorization of morphisms in $\DG$.
Condition (ii) mimics the first identity of crossed simplicial groups in
Proposition \ref{prop:skeleton}(c). 

\begin{proof}[Proof of Proposition \ref{prop:morphisms-Gc}:]
Let $\psi=(\psi_{f,f'})$ be a morphism from $(I', \Oc(I'))$
to $(I, \Oc(I))$ in $\Gc$. The map $\psi_*: I'\to I$ is defined to be the value, on $\psi$,
of the functor $\lambda_{\Gc}$. To define the map $\psi^*: \Oc(I)\to\Oc(I')$ associated to $\psi$,
we fix $f\in\Oc(I)$, choose and arbitrary $f'\in\Oc(I')$ and interpret them as vertical
isomorphisms in the diagram
\[
\xymatrix{
(I', \Oc(I')) \ar[rr]^\psi && (I, \Oc(I))
\\
\epsilon[n']
\ar[u]^{f'} 
 \ar[rr]^{\psi_{f, f'} = f^{-1}\circ\psi\circ f' }&& \epsilon[n]
 \ar[u]_f
 \\\epsilon[n']\ar[u]^{g'} 
 \ar[urr]_\phi
}
\]
Next, we define $\phi$ and $g'$ in the diagram via the canonical factorization
$\psi_{f, f'} = \phi\circ g'$ with $\phi\in\Delta$, $g'\in\Gen_{n'}$, and set $\psi^*(f)=f'\circ g'$. The result is independent
of $f'$ in virtue of the equivariance condition on the datum $\psi=(\psi_{f, f'})$. 
This defines the pair $(\psi_*, \psi^*)$ associated to $\psi$. We leave the remaining verifications
to the reader. 
\end{proof}

\begin{exa}\label{ex:DG-closure} A linear order $\leq$ on a nonempty finite set $I$ defines a
	canonical $\DG$-order on $I$ called the $\DG$-{\em closure} of $\leq$: Note
	that $\leq$ can be identified with the unique monotone bijection $\sigma:
	\{0,1,\dots, n\}\to I$, where $|I|= n+1$. We then define the $\Gen_n$-torsor $\Oc_{\sigma}$ to
	be $\Gen_n$, and the map $\rho$ to be the composition
	\[
	\Gen_n\buildrel\lambda_n\over\lra S_{n+1}= \Isom_{\FSet}\bigl(
	\{0,1,\dots, n\}, \{0,1,\dots, n\}\bigr)\buildrel\sigma_*\over\lra 
	 \Isom_{\FSet}\bigl(
	\{0,1,\dots, n\},  I\bigr). 
	\]
	Denoting by ${\bf\Delta}$ the category of all nonempty finite ordinals, we can
	view the $\DG$-closure as a functor ${\bf i}: {\bf\Delta} \to\Gc$ extending
	$i:\Delta \to \DG$. 
\end{exa}

\begin{prop}\label{prop:model-G/I}
\begin{itemize}
\item[(a)]
For linear orders on $I$ given by bijections $\sigma_1, \sigma_2$, we have
\[
\Hom_{\Gc/I}((I, \Oc_{\sigma_1}), (I, \Oc_{\sigma_2})) \cong \bigl\{ g\in\Gen_n \bigl| \,\, \lambda_n(g)
= \sigma_1^{-1}\sigma_2\bigr\},
\]
and the composition of morphisms corresponds to the multiplication in $\Gen_n$. 

\item[(b)] Every object $(I, \Oc(I))$ of $\Gc/I$ is isomorphic  to an object of the form $(I,\Oc_\sigma)$,
so the category with objects  $(I,\Oc_\sigma)$ and morphisms defined by the right hand side of the identification
in (a), is equivalent to $\Gc/I$. 
\end{itemize}
\end{prop}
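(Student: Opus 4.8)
The plan is to prove part (a) by directly unwinding Definition \ref{defi:dgstructure} with the tautological torsor identifications attached to the $\DG$-closures, and then to obtain part (b) by a routine essential-surjectivity argument. For (a), write $|I|=n+1$ and use the identifications $\Oc_{\sigma_1}=\Oc_{\sigma_2}=\Gen_n$ from Example \ref{ex:DG-closure}, under which $\rho_{\sigma_j}(g)=\sigma_j\circ\lambda_n(g)$. A morphism $\psi$ in $\Gc$ between these two objects is a family $\{\psi_{f,f'}\in\Hom_\DG([n],[n])\}$ indexed by $f,f'\in\Gen_n$ and satisfying the equivariance relation of Definition \ref{defi:dgstructure}; by that relation the family is determined by the single element $\psi_{1,1}$ via $\psi_{f,f'}=f^{-1}\psi_{1,1}f'$, and conversely any choice of $\psi_{1,1}\in\Hom_\DG([n],[n])$ produces such a family. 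First I would compute, using the two unit frames, that the underlying map $\psi_*=\lambda_{\Gc}(\psi)$ equals $\sigma_2\circ\lambda(\psi_{1,1})\circ\sigma_1^{-1}$, so that $\psi$ lies in $\Gc/I$ exactly when $\lambda(\psi_{1,1})\in S_{n+1}$ is the permutation $\sigma_1^{-1}\sigma_2$. The decisive point is that this condition forces the $\Delta$-part of the canonical factorization $\psi_{1,1}=i(\phi)\circ g$ to be trivial: since $\lambda(i(\phi)\circ g)=\phi\circ\lambda_n(g)$ is then a bijection of $\{0,\dots,n\}$ while $\lambda_n(g)$ already is one, $\phi$ is a monotone bijection, hence $\phi=\id_{[n]}$; thus $\psi_{1,1}=g\in\Gen_n$ with $\lambda_n(g)=\sigma_1^{-1}\sigma_2$. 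Conversely every such $g$ gives a morphism in $\Gc/I$, which yields the claimed bijection, and the composition statement drops out of $(\psi\psi')_{f,f''}=\psi_{f,f'}\circ\psi'_{f',f''}$ evaluated at $f=f'=f''=1$. One could equally run this computation through Proposition \ref{prop:morphisms-Gc}, with its condition (i) playing the role of the monotonicity argument.

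For (b), given any $(I,\Oc(I))$ in $\Gc/I$ I would choose a structured frame $f_0\in\Oc(I)$ and set $\sigma:=\rho(f_0)\colon\{0,\dots,n\}\to I$. Then $\theta\colon\Oc_\sigma=\Gen_n\to\Oc(I)$, $g\mapsto f_0\cdot g$, is a $\Gen_n$-equivariant bijection compatible with the structure maps, because $\rho(f_0\cdot g)=\rho(f_0)\circ\lambda_n(g)=\sigma\circ\lambda_n(g)=\rho_\sigma(g)$. From $\theta$ I would read off the morphism $\psi$ in $\Gc/I$ with components $\psi_{f,f'}$ characterized by $f\cdot\psi_{f,f'}=\theta(f')$ in $\Oc(I)$: equivariance is immediate from that of $\theta$, the formula for $\psi_*$ above gives $\psi_*=\id_I$ (test on $f=f_0$, $f'=1$), and $\psi$ is invertible because $\theta$ is. Hence every object of $\Gc/I$ is isomorphic to one of the form $(I,\Oc_\sigma)$. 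Consequently the full subcategory of $\Gc/I$ spanned by the objects $(I,\Oc_\sigma)$, with $\sigma$ ranging over all bijections $\{0,\dots,n\}\to I$, is essentially surjective, so its inclusion into $\Gc/I$ is an equivalence; together with the hom-set and composition description from (a) this identifies $\Gc/I$ with the category having those $\sigma$ as objects and the sets $\{g\in\Gen_n:\lambda_n(g)=\sigma_1^{-1}\sigma_2\}$ as morphisms.

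\emph{Main obstacle.} There is no substantial difficulty: the argument is essentially bookkeeping about torsors, structure maps, and left/right conventions. The single genuinely non-formal observation is the one highlighted above, namely that a morphism inducing the identity on $I$ forces the $\Delta$-component of $\psi_{1,1}$ to be a monotone, hence identity, permutation, which is precisely what collapses each hom-set onto a $\lambda_n$-fiber inside $\Gen_n$. The main care needed is to keep all conventions coherent — the direction of the contravariant datum $\psi^*$, pre- versus post-composition in the definition of $\rho_\sigma$, and the precise meaning of $\lambda_n$ — so that the permutation appearing is $\sigma_1^{-1}\sigma_2$ rather than its inverse and the composition law emerges as multiplication in the correct order.
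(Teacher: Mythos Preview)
Your proposal is correct and follows essentially the same route as the paper, which for (a) simply says ``straightforward from the definitions'' and for (b) picks a frame $f\in\Oc(I)$, sets $\sigma=\lambda(\psi^f)$, and observes the resulting isomorphism to $(I,\Oc_\sigma)$. One small slip: from your own formula $\psi_*=\sigma_2\circ\lambda(\psi_{1,1})\circ\sigma_1^{-1}$, setting $\psi_*=\id_I$ yields $\lambda(\psi_{1,1})=\sigma_2^{-1}\sigma_1$, not $\sigma_1^{-1}\sigma_2$ --- precisely the convention hazard you flag at the end, and it does not affect the substance of the argument.
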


\noindent {\sl Proof:} (a) is straightforward from the definitions. To prove (b), note that any $f\in \Oc(I)$
gives rise to an isomorphism $\psi^f: [n]\to (I, \Oc(I))$ in $\Gc$ which gives a bijection
$\sigma=\lambda(\psi^f)$ from $\{0,1,\dots, n\}$ to $I$. We then see easily that $(I, \Oc(I))$
is isomorphic to $(I, \Oc_\sigma)$ by an isomorphism identical on $I$. \qed

\vskip .2cm

Note that a $\Delta$-structured set is a finite nonempty linearly ordered set. Any subset of a
linearly ordered set admits a canonical induced order. The following proposition shows that this
fundamental property generalizes to $\DG$-structured sets. 

\begin{prop}
Let $u: I'\hookrightarrow I$ be an embedding of nonempty finite sets. Then there is a functor
\[
\Res^I_{I'}: \Gc/I \lra\Gc/I'
\]
called {\em restriction of $\DG$-structures from $I$ to $I'$}, with the following properties:
\begin{itemize}
\item[(a)] For each $\DG$-structure $\Oc(I)$ on $I$ the map $u$    extends to a canonical
morphism in $\Gc$
\[
\psi^u: \Res^I_{I'}(I, \Oc(I))\lra (I, \Oc(I)).
\]
\item[(b)] For a triple of composable embeddings $I''\hookrightarrow I'\hookrightarrow I$ we have a canonical
isomorphism
\[
\Res^I_{I''} \,\,\cong \,\, \Res^{I'}_{I''}\circ \Res^I_{I'}. 
\]
\end{itemize}
\end{prop}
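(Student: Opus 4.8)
The plan is to push everything through the rigid combinatorial model of Proposition~\ref{prop:model-G/I}(b), in which an object of $\Gc/I$ is an honest linear order on $I$, i.e.\ a bijection $\sigma\colon\{0,1,\dots,n\}\to I$ with $|I|=n+1$, with $\Hom_{\Gc/I}((I,\Oc_{\sigma_1}),(I,\Oc_{\sigma_2}))=\{g\in\Gen_n\mid\lambda_n(g)=\sigma_1^{-1}\sigma_2\}$ and composition given by multiplication in $\Gen_n$; likewise for $\Gc/I'$ with $n'+1=|I'|$. Restricting a linear order along $u\colon I'\hookrightarrow I$ is canonical, so this is the natural home for the construction. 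For a linear order $\sigma$ on $I$ I would let $\sigma|_{I'}\colon\{0,\dots,n'\}\to I'$ be the monotone bijection for the induced order; equivalently there is a unique monotone injection $\delta_\sigma\colon[n']\to[n]$ in $\Delta$ with $u\circ(\sigma|_{I'})=\sigma\circ\delta_\sigma$, and $\delta_\sigma$ has image $\sigma^{-1}(u(I'))$. On objects I define $\Res^I_{I'}(I,\Oc_\sigma):=(I',\Oc_{\sigma|_{I'}})$, and on a morphism $g\in\Gen_n$ (so $\sigma_2=\sigma_1\circ\lambda_n(g)$) I set $\Res^I_{I'}(g):=\delta_{\sigma_1}^{*}g\in\Gen_{n'}$, using the simplicial-set operation $\phi\mapsto\phi^{*}$ on $\Gen$ of Proposition~\ref{prop:skeleton}(a).

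The first thing to establish --- and the technical core --- is the transport identity $g^{*}\delta_{\sigma_1}=\delta_{\sigma_2}$ in $\Delta$. It follows from the canonical square \eqref{eq:can-square} for $\phi=\delta_{\sigma_1}$: applying the forgetful functor $\lambda\colon\DG\to\Set$ of Proposition~\ref{prop:forget} to $\delta_{\sigma_1}\circ(\delta_{\sigma_1}^{*}g)=g\circ(g^{*}\delta_{\sigma_1})$ shows that $g^{*}\delta_{\sigma_1}$ is the monotone injection onto $\lambda_n(g)^{-1}(\sigma_1^{-1}(u(I')))=(\sigma_1\lambda_n(g))^{-1}(u(I'))=\sigma_2^{-1}(u(I'))$, which is exactly $\delta_{\sigma_2}$. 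Granting this, $\Res^I_{I'}$ is a functor: $\delta_\sigma^{*}$ preserves units by Proposition~\ref{prop:skeleton}(a), and the first identity of Proposition~\ref{prop:skeleton}(c) gives $\delta_{\sigma_1}^{*}(g\circ h)=\delta_{\sigma_1}^{*}g\circ(g^{*}\delta_{\sigma_1})^{*}h=\Res^I_{I'}(g)\circ\delta_{\sigma_2}^{*}h$, so composition is respected. A direct computation with underlying maps (unwinding $\delta_{\sigma_i}=\sigma_i^{-1}\circ u\circ(\sigma_i|_{I'})$ and $\lambda_n(g)\circ\sigma_2^{-1}=\sigma_1^{-1}$) then checks $\lambda_{n'}(\delta_{\sigma_1}^{*}g)=(\sigma_1|_{I'})^{-1}(\sigma_2|_{I'})$, so $\Res^I_{I'}(g)$ really is a morphism of the required type in the model of $\Gc/I'$. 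Since this model is equivalent to $\Gc/I$ (resp.\ $\Gc/I'$), this determines the functor up to canonical isomorphism.

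For property (a) I would use the description of $\Hom_\Gc$ in Proposition~\ref{prop:morphisms-Gc}: the morphism $\psi^u\colon(I',\Oc_{\sigma|_{I'}})\to(I,\Oc_\sigma)$ is the pair $(\psi^u_*,(\psi^u)^{*})$ with $\psi^u_*=u$ and $(\psi^u)^{*}\colon\Oc_\sigma=\Gen_n\to\Gen_{n'}=\Oc_{\sigma|_{I'}}$, $f\mapsto\delta_\sigma^{*}f$. Condition (i) there --- that the associated map $f^{*}\psi$ lie in $\Delta$ --- is precisely the identity $f^{*}\psi=f^{*}\delta_\sigma=\delta_{\rho(f)}$ coming from the transport identity (with $\rho(f)=\sigma\circ\lambda_n(f)$ the order attached to the frame $f$, cf.\ Example~\ref{ex:DG-closure}), and condition (ii) is again the first identity of Proposition~\ref{prop:skeleton}(c) combined with $f^{*}\delta_\sigma=\delta_{\rho(f)}$; this $\psi^u$ is manifestly canonical, and when $\DG=\Delta$ it is literally the inclusion of a subset with its induced order. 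For property (b), with embeddings $v\colon I''\hookrightarrow I'$ and $u\colon I'\hookrightarrow I$, transitivity of restriction of linear orders gives $(\sigma|_{I'})|_{I''}=\sigma|_{I''}$ on objects, and at the level of the structure maps one checks directly that $\delta_{\sigma,I''}=\delta_{\sigma,I'}\circ\delta_{\sigma|_{I'},I''}$ in $\Delta$ (both are the monotone injection onto $\sigma^{-1}(u(v(I''))))$; since $\phi\mapsto\phi^{*}$ is a contravariant functor on $\Delta$ (Proposition~\ref{prop:skeleton}(a)), applying it yields $\delta_{\sigma,I''}^{*}=\delta_{\sigma|_{I'},I''}^{*}\circ\delta_{\sigma,I'}^{*}$, i.e.\ $\Res^I_{I''}=\Res^{I'}_{I''}\circ\Res^I_{I'}$ on the nose in the model, hence canonically isomorphic in general.

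The argument has no deep content, but the bookkeeping is delicate; I expect the one genuinely substantive step to be the transport identity $g^{*}\delta_\sigma=\delta_{\rho(g)}$ (equivalently, that the restricted-order datum transforms equivariantly), since it is there that one must carefully pass through the forgetful functor $\lambda$ and the canonical square \eqref{eq:can-square}. Everything else is a formal consequence of the crossed-simplicial-group identities of Proposition~\ref{prop:skeleton}.
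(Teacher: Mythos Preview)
Your proposal is correct and follows essentially the same approach as the paper: both work in the model of Proposition~\ref{prop:model-G/I}, define $\Res^I_{I'}$ on objects by restricting linear orders and on morphisms by $g\mapsto\phi_1^{*}g$ (your $\delta_{\sigma_1}^{*}g$), and both isolate the transport identity $g^{*}\phi_1=\phi_2$ as the key step, deducing functoriality from the first identity of Proposition~\ref{prop:skeleton}(c). You supply more detail on parts (a) and (b) than the paper, which leaves those verifications to the reader, but the strategy and all substantive steps coincide.
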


\begin{proof} We use the model of $\Gc/I$ from Proposition \ref{prop:model-G/I}. Given a total order $\leq $ on $I$,
it restricts to a total order on $I'$, and so we have a diagram
\be\label{eq:comm-diagr-emb}
\xymatrix{
\{0,1,\dots, n'\} 
\ar[d]_{\sigma'}
\ar[r]^\phi& \{0,1, \dots, n\}\ar[d]^\sigma
\\
I'\ar[r]_u& I,
}
\ee
in which $\psi, u$ are monotone embeddings, and $\sigma,\sigma'$ are monotone bijections. We define on objects
$\Res^I_{I'}(I,\Oc_\sigma)=(I', \Oc_{\sigma'})$. 

Further, let $g\in\Gen_n$ be such that $\lambda_n(g)=\sigma_1^{-1}\sigma_2$, where 
$\sigma_1, \sigma_2: \{0,1, \dots, n\}\to I$ are the bijections corresponding to two total orders
$\leq^1, \leq^2$ on $I$. Let $\sigma'_1, \sigma'_2: \{0,1,\dots, n'\}\to I'$ be the
bijections corresponding to the restrictions of $\leq^1, \leq^2$ to $I'$. We the have two commutative
diagrams as in \eqref{eq:comm-diagr-emb}, whose arrows we denote $u, \sigma'_\nu, \sigma_\nu, \phi_\nu$,
$\nu=1,2$. 

We claim, first of all, that $\phi_2=g^*\phi_1$. Indeed, consider the canonical diagram in $\DG$
\[
\xymatrix{
[n']\ar[d]_{\phi_1^*g}
\ar[r]^{g^*\phi_1}&[n]
\ar[d]^g
\\
[n']\ar[r]_{\phi_1}& [n]
}
\]
defining $g^*\phi_1$ and apply the functor $\lambda$ to it. Because of the assumptions on $g$, the resulting diagram
in $\FSet$ is 
\be\label{eq:2.1.11}
\xymatrix{
\{0,1,\dots, n'\}
\ar[d]_{\lambda_n(\phi_1^*g)}
\ar[r]^{\phi_2}& \{0,1,\dots, n\}
\ar[d]^{\lambda_n(g)=\sigma_1^{-1}\sigma^2}
\\
\{0,1,\dots, n'\}\ar[r]_{\phi_1}& \{0,1,\dots, n\},
}
\ee
whence $\phi_2=g^*\phi_1$.

We further claim that
$\lambda_n(\phi_1^*(g)) = (\sigma'_1)^{-1}\sigma'_2$. Indeed, in virtue of the injectivity of
$\phi_1, \phi_2$ and bijectivity of $\lambda_n(g)$, there can be at most one map making 
\eqref{eq:2.1.11} commutative, and both sides of the proposed equality do.
So we define $\Res^I_{I'}$ on morphisms by sending $g$ to $\phi_1^*(g)$.
To see that $\Res^I_{I'}$ commutes with composition of morphisms, we consider three orders
$\leq^\nu$ on $I$, $\nu=1,2,3$ with three corresponding bijections $\sigma_\nu$ and monotone maps
$\phi_\nu$. If now
$g_1=\sigma_1^{-1}\sigma_2$, $g_2=\sigma_2^{-1}\sigma_3$, then
$\Res^I_{I'}(g_1g_2)  = \phi_1^{-1}(g_1g_2)$, while
\[
\Res^I_{I'}(g_1) \circ\Res^I_{I'}(g_2) \,\,=\,\,\phi_1^*(g_1)\phi_2^*(g_2) \,\,=\,\,
\phi_1^*(g_1) \circ ((g_1^*\phi_1)^*(g_2)),
\]
which equals $\phi^*(g_1g_2)$ by the first equality in Proposition \ref{prop:skeleton}(c). We leave the
remaining verifications to the reader. 
\end{proof}

\begin{rem}
Given a structure torsor 
\[
\Oc(I)\buildrel \rho\over\lra\Isom_{\FSet}(\{0,1,\dots, n\}, I)
\]
for $I$, we can directly construct the ``restricted torsor" $\Oc(I')$ for $\Res^I_{I'}(I, \Oc(I))$ and the canonical
morphism
\[
\psi=(\psi_*=u, \psi^*): (I', \Oc(I'))\to (I, \Oc(I))
\]
as follows.
Choose some $f\in\Oc(I)$, so that $\sigma=\lambda_{\Gc}(f)$ is a bijection defining a total order on $I$.
Let $\phi: [n']\to[n]$ be the monotone map in \eqref{eq:comm-diagr-emb}. To emphasize the dependence of $\phi$ on
$f$, we denote it as $\phi=f^*\psi$ (where $\psi$ stands for the morphism we are constructing). 
For $g,h\in\Gen_n$ put $f.g\sim f.h$ whenever
$(f^*u)^*(g)=(f^*u)^*(h)$ in $\Gen_{n'}$. It is immediate that this is an equivalence relation, independent on
the choice of $f$. We put $\Oc(I')=\Oc(I)/\sim$ and define $\psi^*: \O(I) \to \O(I')$ to be the quotient map. The composite
	\[
		\O(I) \lra \Isom_{\FSet}(\{0,1,\dots, n\}, I) \lra \Isom_
		{\FSet}(\{0,1,\dots, n'\}, I) 
	\]
	factors through $\psi^*$ providing a map $\rho_{I'}: \O(I') \to \Isom(\{0,1,\dots, n'\}, I)$.
	There exists a unique  $\Gen_{n'}$-action on $\O(I')$ such that, for every $f \in
	\O(I)$ and $g \in \Gen_n$, we have 
	\[
			\psi^*(f.g) = \psi^*(f).((f^*\psi)^*g).
	\]
	 It is now readily verified that this $\Gen_m$-action on $\O(I)$ is simply
	transitive,  and the maps $\psi_*=u$ and $\psi^*$ satisfy the conditions (i) and (ii)
	of Proposition \ref{prop:morphisms-Gc}. 
\end{rem}

\subsubsection{Topological model}
\label{sec:topmodel}

Let $\DG$ be a planar crossed simplicial group with corresponding planar Lie group $p: G \to O(2)$.
In this section, we define a topological model for $\DG$ based on the planar
homeomorphism group
\[
p_\Homeo: \Homeo^G(S^1)\lra\Homeo(S^1)
\]
corresponding to $G$ as introduced in \S \ref{subsub:plan-lie}

By a {\em circle} we mean a topological space homeomorphic to the standard circle $S^1$. By a {\em
marked circle} we mean a pair $(C,J)$ where $C$ is a circle and $J\subset C$ is a nonempty closed
subset homeomorphic to a finite disjoint union of copies of the interval $[0,1]$. For two marked
circles $(C,J)$ and $(C',J')$, we denote
\[
\Homeo((C,J), (C',J')) \,\,=\,\,\bigl\{ \phi\in\Homeo(C,C') \bigl| \,\, \phi(J)\subset J'
\bigr\}. 
\]
It has a natural topology as a closed subspace of $\Homeo(C,C')$. 

\begin{prop}\label{prop:homeo-contr}
Each connected component of $\Homeo((C,J), (C',J'))$ is contractible. 
\end{prop}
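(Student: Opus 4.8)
The plan is to deformation--retract $X:=\Homeo\bigl((C,J),(C',J')\bigr)$ onto an explicit subspace which is homeomorphic to a certain space of embeddings $J\hookrightarrow J'$, and then to check that this embedding space is a disjoint union of contractible pieces. I would begin with the combinatorial anatomy. Write $J=J_0\sqcup\cdots\sqcup J_n$ for the connected components of $J$ (pairwise disjoint closed arcs, occurring in this cyclic order in $C$); then $C\setminus J$ is a disjoint union of $n+1$ open arcs ("gaps") $K_0,\dots,K_n$, with $\overline{K_i}$ joining $J_i$ and $J_{i+1}$. Similarly $J'=J'_0\sqcup\cdots\sqcup J'_m$. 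For $\phi\in X$ the image $\phi(J_i)$ is connected, hence a sub-arc of a unique $J'_{\sigma(i)}$, and $\phi$ carries the cyclic arrangement of arcs and gaps of $C$ to the cyclic arrangement of the arcs $\phi(J_i)$ and $\phi(K_i)$ in $C'$; recording $\sigma$, these cyclic data, and the orientation of each $\phi|_{J_i}$ gives a \emph{combinatorial type} $c(\phi)$ valued in a finite set. A compactness/uniform-continuity argument shows $c$ is locally constant on $X$, hence constant on each connected component.

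The key observation is that, once $e:=\phi|_J$ is fixed, $\phi$ is determined by its restrictions to the closed gaps $\overline{K_i}$, and each of these restrictions ranges over a \emph{convex} function space. Indeed, $C'\setminus e(J)$ has exactly $n+1$ components, all open arcs; since $\phi$ extends $e$ to a homeomorphism of circles, $\phi(\overline{K_i})$ must equal the closure $\overline{A_i}$ of the component of $C'\setminus e(J)$ that is cyclically squeezed between $e(J_i)$ and $e(J_{i+1})$, and this component, together with the matching of the two endpoints of $\overline{K_i}$ with the two endpoints of $\overline{A_i}$, is forced by $e$ alone. Thus $\phi|_{\overline{K_i}}$ lies in the space of homeomorphisms $\overline{K_i}\to\overline{A_i}$ inducing that prescribed bijection on boundaries; choosing parametrisations of $\overline{K_i}$ and $\overline{A_i}$ by $[0,1]$ compatible with it, this space is identified with $\Homeo_\partial([0,1])$, the self-homeomorphisms of $[0,1]$ fixing both endpoints, which is a convex subset of $C([0,1],[0,1])$ and hence contractible. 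Fixing a metric on $C'$ equips every gap with a canonical ("arc-length affine") extension; performing, simultaneously over all gaps and keeping $\phi|_J$ fixed, the straight-line homotopy from $\phi|_{\overline{K_i}}$ to this canonical extension inside $\Homeo_\partial([0,1])$, yields a (strong) deformation retraction of $X$ onto the subspace $X_{\mathrm{aff}}$ of homeomorphisms that are canonical over every gap. Restriction to $J$ identifies $X_{\mathrm{aff}}$ with the space $\mathcal E$ of those embeddings $J\hookrightarrow J'$ that extend to a homeomorphism $C\to C'$, since for such an embedding the arcs $A_i$, and therefore the canonical extension, are already determined.

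It then remains to show that each connected component of $\mathcal E$ is contractible. An embedding $e\in\mathcal E$ amounts to the locally constant combinatorial type — the map $\sigma$, the linear order of each fibre $\sigma^{-1}(j)$ inside $J'_j$, and the orientations of the $e|_{J_i}$ — together with, for each target arc $J'_j$, a configuration of $k=|\sigma^{-1}(j)|$ pairwise disjoint closed sub-arcs of $J'_j$ in the prescribed order, each equipped with an increasing or decreasing parametrisation. For fixed combinatorial type, the space of such configurations of sub-arcs of an arc is the convex region $\{0\le a_1<b_1<a_2<b_2<\cdots<a_k<b_k\le 1\}$, and the parametrisations range over a product of copies of the convex space $\Homeo_\partial([0,1])$; hence each component of $\mathcal E$ is contractible. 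Assembling the pieces: the deformation retraction $X\simeq X_{\mathrm{aff}}\cong\mathcal E$ restricts to a deformation retraction on each connected component of $X$ onto a single component of $\mathcal E$ (the image of a connected set is connected), so every connected component of $X$ is homotopy equivalent to a component of $\mathcal E$, and is therefore contractible.

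I expect the main obstacle to be the bookkeeping in the second paragraph — specifically, verifying that the combinatorial type of $e\in\mathcal E$ really pins down, for every gap, \emph{both} the target arc $\overline{A_i}$ \emph{and} the bijection of its endpoints with those of $\overline{K_i}$. This is exactly what makes each over-a-gap factor the connected space $\Homeo_\partial([0,1])$ rather than the two-component space $\Homeo([0,1])$, and hence what forces the components of $X$ to be contractible rather than merely to have the homotopy type of a finite discrete set. The remaining ingredients — local constancy of $c$, continuity of the gluing maps and of the straight-line homotopies in the compact--open topology, and the convexity computations — are routine.
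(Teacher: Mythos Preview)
Your argument is correct and is essentially a careful unpacking of the paper's one-line proof, which simply reads ``Follows from the fact that the group of orientation preserving self-homeomorphisms of an interval is contractible.'' Both your deformation retraction over the gaps and your analysis of the embedding space $\mathcal{E}$ ultimately rest on the convexity of $\Homeo_\partial([0,1])$, which is exactly the single fact the paper invokes.
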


\begin{proof} Follows from the fact that the group of orientation preserving self-homeomorphisms
of an interval is contractible.
\end{proof}

By a $G$-{\em structured circle}\footnote{A more precise term would be ``$\Homeo^G(S^1)$-structured circle".}
we mean a pair $(C,\rho)$ where $C$ is a circle, and $\rho: F\to\Homeo(S^1, C)$ is a reduction of the structure
group along $p_\Homeo$. Thus, $F$ is a $\Homeo^G(S^1)$-torsor and $\rho$ is $p_\Homeo$-equivariant.
We will often denote a $G$-structured circle simply by $C$, assuming $\rho$ to be given. 

For $G$-structured circles $(C,\rho)$ and $(C',\rho')$, we define $\Homeo^G((C,\rho), (C',\rho'))$
to be the set of pairs $(\phi, \widetilde\phi)$, where $\phi\in\Homeo(C,C')$ and $\widetilde\phi: F\to F'$
is a $\Homeo^G(S^1)$-equivariant homeomorphism such that the diagram
\[
\xymatrix{
F\ar[d]_\rho
\ar[r]^{\widetilde\phi}& F'\ar[d]^{\rho'}
\\
\Homeo(S^1, C) \ar[r]_\phi & \Homeo(S^1, C')
}
\]
is commutative. Note that the set $\Homeo^G((C,\rho), (C',\rho'))$ has a natural topology with respect to which it is
homeomorphic to $\Homeo^G(S^1)$. 

We will consider circles which are both structured and marked, denoting them by $(C,J)$ where
$C=(C,\rho)$ is a $G$-structured circle. For $G$-structured marked circles $(C, J)$ and
$(C',J')$, we denote by $\Homeo^G((C,J), (C',J '))$ the closed subspace in $\Homeo^G(C,C')$ formed by
$(\phi,\widetilde\phi)$ such that $\phi\in \Homeo((C,J), (C',J '))$. 
 
\begin{defi}
We define the category $\CG$ with objects given by $G$-structured marked circles and morphisms 
\[
\Hom_{\CG}((C,J), (C',J')) = \pi_0\, \Homeo^G((C,J), (C',J ')). 
\]
\end{defi}

We have the main result of this section: 
 
\begin{thm}\label{thm:CG=DG}
	Let $p: G \to O(2)$ be a planar Lie group with corresponding crossed simplicial group $\DG$. Then
	the functor
	\[
		\lambda_{\CG}: \CG \lra \FSet, \quad (C,J)\mapsto \pi_0(J)
	\]
	canonically factors as in 
	\[
		\xymatrix{
			\CG \ar[dr]_{\lambda_{\CG}} \ar[rr]^{\pi}_{\simeq} & & \G \ar[dl]^{\lambda_{\G}} \\
		& \FSet & }
	\]
	where $\G$ denotes the category of $\DG$-structured sets and $\pi: \CG \to \G$ is an equivalence
	of categories. In particular, the category $\CG$ is equivalent to $\DG$. 
\end{thm}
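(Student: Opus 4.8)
The plan is to build the functor $\pi\colon\CG\to\G$ by hand, so that it covers $\lambda$ strictly, and then to check it is fully faithful and essentially surjective; the whole proof will funnel into one skeletal statement. Fix, for each $n\ge 0$, a \emph{standard} $G$-structured marked circle $(C_0^{(n)},J_0^{(n)})$: take $C_0=S^1$ with its tautological $G$-structure and $J_0^{(n)}$ a union of $n+1$ arcs in symmetric position, together with a fixed identification $\pi_0(J_0^{(n)})\cong\{0,1,\dots,n\}$. On objects set $\pi(C,J)=(\pi_0(J),\Oc(C,J))$ with
\[
\Oc(C,J)\;=\;\pi_0\,\Isom^G\bigl((C_0^{(n)},J_0^{(n)}),(C,J)\bigr),\qquad n+1=|\pi_0(J)|,
\]
the set of isotopy classes of $G$-structured homeomorphisms carrying $J_0^{(n)}$ onto $J$. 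By Proposition~\ref{prop:homeo-contr} this space has contractible components, composition makes $\Oc(C,J)$ a right torsor over $\Aut_\CG(C_0^{(n)},J_0^{(n)})$, and passing to $\pi_0$ of the induced map on marked arcs supplies the reduction $\rho\colon\Oc(C,J)\to\Isom_\FSet(\{0,\dots,n\},\pi_0(J))$. On a morphism $[\phi]\colon(C,J)\to(C',J')$ define the family $\{\psi_{f,f'}\}$ of Definition~\ref{defi:dgstructure} by $\psi_{f,f'}=[\,\phi_0'^{-1}\circ\phi\circ\phi_0\,]$ for $f=[\phi_0]$ and $f'=[\phi_0']$; this is an isotopy class of circle homeomorphisms sending $J_0^{(n)}$ into $J_0^{(n')}$, i.e.\ an element of $\Hom_\CG\bigl((C_0^{(n)},J_0^{(n)}),(C_0^{(n')},J_0^{(n')})\bigr)$, and the relation $\psi_{fg,f'g'}=g^{-1}\psi_{f,f'}g'$ is immediate from functoriality of composition. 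Since $\DG\cong\G$ canonically over $\FSet$ (the equivalence $\epsilon\colon\DG\to\G$ of the proposition following Definition~\ref{defi:dgstructure}), it is harmless to argue with $\DG$ in place of $\G$ wherever convenient.

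Everything now rests on the skeletal claim: \emph{the full subcategory $\CG_{\mathrm{st}}\subset\CG$ on the standard objects is isomorphic to $\DG$ over ${\bf N}$, by an isomorphism taking $\Aut_\CG(C_0^{(n)},J_0^{(n)})$ to $\Gen_n$, the induced map to $S_{n+1}$ to $\lambda_n$, and the face/degeneracy operations on $\{\Aut_\CG(C_0^{(n)},J_0^{(n)})\}$ to the simplicial structure on $\Gen_\bullet$ together with the $\Gen_n$-action on $\Hom_\Delta$.} To prove it I would first single out inside $\CG_{\mathrm{st}}$ the subcategory of ``monotone degree $1$'' isotopy classes --- the topological avatar of the $\DG$-closure functor ${\bf i}$ of Example~\ref{ex:DG-closure} --- and identify it with $\Delta$. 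Then comes unique factorization: by Proposition~\ref{prop:homeo-contr} a morphism in $\CG_{\mathrm{st}}$ is just an element of $\pi_0$ of the $p_\Homeo$-preimage of the corresponding space $\Homeo\bigl((C_0^{(m)},J_0^{(m)}),(C_0^{(n)},J_0^{(n)})\bigr)$, and one checks that every such class factors uniquely as an automorphism of the source followed by a monotone degree $1$ class --- geometrically, straighten out the orientation and the base arc, then record the underlying cyclic monotone map. This makes $\CG_{\mathrm{st}}$ a crossed simplicial group. Its data are then pinned down: $\Aut_\CG(C_0^{(n)},J_0^{(n)})=\pi_0\,p_\Homeo^{-1}\bigl(\Homeo((C_0^{(n)},J_0^{(n)}),(C_0^{(n)},J_0^{(n)}))\bigr)$, whose base has contractible components and $\pi_0$ equal to $D_{n+1}$, so pulling back along the covering and using $O(2)\simeq\Homeo(S^1)$ identifies this group with $p^{-1}(D_{n+1})=\Gen_n$ via Theorem~\ref{thm:crossed-groups}(b2); the map to $S_{n+1}$ is the action on marked arcs, which is $\lambda_n$, and the face and degeneracy maps are deletion and collapse of arcs, matching the simplicial structure on $\Gen_\bullet$ and the action on $\Hom_\Delta$ exactly as in the $\Res^I_{I'}$ construction of Section~\ref{sec:order}. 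Proposition~\ref{prop:skeleton}(d) then upgrades this to $\CG_{\mathrm{st}}\cong\DG$. (Alternatively one can match $\CG_{\mathrm{st}}$ type by type against the explicit topological models of $\Lambda,\Xi,\Lambda_\infty,\Lambda_N,\dots$ in Examples~\ref{ex:cyclic}--\ref{ex:N-quat}.)

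Granting the skeletal claim, the proof finishes formally. The inclusion $\CG_{\mathrm{st}}\hookrightarrow\CG$ is fully faithful and, since every marked circle is homeomorphic to some $(C_0^{(n)},J_0^{(n)})$ and its $G$-structure transports along any such homeomorphism by torsoriality, essentially surjective; hence it is an equivalence. Feeding the claim into the construction of $\pi$: each $\psi_{f,f'}$ genuinely lies in $\Hom_\DG([n],[n'])=\Hom_\G(\epsilon[n],\epsilon[n'])$, so $\pi$ is a well-defined functor; it is fully faithful because $\phi_0,\phi_0'$ are invertible in $\CG$, so $[\phi]$ is reconstructed from any single $\psi_{f,f'}$ and every equivariant family so arises; and it is essentially surjective because $\pi(C_0^{(n)},J_0^{(n)})\cong\epsilon[n]$ while the $\epsilon[n]$ exhaust $\G$ up to isomorphism. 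The triangle commutes on the nose: $\pi$ retains the underlying set $\pi_0(J)$ and sends $[\phi]$ to a morphism with first component $\pi_0(\phi)$, so $\lambda_\G\circ\pi=\lambda_\CG$. Composing with the equivalence $\DG\cong\G$ yields the last assertion $\CG\simeq\DG$.

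I expect the main obstacle to be the skeletal claim, and within it the unique factorization statement together with the bookkeeping identifying $\Aut_\CG(C_0^{(n)},J_0^{(n)})$ with $\Gen_n$ \emph{compatibly} with $\lambda_n$, with the pullback maps $\phi^*\colon\Gen_n\to\Gen_m$, and with the $\Gen_n$-action on $\Hom_\Delta([m],[n])$. Each ingredient is geometrically transparent --- it is the analysis of isotopy classes of self-homeomorphisms of $S^1$ preserving a cyclically ordered family of marked arcs, and of their lifts along the connective covering $\Homeo^G(S^1)\to\Homeo(S^1)$ --- but making this precise and uniform across all seven planar types is where the actual work sits.
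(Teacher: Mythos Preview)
Your proposal is correct and follows essentially the same architecture as the paper: construct the structure torsor of $(C,J)$ as $\pi_0$ of the space of $G$-structured identifications with a standard model, reduce to showing that the full subcategory on standard objects is a crossed simplicial group, and identify $\Aut_{\CG}(C_0^{(n)},J_0^{(n)})$ with $\Gen_n=p^{-1}(D_{n+1})$ via Theorem~\ref{thm:crossed-groups}(b2) together with unique factorization. The paper packages the first step as Lemma~\ref{lem:pi0} and likewise dispatches the skeletal claim with ``easily done''; the one genuine variation is that you encode morphisms via the family $\psi_{f,f'}=[\phi_0'^{-1}\circ\phi\circ\phi_0]$ of Definition~\ref{defi:dgstructure}, which is a direct conjugation, whereas the paper uses the $(\psi_*,\psi^*)$ description of Proposition~\ref{prop:morphisms-Gc} and therefore has to produce the pullback $\psi^*$ on structured frames by a choice of interstice base point and a path-lifting argument along the covering $p_\Homeo$. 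Your formulation sidesteps that step and is slightly cleaner; the paper's buys an intrinsic description of the structure torsor $\pi_0(F_J)$ not tied to a chosen standard model.
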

\begin{proof} 
	Let $(C,J)$ be a marked $G$-structured circle with reduction $\rho: F \to \Homeo(S^1, C)$ of structure
	group along $p_\Homeo$. We claim that the association
	\[
		(C,J) \mapsto (\pi_0(J), \rho_{\pi_0(J)}),
	\] 
	where $(\pi_0(J), \rho_{\pi_0(J)})$ denotes the $\DG$-structured set constructed in Lemma
	\ref{lem:pi0} below, extends to an equivalence of categories $\pi: \CG \to \DG$.
	Given a morphism $(\varphi, \widetilde{\varphi}): (C,J) \to (C', J')$ of marked
	$G$-structured circles, we have to construct a morphism of the $\DG$-structured sets
	$(\pi_0(J), \rho_{\pi_0(J)})$ and $(\pi_0(J'),\rho_{\pi_0(J')})$. Clearly, we have an
	induced map $\psi: \pi_0(J) \to \pi_0(J')$ of underlying sets. We now define the pullback
	map $\psi^*: \pi_0(F_{J'}) \to \pi_0(F_{J})$ of structured frames. Let $f \in F_{J'}$
	represent a structured frame in $\pi_0(F_{J'})$. Its image under $\rho$ defines a
	homeomorphism  $\rho(f): \Homeo(S^1, C')$ which maps $[n]$ to $J'$. We choose a point in the
	unique component of $C'\setminus J'$ bounded by $\rho(f)(n)$ and $\rho(f)(0)$, and denote
	its preimage in $C$ under $\varphi$ by $p$. Note that $p$ lies in $C \setminus J$.  Consider
	the homeomorphism $\varphi^{-1} \circ \rho(f)$ in $\Homeo(S^1, C)$. Now we choose a path
	$\alpha: [0,1] \to \Homeo(S^1, C)$ starting at $\varphi^{-1} \circ \rho(f)$, ending at a
	homeomorphism which maps $[m]$ to $J$, such that, for all $t \in [0,1]$, $\alpha(t)(0) \ne
	p$. It is easy to see that the space of such paths is contractible. Since $F \to \Homeo(S^1,
	C)$ is a covering, there is a unique lift of $\alpha$ to $F$ satisfying $\alpha(0) =
	\widetilde{\varphi}$. We define $\psi^*(f)$ to be the connected component of $\alpha(1)$ in
	the space $F_J$.  The pair $(\psi_*, \psi^*)$ defines a morphism in $\Gc$ by Proposition
	\ref{prop:morphisms-Gc}. It straightforward to show that this construction is functorial. 
	To see that it provides an equivalence of categories it suffices to verify the unique
	factorization property for the full subcategory of $\CG$ generated by a set of standard
	objects $\{ (S^1, [n]) \}$ which is easily done.
\end{proof}

\begin{lem}\label{lem:pi0} Let $n \ge 0$ and consider the standard circle $S^1$ equipped with the standard subset
	$[n] \subset S^1$ of $(n+1)$st roots of unity.
	\begin{enumerate}
		\item Consider the subgroup $\Homeo(S^1, [n]) \subset \Homeo(S^1)$ of homeomorphisms
			preserving which preserve the standard subset $[n]$ of $(n+1)$st roots of
			unity. Let $H_n$ be the pullback 
			\[ 
				\xymatrix{ H_n\ar[d] \ar[r]&\Homeo^G(S^1) \ar[d]^{p_\Homeo} \\ 
				\Homeo(S^1, [n]) \ar[r]&\Homeo(S^1).  } 
			\] 
			Then each connected component of $H_n$ is contractible
			and there is a canonical group isomorphism $\pi_0(H_n) \cong \Gen_0$. 
		\item Let $(C,J)$ be a $G$-structured marked circle with reduction $\rho: F \to \Homeo(S^1, C)$ of structure
			group along $ p_\Homeo$. Consider the subset $\Homeo( (S^1, [n]), (C, J)) \subset \Homeo(S^1, C)$ 
			consisting of homeomorphisms which map $[n]$ into $J$ and let 
			$\rho_J: F_J \to \Homeo( (S^1, [n]), (C, J))$ denote the restriction of
			$\rho$. Then the set $\pi_0(F_J)$ is a $\pi_0(H_n)$-torsor, equipped with a
			natural equivariant map 
			\[
				\rho_{\pi_0(J)}: \pi_0(F_J) \to \pi_0(J)
			\]
			making $(\pi_0(J), \rho_{\pi_0(J)})$ a $\DG$-structured set.
	\end{enumerate}
\end{lem}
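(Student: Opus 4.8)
The target of the isomorphism in part (1) should read $\Gen_n$ rather than $\Gen_0$ (the two coincide when $n=0$), and I will prove $\pi_0(H_n)\cong\Gen_n$; throughout part (2) I read ``$\varphi$ maps $[n]$ to $J$'' in the normalised sense used in the proof of Theorem \ref{thm:CG=DG}, namely that $\varphi$ sends $[n]$ into $J$ so as to induce a bijection with $\pi_0(J)$ and sends each arc of $S^1$ complementary to $[n]$ into an arc of $C$ meeting exactly one component of $C\setminus J$ (so implicitly $n+1=|\pi_0(J)|$). The plan is to identify $H_n$, up to the homotopy type relevant for $\pi_0$, with the discrete group $\Gen_n=p^{-1}(D_{n+1})$ of Theorem \ref{thm:crossed-groups}(b2), and then to deduce (2) by a torsor computation.

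\emph{Step 1: the space $\Homeo(S^1,[n])$.} First I would record that $\pi_0(\Homeo(S^1,[n]))$ is the dihedral group $D_{n+1}$ of order $2(n+1)$ and that each component is contractible. A self-homeomorphism of $S^1$ preserves or reverses the cyclic order of $[n]$, so it induces a cyclic-or-anticyclic permutation of $[n]$ together with an orientation; these two data are compatible (the orientation is forced by the permutation for $n\geq 2$, and is an independent $\ZZ/2$ for $n\leq 1$), and all admissible combinations are realised already by the isometry subgroup $O(2)\cap\Homeo(S^1,[n])=(\ZZ/2)\ltimes\mu_{n+1}\subset O(2)$. The identity component consists of the orientation-preserving homeomorphisms fixing $[n]$ pointwise; these restrict on each closed arc between consecutive roots to an increasing self-homeomorphism fixing the endpoints, so that component is convex (linear interpolation to the identity stays a homeomorphism), hence contractible, and every other component is a coset of it. In particular $D_{n+1}=O(2)\cap\Homeo(S^1,[n])\hookrightarrow\Homeo(S^1,[n])$ meets each component exactly once and induces the isomorphism $D_{n+1}\cong\pi_0(\Homeo(S^1,[n]))$. (This recovers Proposition \ref{prop:homeo-contr} in the present special case.)

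\emph{Step 2: $\pi_0(H_n)\cong\Gen_n$.} As $p_\Homeo\colon\Homeo^G(S^1)\to\Homeo(S^1)$ is a covering onto its image, the pullback $H_n\to\Homeo(S^1,[n])$ is a covering onto an open and closed union of components; over each such component, which is contractible, the covering is trivial, so each component of $H_n$ is homeomorphic to a component of $\Homeo(S^1,[n])$ and is therefore contractible --- this is the first assertion of (1). For the group $\pi_0(H_n)$, restrict the equivalence $\on{Con}(\Homeo(S^1))\simeq\on{Con}(O(2))$ along $O(2)\hookrightarrow\Homeo(S^1)$: this identifies $G$ with $p_\Homeo^{-1}(O(2))$, hence, by Theorem \ref{thm:crossed-groups}(b2), identifies $\Gen_n=p^{-1}(D_{n+1})$ with the discrete subgroup $p_\Homeo^{-1}(D_{n+1})\subseteq H_n$. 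By Step 1, $D_{n+1}$ hits each component of $\Homeo(S^1,[n])$ once; over such a component the fibre of $H_n$ above the corresponding isometry is precisely the part of $\Gen_n$ over it, and --- the covering being trivial over a connected base --- that fibre injects into $\pi_0(H_n)$. Summing over components shows that the group homomorphism $\Gen_n\hookrightarrow H_n\twoheadrightarrow\pi_0(H_n)$ is a bijection, hence a canonical isomorphism. I expect this step to be the delicate one, not for the bijection itself but for checking that it carries the residual data correctly: the permutation action on $[n]$ should recover $\lambda_n$ of Proposition \ref{prop:forget}, and the order-reversing behaviour of components the canonical parity of $\Gen_0$ (Proposition \ref{prop:orient-G-0}) --- these compatibilities are exactly what the equivalence $\pi$ of Theorem \ref{thm:CG=DG} needs in order to be a functor of $\DG$-structured sets.

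\emph{Step 3: part (2).} Let $F_J\subseteq F$ consist of the lifts of the normalised homeomorphisms above; it is nonempty since $\rho(F)$ is a full $\Homeo^G(S^1)$-orbit in $\Homeo(S^1,C)$ and that orbit contains a normalised homeomorphism (here one uses $n+1=|\pi_0(J)|$). The right $\Homeo^G(S^1)$-action on $F$ restricts to a right $H_n$-action on $F_J$: if $f\in F_J$ and $a\in H_n$ then $\rho(f\cdot a)=\rho(f)\circ p_\Homeo(a)$ with $p_\Homeo(a)\in\Homeo(S^1,[n])$, so $f\cdot a$ is again normalised. This action is free and transitive: given $f,f'\in F_J$, write $f'=f\cdot a$ with $a\in\Homeo^G(S^1)$; then $p_\Homeo(a)=\rho(f)^{-1}\circ\rho(f')$ carries each arc of $S^1$ complementary to $[n]$ onto another such arc (since $\rho(f)$ and $\rho(f')$ each carry these onto arcs of $C$ meeting a single component of $C\setminus J$), so $p_\Homeo(a)\in\Homeo(S^1,[n])$ and $a\in H_n$. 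Hence $F_J$ is a right $H_n$-torsor, and passing to $\pi_0$ (Step 2) makes $\pi_0(F_J)$ a right $\Gen_n$-torsor. Finally I would define $\rho_{\pi_0(J)}$ by sending the class of $f$ to the bijection $\{0,\dots,n\}\to\pi_0(J)$ that takes $k$ to the component of $J$ containing $\rho(f)(k)$; this is well defined because that component depends locally constantly on $f$, and it is equivariant for $\lambda_n\colon\Gen_n\to S_{n+1}$ by the compatibility flagged in Step 2 (both sides being computed from the action on $\Hom_\Delta([0],[n])\cong\{0,\dots,n\}$). Thus $(\pi_0(J),\rho_{\pi_0(J)})$ is a $\DG$-structured set, completing the argument.
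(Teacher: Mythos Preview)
Your approach is essentially the paper's: contractibility of components via a covering over a contractible base, then $\pi_0(H_n)\cong p^{-1}(D_{n+1})\cong\Gen_n$ by Theorem \ref{thm:crossed-groups}(b2), and finally the torsor computation. Your catch of the typo $\Gen_0\rightsquigarrow\Gen_n$ is correct, and your normalised reading of $\Homeo((S^1,[n]),(C,J))$ is indeed what is needed (and what the paper implicitly uses when it writes the second map as landing in $\Isom_{\FSet}([n],\pi_0(J))$).

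There is, however, a genuine gap in Step 3. You assert that the $H_n$-action on $F_J$ itself is transitive, arguing that for $f,f'\in F_J$ the element $p_\Homeo(a)=\rho(f)^{-1}\circ\rho(f')$ ``carries each arc of $S^1$ complementary to $[n]$ onto another such arc''. This is false: $\rho(f')$ sends an arc $A_i$ onto an open arc of $C$ whose endpoints lie in the \emph{interiors} of two components of $J$, and $\rho(f)^{-1}$ sends those interiors to small arcs around points of $[n]$, not to the points themselves. So $p_\Homeo(a)(A_i)$ is only \emph{close} to some $A_j$, overshooting its endpoints, and in general $p_\Homeo(a)([n])\neq[n]$, hence $a\notin H_n$. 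Concretely, for $n=0$ with $J$ a small arc about $1$, any two $\rho(f),\rho(f')$ sending $1$ to distinct points of $J$ already witness non-transitivity. What \emph{is} true is transitivity on $\pi_0$: fixing $f_0\in F_J$ identifies $F_J$ with $p_\Homeo^{-1}\bigl(\Homeo((S^1,[n]),(S^1,J'))\bigr)$ where $J'=\rho(f_0)^{-1}(J)$ is a disjoint union of arcs each containing exactly one point of $[n]$; sliding each $\phi(k)$ within its arc to that point gives a deformation retraction of the normalised part of $\Homeo((S^1,[n]),(S^1,J'))$ onto $\Homeo(S^1,[n])$, whence $\pi_0(F_J)\cong\pi_0(H_n)$ as right $\pi_0(H_n)$-sets. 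Replace your space-level torsor claim with this $\pi_0$-level argument and the proof goes through.
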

\begin{proof} (1) As argued in Proposition \ref{prop:homeo-contr}, each component of the
 subgroup $\Homeo(S^1, [n])$ is contractible. Therefore, the map $p_\Homeo$ restricts to a covering 
 $H_n \to \Homeo(S^1,[n])$ which is topologically trivial so that each connected component of $H_n$
 must be contracible. We have
 \[
	 \pi_0 \Homeo(S^1, [n]) \cong D_{n+1} \cong \Aut_\Xi([n])
 \]
 where $\Xi$ denotes the dihedral category of Example \ref {ex:dihedral}. The group $\pi_0(H_n)$ can
 be identified with the preimage $p^{-1}(D_{n+1})$ under $p: G\to O(2)$. By Theorem \ref{thm:crossed-groups}(b2), 
 this preimage is canonically identified with $\Gen_n$. 

 \noindent
 (2) Passing to connected components, we obtain a sequence of maps
	\[
			\pi_0(F_J) \overset{\pi_0(\rho_J)}{\lra} \pi_0(\Homeo( (S^1, [n]), (C, J)))
			\lra \Isom_{\FSet}([n],
			\pi_0(J))
	\]
	whose composite is defined to be $\rho_{\pi_0(J)}$. It is apparent that $(\pi_0(J),
	\rho_{\pi_0(J)})$ is a $\DG$-structured set.
\end{proof}

\begin{cor}\label{cor:structured-circle-orders}
Let $C$ be a $G$-structured circle. Then, for any nonempty finite subset $I\subset C$, both $I$ and
$\pi_0(C-I)$ acquire canonical $\DG$-structures. These structures are natural with respect to
homeomorphisms of $G$-structured circles. 
\end{cor}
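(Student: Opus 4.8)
The plan is to realize both $I$ and $\pi_0(C-I)$ as sets of connected components of suitable markings of $C$ and then invoke Lemma~\ref{lem:pi0} and the equivalence $\pi\colon\CG\to\Gc$ of Theorem~\ref{thm:CG=DG}. First I would call a marking $J$ of $C$ a \emph{thickening} of $I$ if the map $\pi_0(J)\to I$ sending a component to the unique point of $I$ it contains in its interior is a bijection; such thickenings clearly exist, $C$ being a circle and $I$ finite. For a thickening $J$, Lemma~\ref{lem:pi0}(2) applied to the $G$-structured marked circle $(C,J)$ yields a $\DG$-structure on $\pi_0(J)$, which I would transport along the canonical bijection $\pi_0(J)\cong I$ to an object $\Oc^I_J$ of the groupoid $\Gc/I$ of $\DG$-orders on $I$. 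Dually, to a thickening $J$ I would attach the marking $\widehat J:=\overline{C\setminus J}$, whose components are the closures of the components of $C\setminus J$; since $I\subseteq J$, one has canonical bijections $\pi_0(\widehat J)\cong\pi_0(C\setminus J)\cong\pi_0(C-I)$, and Lemma~\ref{lem:pi0}(2) applied to $(C,\widehat J)$ then produces an object of $\Gc/\pi_0(C-I)$.

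The crux is independence of the chosen thickening. I would order thickenings of $I$ by componentwise inclusion. When $J\subseteq J'$, the identity homeomorphism of $C$ carries $J$ into $J'$, so the pair $(\id_C,\id_F)$ — with $F$ the common $\Homeo^G(S^1)$-torsor underlying both marked circles — represents a morphism $(C,J)\to(C,J')$ in $\CG$; under $\pi$ it becomes a morphism $\iota_{J,J'}\colon\Oc^I_J\to\Oc^I_{J'}$ in $\Gc$ whose underlying map of sets is $\id_I$. By the unique factorization property, a morphism of $\DG$ whose image under $\lambda$ is a bijection must lie in some $\Gen_n$ and hence be invertible, so each $\iota_{J,J'}$ is an isomorphism in $\Gc/I$; functoriality of $\pi$ together with $\id_C\circ\id_C=\id_C$ gives $\iota_{J',J''}\circ\iota_{J,J'}=\iota_{J,J''}$ and $\iota_{J,J}=\id$. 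The poset of thickenings is downward directed — given $J_1,J_2$, replacing each component $J_{1,x}$ by the component of $J_{1,x}\cap J_{2,x}$ containing $x$ yields a common lower bound — so $J\mapsto\Oc^I_J$ is a functor from a directed poset into a groupoid, and all of its values are canonically and coherently identified; that common value I would take as the canonical $\DG$-order on $I$. The identical argument applied to the markings $\widehat J$ (note that $J\subseteq J'$ forces $\widehat{J'}\subseteq\widehat J$) produces the canonical $\DG$-order on $\pi_0(C-I)$.

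For naturality, given a homeomorphism $\Phi=(\varphi,\widetilde\varphi)\colon C\to C'$ of $G$-structured circles, put $I':=\varphi(I)$; then $\varphi$ carries thickenings of $I$ to thickenings of $I'$ and sends $\widehat J$ to $\widehat{\varphi(J)}$, and $\Phi$ becomes an isomorphism $(C,J)\to(C',\varphi(J))$, resp.\ $(C,\widehat J)\to(C',\widehat{\varphi(J)})$, in $\CG$. Applying $\pi$ yields isomorphisms of $\DG$-orders over $\varphi|_I\colon I\to I'$, resp.\ over the induced bijection $\pi_0(C-I)\to\pi_0(C'-I')$, and these are compatible with the refinement isomorphisms of the second paragraph because $\Phi\circ(\id_C,\id_F)=(\id_{C'},\id_{F'})\circ\Phi$; hence they descend to the canonical structures, giving the asserted naturality. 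The main obstacle is precisely the independence step: the key point making it work is that refinements of thickenings are implemented by the \emph{identity} homeomorphism of $C$ paired with the \emph{identity} of the structure torsor, so the induced morphisms in $\Gc$ are forced to be isomorphisms by unique factorization, after which the coherence needed to pass to a canonical structure is automatic from directedness of the poset of thickenings.
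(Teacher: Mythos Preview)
Your proposal is correct and is precisely the argument the paper has in mind: the corollary is stated without proof because it follows directly from Lemma~\ref{lem:pi0}(2) and Theorem~\ref{thm:CG=DG} by thickening $I$ to a marking $J$ (and dually taking $\widehat J=\overline{C\setminus J}$ for $\pi_0(C-I)$). Your extra care in verifying independence of the chosen thickening via the directed poset argument, and in checking that the induced morphisms over $\id_I$ are automatically invertible by unique factorization, just makes explicit what the paper leaves to the reader.
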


The following fact generalizes the result of Drinfeld \cite[\S 3]{drinfeld:cyclic}. 

\begin{prop}
	 Let $R:\Set_\DG\to\Top$ be the geometric realization functor: $R(X)=|i^*X|$, where $i: \Delta\to\DG$
	 is the embedding. The group $\Homeo^G(S^1)$ acts on $R$ by natural isomoprhisms. In particular, it
	 acts on each $|i^*X|$ be homeomorphisms. 
\end{prop}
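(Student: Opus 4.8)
\emph{Strategy.} The functor $R = |i^*(-)|$ preserves colimits, since both the restriction $i^*$ and geometric realization do, and every $\DG$-set is canonically the coend $X\cong\int^{[n]\in\DG}X_n\times\DG^n$ of the representables $\DG^n=\Hom_\DG(-,[n])$. Hence
\[
	R(X)\;\cong\;\int^{[n]\in\DG}X_n\times E_n,\qquad E_n:=R(\DG^n)=|i^*\DG^n|,
\]
naturally in $X$, and $[n]\mapsto E_n$ is a functor $\DG\to\Top$. It therefore suffices to lift this functor to $\DG\to\Homeo^G(S^1)\dash\Top$, i.e.\ to equip each ``thick $\DG$-simplex'' $E_n$ with an $\Homeo^G(S^1)$-action compatible with the maps $E_m\to E_n$ induced by $\DG$-morphisms: the coend then inherits the action, naturally in $X$, and each resulting self-map of $R(X)$ is a homeomorphism because $\phi^{-1}$ induces the inverse.

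\emph{Construction of the action on $E_n$.} Here I would pass through the topological model of Section~\ref{sec:topmodel}. Under $\CG\simeq\DG$ (Theorem \ref{thm:CG=DG}) the representable $\DG^n$ corresponds to $\Hom_\CG(-,(S^1,[n]))$, and, working with the mapping \emph{spaces} $\Homeo^G((C,J),(S^1,[n]))$ rather than their sets of components, one identifies $E_n$ with an explicit $\Homeo^G(S^1)$-space of weighted ``$G$-structured configurations'' on the $G$-structured circle $S^1$; for $n=0$ this is $E_0=|\Gen|=G$ with its natural $\Homeo^G(S^1)$-action (for $\DG=\Lambda$: $\Homeo^+(S^1)$ acting on $G=SO(2)=S^1$). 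In every case $\Homeo^G(S^1)$ acts by transporting configurations, using that a self-homeomorphism of the $G$-structured circle carries finite subsets, together with the $\DG$-structures they inherit, along functorially (Corollary \ref{cor:structured-circle-orders}); the same functoriality makes the $\DG$-transition maps $E_m\to E_n$ equivariant. Restricting the action along $G=|\Gen|\hookrightarrow\Homeo^G(S^1)$ recovers the $G$-action of Theorem \ref{thm:crossed-groups}(c), a useful check.

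\emph{Main obstacle.} The technical heart is the explicit identification of $E_n=|i^*\DG^n|$ with this configuration space and the verification that the action is well defined, continuous, and natural in $[n]\in\DG$. The delicate point is continuity across the strata where a configuration point collides with a neighbour and the dimension drops; this is controlled by the face and degeneracy relations, via the unique factorization of $\DG$-morphisms (Definition \ref{defi:csg}) and the identities of Proposition \ref{prop:skeleton}, exactly as in the simplicial case. For $\DG=\Lambda$, $G=SO(2)$, $\Homeo^G(S^1)=\Homeo^+(S^1)$, all of this is carried out in Drinfeld's analysis \cite[\S 3]{drinfeld:cyclic}; the structured case follows the same lines, with $\Homeo^G$-morphisms and Corollary \ref{cor:structured-circle-orders} taking over the roles played there by orientation-preserving homeomorphisms and by the canonical cyclic order of points on an oriented circle. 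Granting that $E$ is thus upgraded to a functor $\DG\to\Homeo^G(S^1)\dash\Top$, the action axioms and naturality in $X$ are formal.
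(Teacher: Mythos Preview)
Your coend strategy is sound in principle and would work, but the paper takes a different and more direct route that sidesteps precisely the ``main obstacle'' you flag. Rather than reducing to the representables $E_n=|i^*\DG^n|$ and building the action on each of them, the paper rewrites the geometric realization \emph{of an arbitrary $\DG$-set $X$} as a single colimit on which $\Homeo^G(S^1)$ visibly acts. Following Drinfeld, one has
\[
|i^*X|\;=\;\varinjlim_{F\subset[0,1]} X\bigl({\bf i}(\pi_0([0,1]\setminus F))\bigr),
\]
where ${\bf i}:{\bf\Delta}\to\Gc$ is the $\DG$-closure functor. Gluing the endpoints of $[0,1]$ to form $S^1$ and then dropping the basepoint condition (a cofinality step), this becomes
\[
|i^*X|\;=\;\varinjlim_{F\subset S^1} X\bigl(\pi_0(S^1\setminus F)\bigr),
\]
with the $\DG$-structure on $\pi_0(S^1\setminus F)$ supplied by Corollary~\ref{cor:structured-circle-orders}. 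The group $\Homeo^G(S^1)$ now acts on the indexing diagram itself, hence on the colimit, and naturality in $X$ is automatic.

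The comparison: your approach localizes the problem to the co-$\DG$-space $[n]\mapsto E_n$ and then has to identify each $E_n$ with a configuration space and check equivariance of all the transition maps---exactly the continuity-across-strata analysis you mention. The paper's approach trades that for a cofinality argument (passing from subsets containing a basepoint to all finite subsets of $S^1$), after which nothing further needs to be verified. Both are correct; the Drinfeld-style rewriting is shorter and makes the action appear without any case analysis on $n$.
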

 
 It seems plausible that the group $\Homeo^G(S^1)$ is in fact identified with $\Aut(R)$
 which would provide its natural construction in terms of $\DG$. 
 
 \vskip .2cm
 
 \noindent {\sl Proof:} The argument is similar to that of \cite{drinfeld:cyclic}. 
 That is, let $\bf\Delta$ be the category of all nonempty finite ordinals, and ${\bf i}: {\bf\Delta}\to \Gc$
 be the functor of $\DG$-closure, extending $i$ (Example \ref{ex:DG-closure}). A simplicial set $Y$ can
 be seen as a contravariant functor $Y: {\bf\Delta}\to\Set$ and, as pointed out in Formula (1.1) of
 \cite {drinfeld:cyclic},
 \[
 |Y|\,\,=\,\,\varinjlim\limits_{F\subset [0,1]} \, Y\bigl( \pi_0([0,1]-F)\bigr), 
 \]
 where $F$ runs over finite subsets of $[0,1]$, and $\pi_0([0,1]-F)$ is equipped by the total order
 induced by that on $[0,1]$. 
 
 Now, if $X$ is a $\DG$-set, we can view $X$ as a contravariant functor $\Gc\to\Set$ and identify
 \[
 \begin{gathered}
 |i^*X| \,\,=\,\, \varinjlim\limits_{F\subset [0,1]} \, ({\bf i}^*X)\bigl( \pi_0([0,1]-F)\bigr) \,\,=\,\,
 \varinjlim\limits_{F\subset [0,1]} \, X\bigl({\bf i} ( \pi_0([0,1]-F))\bigr) \,\,= \\
=\,\,   \varinjlim\limits_{1\in F\subset S^1} \, X\bigl({\bf i} ( \pi_0(S^1-F))\bigr) \,\,=\,\,
  \,\,  \varinjlim\limits_{F\subset S^1} \, X\bigl({\bf i} ( \pi_0(S^1-F))\bigr).
  \end{gathered}
 \]
 Here we view $S^1$ as obtained from $[0,1]$ by identifying $0$ and $1$ into one point $1\in S^1$ and as equipped
 with the standard $\Homeo^G(S^1)$-structure. Each set $\pi_0(S^1-F)$ is then equipped with a $\DG$-structure
 by Corollary \ref{cor:structured-circle-orders}. Now, the last colimit is manifestly acted upon by
 $\Homeo^G(S^1)$. \qed

\subsubsection {Interstice duality}
 
The topological model $\CG$ allows for an immediate and unified proof of the following result which is
well-known for cyclic (cf. \cite{connes, drinfeld:cyclic}), dihedral, and quaternionic categories (cf. \cite{dunn}).

\begin{cor}\label{cor:duality}
Any planar crossed simplicial group is self-dual.
\end{cor}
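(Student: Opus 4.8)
The plan is to exploit the topological model $\CG \simeq \DG$ of Theorem~\ref{thm:CG=DG} and exhibit an explicit \emph{interstice duality} functor $D\colon \CG \to \CG^{\op}$, which passes from a marked set to the closure of its complement. Concretely, given a $G$-structured marked circle $(C,J)$ --- where the $G$-structure is a reduction $\rho\colon F\to\Homeo(S^1,C)$ of structure group along $p_\Homeo$, and $J\subset C$ is a finite disjoint union of nondegenerate closed arcs --- I set $J^\vee := \overline{C\setminus J}$, keeping the same $G$-structured circle $C$ and the same $\rho$. The first step is to check that $J^\vee$ is again a legitimate marked set: $C\setminus J$ is a finite disjoint union of open arcs, and since each component of $J$ is nondegenerate, no two of these open arcs share an endpoint (such a common endpoint would lie in $J$ while a punctured neighbourhood of it would lie in $C\setminus J$). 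Hence the closures of the complementary arcs are pairwise disjoint closed arcs, so $J^\vee$ is homeomorphic to a finite disjoint union of copies of $[0,1]$, is nonempty since $J\neq C$, and satisfies $\lvert\pi_0(J^\vee)\rvert = \lvert\pi_0(C\setminus J)\rvert = \lvert\pi_0(J)\rvert$. Moreover $J^{\vee\vee} = \overline{\operatorname{int}(J)} = J$, so $D$ is an involution on objects.

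On morphisms, $D$ sends $(\phi,\widetilde\phi)\in\Homeo^G((C,J),(C',J'))$ to $(\phi^{-1},\widetilde\phi^{-1})$. Here $\phi(J)\subset J'$ forces $\phi^{-1}(C'\setminus J')\subset C\setminus J$, and, taking closures (a homeomorphism commutes with closure), $\phi^{-1}(J'^\vee)\subset J^\vee$; likewise $\widetilde\phi^{-1}\colon F'\to F$ is an equivariant homeomorphism over $\phi^{-1}$ compatible with the structure maps $\rho,\rho'$. Since inversion is continuous on the homeomorphism group of a compact space, and lifts continuously to the $G$-structured version, this construction descends to a map on sets of connected components, yielding a functor $D\colon\CG\to\CG^{\op}$; the identity $(\psi\phi)^{-1}=\phi^{-1}\psi^{-1}$ is precisely functoriality into the opposite category. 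Applying $D$ twice returns $(\phi^{-1})^{-1}=\phi$ on morphisms and $J^{\vee\vee}=J$ on objects, so $D$ is an isomorphism of categories $\CG\xrightarrow{\ \sim\ }\CG^{\op}$.

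Combining this with the equivalence $\CG\simeq\DG$ of Theorem~\ref{thm:CG=DG}, and hence $\CG^{\op}\simeq\DG^{\op}$, gives the chain $\DG\simeq\CG\xrightarrow{\,D\,}\CG^{\op}\simeq\DG^{\op}$, which proves that $\DG$ is self-dual; specializing $D$ to the standard objects $(S^1,[n])$ recovers the classical self-dualities of $\Lambda$ (Connes, Drinfeld), $\Xi$, and $\nabla$ (Dunn), as well as those of the remaining planar crossed simplicial groups in a uniform way. The only point demanding genuine care is the verification that $J\mapsto J^\vee$ preserves the defining shape of a marked set --- i.e.\ the non-adjacency of complementary interstices --- together with the continuity of the assignment on morphism spaces, so that $D$ is well defined after applying $\pi_0$. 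Beyond this bookkeeping I expect no serious obstacle, the substance of the argument having already been absorbed into the topological model $\CG$.
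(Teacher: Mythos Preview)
Your proposal is correct and follows essentially the same approach as the paper: define the interstice duality $(C,J)\mapsto(C,\overline{C\setminus J})$, $(\phi,\widetilde\phi)\mapsto(\phi^{-1},\widetilde\phi^{-1})$ on the topological model $\CG$, observe that it squares to the identity, and transport the resulting self-duality to $\DG$ via Theorem~\ref{thm:CG=DG}. The paper's proof is terser, but you have supplied exactly the verifications it omits.
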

\begin{proof}  We define the  functor $(-)^{\vee}: \CG \to
	\CG^{\op}$  by putting
		\[
		(C,J)^{\vee} := (C, \overline{C \setminus J}), \quad 
 (\varphi, \widetilde{\varphi})^{\vee} := (\varphi^{-1}, \widetilde{\varphi}^{-1})
	\]
	on objects and morphisms, respectively. 
	Since the square of this functor equals the identity functor, it must be an equivalence.
	Further, since $\DG$ is equivalent to $\CG$, we conclude that there exists a duality functor $(-)^\vee_\DG:
	\DG \overset{\simeq}{\to} \DG^\op$ with $[n]^\vee=[n]$. The choice of values for this functor on morphisms, however, is
	not canonical. 
\end{proof}

The connected components of $\overline{C \setminus J}$ can be regarded as ``interstices" positioned
between adjacent connected components of $J$. 

\begin{rem}\label{rems:duality}
After choosing a duality functor $(-)^\vee_\DG$, we obtain identifications of sets
\[
\begin{gathered}
\Gen_n \,\, \buildrel \on{C.F.} \over = \,\, \Hom_\DG([n], [0]) \,\,\simeq \,\, \Hom_\DG([0]^\vee, [n]^\vee)\,\,\simeq \\
\simeq \,\, \Hom_\DG([0], [n]) \,\, \buildrel \on{C.F.} \over = \,\, \Gen_0 \times \Hom_\Delta([0], [n]),
\end{gathered}
\]
where ``C.F." stands for the canonical factorization of morphisms in a crossed simplicial group.
These identifications can be interpreted by saying that $\Gen_n$
``grows linearly with $n$" (with ``coefficient of proportionality" being $\Gen_0$). Cf. the discussion
of three growth types of crossed simplicial groups in \S \ref{sub:weyl}.
We further deduce that the $\Gen_n$-action on $\Hom_\DG([0], [n])$ is simply transitive. 
\end{rem}

\begin{rem} One can also give a purely combinatorial description of duality in terms of the
category $\Gc$ of $\DG$-structured sets. Note that for any object $(I, \Oc(I))\in\DG$ we
have canonical identifications
\[
I \,\,=\,\, \Hom_\Gc([0], I)/\Gen_0, \quad \Oc(I) \,\,=\,\, \Hom_\Gc(I, [0]). 
\]
The dual of $(I, \O(I))$ is the pair
\[
	(I^{\vee}, \O(I^{\vee})) := (\Hom_{\G}(I, [0])/\Gen_0, \Hom_{\G}([0],I))
\]
which can be made into a $\DG$-structured set so that $I \mapsto I^{\vee}$ defines an equivalence
$\G \to \G^{\op}$.
\end{rem}
	
In the subsequent sections, we explicitly analyze the category $\G$ of $\DG$-structured sets for
various examples of planar crossed simplicial groups.

\subsection{Cyclic orders}\label{sec:cycorder}

Let $\DG = \Lambda$ be Connes' cyclic category. A $\Lambda$-structure on a set $J$ can then be
interpreted as a {\em cyclic order}. 

\subsubsection{Finite total cyclic orders}

\begin{defi}\label{def:totalcyclic} A {\em (total) cyclic order} on a finite set $J$ of cardinality $n$ is 
the choice of a simply transitive action of the group $\ZZ/n\ZZ$. Note that a cyclic order on the set $J$ 
is uniquely determined by specifying for every element $j$ its {\em successor} $j + 1$.
\end{defi}

To avoid confusion, we refer to the usual concept of total order as {\em linear order}.

\begin{exas}
	\begin{enumerate} 
		\item Any set of cardinality $\le 2$ has a unique cyclic order. A cyclic order on a
			set $J$ of cardinality $3$ is the same as an orientation of the triangle
			$|\Delta^J|$.
		\item Every subset of a cyclically ordered set $J$ has a canonical cyclic order.
		\item Any finite linearly ordered set $J$ carries a canonical cyclic order where the
			successor of each nonmaximal element of $J$ is its successor with respect to the linear
			order and the minimal element is the successor of the maximal element. We
			say that this cyclic order is the {\em cyclic closure} of the linear order on $J$.
		\item More generally, given a cyclically ordered finite set $K$ and a map of finite sets $f: J \to
			K$ where each fiber of $f$ is equipped with a linear order, we can define
			the {\em lexicographic} cyclic order on $J$ as follows. First note, that the
			set of fibers of $f$ is canonically identified with $K$ and hence carries a
			cyclic order. Therefore, every fiber has a successor fiber. Now the cyclic
			order on $J$ is given as follows: For every element $j$ which is nonmaximal
			in its fiber, we define the successor $j +1$ to be the
			successor with respect to the specified linear order of the fiber. If $j$ is
			maximal in its fiber, we define its successor to be the minimal element of
			the successor fiber. Here the convention is to skip all empty fibers. The
			previous example is then given by the case when $K$ is a point, equipped
			with its unique cyclic order.
	\end{enumerate}
\end{exas}

\begin{defi} A morphism $f: J \to K$ of finite cyclically ordered sets is a map $f: J \to K$ of underlying sets
	equipped with the choice of a linear order on every fiber such that the cyclic order on $J$
	is the induced lexicographic order. The resulting category ${\bf \Lambda}$ 
	given by finite nonempty cyclically ordered sets and their morphisms is called the {\em large cyclic
	category.}
\end{defi}

\begin{rems} (a) Restricting to the standard cyclic ordinals $\cn$, this gives a purely  combinatorial description
of morphisms in Connes' category $\Lambda$. 

(b) We have a natural faithful embedding ${\bf \Delta} \subset {\bf \Lambda}$ given by
	associating to a linearly ordered set $J$ its cyclic closure. This extends the embedding $\Delta\subset\Lambda$. 
\end{rems}

\subsubsection{General cyclic orders}
While the concept of cyclic order introduced above is sufficient for the main purposes of this work, it
is not fully satisfactory in that it does not cover the prototypical example, generating all
the finite cyclic orders:   {\em  the circle $S^1$ itself}.
Indeed, $S^1$ is an infinite set. 
The most flexible definition of a linear order is given by a
binary relation on a set. It is therefore natural to define cyclic orders in a similar style.
The following definition is a slight modification of those
given by Huntington and Nov\'ak \cite{huntington, novak}.

\begin{defi}\label{def:cyclic-order}
Let $J$ be a set. A {\em partial cyclic order} on $J$ is a ternary relation $\lambda \subset J^3$
with the following properties:
\begin{enumerate}[label=(C\arabic{*})]
\item (reflexivity) We have $(a,a,a)\in\lambda$ for any $a\in J$. 

\item (antisymmetry) If  $(a,b,c)\in\lambda$ and $(b,a,c)\in\lambda$, then
$|\{a,b,c\}|\leq 2$. 
 
 \item (transitivity)  Let $a,b,c,d\in J$ be distinct.
 If $(a,b,c)\in\lambda$ and $(a,c,d)\in\lambda$, then $(a,b,d)\in\lambda$
 and $(b,c,d)\in\lambda$: 
  
\[
\begin{tikzpicture}

 \node (O) at (0,0) (origin) {}; 
\node (Pa) at (0:2cm)   { $a$}; 
 \node (Pb) at (3.5*72:2cm)  {$b$}; 
 \node (Pc) at (2.2*72:2cm)  {$c$}; 
\node (Pd) at (1*72:2cm)  {$d$};

\draw[ 
        decoration={markings, mark=at position 0.125 with {\arrow{<}},
        mark=at position 0.35 with {\arrow{<}},
        mark=at position 0.6 with {\arrow{<}},
        mark=at position 0.85 with {\arrow{<}}
        },
        postaction={decorate}, dotted
        ]
 (origin) circle (2cm);

 \draw (node cs:name=Pa) --(node cs:name=Pb);
  \draw (node cs:name=Pb) --(node cs:name=Pc);
   \draw (node cs:name=Pa) --(node cs:name=Pc);
 \draw (node cs:name=Pa) --(node cs:name=Pd);
  \draw (node cs:name=Pc) --(node cs:name=Pd);

\node(A) at (4.5,0) (arrow) {$\Rightarrow$} ;

\node (O') at (9,0) (origin') {}; 
\node [xshift=9cm] (Pa') at (0:2cm)  { $a$}; 
 \node  [xshift=9cm] (Pb') at (3.5*72:2cm) {$b$}; 
 \node [xshift=9cm] (Pc') at (2.2*72:2cm) {$c$}; 
\node [xshift=9cm] (Pd') at (1*72:2cm) {$d$};
 
 \draw[ 
        decoration={markings, mark=at position 0.125 with {\arrow{<}},
        mark=at position 0.35 with {\arrow{<}},
        mark=at position 0.6 with {\arrow{<}},
        mark=at position 0.85 with {\arrow{<}}
        },
        postaction={decorate}, dotted
        ]
 (origin') circle (2cm);
 
 \draw (node cs:name=Pa') --(node cs:name=Pb');
  \draw (node cs:name=Pb') --(node cs:name=Pc');
   \draw (node cs:name=Pc') --(node cs:name=Pd');
    \draw (node cs:name=Pa') --(node cs:name=Pd');
 \draw (node cs:name=Pb') --(node cs:name=Pd');

 \end{tikzpicture}
\]
 
 \item (cyclic symmetry) If $(a,b,c)\in\lambda$, then $(b,c,a)\in\lambda$. 
 
 \item (2-cycle axiom) If $(a,b,c)\in\lambda$, then $(a, a, c)\in\lambda$.
 
\end{enumerate}
A {\em total cyclic order} is a partial cyclic order satisfying the additional condition:
\begin{enumerate}[resume,label=(C\arabic{*})]
\item (comparability) For any $a,b,c$ we have either $(a,b,c)\in\lambda$, or $(b,a,c)\in\lambda$. 
\end{enumerate}
\end{defi}

Note the formal similarity of (C3) and  the Pachner move (exchange of two triangulations of a 4-gon).
 Note also that the implication in (C3)
is in fact an equivalence, as we can apply it twice. 
 
\begin{exas}\label{ex:cyclic-orders}
\begin{enumerate}
\item Let $C$ be a circle.
Then $C^3_\neq = C^3-\bigcup_{i\neq j} \{x_i=x_j\}$ has two connected components which are
in bijection with  orientations of $C$. The ``anti-clockwise" component corrresponding to a
choice of orientation contains triples $(x_1, x_2, x_3)$  with $x_i$ following very closely
each other in the orientation direction. The closure $\lambda$ of this component is a total
cyclic order on $C$.  In particular, the standard oriented circle $S^1 =
\{|z|=1\}\subset\CC$ has a canonical cyclic order. 
 
This implies that a total cyclic order on a finite set $J$ can be defined as an  homeomorphism class
of embeddings of $J$ into oriented circles. That is, two such embeddings $u: J\to C$  and $u':J\to
C'$  are identified  iff there is an orientation preserving homeomorphism $\phi: C\to C'$ such that
$\phi u=u'$. It is now easy to verify that a total cyclic order on a finite set as defined in 
Definition \ref{def:cyclic-order} is the same as a total cyclic order in the sense of Definition
\ref{def:totalcyclic}.
 
\item Let $\leq$ be a partial linear order on $S$. Its {\em cyclic closure} is the ternary relation
$\lambda=\lambda_\leq\subset S^3$ obtained as the closure of the set $\{x_1\leq x_2\leq x_3\}$ under
the action of $\ZZ/3$ by cyclic rotations. It is straightforward to see that $\lambda$ is a partial
cyclic order, total if $\leq$ is a total linear order, cf. \cite[Th. 3.5]{novak}.  

This implies that total cyclic order on a finite set $J$ with $|J|=n$ is the same as a class of
bijections $J\to \{1, 2, ..., n\}$  under the action of $\ZZ/n$, each bijection (i.e., a total
order) giving a cyclic order by cyclic closure. 

\item Every set $J$ has the {\em discrete partial cyclic order} consisting only of triples
	$(a,a,a)$, $a\in J$. If $|J|=1$, this is the only partial cyclic order on $J$.  If $|J|=2$,
	then $J$ has two partial cyclic orders: one discrete and the other is the total cyclic
	order, formed by all triples $(a,b,c)\in J^3$.  Note that Nov\'ak's setting \cite{novak}
	allows only one partial cyclic order on a 2-element set, the discrete one. 
	
	\item Generalizing (1), one can see that the set of points of any ``abstract circle"
	in the sense of Moerdijk \cite{moerdijk} has a total cyclic order.
	
\end{enumerate}
\end{exas}
 
\begin{exa} The real projective line $\RR P^1$, being a circle, has a cyclic order. Such an order is
	determined by a choice of orientation of $\RR^2$.  More generally, let
	$F_n=\on{Flag}(1,2,...,n; \RR^n)$ be the space of complete flags in $\RR^n$.  For example,
	$F_2=\RR P^1$.  One of the main results of \cite{fock-goncharov} (theory of positive
	configurations of flags) says, in our terminology, that $F_n$ has a natural partial cyclic
	order $\lambda$, determined by a choice of orientation of $\RR^n$.  Explicitly, a closed
	smooth curve $C\subset \RR P^{n-1}$ is called {\em convex}, if every hyperplane intersects
	$C$ in at most $n-1$ points.  In particular, such $C$ does not lie in a hyperplane. Taking
	the osculating flags to $C$ at all its points, we obtain an embedding $\gamma_C: C\to F_n$.
	The 2-element set of orientations of any convex curve  $C\subset \RR P^{n-1}$ is canonically
	identified with the 2-element set of orientations of $\RR^n$, by a version of the Frenet
	frame construction (this is the old-fashioned definition of the orientation of the
	space in terms of ``left and right screws").
	  Thus choosing an orientation of $\RR^n$, we equip any convex $C$ with a
	total cyclic order $\lambda_C$.  We then say that $({\bf f}_1, {\bf f}_2, {\bf f}_3)\in
	F_n^3$ lies in $\lambda$, if there is a convex curve $C\in \RR P^{n-1}$ and three points
	$(t_1, t_2, t_3)\in\lambda_C$ such that ${\bf f}_i=\gamma_C(t_i)$.  See {\em loc. cit.} for
	more details as well as a generalization to the complete flag variety of any split simple
	algebraic group over $\RR$.
\end{exa}

The concept of lexicographic order introduced above can be defined in the more general context of
partial orders.

\begin{defi}  \label{defi:lexico} 
Let $f: I\to J$ be a map of sets.  Suppose that $J$ is equipped with a partial cyclic order
$\lambda_J$, and each fiber $f^{-1}(j)$ is equipped with a partial linear order $\leq_j$. In this
setting we construct a partial cyclic order $\lambda=\on{Lex}(f, (\leq_j))$ on $I$, called the {\em
lexicographic cyclic order} as follows. 
We say that $(a,b,c)\in\lambda_I$ if $(f(a), f(b), f(c))\in\lambda_J$ and either
\begin{enumerate}[labelindent=1cm]
\item[(i)]  all three elements $f(a), f(b), f(c)$  are distinct, or:

\item[(ii)]  $f(a)=f(b)\neq f(c)$ and $a\leq_{f(a)} b$,  or:

\item[(ii')] $f(a)=f(c)\neq f(b)$ and $c\leq_{f(c)}  a$, or:

\item[(ii'')] $f(a)\neq f(b)=f(c)$ and $b\leq_{f(b)} c$,  or:

\item[(iii)] $f(a)=f(b)=f(c)$ and either
\begin{enumerate}
\item $a\leq_j b\leq_j c$, or
\item $b\leq_j c\leq_j a$, or
\item $c\leq_j a\leq_j b$, \,\, $j=f(a)=f(b)=f(c)$.
\end{enumerate}
\end{enumerate}
If $\lambda_J$ is a total cyclic order, and each $\leq_i$ is a total
linear order, then $\on{Lex}(f, (\leq_j))$ is a total cyclic order. 
\end{defi}

\begin{defi}\label{def:morphisms-cyclic-orders}
 Let $(I,\lambda_I)$ and $(J, \lambda_J)$ be partially cyclically ordered sets. 
\begin{enumerate}[label=(\alph*),itemsep=0cm,topsep=1ex]
	\item Assume that $\lambda_I$ is a total cyclic order. Then we define a {\em morphism} $F: I\to J$
to be a datum consisting of a map $f: I\to J$ and a total linear order $\leq_j$ on
each $f^{-1}(j)$ such that $\on{Lex}(f, (\leq_j))=\lambda_I$.
	\item In general, a morphism $F: I\to J$ is a compatible system of
morphisms $I'\to J$ given for all totally cyclically ordered subsets $I'\subset I$  
(with respect to the induced cyclic order). 
\end{enumerate}
\end{defi}
 
Note that the map of sets $f$ underlying a morphism $F: I\to J$, is always ``cyclically monotone'':
$(a,b,c)\in\lambda_I$ implies $(f(a), f(b), f(c))\in\lambda_J$. If $f$ is injective, then $F$ is
determined by $f$. We call such morphisms {\em injective}. In general, however, a morphism
contains more data than just this map $f$.
The collection of partially cyclically ordered sets with morphisms as defined above forms a category
${\mathbf \Lambda}_{\on{par}}$ which contains the categories $\Lambda$ and ${\bf \Lambda}$ as full
subcategories.

\subsubsection{Cyclic duality}
  Given a cyclically
ordered set $J$ of cardinality $n \ge 1$,  the set $J^{\vee} := \Hom_{\Lambda}(J, \langle 0\rangle)$
 is, by definition, in canonical bijection with the
set of {\em linear refinements} of the cyclic order on $J$, i.e., linear orders whose cyclic closure recovers 
the cyclic order on $J$. Further, the action of $\ZZ/n$ on $J$ induces a $\ZZ/n$-action on
$J^{\vee}$ which is simply transitive. 

\begin{exa} Let $J \subset S^1$ be a subset of the standard circle. Providing $J$ with the
	counter-clockwise cyclic order induced from $S^1$, we may identify the set
	$\Hom_{\Lambda}(J, \langle 0\rangle)$ with the set of  homotopy classes of monotone maps $f: S^1 \to S^1$
	of degree $1$ taking the subset $J$ to $1 \in S^1$. Let us refer to the arcs on $S^1$ which
	connect an element of $J$ to its successor as {\em interstices of $J$}. The map $f$ is
	uniquely determined by specifying which interstice of $J$ maps to a nontrivial loop in
	$S^1$. Therefore, $J^{\vee}$ is canonically identified with the set of interstices of $J
	\subset S^1$. For an abstract cyclically ordered set, the set $J^{\vee}$ should be regarded
	as an intrinsic construction of the set of interstices with its cyclic order, without any
	reference to an embedding into the circle. The role of $J^\vee$ as a dual object was emphasized
	by Drinfeld \cite{drinfeld:cyclic} whose result can be formulated as follows. 
	
\end{exa}

\begin{prop}
\label{prop:cyclic-interstice} The cyclic interstice construction $J \mapsto J^{\vee}$ extends to a functor
	\[
		{\bf \Lambda}^{\op} \lra {\bf \Lambda}
	\]
	which is an equivalence of categories.
\end{prop}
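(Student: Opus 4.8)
The plan is to deduce the Proposition from the self-duality of the topological model $\CG$ established in Corollary~\ref{cor:duality}, specialized to the cyclic case $G=SO(2)$, $\DG=\Lambda$, and then to identify the resulting functor explicitly with the interstice construction $J\mapsto J^{\vee}=\Hom_{\Lambda}(J,\langle 0\rangle)$.

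First I would assemble the relevant equivalences. By the Remark following the definition of ${\bf\Lambda}$, restriction to the standard cyclic ordinals identifies ${\bf\Lambda}$ with Connes' category $\Lambda$; by Theorem~\ref{thm:CG=DG} the latter is in turn equivalent to $\CG$, whose objects are oriented marked circles $(C,J)$ with $J\subset C$ a nonempty finite union of closed arcs (each with nonempty interior), the equivalence sending $(C,J)$ to $\pi_0(J)$ equipped with the cyclic order induced by the orientation of $C$. Under this chain, any finite cyclically ordered set is realized as $\pi_0(J)$ for a suitable marked circle, and — by the Example immediately preceding the Proposition — the interstice set $\pi_0(C\setminus J)$, cyclically ordered by $C$, is canonically and $\ZZ/n$-equivariantly identified with $\Hom_{\Lambda}\!\big(\pi_0(J),\langle 0\rangle\big)$, where $n=|\pi_0(J)|$; in particular this identification respects the cyclic orders. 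Since $\pi_0(\overline{C\setminus J})=\pi_0(C\setminus J)$, it follows that on objects the interstice functor agrees with the object part of the self-duality
\[
(-)^{\vee}\colon\CG\lra\CG^{\op},\qquad (C,J)\longmapsto\big(C,\overline{C\setminus J}\big),\qquad (\varphi,\widetilde\varphi)\longmapsto(\varphi^{-1},\widetilde\varphi^{-1}),
\]
of Corollary~\ref{cor:duality}, transported along $\CG\simeq{\bf\Lambda}$.

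Next I would promote this to morphisms. The functor $(-)^{\vee}$ on $\CG$ is well defined because $\varphi(J)\subset J'$ forces $\varphi^{-1}\big(\overline{C'\setminus J'}\big)\subset\overline{C\setminus J}$, and it is plainly functorial (inverses reverse composition, and this survives passage to $\pi_0$). Transporting along the equivalences above yields a functor ${\bf\Lambda}^{\op}\to{\bf\Lambda}$ extending $J\mapsto J^{\vee}$. One then checks, by unwinding the combinatorial description of morphisms in ${\bf\Lambda}$ (a cyclically monotone map of sets together with linear orders on its fibers inducing the lexicographic cyclic order on the source), that the induced map on morphism sets is exactly precomposition $\Hom_{\Lambda}(-,\langle 0\rangle)$: the underlying map $K^{\vee}\to J^{\vee}$ is precomposition, and the required linear order on a fiber $(F^{\vee})^{-1}(\ell)$ is the natural order along the circle on the interstices of the target circle contained in the interstice $\ell$ of the source. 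This is the one place where genuine bookkeeping is needed, and it is the step I expect to be the main obstacle: matching the fiber-order data attached to a morphism of ${\bf\Lambda}$ with arcs and interstices under a degree-one monotone self-map of the circle, i.e. verifying that the abstract self-duality of $\CG$ really computes $\Hom_{\Lambda}(-,\langle 0\rangle)$ on the nose and not merely something isomorphic to it on objects. (Alternatively, this step admits a purely combinatorial treatment: refining $\lambda_K$ to a linear order and forming the lexicographic order with the given fiber orders produces a linear refinement of $\lambda_J$, which defines $F^{\vee}$ directly, and associativity of the lexicographic construction gives contravariant functoriality.)

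Finally, for the equivalence: if $J\subset C$ is a finite union of closed arcs each with nonempty interior — precisely the situation for objects of $\CG$ — then $\overline{C\setminus\overline{C\setminus J}}=J$, so $(-)^{\vee}$ on $\CG$ is strictly involutive and hence an equivalence $\CG\overset{\simeq}{\to}\CG^{\op}$; transporting along $\CG\simeq{\bf\Lambda}$ gives the asserted functor ${\bf\Lambda}^{\op}\to{\bf\Lambda}$, involutive up to natural isomorphism and therefore an equivalence of categories.
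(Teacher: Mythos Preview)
Your proposal is correct and takes a genuinely different route from the paper. The paper gives a direct, self-contained combinatorial proof: given $f\colon J\to K$, precomposition yields a set map $f^{\vee}\colon K^{\vee}\to J^{\vee}$ on linear refinements, and the paper supplies an explicit formula for the linear order on each fiber of $f^{\vee}$ (comparing the sizes of the upper sets $\{k\in K\mid f(j_{\max})\le_{\lambda_i} k\}$), then checks that the double dual $(J^{\vee})^{\vee}$ is canonically and functorially $J$. The paper even remarks that the result is a particular case of the general planar duality and that the direct argument is given ``for convenience of the reader.''

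Your approach instead transports the involutive self-duality of the topological model $\CG$ (Corollary~\ref{cor:duality}) along the equivalences $\CG\simeq\Lambda\simeq{\bf\Lambda}$, and then identifies the resulting functor with $J\mapsto\Hom_{\Lambda}(J,\langle 0\rangle)$ via the interstice description in the preceding Example. This is more conceptual and makes clear that cyclic duality is an instance of a uniform phenomenon for all planar crossed simplicial groups; the cost, as you correctly flag, is the bookkeeping needed to verify that the transported functor agrees with the interstice construction on morphisms and not merely on objects. The paper's explicit fiber-order formula is precisely what you would recover at the end of that bookkeeping, so the two arguments meet there. Your alternative combinatorial sketch (refining $\lambda_K$, forming the lexicographic order, using associativity) is essentially a repackaging of the paper's direct construction and would also work.
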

\begin{proof} Although this is a particular case of the general duality for planar crossed
	simplicial groups, let us provide a self-contained proof for convenience of the reader. 

 	Given a morphism of cyclically ordered sets $f: J \to K$, we obtain a map of sets of linear
	refinements $f^{\vee}: K^{\vee} \to J^{\vee}$. To lift this map to a morphism of cyclically ordered sets,
	we have to additionally provide a linear order on each fiber of $f^{\vee}$. Consider linear
	orders $\lambda_1, \lambda_2 \in K^{\vee}$ which map to the same linear order $\lambda \in
	J^{\vee}$. Let $j_{\max}$ be the maximum of $(J, \lambda)$. We say $\lambda_1 \le \lambda_2$ if
	\[
		\big|\{k \in K\; |\; f(j_{\max}) \le_{\lambda_1} k\}\big| \le \big|\{k \in K \;|\; f(j_{\max})
		\le_{\lambda_2} k\}\big|.
	\]
	This construction provides linear orders on the fibers of $f^{\vee}$
	which are compatible with the cyclic order on $K^{\vee}$. The equivalence of categories can
	be verified by noting that the double dual $(J^{\vee})^{\vee}$ can be canonically and
	functorially identified with $J$.
\end{proof}

\subsection{Dihedral orders}\label{sec:dihorder}

Let $\Delta \Gen = \Xi$ be the dihedral category. A $\Xi$-structure on a set $J$ yields a
concept of {\em dihedral order}. 

\subsubsection{Finite total dihedral orders}

Given a finite set $J$ with cyclic order $\lambda$, we can define the {\em opposite} cyclic order
$\lambda^{\op}$ on $J$ by declaring the successor $j+1$ of an element $j$ to be the predecessor 
$j-1$ of $j$ with respect to $\lambda$. The association $(J,\lambda) \mapsto (J, \lambda^{\op})$ extends to a self-equivalence of the
category $\bf \Lambda$ where the opposite $f^{\op}$ of a morphism $f: J \to K$ is defined to be the
same map on underlying sets equipped with the opposite linear order on each fiber.

\begin{defi} A {\em dihedral order} on a finite set $J$ of cardinality $\ge 3$ is defined to be the choice
	of a set $o(J) = \{\lambda_0,\lambda_1\}$ of opposite cyclic orders on $J$. A dihedral
	order on a set $J$ of cardinality $\le 2$ is defined to be the choice of an arbitrary
	two-element set $o(J)$. Given a dihedrally ordered set $(J, o(J))$, we refer to the two-element set $o(J)$ as the {\em
	orientation torsor} of $J$. 
\end{defi}

A morphism $J \to K$ of dihedrally ordered sets is defined to be a
bijection $f: o(J) \to o(K)$ and an unordered pair $\{f_0:(J,\lambda_0) \to  (K, f(\lambda_0)),f_1: (J,
\lambda_1) \to (K, f(\lambda_1))\}$ of opposite morphisms of cyclic ordinals. Here, we agree by
convention that for a dihedrally ordered set $J$ of cardinality $\le 2$, both elements of the
orientation torsor $o(J)$ induce the unique cyclic order on $J$. With this notion of morphism, the
set of finite nonempty dihedrally ordered sets organizes into a category ${\bf \Xi}$ which we call the {\em large
dihedral category}.

\begin{rems} (a) The reason for the introduction of a ``virtual'' orientation torsor for sets of
	cardinality $\le 2$ should now be clear. For example, with this convention, 
	the automorphism groups of the standard dihedrally ordered sets $\{0\}$ and $\{0,1\}$ are
	$D_1 = \ZZ/2$ and $D_2 = \ZZ/2 \times \ZZ/2$, respectively. The nontrivial
	automorphism given by interchanging the elements of the virtual orientation torsor acts
	trivially on the underlying sets. 
	
	\vskip .2cm
	
	(b) The automorphism of $\bf \Lambda$ given by passing to the opposite cyclic order can be
	regarded as an action of the (discrete $2$-) group $\ZZ/2$ on $\bf \Lambda$. Our approach
	amounts to defining $\bf\Xi$ as the quotient of $\bf\Lambda$ by this action.		 
\end{rems}

\begin{exa} \label{exa:dihedral}
	\begin{enumerate}
		\item Every set of cardinality $1 \le n \le 2$ admits a dihedral order which is unique up to
		nonunique isomorphism. This corresponds to the fact that we have surjections $D_n \to
		S_n$ with nontrivial kernel. Every set of cardinality $3$ admits a unique dihedral order 
		corresponding to the fact that we have an isomorphism $D_3 \to S_3$.
		A set of cardinality $n \ge 3$ has $|S_n|/|D_n|$ different dihedral orders.
		
	\item \label{exa:item2} 
		For a set $J$ of cardinality $\ge 3$, a dihedral order on $J$ can be identified with
		an equivalence class of embeddings of $J$ into the standard unit circle $S^1 \subset
		\mathbb C$. Here two embeddings are equivalent if they differ by a (not necessarily
		orientation preserving) homeomorphism of $S^1$.  A set $J$ of cardinality $4$ has
		$3= |S_4|/|D_4|$ dihedral orders.  These orders are in natural bijection with
		unordered partitions of $J$ into two 2-element sets: $J = \{a,c\} \sqcup \{b,d\}$.
		This can be immediately seen by identifying a dihedral order with an equivalence
		class of embeddings into $S^1$: the above partition encodes that the diagonals
		$[a,c]$ and $[b,d]$ intersect inside the unit disk.	
		\[
			\begin{tikzpicture}

			\node (O) at (0,0) (origin) {}; 
			\node (Pb) at (0:2cm)   { $c$}; 
			\node (Pc) at (3.5*72:2cm)  {$b$}; 
			\node (Pa) at (2.2*72:2cm)  {$a$}; 
			\node (Pd) at (1*72:2cm)  {$d$};

			\draw[ 
				decoration={markings},
				postaction={decorate}, 
				dotted
				]
			 (origin) circle (2cm);

			 \draw (node cs:name=Pa) --(node cs:name=Pb);
			  \draw (node cs:name=Pc) --(node cs:name=Pd);

			 \end{tikzpicture}
		\]
	Similarly, a dihedral order on a finite set $J$ of cardinality $\geq 4$ is uniquely
	determined by the knowledge of which pairs of diagonals intersect in the unit disk. In
	particular, this means that a dihedral order on $J$ is uniquely recovered from the
	collection of the induced dihedral orders on all 4-element subsets of $J$.  

	\item Every finite cyclically ordered set $(J, \lambda)$ admits a canonical dihedral order by
	setting $o(J) = \{\lambda, \lambda^{\op}\}$ if $|J| \ge 3$ and $o(J) = \ZZ/2$ otherwise.
	We call this dihedral order the {\em dihedral closure} of $\lambda$.
	There is an obvious way to extend this construction to a faithful functor
	\[
		{\bf \Lambda} \lra {\bf \Xi} 
		\]
	We denote the dihedral closure of the standard
	cyclically ordered set $\cn$ by $\dn$. The small dihedral category $\Xi$ is then obtained as the full
	subcategory of ${\bf \Xi}$ spanned by the standard dihedrally ordered sets $\{\dn\}$.
	\end{enumerate}
\end{exa}

\begin{refo}  A more algebro-geometric version of Example \ref{exa:dihedral} \ref{exa:item2} can be obtained
	as follows. Let  $J$ be a finite set with $\geq 3$ elements, and $\overline M_{0,J}$ be the
	moduli space of stable $J$-pointed curves of genus 0. This is a smooth projective  variety
	over $\QQ$ of dimension $|J|-3$ whose open part is the quotient
	\[
	M_{0,J} = \bigl( (\PP^1)^J \setminus (\text{diagonals} )\bigr) \bigl/ PGL_2,
	\]
	see, e.g., \cite{kapranov-veronese} for more details.  Now note that connected components of the
	real locus $M_{0,J}(\RR)$ are in bijection with dihedral orders on $J$. Indeed, $\RR P^1=
	\PP^1(\RR)$ is topologically a circle, and action of $PGL_2(\RR)$ on it contains transformations
	both preserving and reversing the orientation of this circle, so a comparison as in 
	Example \ref{exa:dihedral} \eqref{exa:item2} is immediate. 
	It was proved in \cite{kapranov-permuto} that the closure in $\overline M_{0,J}(\RR)$ of each component of
	$M_{0,J}(\RR)$, is identified with the Stasheff polytope $K_{|J|-1}$. Thus 
	$\overline M_{0,J}(\RR)$ can be obtained by gluing as many Stasheff polytopes
	as there are dihedral orders on $J$. 
\end{refo}
 
\subsubsection{General dihedral orders}
  
According to Example \ref{exa:dihedral} \eqref{exa:item2}, a dihedral order on a finite
set $J$ can be understood as a $4$-ary relation on $J$ in analogy to Definition
\ref{def:cyclic-order}. In this paper we do not attempt a full investigation of this
approach which should naturally lead to a good concept of partial dihedral order on a
possibly infinite set.  
Let us just indicate the most important property of the resulting $4$-ary relation in the case
of a finite dihedrally ordered set $J$. For any $n\geq 1$ let
\[
	J^n_\neq = J^n \setminus \bigcup_{i\neq j} \{x_i=x_j\}
\]
be the set of $n$-tuples of distinct elements of $J$, and let $\gamma \subset J^4_\neq$
consist of $4$-tuples $(a,b,c,d)$ such that dihedral order on $\{a,b,c,d\}$ induced from the
embedding into $J$ coincides with the dihedral closure of the linear order $(a,b,c,d)$.  
For any ${\bf x}= (x_0, \dots , x_4)\in J^5_\neq$ let
\[
	\partial_i {\bf x} = (x_0, \dots, \widehat x_i, \dots, x_4), \quad i=0, \dots, 4, 
\]
be the tuple obtained by removing the $i$th element. 
 
\begin{prop}[The pentagon property for dihedral orders]
Let $J$ be a finite dihedrally ordered set, $|J|\geq 5$, and $\gamma\subset J^4_\neq$
defined as above. For any ${\bf x}=(x_0, \dots, x_4) \in J^5_\neq$ the following are equivalent:
\begin{enumerate}
\item[(i)] 
All $\partial_i {\bf x}$, $i=0, \dots, 4$, belong to $\gamma$.
In order words, the 
total order on $\{x_0, \dots, x_4\}$ is compatible with the induced cyclic order. 

\item[(ii)] $\partial_1{\bf x}$ and $\partial_3 {\bf x}$ belong to $\gamma$.

\item[(iii)] $\partial_0{\bf x}$, $\partial_2{\bf x}$ and $\partial_4 {\bf x}$ belong to $\gamma$. 
\end{enumerate}
\end{prop}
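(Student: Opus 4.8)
The plan is to reduce everything to the concrete circle model of Example \ref{exa:dihedral}\eqref{exa:item2}: since $J$ is finite with a dihedral order, we may fix an embedding $J\hookrightarrow S^1$ representing that order, and then the relation $\gamma$ consists precisely of those $4$-tuples $(a,b,c,d)$ which occur in this cyclic (or reverse-cyclic) positional order on $S^1$, up to the $D_4$-action generated by rotation and reflection. Concretely, $(a,b,c,d)\in\gamma$ if and only if reading the four points off the circle in one of the two circular directions, starting somewhere, produces $a,b,c,d$. Under this identification, condition (i) says that the five points $x_0,\dots,x_4$ sit on $S^1$ in the cyclic order $x_0,x_1,x_2,x_3,x_4$ (again up to the $D_5$-action). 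So the real content is: a candidate cyclic arrangement of five points is globally consistent iff a specified proper subset of its $4$-point sub-arrangements is consistent. This is an entirely finite, combinatorial verification.

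First I would make precise the combinatorial translation. For a $5$-tuple $\mathbf x=(x_0,\dots,x_4)\in J^5_{\neq}$, let $\sigma\in S_5$ be the permutation recording the actual circular position (in a fixed orientation) of $x_i$ relative to $x_0,\dots,x_4$; then $(x_0,\dots,x_4)$ satisfies (i) iff $\sigma$ lies in the subgroup $D_5\subset S_5$ generated by the $5$-cycle and the reflection fixing $x_0$. Likewise, $\partial_i\mathbf x\in\gamma$ iff the induced cyclic position of the four remaining points lies in the copy of $D_4$ inside $S_4$. Thus the statement becomes: for $\sigma\in S_5$, one has $\sigma\in D_5$ iff the two ``face restrictions'' $\partial_1\sigma,\partial_3\sigma$ lie in $D_4$ (that's (ii)), and equivalently iff the three face restrictions $\partial_0\sigma,\partial_2\sigma,\partial_4\sigma$ lie in $D_4$ (that's (iii)). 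The implications (i)$\Rightarrow$(ii) and (i)$\Rightarrow$(iii) are immediate, since the dihedral order on a subset is induced, so I only need the converses.

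For (ii)$\Rightarrow$(i): I would argue directly on the circle. Assume the sub-quadruples $(x_0,x_1,x_2,x_4)$ and $(x_0,x_2,x_3,x_4)$ are each in correct cyclic position. The first says that, going around $S^1$, after $x_0$ we meet $x_1$ before $x_2$, and $x_2$ before $x_4$ (in one of the two directions); the second says after $x_0$ we meet $x_2$ before $x_3$ before $x_4$. Fixing the common reference triple $x_0,x_2,x_4$, these two constraints place $x_1$ on the arc from $x_0$ to $x_2$ and $x_3$ on the arc from $x_2$ to $x_4$; combining, the full circular order is forced to be $x_0,x_1,x_2,x_3,x_4$ (up to the orientation ambiguity), which is exactly (i). One only needs to check that the orientation choices in the two sub-quadruples are forced to be compatible: they share the ordered triple $x_0,x_2,x_4$, and a cyclic order on three distinct points of $S^1$ together with the ambient orientation determines the direction, so there is no freedom to reverse. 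This is the step I expect to be the main (though still elementary) obstacle: carefully tracking that the two face conditions pin down the orientation coherently, so that no ``Klein-bottle-like'' inconsistent gluing is possible. I would phrase it cleanly using that for distinct points on an oriented circle the betweenness relation $b$ lies on the arc from $a$ to $c$ (oriented) is well defined, and that the two hypotheses assert $x_1$ between $x_0,x_2$ and $x_3$ between $x_2,x_4$ for a consistent global orientation. The implication (iii)$\Rightarrow$(i) is handled the same way: the quadruples $(x_1,x_2,x_3,x_4)$, $(x_0,x_1,x_3,x_4)$, $(x_0,x_1,x_2,x_3)$ share enough overlapping ordered triples (e.g.\ $x_1,x_3,x_4$ and $x_1,x_2,x_3$) to fix a common orientation and then force the positions of $x_0$ and of the remaining points, again yielding the full order $x_0,\dots,x_4$. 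Finally, I would remark that both (ii) and (iii) are ``minimal'' witnessing subsets in the sense that the underlying simplicial identities force the other faces once these are correct — which is why the statement has the shape of a Segal-type (indeed $2$-Segal/Pachner) condition, exactly as hinted by the parenthetical remark after Definition \ref{def:cyclic-order} about the pentagon and Pachner moves.
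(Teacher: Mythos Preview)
Your approach is correct and essentially matches the paper's: both embed $J$ in a circle and argue that the given face conditions force the remaining point(s) into the correct arc(s) determined by the shared vertices. Two small remarks: your orientation worry is unnecessary because $\gamma$ is already dihedral-invariant (use the crossing description $(a,b,c,d)\in\gamma\iff[a,c]$ meets $[b,d]$, which is orientation-free), and for (iii)$\Rightarrow$(i) the paper notes that already two of the three faces suffice, since the pair $\partial_0\mathbf x,\partial_2\mathbf x$ is a cyclic rotation of the pair $\partial_1,\partial_3$ applied to a shifted $\mathbf x$.
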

 
\begin{proof} Clearly, (i) implies (ii) and (iii). Let us prove that (ii)$\Rightarrow$(i).  Let
	$J\hookrightarrow C$ be an embedding into a circle defining the dihedral order.   Since
	$\partial_1{\bf x}\in\gamma$, we have that $x_0, x_2, x_3, x_4$ are in the right dihedral
	order.  In other words,  their embedding into a circle $C$ is such that they subdivide $C$
	into the following  (unoriented)  arcs: $A(x_0, x_2)$, $A(x_2, x_3)$, $A(x_3, x_4)$ and
	$A(x_4, x_0)$.  Here $A(x_p, x_q)$ denotes the arc joining $x_p, x_q$ which does not contain
	any other $x_j, j\in \{0,2,3,4\}$.  With respect to this embedding into $C$, the point $x_1$
	must lie on one of these arcs, and (i) holds precisely  when $x_1\in A(x_0, x_2)$. Now, the
	assumption that $\partial_3{\bf x}\in\gamma$ means that the diagonal $[x_0, x_2]$ crosses
	the diagonal $[x_1, x_4]$ which means that $x_1\in A(x_0, x_2)$. 
 
	 The proof that (iii)$\Rightarrow$(i) is similar. In fact, the conditions $\partial_1{\bf
	 x}\in\gamma$ and $\partial_3{\bf x}\in\gamma$ already imply (i), the situation differing from (ii)
	 by a cyclic rotation. 
\end{proof}

\subsubsection{Dihedral duality.}
 
Given any finite nonempty dihedrally ordered set $J$, consider the set $\widetilde{J^{\vee}} :=
\Hom_{{\bf \Xi}}(J, \overline {\langle 0\rangle})$. We can identify the set $\widetilde{J^{\vee}}$ with the set of pairs
$(\lambda, o)$ where $o \in O(J)$ is an orientation and $\lambda$ is a linear refinement of the
cyclic order on $J$ corresponding to $o$. In particular, $\widetilde{J^{\vee}}$ admits a natural
involution given by $(\lambda,o) \mapsto (\lambda^{\op}, o^{\op})$. We define $J^{\vee}$ to be the
orbit set under this involution. We can interpret $O(J)$ as an orientation torsor for $J^{\vee}$
which determines opposite cyclic orders on $J^{\vee}$. Hence $J^{\vee}$ admits a canonical dihedral
order. 

\begin{exa} Let $J \subset S^1$ be a subset of the standard circle considered without chosen
	orientation. Then $J$ inherits a natural dihedral order and the morphism set
	$\Hom_{\bf \Xi}(J, \overline {\langle 0\rangle})$ can be identified with homotopy classes of monotone homotopy
	equivalences $f:S^1 \to S^1$ which map the subset $J$ to the point $1 \in S^1$ where we
	orient the target circle counterclockwise. This map $f$ is uniquely determined by the choice
	of an orientation and the choice of an interstice which maps to a nontrivial loop in $S^1$.
	Therefore,  $\Hom_{\bf \Xi}(J, \overline {\langle 0\rangle})$ is in natural bijection with the set of {\em
	oriented interstices} of $J \subset S^1$. The quotient set $\Hom_{\bf \Xi}(J, \overline {\langle 0\rangle})/(\ZZ/2)$ is
	then identified with the set of {\em unoriented interstices}. The two possible choice of
	coherent orientations of all interstices correspond to the elements of the orientation torsor.
\end{exa}

\begin{prop}\label{prop:dihedral-interstice} The dihedral interstice construction $J \mapsto J^{\vee}$ extends to a functor
	\[
		{\bf \Xi}^{\op} \lra {\bf \Xi}
	\]
	which is an equivalence of categories. \qed
\end{prop}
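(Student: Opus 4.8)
The plan is to run the proof of Proposition \ref{prop:cyclic-interstice} in the dihedral setting, carrying the orientation torsors along as the extra datum that distinguishes ${\bf \Xi}$ from ${\bf \Lambda}$; equivalently, one may phrase the whole argument as descent of the cyclic interstice functor along the $\ZZ/2$-action on ${\bf \Lambda}$ by opposite cyclic order, whose quotient is ${\bf \Xi}$ (cf. the Remarks following Example \ref{exa:dihedral}). The object assignment $J \mapsto J^{\vee}$ is already in place, so what remains is to define it on morphisms, verify functoriality, and produce a canonical identification of the double dual with the identity; consistency with the abstract self-duality of Corollary \ref{cor:duality} is then automatic, since both are computed by the interstices of an embedding $J \hookrightarrow S^1$.

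First I would define $(-)^{\vee}$ on morphisms. A morphism $f : J \to K$ in ${\bf \Xi}$ consists of a bijection $f : o(J) \overset{\cong}{\lra} o(K)$ of orientation torsors together with an unordered pair $\{f_0, f_1\}$ of opposite morphisms of cyclic ordinals $f_i : (J, \lambda_i) \to (K, f(\lambda_i))$, where $o(J) = \{\lambda_0, \lambda_1\}$. Since the orientation torsor of $J^{\vee}$ is canonically $o(J)$ and that of $K^{\vee}$ is $o(K)$, the inverse bijection $f^{-1} : o(K) \to o(J)$ will serve as the orientation-torsor component of $f^{\vee} : K^{\vee} \to J^{\vee}$. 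For the cyclic components I would apply the cyclic interstice functor of Proposition \ref{prop:cyclic-interstice} to $f_0$ and $f_1$ separately, obtaining $f_i^{\vee} : (K, f(\lambda_i))^{\vee} \to (J, \lambda_i)^{\vee}$, and then pass to the canonical identifications $(K, f(\lambda_0))^{\vee} \cong (K, f(\lambda_1))^{\vee}$ and $(J, \lambda_0)^{\vee} \cong (J, \lambda_1)^{\vee}$ (reversal of linear refinements). The pair $(f^{-1}, \{f_0^{\vee}, f_1^{\vee}\})$ then defines a morphism $K^{\vee} \to J^{\vee}$ in ${\bf \Xi}$, and preservation of identities and composition follows from the corresponding facts in the cyclic case together with the evident compatibility with orientation torsors, the cardinality $\le 2$ cases being absorbed by the virtual orientation torsor convention of Example \ref{exa:dihedral}.

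To see that $(-)^{\vee} : {\bf \Xi}^{\op} \to {\bf \Xi}$ is an equivalence I would, exactly as in the cyclic case, produce a canonical and functorial isomorphism $(J^{\vee})^{\vee} \cong J$. Fixing an element of $o(J)$ reduces this to the double-dual identification for cyclically ordered sets established inside the proof of Proposition \ref{prop:cyclic-interstice}; one then checks that the two choices of orientation yield compatible maps and that the orientation torsors $o((J^{\vee})^{\vee}) = o(J^{\vee}) = o(J)$ agree, so the cyclic natural isomorphisms glue to a natural isomorphism $\id_{{\bf \Xi}} \cong ((-)^{\vee})^{\op} \circ (-)^{\vee}$, and symmetrically in the other composition order. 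This exhibits $(-)^{\vee}$ as an equivalence.

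The main obstacle is bookkeeping rather than anything conceptual: one has to verify that the two cyclic morphisms $f_0^{\vee}, f_1^{\vee}$ produced from $f$ share a common underlying set map after the canonical re-labelling identifications (so that $\{f_0^{\vee},f_1^{\vee}\}$ really is an unordered pair of \emph{opposite} cyclic morphisms and hence a bona fide ${\bf \Xi}$-morphism), and that all constructions respect the virtual orientation torsors attached to sets of cardinality $\le 2$. Both points become transparent once one makes explicit that the cyclic interstice functor intertwines the opposite-cyclic-order involution, and I do not expect any genuinely new difficulty beyond what is already handled in the cyclic case.
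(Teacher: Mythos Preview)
Your proposal is correct. The paper itself offers no proof at all here: the proposition is stated with a bare \qed, the implicit justification being that it is an instance of the general interstice duality for planar crossed simplicial groups (Corollary~\ref{cor:duality}), proved once and for all via the topological model $\CG$ of $G$-structured marked circles.

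Your route is different but sound: rather than invoking the general topological argument, you descend the cyclic interstice functor of Proposition~\ref{prop:cyclic-interstice} along the $\ZZ/2$-action on ${\bf \Lambda}$ by opposite cyclic order, carrying the orientation torsors through by hand. This buys a self-contained combinatorial proof in the spirit of the one the paper \emph{does} give for the cyclic case, at the cost of the bookkeeping you identify (checking that $f_0^{\vee}$ and $f_1^{\vee}$ form an opposite pair, and handling the virtual torsors for $|J|\le 2$). The paper's approach buys uniformity: one argument covers all planar $\DG$ simultaneously. Neither is deeper than the other; your version is what one would write if asked to make the \qed explicit without appealing to Corollary~\ref{cor:duality}.
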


\subsection{Paracyclic   orders}\label{sec:paraorder}

Let $\Delta \Gen = \Lambda_{\infty}$ be the paracyclic category. A $\Lambda_{\infty}$-structure on a
set $J$ defines the concept of a {\em paracyclic order}. As before, we study it at several
levels of generality. 

\subsubsection{Finite total paracyclic orders}

\begin{defi} 
	A {\em paracyclic order} on a finite nonempty set $J$ consists of 
	\begin{enumerate}
		\item a $\ZZ$-torsor $\widetilde{J}$,
		\item a surjection $\pi: \widetilde{J} \to J$ which exhibits $J$ as the orbit set of
			the bijection \[T:\; \widetilde{J} \to \widetilde{J},\quad \widetilde{j} \mapsto \widetilde{j} +
				|J|.\]
	\end{enumerate}
\end{defi}

\begin{exa}\label{exa:paracyclic-exp}
Let $C$ be an oriented circle. A choice of a base point $c\in C$ defines a canonical 
paracyclic order on any finite nonempty $J\subset C$. More precisely, the choice of $c$
produces a canonical universal covering $\pi: \widetilde C_c\to C$, with $\widetilde C_c$ being
homeomorphic to $\RR$ and oriented, and so equipped with
a canonical total order. The discrete subset $\widetilde j=\pi^{-1}(J)$ has a natural successor operation 
induced by the  order on $\widetilde C_c$,  which makes it into a $\ZZ$-torsor.
 The generator of the group $\ZZ$ of deck transformations
on $\widetilde C_c$ corresponds to the shift by $|J|$ in this torsor structure. 
 
\end{exa}

A morphism $(J, \widetilde{J}) \to (K, \widetilde{K})$ of paracyclically ordered sets is given by a
commutative diagram 
\[
	\xymatrix{
		\widetilde{J} \ar[d]_{\pi_J} \ar[r]^{\widetilde{f}} & \ar[d]^{\pi_K} \widetilde{K}\\
		J \ar[r]^f &  K
	}
\]
such that $\widetilde{f}$ is monotone with respect to the linear orders on $\widetilde{J}$ and $\widetilde{K}$
induced from the $\ZZ$-torsor structures. 
The resulting category of finite nonempty paracyclically ordered
sets is called the {\em large paracyclic category} denoted by ${\bf \Lambda_{\infty}}$.

\subsubsection{General paracyclic orders}
Following \cite[\S 3.1]{HSS1}, call a $\ZZ$-{\em ordered set} as pair $(S,F)$, consisting of
a partially ordered set $(S,\leq)$ and an  morphism of partially ordered set $F: S\to S$.

\begin{defi}
A {\em partial paracyclic order} on a set $J$ is a datum of:
\begin{enumerate}
\item  a $\ZZ$-ordered set $(\widetilde J, F)$
such that $F$ is a fixed point-free bijection. We denote by $F^\ZZ$ the group
of automorphisms of $S$ generated by $F$.

\item A bijection $\widetilde J/F^\ZZ\to J$. 

\end{enumerate}

\end{defi}

 \begin{exa} (Cf. \cite{HSS1}, Ex. 3.1.5) Let $\k$ be a discrete valued field with the ring of integers
 $\mathfrak o$ and uniformizing element $\varpi$. Then the set $B=GL(n, \k)/GL(n, \mathfrak{o})$
 is identified with the set of $\mathfrak o$-lattices $L\subset \k^n$, so it is partially ordered
 by inclusion of lattices. The automorphism $F$ given by multiplication of (matrices or lattices)
 by the scalar matrix $\varpi$, makes $B$ into a $\ZZ$-ordered set. The quotient
 \[
 B/\varpi^\ZZ \,\,=\,\, PGL(n, \k)/PGL(n, \mathfrak{o})
 \]
 has therefore a canonical (partial) paracyclic order. 
  \end{exa}

\subsubsection{Paracyclic duality.}

We denote by $\pt$ the set $\{0\}$ equipped with the paracyclic order given by the trivial
$\ZZ$-torsor. Given a finite nonempty paracyclically ordered set $(J, \widetilde{J})$, 
consider the set $\widetilde{J^{\vee}} := \Hom_{\bf \Lambda_{\infty}}(J, \pt)$ and define $J^{\vee}$
to be the orbit set under the translation action of $\ZZ$ on $\pt$. We can equip
$\widetilde{J^{\vee}}$ with the $\ZZ$-action given by pulling back the translation action on $J$. 
It is straightforward to verify that $\widetilde{J^{\vee}}$ is a $\ZZ$-torsor and the projection map
$\widetilde{J^{\vee}} \to J^{\vee}$ defines a paracyclic order on $J^{\vee}$.

\begin{exa}
	Let $J \subset S^1$ and consider the paracyclic order $\pi^{-1}(J) \to J$ on $J$ induced from the universal
	cover map $\pi: \RR \to S^1$ (corresponding to the base point $1\in S^1$.
	 Then we can identify $\Hom_{\bf \Lambda_{\infty}}(J, \pt)$
	with homotopy classes of monotone maps $\RR \to \RR$ which 
	\begin{enumerate}
		\item map the subset $\widetilde{J} \subset \RR$ to the subset $\ZZ \subset \RR$, 
		\item are equivariant with respect to translation by $|J|$ on $\widetilde{J}$ and
			translation by $1$ on $\ZZ$.
	\end{enumerate}
	Any such homotopy class is uniquely determined by specifying the interstice
	$[\widetilde{j},\widetilde{j}+1]$ of $\widetilde{J} \subset \RR$ which maps
	onto the interstice $[0,1]$ of $\ZZ \subset \RR$. Thus the set $\widetilde{J^{\vee}}$ can
	be naturally identified with the set of interstices of $\widetilde{J} \subset \RR$. The
	quotient $J^{\vee}$ can therefore be canonically identified with the set of interstices of $J \subset
	S^1$.
\end{exa}

\begin{prop} The interstice construction $J \mapsto J^{\vee}$ extends to a functor
	\[
		({\bf \Lambda_{\infty}})^{\op} \lra {\bf \Lambda_{\infty}}
	\]
	which is an equivalence of categories. \qed
\end{prop}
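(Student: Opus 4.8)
The plan is to imitate the self-contained argument for cyclic duality (Proposition~\ref{prop:cyclic-interstice}), with the oriented circle $S^1$ replaced by its universal cover $\RR$; alternatively, the statement can be deduced from the general self-duality of planar crossed simplicial groups (Corollary~\ref{cor:duality}), since ${\bf \Lambda_{\infty}}$ is equivalent to the category $\G$ of $\Lambda_{\infty}$-structured finite sets and the combinatorial dual described in the remark after Corollary~\ref{cor:duality} then specializes to the interstice construction. I will follow the first route.

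First I would make the target explicit. For a finite nonempty paracyclically ordered set $(J,\widetilde J)$, realize $\widetilde J$ as an order-preserving discrete subset of $\RR$ on which the shift $T$ acts by translation by $|J|$, so that $J$ embeds in $S^1 = \RR/\ZZ$ (after rescaling) as in Example~\ref{exa:paracyclic-exp}; then $\widetilde{J^{\vee}} = \Hom_{{\bf \Lambda_{\infty}}}(J,\pt)$ is canonically the $\ZZ$-torsor of interstices of $\widetilde J \subset \RR$, each homotopy class of maps being pinned down by the interstice on which the corresponding monotone $\ZZ$-equivariant map $\widetilde J \to \ZZ$ performs its unit jump, and $J^{\vee}$ is the set of interstices of $J \subset S^1$ with its induced paracyclic order. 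On morphisms, for $f\colon J \to K$ in ${\bf \Lambda_{\infty}}$ and $\phi \in \Hom_{{\bf \Lambda_{\infty}}}(K,\pt)$ I set $f^{\vee}(\phi) = \phi \circ f$; precomposition is $\ZZ$-equivariant, hence descends to a map $K^{\vee} \to J^{\vee}$, and in the interstice picture it sends an interstice $e$ of $\widetilde K$ to the unique interstice of $\widetilde J$ on which $\phi_e \circ \widetilde f$ jumps. Functoriality in $f$ is then immediate from associativity of composition.

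The step I expect to be the main obstacle is checking that this assignment genuinely lands in ${\bf \Lambda_{\infty}}$, i.e. that $e \mapsto f^{\vee}(e)$ is monotone for the chosen $\ZZ$-torsor linear orders on $\widetilde{K^{\vee}}$ and $\widetilde{J^{\vee}}$: this is the paracyclic analogue of the counting inequality used in the proof of Proposition~\ref{prop:cyclic-interstice}, and it requires some care in matching the order-by-jump-interstice against the pointwise order on maps $\widetilde K \to \ZZ$. Once this is in place, I would identify the double dual with the identity functor: an interstice of $\widetilde{J^{\vee}}$ is an adjacent pair of interstices of $\widetilde J$, and such a pair is the same datum as the point of $\widetilde J$ at which the two interstices meet, giving an order-preserving $\ZZ$-equivariant bijection $\widetilde{(J^{\vee})^{\vee}} \overset{\cong}{\lra} \widetilde J$, hence a natural isomorphism $(J^{\vee})^{\vee} \cong J$ in ${\bf \Lambda_{\infty}}$ which one checks intertwines $(f^{\vee})^{\vee}$ with $f$. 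Therefore $(-)^{\vee}\colon ({\bf \Lambda_{\infty}})^{\op} \to {\bf \Lambda_{\infty}}$ is its own quasi-inverse and is an equivalence of categories.
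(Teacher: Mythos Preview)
The paper does not actually give a proof of this proposition: the \qed appears at the end of the statement itself, so the result is asserted without argument. Your proposal is a correct and natural way to supply the omitted details; it is precisely the paracyclic analogue of the self-contained argument the paper gives for Proposition~\ref{prop:cyclic-interstice}, and the alternative you mention via Corollary~\ref{cor:duality} is presumably what the authors have in mind when they leave the proof to the reader.

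One small point worth tightening in your write-up: when you say ``precomposition is $\ZZ$-equivariant, hence descends to a map $K^{\vee} \to J^{\vee}$'', there are two distinct $\ZZ$-actions in play on $\widetilde{K^{\vee}} = \Hom_{{\bf \Lambda_{\infty}}}(K,\pt)$, namely postcomposition with the shift on $\pt$ (used to form the quotient $K^{\vee}$) and precomposition with the generator of the $\ZZ$-torsor $\widetilde K$ (used to give $\widetilde{K^{\vee}}$ its own $\ZZ$-torsor structure). You should check both compatibilities separately: the first is automatic since $f^{\vee}(\phi) = \phi \circ f$ does not touch the target, and the second uses that $\widetilde f$ intertwines the successor operations on $\widetilde J$ and $\widetilde K$. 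Once this is made explicit, your monotonicity check and the double-dual identification go through exactly as you describe.
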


\newpage
\section{Structured surfaces} 

\subsection{Structured $C^\infty$-surfaces}\label{subsec:c-infty}

Let $p:G\to O(2)$ be a planar Lie group, and $\p: \GG=G^\diamond \to GL(2, \RR)$
the connective covering extending $p$. We denote $K=\Ker(p)=\Ker(\p)$. As the kernel of a connective
covering, the group $K$ is abelian. We say that $\GG$ {\em preserves
orientation} if $\p(\GG)\subset GL^+(2,\RR)$. Depending on whether $\GG$ preserves
orientation or not, we have one of the following two short exact sequences of groups
with abelian kernel:
\begin{equation}\label{eq:group-K}
	\begin{gathered}
		0\to K\lra \GG \lra GL^+(2,\RR)\to 1,
		\\
		0\to K\lra \GG \lra GL(2,\RR)\to 1.
	\end{gathered}
\end{equation}
In the first case, $K$ lies in the center of $\GG$. In the second case, the action of $\GG$ on $K$
by conjugation factors through an action of $\pi_0(GL(2,\RR))= \{1, \tau\}$ given by an involution
$\tau:K\to K$. 

\begin{prop}\label{prop:group-K}
\begin{enumerate}
\item[(a)] Let $\DG$ be the crossed simplicial group corresponding to $\GG$. Then $K=\Ker\{\rho: \Gen_0\to \ZZ/2\}$.

\item[(b)] The kernels corresponding to the various planar groups are given by:\\
  \renewcommand{\arraystretch}{2}
  \begin{tabular}{ p{1cm} || p{2cm}| p{2cm}| p{2cm}|p{2cm}|p{2cm}}
	  $G$& $\on{Spin}_N(2)$& $\widetilde{SO(2)}$  & $\on{Pin}^+_N(2)$ & $\widetilde {O(2)}$& $\on{Pin}^-_{2M}(2)$
 \\
 \hline
 $K$ & $\ZZ/N$ & $\ZZ$ & $\ZZ/N$ & $\ZZ$ & $\ZZ/2M$
    \end{tabular}\\
 	If $G$ does not preserve orientation then $\tau$ acts on $K$ by $\tau(k)=-k$. 
\end{enumerate}
\end{prop}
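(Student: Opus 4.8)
The plan is to deduce (a) formally from Proposition \ref{prop:orient-G-0}(a), and to obtain (b) by inspecting the explicit presentations of the planar Lie groups recorded in Examples \ref{ex:planar}.

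For part (a): Proposition \ref{prop:orient-G-0}(a) identifies $\Gen_0$, via the embedding $\epsilon$, with the subgroup $p^{-1}(O(1))\subseteq G$, the homomorphism $\rho$ being the restriction of $p$ with values in $O(1)=\ZZ/2$. Since the identity of $O(2)$ lies in $O(1)$, the subgroup $K=\Ker(p)$ is automatically contained in $p^{-1}(O(1))=\Gen_0$; and for $g\in\Gen_0$ one has $\rho(g)=0$ exactly when $p(g)$ is the identity of $O(2)$, i.e.\ exactly when $g\in K$. Hence $\Ker(\rho)=K$. The identical argument applies to $\p\colon\GG\to GL(2,\RR)$, and $\Ker p=\Ker\p$ by construction.

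For the table in (b): each group $G$ in the list of Examples \ref{ex:planar}, apart from $SO(2)$ and $O(2)$ themselves, was defined as the total space of an explicit extension of $O(2)$, or of $SO(2)\subseteq O(2)$, and one reads off the kernel of the covering map directly: $\Ker(\phi_N)=\ZZ/N$ for $\on{Spin}_N(2)$; $\Ker(\RR\to S^1)=\ZZ$ for $\widetilde{SO}(2)$, and likewise $K=\ZZ$ for $\widetilde O(2)=(\ZZ/2)\ltimes\RR$; $\Ker(p_N^+)=\mu_N\cong\ZZ/N$ for $\on{Pin}^+_N(2)=(\ZZ/2)\ltimes S^1$; and for $\on{Pin}^-_{2M}(2)=P$, where $p^-_{2M}=f_M\circ\pi$, the kernel is $\pi^{-1}(\mu_M)$. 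To identify this last group with $\mu_{2M}\cong\ZZ/2M$ I would use that $\Ker(\pi)=\{\pm1\}\subseteq P_e=S^1\subseteq\CC^*$ forces the restriction $\pi|_{P_e}\colon S^1\to SO(2)$ to be, up to isomorphism, the squaring map, so that $\pi^{-1}(\mu_M)=\mu_{2M}$. This identification of $\pi|_{P_e}$ is the one place where a small argument is needed; everything else in the table is a matter of unwinding the definitions.

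For the last assertion: the orientation-preserving planar groups are exactly $\on{Spin}_N(2)$ and $\widetilde{SO}(2)$ (image $SO(2)$ in $O(2)$), and for these $K$ is central by the first exact sequence in \eqref{eq:group-K}, so $\tau=\id$ and there is nothing to check. For the remaining three groups the image is all of $O(2)$, the conjugation action of $\GG$ on $K$ factors through $\pi_0(GL(2,\RR))=\ZZ/2$, and to compute the resulting involution $\tau$ it suffices to conjugate $K$ by a single reflection lying over the non-identity component. In the semidirect-product models $\on{Pin}^+_N(2)=(\ZZ/2)\ltimes S^1$ and $\widetilde O(2)=(\ZZ/2)\ltimes\RR$ the generator of the $\ZZ/2$ factor conjugates $S^1$ (resp.\ $\RR$) by $x\mapsto -x$, hence acts on $K=\mu_N$ (resp.\ $K=\ZZ$) by $k\mapsto -k$; and for $\on{Pin}^-_{2M}(2)=P\subseteq\HH^*$, conjugating $K=\mu_{2M}\subseteq\CC^*$ by the quaternion $j$ yields $j\zeta j^{-1}=\bar\zeta=\zeta^{-1}$, i.e.\ $k\mapsto -k$ again. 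The only genuine, if minor, obstacle in the whole argument is the normalization of $\pi|_{P_e}$ used in the previous paragraph.
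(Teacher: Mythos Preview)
Your proof is correct and follows essentially the same approach as the paper, which simply cites Proposition~\ref{prop:orient-G-0}(a) and (b) for the two parts. The only cosmetic difference is that for the table in (b) you read off $K=\Ker(p)$ directly from the explicit covering maps in Examples~\ref{ex:planar}, whereas the paper's citation of Proposition~\ref{prop:orient-G-0}(b) amounts to computing $K=\Ker(\rho)$ from the listed parities on $\Gen_0$; these are two sides of the same case-by-case inspection, and your added verification of the $\tau$-action (which the paper leaves implicit) is a welcome bit of detail.
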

\begin{proof} Follows from Proposition 
\ref{prop:orient-G-0} (a) and (b) respectively.
\end{proof}

By a {\em surface} we will mean a  connected $C^\infty$-surface $S$ possibly with boundary
$\partial S$. Given a surface $S$, let $\Fr_S\to S$ be the principal $GL(2,\RR)$-bundle of frames in
the tangent bundle $T_S$. The sheaf of connected components $\underline{\pi}_0(\Fr_S)$ is a
$\ZZ/2$-torsor over $S$ which we call the {\em orientation torsor} of $S$ and denote by $\orr_S$. Thus
the total space of $\orr_S$ is the orientation cover of $S$, denoted $\widetilde S$. 

Suppose $\GG$ does not preserve orientation. In this case, the $GL(2,\RR)$-action on $K$ from
\eqref {eq:group-K} together with the $GL(2,\RR)$-torsor $\Fr_S\to S$,  gives rise to a local
system of abelian groups $\underline K^\orr$ on $S$, locally isomorphic to $\underline{K}$. If $S$
is orientable, then $\underline K^\orr \cong \underline K$ is a constant sheaf.

\begin{defi}
A $\GG$-structured surface is a datum $(S,F)$, where $S$ is a surface, and $F\buildrel\rho\over\to \Fr_S$ is a reduction
of structure groups along $\p$. 
\end{defi}

Thus $F\to S$ is a principal $\GG$-bundle on $S$, and $\rho$ is $\p$-equivariant. 

\begin{exas} We give examples using the notation of Table \ref{table:corr}.
	\begin{enumerate}[label=(\alph{*})]
		\item  For $G=O(2)$, we have $\GG=GL(2,\RR)$, and a $\GG$-structured surface is just a surface with no additional structure.

		\item For $G=SO(2)$, we have $\GG=GL^+(2,\RR)$, and a $\GG$-structure on $S$ is the
			same as an orientation of $S$.

		\item For $G=\on{Spin}_N(2)$, the group $\GG$ is the $N$-fold connected covering of $GL^+(2,\RR)$, and
		a $\GG$-structure is known as an $N$-{\em spin structure}.  The case $N=2$, being an instance
		of the spinor construction existing in any number of dimensions, has been studied more systematically, cf. \cite{johnson}. 

	\item For the unversal covering $G=\widetilde{SO(2)}$ of $SO(2)$, the group $\GG=\widetilde{GL^+(2,\RR)}$
		is the universal covering of $GL(2,\RR)$. A $\widetilde{GL^+(2,\RR)}$-structure on a
		surface will be called a {\em framing}. Note that, in our definition, a framing on
		$S$ is a discrete datum: not an actual choice of a trivialization of $T_S$ but what
		amounts to an isotopy class of such trivializations. In fact, the same is true in
		all the other cases, and this is the reason for working with $\GG$ instead of $G$.
		For instance, an $O(2)$-structure on $S$ is a Riemannian metric, while a
		$GL(2,\RR)$-structure is no extra structure at all. 
	\end{enumerate}
\end{exas}


A {\em structured diffeomorphism} of $\GG$-structured surfaces $(S,F)\to (S', F')$ is a pair
$(\phi,\widetilde \phi)$ where $\phi: S\to S'$ is a diffeomorphism, and $\widetilde\phi: F\to F'$
is a $\GG$-equivariant diffeomorphism lifting $d\phi: \Fr_S\to\Fr_{S'}$. A structured diffeomorphism
$(S,F) \to (S,F')$ with $\phi=\Id$ will be called an {\em isomorphism of $\GG$-structures} on $S$. 
We denote by $\Gstr(S)$ the groupoid formed by $\GG$-structures on $S$ and their isomorphisms.

Suppose $\GG$ preserves orientation. Then any $\GG$-structure on $S$ gives rise to an orientation.
Given an oriented surface $S$, we denote by $\Gstr^+(S)$ the full subcategory in $\Gstr(S)$ formed
by those $\GG$-structures which induce the given orientation. 

The categories $\Gstr(S), \Gstr^+(S)$ can be understood in terms of standard constructions of
non-abelian cohomology \cite{giraud}. For a Lie group $L$, let $\underline {L}$ be the sheaf of
groups on $S$ formed by local $C^\infty$-maps into $L$. Thus, the pointed set $H^1(S,\underline L)$ is the set of
isomorphism classes of principal $L$-bundles on $S$. 

\begin{prop}\label{prop:G-structures}
\begin{enumerate}
\item[(a)] Suppose $\GG$ preserves orientation, and let $S$ be an oriented surface. Then:
\begin{enumerate}
\item[(a1)] For any object $F\in\Gstr^+(S)$, the group $\Aut(F)$ is identified with $K$.

\item[(a2)] The set of isomorphism classes of objects of $\Gstr^+(S)$ is either empty or a torsor
over $H^1(S,K)$.
\end{enumerate}

\item[(b)] Suppose $\GG$ does not preserve orientation, and let  $S$
be any surface. Then: 

\begin{enumerate}
\item[(b1)] For  any object $F\in\Gstr(S)$, the group $\Aut(F)$ is identified with $H^0(S, \underline K^\orr)$. 

\item[(b2)] The set of isomorphism classes of objects of $\Gstr(S)$ is either empty, or is a torsor
over $H^1(S,\underline K^\orr)$.

\item[(b3)] The group  $H^0(S, \underline K^\orr)$ is identified with $K$, if $S$ is orientable, and with
the 2-torsion subgroup $K_2\subset K$, if $S$ is non-orientable. 
\end{enumerate}
\end{enumerate}
\end{prop}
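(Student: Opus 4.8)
The plan is to recognize $\Gstr^{+}(S)$ (resp.\ $\Gstr(S)$) as the groupoid of lifts of the frame bundle along the covering $\p$, and to feed the short exact sequences \eqref{eq:group-K} into the standard nonabelian cohomology machinery \cite{giraud}. In case (a) I would first replace $\Fr_S$ by its reduction $\Fr_S^{+}$ to $GL^{+}(2,\RR)$ determined by the chosen orientation, so that an object of $\Gstr^{+}(S)$ is a pair $(F,\rho)$ with $F$ a principal $\GG$-bundle and $\rho\colon F\to\Fr_S^{+}$ a $\p$-equivariant map, morphisms being $\GG$-equivariant maps over the frame bundle. Since $\p$ is a covering, $K$ is discrete, and \eqref{eq:group-K} yields short exact sequences of sheaves of groups on $S$,
\[
1\lra\underline K\lra\underline\GG\lra\underline{GL^{+}(2,\RR)}\lra 1
\qquad\text{resp.}\qquad
1\lra\underline K\lra\underline\GG\lra\underline{GL(2,\RR)}\lra 1 .
\]
In case (a) the subsheaf $\underline K$ is central, so the bundle of groups $F\times_{\GG}K$ equals the constant sheaf $\underline K$ for every object $F$; in case (b) the conjugation action of $\underline{GL(2,\RR)}$ on $\underline K$ factors through $\pi_0=\ZZ/2$ acting by $\tau$, so $F\times_{\GG}K$ is canonically the local system $\underline K^{\orr}$ obtained by twisting $K$ along $\orr_S$ via $\rho$.

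For (a1) and (b1): a $\GG$-equivariant self-map of $F$ lying over the identity of the frame bundle is exactly a global section of $F\times_{\GG}\GG$ taking values in $F\times_{\GG}K$, so $\Aut(F)\cong H^{0}(S,F\times_{\GG}K)$; this is $H^{0}(S,\underline K)\cong K$ (as $S$ is connected) in case (a), proving (a1), and equals $H^{0}(S,\underline K^{\orr})$ in case (b), proving (b1). For (b3): if $S$ is orientable then $\orr_S$ is trivial, $\underline K^{\orr}=\underline K$ is constant, and $H^{0}(S,\underline K^{\orr})\cong K$; if $S$ is non-orientable the monodromy $\pi_{1}(S)\to\ZZ/2$ is surjective, so $H^{0}(S,\underline K^{\orr})$ is the group of $\tau$-invariants of $K$, and since $\tau(k)=-k$ by Proposition \ref{prop:group-K}(b) this is precisely the $2$-torsion subgroup $K_{2}$.

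For (a2) and (b2): the point is that, when nonempty, the groupoid $\Gstr^{+}(S)$ (resp.\ $\Gstr(S)$) is a torsor over the Picard groupoid $\operatorname{Tors}(S,\underline K)$ (resp.\ $\operatorname{Tors}(S,\underline K^{\orr})$) of torsors on $S$ under $\underline K$ (resp.\ $\underline K^{\orr}$). Concretely, for two objects $(F,\rho),(F',\rho')$ the sheaf $\underline{\Isom}\big((F,\rho),(F',\rho')\big)$ of $\GG$-equivariant maps over the frame bundle is a torsor under $F\times_{\GG}K$, i.e.\ under $\underline K$ (resp.\ $\underline K^{\orr}$), which is trivial iff $F\cong F'$ over the frame bundle; conversely, twisting a fixed object $(F_{0},\rho_{0})$ by a torsor $L$ — using the action of the bundle of groups $F_{0}\times_{\GG}K$ on $F_{0}$ and forming the quotient of $F_{0}\times_{S}L$ by this bundle of groups — produces an object realizing the class $[L]$. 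Hence $\pi_{0}\Gstr^{+}(S)$ is (when nonempty) a torsor over $H^{1}(S,\underline K)$ and $\pi_{0}\Gstr(S)$ one over $H^{1}(S,\underline K^{\orr})$, the automorphism groups computed above acting trivially on $\pi_0$.

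I expect the only real work to lie in (a2)/(b2): one must verify that the action of $H^{1}$ on the set of lifts is not merely transitive but \emph{simply} transitive — transitivity from the twisting construction, freeness from the fact that $\underline{\Isom}(F,F')$ is canonically trivialized precisely by an isomorphism $F\cong F'$ — and, in case (b), one must track the twist by $\orr_S$ through the entire argument, which is exactly what forces the local-system coefficients $\underline K^{\orr}$ in place of constant ones. Parts (a1), (b1) and (b3) are then routine, (b3) reducing to the elementary computation of $\tau$-fixed points with $\tau(k)=-k$.
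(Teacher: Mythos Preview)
Your proposal is correct and follows essentially the same approach as the paper: both recognize $\Gstr^{+}(S)$ resp.\ $\Gstr(S)$ as the groupoid of lifts of the frame bundle along $\p$ and invoke the standard non-abelian cohomology facts (automorphisms $=H^{0}$ of the twisted kernel, isomorphism classes a torsor over $H^{1}$) for an extension of sheaves of groups with abelian kernel. The paper formulates these facts once abstractly for a general extension $0\to A\to\Gamma'\to\Gamma\to 1$ and a fixed $\Gamma$-torsor $P$ (with the twisted form $A^{P}$ playing the role of your $F\times_{\GG}K$), then specializes; you instead unfold the same argument directly in the case at hand, but the content is identical.
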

\begin{proof} Let $X$ be a topological space
and
\[
0\to A\lra\Gamma'\buildrel\pi\over\lra \Gamma\to 1
\]
be an extension of sheaves of groups on $X$ with abelian kernel $A$. Let $P$ be a sheaf of $\Gamma$-torsors
on $X$, with associated class $[P]\in H^1(X, \Gamma)$.
 In this situation, the $\Gamma$-action on $A$ by conjugation gives rise to the twisted form
$A^P$, a sheaf of abelian groups on $X$ locally isomorphic to $A$. Denote by $\on{Lift}(P)$ the category
formed by sheaves $P'$ of $\Gamma'$-torsors on $X$ together with a $\pi$-equivariant 
map $\pi_{P'}: P'\to P$. Morphisms in $\on{Lift}(P)$ are isomorphisms of torsors
commuting with the projection to $P$. 
In the described situation, we have:
\begin{enumerate}
\item[(i)] Each group $\Aut_{\on{Lift}(P)}(P')$ is identified with $H^0(X, A^P)$.

\item[(ii)] The set of isomorphism classes of objects of $\on{Lift}(P)$ is either empty or is
a torsor over $H^1(X, A^P)$.

\item[(iii)] If $A$ is central, then there is a natural coboundary map of pointed sets
$\delta: H^1(X,\Gamma)\to H^2(X, A)$.
In this case $\on{Lift}(P)\neq\emptyset$ if and only if $\delta([P])=0$. 
\end{enumerate}
Now, parts (a1-2) and (b1-2) follow by considering the extensions of sheaves
on $S$ obtained from \eqref{eq:group-K} by passing to sheaves of $C^\infty$-sections. Part (b3) follows from
Proposition \ref{prop:group-K}(b). 
\end{proof}

\begin{exa} Let $S$ be an oriented surface and $G=\on{Spin}_N(2)$. If $S$ is noncompact or has
	nonempty boundary then we have $H^2(S,K) = 0$ so that $\Gstr^+(S)$ is nonempty. If $S$ is
	compact without boundary, then $\Gstr^+(S)$ is nonempty for $N=2$.
	This follows by considering the inclusion of short exact sequences of sheaves of groups
	\begin{equation}
		\label{eq;two-seq}
		\xymatrix{
			0\ar[r]& \underline{\ZZ/2}\ar[r]\ar[d]^{=}&\underline{S}^1\ar[d] \ar[r]^{z
			\mapsto z^2} & \underline{S}^1\ar[d]\ar[r] & 1
			\\
			0\ar[r]& \underline{\ZZ/2}\ar[r]&\underline{\widetilde {GL^+(2,\RR)}_2}\ar[r] & \underline {GL^+(2,\RR)}\ar[r] & 1
		}
	\end{equation}
	where $S^1=SO(2)$. Choosing a Riemannian metric on $S$, we reduce the structure group to $S^1$
	and so realize $[\Fr_S]$ as coming from an element of $H^1(S, \underline{S}^1)$. The coboundary map 
	$\delta_{S^1}: H^1(S, \underline S^1)\to H^2(S, \ZZ/2) \cong \ZZ/2$ is given by the degree
	modulo $2$. Since the tangent bundle of a surface of genus $g$ has degree $2g -2$, we have
	$\delta_{S^1}([\Fr_S]) = 0$.
\end{exa}

\begin{defi}
	 A {\em $\GG$-structured marked surface} is a datum consisting of:
	 \begin{enumerate}
	 \item[(1)] A compact surface $S$ (possibly with boundary). We denote $S^\circ= S-\partial S$ the interior of $S$.
	 
	 \item[(2)] A finite subset $M\subset S$.
	 
	 \item[(3)] A $\GG$-structure $F$ on $S-(M\cap S^\circ)$. 
	 \end{enumerate}

\end{defi}	 

We will usually denote $\GG$-structured   marked surfaces by  $(S,M)$, keeping the $\GG$-structure
	 implicit.
	 
	 \begin{defi} Let $(S,M)$ and $(S', M')$ be two $\GG$-structured   marked surfaces, with $\GG$-structured
	 torsors $F$ and $F'$ respectively. A {\em structured diffeomorphism} $(S,M)\to(S', M')$ is a datum
	 consisting of:
	 \begin{enumerate}
	 \item[(1)] A diffeomoprhism $\phi: S\to S'$ taking $M$ to $M'$.
	 \item[(2)] A structured diffeomorphism $\widetilde\phi: S-(M\cap S^\circ)\to S'- (M'\cap S'{}^\circ)$
	 lifting the restriction of $\phi$. 
	 \end{enumerate}
	  \end{defi}
	  
	  Note that $\phi$ in (1) is  uniquely determined by  $\widetilde\phi$ in (2).
	  
	  \vskip .2cm
	 
	  For a $\GG$-structured marked surface
	 $(S,M)$, we denote by $\Diff^\GG(S,M)$ the group of $\GG$-structured self-diffeomorphisms of $(S,M)$,
	 and by   $\Diff^\GG_\pure(S,M)$ the subgroup of structured self-diffeomorphisms preserving $M$ pointwise. 
	  We consider
	 these groups as subgroups in the group of diffeomorphisms of the structure torsor $F$ and
	 equip them with the subspace topology.

For $\GG=GL(2,\RR)$, we get the ordinary (pure) diffeomorphism group of the  marked surface $(S,M)$,
which we denote $\Diff(S,M)$ and $\Diff_\pure(S,M)$. Forgetting the $\GG$-structured data gives
morphisms of topological groups
\[
\begin{gathered}
	\pi: \Diff^\GG(S,M) \lra\Diff(S,M), 
	\\
	\pi_\pure: \Diff_\pure^\GG(S,M) \lra\Diff_\pure(S,M).  
\end{gathered}
\]

\begin{prop}\label{prop:str-non-str-diff}
The group $\Ker(\pi)=\Ker(\pi_\pure)$ is a discrete abelian group, so that $\pi$ and $\pi_\pure$ are
unramified coverings. Further, the groups $\on{Im}(\pi)$ and $\on{Im}(\pi_\pure)$ are 
unions of connected components of $\Diff(S,M)$. 
\end{prop}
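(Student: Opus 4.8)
The plan is to treat the kernel statement and the covering/components statement separately; the first is immediate from Proposition \ref{prop:G-structures}, while the second rests on producing a continuous section of $\pi$ (and of $\pi_\pure$) near the identity.

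First I would identify the kernel. Write $\Sigma = S - (M\cap S^\circ)$, so that the $\GG$-structure carried by $(S,M)$ is an object $F$ of $\Gstr(\Sigma)$, with $\rho\colon F\to\Fr_\Sigma$ the reduction along $\p$; recall $K=\Ker(\p)$ is abelian as the kernel of a connective covering, and discrete since $\p$ is a covering of its image, and that $\rho$ exhibits $F$ as a principal $K$-bundle over the sub-bundle $\rho(F)\subseteq\Fr_\Sigma$, hence as a covering space of $\rho(F)$. By the definition of a structured diffeomorphism, an element of $\Ker(\pi)$ is a pair $(\id_S,\widetilde\phi)$ with $\widetilde\phi\colon F\to F$ a $\GG$-equivariant diffeomorphism lifting $\id_{\Fr_\Sigma}$, i.e.\ an automorphism of $F$ in $\Gstr(\Sigma)$ (in $\Gstr^+(\Sigma)$, if $\GG$ preserves orientation); and since covering $\id_S$ forces the underlying diffeomorphism to fix $M$ pointwise, $\Ker(\pi_\pure)=\Ker(\pi)=\Aut(F)$. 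Proposition \ref{prop:G-structures}(a1), resp.\ (b1) together with (b3), then identifies this group with $K$ (resp.\ with $K$ or its $2$-torsion subgroup), which is discrete and abelian.

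For the covering statement, I would use that a continuous homomorphism of topological groups with discrete kernel is an unramified covering onto its image once it admits a continuous section over a neighbourhood of the identity — translating by lifts of points then trivialises $\pi$ near every point of $\on{Im}(\pi)$. To build the section, fix an auxiliary Riemannian metric on $S$; for $\phi$ sufficiently $C^\infty$-close to $\id_S$ (and, for $\pi_\pure$, fixing $M$ pointwise) the formula $\phi_t(x)=\exp_x\!\big(t\exp_x^{-1}\phi(x)\big)$ gives a smooth isotopy from $\id_S$ to $\phi$, depending smoothly on $\phi$. Such $\phi_t$ are orientation preserving, so $d\phi_t$ preserves $\rho(F)\subseteq\Fr_\Sigma$ (which is all of $\Fr_\Sigma$ when $\GG$ does not preserve orientation, and the component of positively oriented frames otherwise), yielding a path $\psi_t=d\phi_t|_{\rho(F)}$ of $\p(\GG)$-equivariant diffeomorphisms through $\id_{\rho(F)}$. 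Lifting the homotopy $(x,t)\mapsto\psi_t(\rho(x))$ through the covering $\rho\colon F\to\rho(F)$, starting at $\id_F$, produces $\widetilde\phi_t\colon F\to F$; uniqueness of path-lifting forces $\widetilde\phi_1$ to be $\GG$-equivariant and, being a map of $K$-torsors on each $\rho$-fibre lying over the bijection $\psi_1$, a bijection, hence a diffeomorphism lifting $d\phi$. Setting $s(\phi)=(\phi,\widetilde\phi_1)\in\Diff^\GG(S,M)$ (resp.\ $\in\Diff^\GG_\pure(S,M)$) gives the desired section, so $\pi$ and $\pi_\pure$ are unramified coverings onto their images.

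For the last assertion, $\on{Im}(\pi)$ is a subgroup of $\Diff(S,M)$ (as $\pi$ is a homomorphism) that, by the section just built, contains a neighbourhood of $\id_S$; an open subgroup of a topological group is also closed, and a clopen subset is a union of connected components, so $\on{Im}(\pi)$ is a union of connected components of $\Diff(S,M)$, and likewise for $\on{Im}(\pi_\pure)\subseteq\Diff_\pure(S,M)$. The main obstacle is the construction of the local section: one must verify that the homotopy-lift through $\rho\colon F\to\rho(F)$ is genuinely a $\GG$-equivariant diffeomorphism — the equivariance and bijectivity following from uniqueness of path-lifting — and that it varies continuously with $\phi$ in the $C^\infty$-topology, which is the usual (mildly technical) continuity of the covering-lifting operation applied to a continuously varying family of homotopies. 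Everything else is formal given Proposition \ref{prop:G-structures}.
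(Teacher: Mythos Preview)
Your proof is correct, but you take a more hands-on route than the paper for the image/covering statements. You both identify $\Ker(\pi)=\Aut(F)$ via Proposition \ref{prop:G-structures}, so that part is identical. For the image, however, the paper simply observes that $\phi\in\on{Im}(\pi)$ if and only if $\phi^*F\cong F$, so $\on{Im}(\pi)$ is the stabiliser of the class $[F]$ under the (continuous) action of $\Diff(S,M)$ on the set of isomorphism classes of objects of $\Gstr(S-(M\cap S^\circ))$; since that set is discrete by Proposition \ref{prop:G-structures}(a2)(b2), the stabiliser is open and closed, hence a union of components. No isotopies, no path-lifting.

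Your approach --- geodesic isotopy plus covering-space lifting through $\rho\colon F\to\rho(F)$ --- has the merit of producing an explicit local section, which makes the unramified-covering claim completely transparent (the paper leaves this step essentially implicit once the kernel is discrete and the image is open). The cost is the analytic baggage you yourself flag: checking $\GG$-equivariance of the lift and continuity in $\phi$. The paper's stabiliser argument sidesteps all of this and is worth knowing, since the same trick (``image is the preimage of a point in a discrete orbit set'') recurs elsewhere when comparing structured and unstructured symmetry groups.
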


\begin{proof} The group $\Ker(\pi)=\Ker(\pi_\pure)$ is identified with $\Aut(F)$, the group
of automorphisms of the given $\GG$-structure $F$ on $S-(M\cap S^\circ)$. This group was described
in Proposition \ref {prop:G-structures} (a1)(b1), in particular it is abelian. Further, $\phi\in\Diff(S,M)$ lies
in $\on{Im}(\pi)$ if and only if the $\GG$-structure $\phi^*F$  on $S-(M\cap S^\circ)$  is isomorphic to $F$. So 
$\on{Im}(\pi)$ is the stabilizer of the action of $\Diff(S,M)$ on the  set of
isomorphism classes of objects of $\Gstr(S-(M\cap S^\circ))$. This set was described in Proposition 
 \ref {prop:G-structures}  (a2)(b2), in particular, it is discrete. So $\on{Im}(\pi)$ is open and
 closed, and similarly for $\on{Im}(\pi_\pure)$. 
\end{proof}
 
\begin{defi}
 Let $(S,M)$ be a $\GG$-structued marked surface. We define the $\GG$-structured mapping class
 groups as
 \[
 	\Mod^\GG(S,M) = \pi_0 \Diff^\GG(S,M), \quad  \Mod^\GG_\pure(S,M) =\pi_0 \Diff_\pure^\GG(S,M). 
 \]
\end{defi}

The following classical construction will be used frequently.

\begin{defi}
Let $S$ be a compact $C^\infty$-surface with boundary $\partial S$, and $S^\circ = S-\partial S$. 
The {\em Schottky double} of $S$ is the compact oriented $C^\infty$ surface $S^\#$
without boundary obtained by compatifying $\widetilde{S^\circ}$, the orientation cover of $S^\circ$,
by adding one copy of $\partial S$. Thus we have a 2-sheeted covering $\pi: S^\#\to S$, ramified
along $\partial S$, and the deck involution $\sigma: S^\#\to S^\#$ of this covering, reversing the
orientation and identical on  $\partial S$. If $(S,M)$ is a marked surface, we equip $S^\#$ with the
subset $M^\#=\pi^{-1}(M)$. 
\end{defi}

Note that $S^\#$ is not the same as $\widetilde S$, if $\partial S\neq\emptyset$. 

\begin{defi} \cite{seppala} A surface $S$ is called {\em classical}, if $S$ is orientable and $\partial S=\emptyset$,
and {\em non-classical} otherwise. 
\end{defi}

Thus $S^\#$ is connected, if and only if $S$ is non-classical. 

\begin{defi} A marked surface $(S,M)$ is called {\em stable}, if:
\begin{enumerate}
\item[(1)]  $M\neq\emptyset$ and moreover, $M$ meets each component of $\partial S$.

\item[(2)] $\chi (S^\# - M^\#) < 0$. 
\end{enumerate}

\end{defi}

For orientable surfaces this is equivalent to Def. 3.11 of \cite {HSS-triangulated}. 
For a stable marked surface $(S,M)$ the points of $M$ lying in the interior of $S$, will be called
{\em punctures}. By blowing up each puncture to a small circle, one can obtain an alternative
point of view on stable marked surfaces, in which all marked points are supposed to lie on
the boundary, but it is allowed to have boundary components without a marked point.
See \cite[Rem. 3.3.2]{HSS-triangulated}. 
     
\begin{prop}\label{prop:diff-contract}
If $(S,M)$ is a stable $\GG$-structured marked surface, then each component of
$\Diff^\GG(S,M)$ is contractible.
\end{prop}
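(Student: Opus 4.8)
The plan is to reduce the statement to the well-known fact that, for a stable marked surface $(S,M)$ without $\GG$-structure, each component of $\Diff(S,M)$ (equivalently $\Diff_\pure(S,M)$, since $M$ is finite) is contractible. This is classical; it follows from the theorem of Earle--Eells and Earle--Schatz (see also \cite{farb-margalit}) that for a compact surface with $\chi<0$ the identity component of the diffeomorphism group is contractible, combined with the fact that removing or fixing a finite set of points does not change this (the relevant evaluation fibrations have contractible total spaces and bases). The stability hypothesis, via the condition $\chi(S^\#-M^\#)<0$ on the Schottky double, is precisely what guarantees we are in the negative Euler characteristic regime even in the low-genus and non-orientable cases: a non-classical $S$ has $S^\#$ connected, and $\chi(S^\#-M^\#)<0$ forces the relevant components of $\Diff_\pure(S,M)$ to be contractible. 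So first I would record this as the known input about the unstructured group.

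Next I would invoke Proposition \ref{prop:str-non-str-diff}: the forgetful map $\pi_\pure:\Diff^\GG_\pure(S,M)\to\Diff_\pure(S,M)$ is an unramified covering with discrete abelian kernel $\Aut(F)$, and its image is a union of connected components of $\Diff_\pure(S,M)$. A covering map restricts, over each connected component of the base, to a covering of that component. Since each component of the base lying in the image is contractible, hence simply connected, the covering over it is trivial: it is a disjoint union of copies of that component, each of which is therefore contractible. Thus every component of $\Diff^\GG_\pure(S,M)$ is contractible. The same argument applied to $\pi:\Diff^\GG(S,M)\to\Diff(S,M)$ (again using Proposition \ref{prop:str-non-str-diff}, and the contractibility of the components of $\Diff(S,M)$, which follows from that of $\Diff_\pure(S,M)$ together with the fact that the ``permuting'' part of the mapping class group action on $M$ does not affect component-wise contractibility) gives the claim for $\Diff^\GG(S,M)$.

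One technical point to handle carefully: the structure $F$ lives on $S-(M\cap S^\circ)$ rather than on $S$, so one must make sure that ``structured diffeomorphism of $(S,M)$'' really does fit into the covering-space picture of Proposition \ref{prop:str-non-str-diff} and that the topology on $\Diff^\GG(S,M)$ (as a subspace of the diffeomorphism group of the structure torsor $F$) is the one for which the projection is a covering. That is exactly the content of that proposition, so I would simply cite it; no new argument is needed. I would also note that for non-classical surfaces one must use the Schottky-double form of stability to conclude negative Euler characteristic of the surface (with punctures blown up, so that all marked points are boundary points), which is the reason the statement is phrased via $(S,M)$ stable rather than via $\chi(S)$ directly.

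The main obstacle is not really an obstacle but a matter of bookkeeping: making sure that the classical contractibility input is invoked in the correct generality (compact surfaces with boundary, marked points possibly in the interior, possibly non-orientable), i.e.\ that one genuinely has $\chi<0$ for the surface underlying each relevant component and that Earle--Eells/Gramain-type results apply. Once that is in place, the passage to the $\GG$-structured group is a one-line covering-space argument.
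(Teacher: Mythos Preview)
Your proposal is correct and follows essentially the same approach as the paper: reduce via the covering map of Proposition \ref{prop:str-non-str-diff} to the unstructured case $\GG=GL(2,\RR)$, where contractibility of each component of $\Diff(S,M)$ is classical (the paper cites Gramain \cite{gramain} rather than Earle--Eells, but this is the same input). Your write-up is more detailed about the covering-space step and the role of the stability hypothesis, but the strategy is identical.
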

     
\begin{proof} By Proposition \ref{prop:str-non-str-diff}, the statement reduces to
the particular case of $\GG=GL(2,\RR)$ where it translates to the contractibility of each component
or, equivalently, of the identity component of $\Diff(S,M)$. This statement is shown in
\cite{gramain} 
\end{proof}

\subsection{Conformal structured surfaces}

\subsubsection {The setup}
As in \S \ref{subsec:c-infty}, we fix a planar Lie group $G$ and the corresponding connective covering
$\GG\buildrel\p\over\to GL_2(\RR)$. Let
\begin{equation}
	\label{eq:emb-conf}
	\Conf(2) = (\ZZ/2)\ltimes \CC^* \subset GL(2, \RR)
\end{equation}
be the subgroup of conformal transformations. Since the embedding \eqref{eq:emb-conf}
is a homotopy equivalence, the categories of connective coverings of $GL(2,\RR)$ and $\Conf(2)$
are equivalent by Proposition \ref {prop:summary-covers}(c). We denote
\[
p_\conf: G_\conf\lra \Conf(2)
\]
the connective covering obtained by restricting $\p$. 

Similarly to  \S \ref{subsec:c-infty}, by a
$G_\conf$-{\em structured surface} we mean a surface $S$ together with a restriction of the structure
group of $\Fr_S$ to $G_\conf$. As before, such a structure is a datum of a principal $G_\conf$-bundle
$F_\conf$ together with a compatible $P_\conf$-equivariant map $F_\conf\to\Fr_S$. 
The concepts of a structured diffeomorphism between two $G_\conf$-structured surfaces and of an isomorphism
between two $G_\conf$-structures on a given surface $S$ are defined entirely similar to 
\S \ref{subsec:c-infty}. In particular, we have a groupoid $\Gcstr(S)$ of $G_\conf$-structures on $S$
and their isomorphisms. If $\GG$ preserves orientation and $S$ is oriented, we also have the full
subcategory $\Gcstr^+(S)$ of $G_\conf$-structures compatible with the orientation. 

\begin{exas}
	\begin{enumerate}[label=(\alph{*})]
	\item An $SO(2)_\conf$-structured surface is a {\em Riemann surface}, i.e., a
		1-dimensional complex manifold $X$ possibly with boundary. More precisely, an
		isomorphism class of objects of $SO(2)_\conf \on{Str}(S)$ is the same as an almost
		complex structure $J: T_S\to T_S$, $J^2=-1$, and the automorphism group of any
		object of  $SO(2)_\conf \on{Str}(S)$ is trivial. As well known, all almost complex
		structures in complex dimension $1$ are integrable. For a Riemann surface $X=(S,J)$
		we denote by $\overline X = (S, -J)$ the {\em conjugate} Riemann surface.

	\item  A $\on{Spin}_N(2)_\conf$-structured surface is a triple $(X, \Lc, \phi)$
		where $X$ is a Riemann surface, $\Lc$ is a line bundle on $X$, and $\phi:
		\Lc^{\otimes N}\to\omega_X$ is an isomorphism. Here $\omega_X$ is the canonical line
		bundle of $X$.  For $N=2$, such line bundles $\Lc$ are traditionally called {\em
		theta-characteristics}, and a choice of $\Lc$ is often referred to as a ``spin
		structure" on $X$, cf.  \cite{atiyah}. For ``higher spin structures" on Riemann
		surfaces ($N>2$), see \cite{jarvis}. 
  
	\item An $O(2)_\conf$-structured surface is a {\em Klein surface}, i.e., a
		surface together with an atlas whose transition maps are conformal but do not
		necessarily preserve orientation.  Such maps are called {\em dianalytic} (i.e.,
		analytic or anti-analytic). Thus a Klein surface does not have to be orientable.
		See \cite{alling-greenleaf, braun} for more background. 
	\end{enumerate}
\end{exas}

\begin{prop}\label{prop:prod-of-grpds}
\begin{enumerate}
\item[(a)] For any surface $S$ we have a natural identification of groupoids
\[
\Gcstr(S) \simeq \Gstr(S)  \times  O(2)_\conf \on{Str}(S). 
\]
\item[(b)] The automorphism group of any object of $O(2)_\conf \on{Str}(S)$ is trivial.
\end{enumerate}
\end{prop}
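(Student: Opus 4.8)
The plan is to reduce both statements to the homotopy-theoretic structure of the Lie groups involved, using the general non-abelian cohomology formalism already set up in the proof of Proposition~\ref{prop:G-structures}. First I would recall that $\Conf(2) = (\ZZ/2)\ltimes\CC^*$ and that $GL(2,\RR) \to \on{Conf}(2)$ is a homotopy equivalence; similarly $O(2) \hookrightarrow \on{Conf}(2)$ is a homotopy equivalence, and the maximal compact of $\on{Conf}(2)$ is $O(2)$, so $\on{Conf}(2)$ deformation retracts onto $O(2)$. The key point for part (b) is that $O(2)_\conf$, being a connective covering of $\on{Conf}(2)$ that \emph{is} $O(2)$ itself (i.e. the one with trivial kernel $K$), coincides with $\on{Conf}(2)$ as a group and the reduction $F_\conf \to \Fr_S$ along $p_\conf\colon \on{Conf}(2)\to GL(2,\RR)$ is precisely the datum of a conformal (dianalytic) structure, i.e. a Klein surface structure. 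Since $\Ker(p_\conf) = K = \{1\}$ in this case, the extension of sheaves of groups $0 \to \underline{K} \to \underline{\on{Conf}(2)} \to \underline{GL(2,\RR)} \to 1$ has trivial kernel, so by observation (i) in the proof of Proposition~\ref{prop:G-structures} the automorphism group of any object of $O(2)_\conf\on{Str}(S)$ equals $H^0(S,\underline K) = H^0(S,\underline{\{1\}}) = \{1\}$. This proves (b).

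For part (a), I would use that the connective covering $p_\conf\colon G_\conf \to \on{Conf}(2)$ fits into a pullback/fiber-product decomposition over $\on{Conf}(2)$ reflecting the splitting $\on{Conf}(2) \simeq O(2) \times (\text{contractible factor } \RR_{>0})$ at the level of \emph{homotopy}: more precisely, the covering $G_\conf$ of $\on{Conf}(2)$ is the pullback of the covering $G$ of $O(2)$ along the retraction $\on{Conf}(2)\to O(2)$, so that $G_\conf \cong G \times_{O(2)} \on{Conf}(2)$. Passing to the associated sheaves of groups on $S$ and to torsors, a reduction of $\Fr_S$ to $G_\conf$ is equivalent to the data of: a reduction of $\Fr_S$ to $\on{Conf}(2)$ (i.e. an object of $O(2)_\conf\on{Str}(S)$), together with a further lift along $G \to O(2)$ of the induced $O(2)$-reduction (i.e. an object of $G\on{Str}(S)$ in the sense of \S\ref{subsec:c-infty}, applied to the $O(2)$-bundle underlying the conformal structure — which after choosing a metric is the same as the reduction appearing in the $C^\infty$ story). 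Concretely, I would write down the functor $\Gcstr(S) \to \Gstr(S)\times O(2)_\conf\on{Str}(S)$ sending $F_\conf$ to its two induced reductions, and construct an inverse by forming the fiber product $F \times_{\Fr^{O(2)}_S} F_\conf^{O(2)}$ of the two given torsors over the common $O(2)$-frame bundle, then inducing up along $G\times_{O(2)}\on{Conf}(2) = G_\conf$. Functoriality on morphisms (isomorphisms of structures) and the verification that these two constructions are mutually quasi-inverse is then a routine diagram chase.

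The main obstacle I anticipate is bookkeeping: one must be careful that ``the $O(2)$-reduction underlying a conformal structure'' is canonically the same $O(2)$-torsor that the $\Gstr$ story in \S\ref{subsec:c-infty} takes as its starting point, since a priori $\Gstr(S)$ was defined in terms of reductions of the full $GL(2,\RR)$-frame bundle $\Fr_S$ along $\p\colon\GG\to GL(2,\RR)$, not along $G\to O(2)$. The cleanest way around this is to observe, via Proposition~\ref{prop:summary-covers}(c), that restriction along the homotopy equivalence $O(2)\hookrightarrow GL(2,\RR)$ gives an equivalence of groupoids of reductions, so the choice of a Riemannian metric (which is itself a contractible choice) identifies $\Gstr(S)$ with the groupoid of reductions of the $O(2)$-bundle along $G\to O(2)$; under this identification the decomposition $G_\conf = G\times_{O(2)}\on{Conf}(2)$ yields (a) directly. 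The only genuine subtlety beyond this is the orientation-reversing case, where $K$ carries the involution $\tau$ and one must check that the twisted sheaves $\underline K^\orr$ appearing on both sides of the claimed equivalence agree; but this is forced since the orientation torsor $\orr_S$ is the same whether computed from $\Fr_S$ or from its $O(2)$-reduction, so no new input is needed. I would then conclude by noting that (b) is the special case $G = O(2)$, $K = \{1\}$ of the automorphism computation, already carried out above.
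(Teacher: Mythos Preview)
Your proof is correct, and part (b) is essentially identical to the paper's argument (the paper phrases it as ``$O(2)_\conf$ is a subgroup of $GL(2,\RR)$'', which is exactly your observation that the kernel $K$ is trivial).

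For part (a), however, you take a more roundabout route than necessary. You use the fiber product presentation $G_\conf \cong G \times_{O(2)} \Conf(2)$, which then forces you to reconcile $G$-reductions of an $O(2)$-bundle with $\GG$-reductions of $\Fr_S$, and this is precisely the bookkeeping obstacle you anticipate (choosing a metric, checking independence from that choice, matching the $O(2)$-reduction coming from the conformal structure with the one coming from the metric, etc.). The paper sidesteps all of this by using the fiber product over $GL(2,\RR)$ instead:
\[
G_\conf \,=\, \GG \times_{GL(2,\RR)} \Conf(2).
\]
This holds on the nose since $G_\conf$ is by definition $\p^{-1}(\Conf(2)) \subset \GG$. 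With this presentation, both $\GG$ and $\Conf(2)$ map directly to $GL(2,\RR)$, so the inverse functor is simply
\[
(F,\; F^{O(2)}_\conf) \,\longmapsto\, F \times_{\Fr_S} F^{O(2)}_\conf,
\]
a fiber product of torsors over the full frame bundle $\Fr_S$. No metric, no intermediate $O(2)$-reduction, no reconciliation step. Your approach works, but the paper's choice of base for the fiber product makes the ``main obstacle'' you identify disappear entirely.
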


\begin{proof} (a) By definition, we have
\[
G_\conf = \GG \times_{GL(2,\RR)} O(2)_{\conf}
\]
where $O(2)_{\conf}=\Conf(2)$. So given a $\GG$-structure $F\to \Fr_S$ and an $O(2)_\conf$-structure $F^{O(2)}_\conf \to\Fr_S$,
we obtain a $G_\conf$-structure by forming the fiber product
\[
F_\conf \,\,=\,\, F\times_{\Fr_S} F^{O(2)}_\conf
\]
which is a principal $G_\conf$-bundle. We leave the remaining details to the reader.

(b) Follows  because $O(2)_{\conf}$ is a subgroup of $GL(2,\RR)$. 
\end{proof}

\subsubsection {Klein surfaces and algebraic curves over $\RR$}

Let $X$ be a Klein surface, and $X^\circ=X-\partial X$. For each $x\in X^\circ$, there are exactly
two complex structures on $T_xX$ compatible with the given conformal structure. The set of all these
structures for all $x\in X^\circ$ is canonically identified with the orientation cover $\widetilde{X^\circ}$.
Thus $\widetilde{X^\circ}$ has a canonical complex structure, making in into a (possibly non-compact)
Riemann surface. Further, the complex structure on $\widetilde{X^\circ}$ extends naturally
to the Schottky double $X^\#= \widetilde{X^\circ}\sqcup \partial X$. The local structure of $X^\#$
near the boundary is clear from the following example.

\begin{exa}
Let $X$ be a Riemann surface. Then $X^\# = X\cup_{\partial X} \overline X$. 
\end{exa}

Thus $X^\#$, being a compact Riemann surface without boundary, can be considered as a smooth
projective algebraic curve over $\CC$. Further, the canonical involution $\sigma: X^\#\to X^\#$ is
anti-holomorphic and makes $X^\#$ defined over $\RR$, so that the real locus of $X^\#$ is
$X^\#(\RR)=\partial S$. The following is classical \cite{alling-greenleaf, natanzon, braun}.

\begin{prop}
The Schottky double construction establishes an equivalence between the following categories:
\begin{itemize}
\item[(i)] Non-classical Klein surfaces $X$.

\item[(ii)] Pairs $(Y,\sigma)$ where $Y$ is a compact Riemann surface without boundary and
$\sigma: Y\to Y$ is an anti-holomorphic involution.

\item[(iii)] Smooth projective algebraic curves over $\RR$. \qed

\end{itemize}

\end{prop}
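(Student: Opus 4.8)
The plan is to build mutually quasi-inverse functors realizing (i)$\simeq$(ii), and then to reduce (ii)$\simeq$(iii) to classical descent theory and GAGA.

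For (i)$\Leftrightarrow$(ii): I would first send a non-classical Klein surface $X$ to the pair $(X^\#,\sigma)$, its Schottky double together with the deck involution. The discussion preceding the statement already supplies the complex structure on $\widetilde{X^\circ}$, its extension across $\partial X$ to $X^\#$, the compactness of $X^\#$, and the antiholomorphy of $\sigma$; the one remaining point to invoke is that $X^\#$ is \emph{connected}, which is exactly the observation that $S^\#$ is connected iff $S$ is non-classical. A dianalytic map of Klein surfaces is precisely a map that becomes holomorphic after passing to the complex-structure (orientation) covers, hence lifts canonically and uniquely to a $\sigma$-equivariant holomorphic map of Schottky doubles, by functoriality of the orientation cover and of the compactification; so this is a functor. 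In the reverse direction I would send $(Y,\sigma)$ to $Y/\sigma$: near a point of $\on{Fix}(\sigma)$ one chooses a holomorphic coordinate $z$ with $\sigma(z)=\bar z$, so the quotient is locally $\{\operatorname{Im} z\ge 0\}$, giving $Y/\sigma$ a dianalytic atlas with boundary $\partial(Y/\sigma)=\on{Fix}(\sigma)$, while away from $\on{Fix}(\sigma)$ the charts are holomorphic-or-antiholomorphic, and a $\sigma$-equivariant holomorphic map descends to a dianalytic map. One then checks $Y/\sigma$ is non-classical: if $\on{Fix}(\sigma)\neq\emptyset$ it has nonempty boundary, and if $\sigma$ is free then $Y\to Y/\sigma$ is a connected double cover, necessarily the orientation cover, so $Y/\sigma$ is non-orientable. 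Finally I would verify the two functors are inverse up to natural isomorphism: for non-classical $X$ the fixed locus of $\sigma$ on $X^\#$ is $\partial X$ and $X^\#/\sigma=(\widetilde{X^\circ}/\sigma)\cup\partial X=X$ as a dianalytic surface, naturally in $X$; conversely the Schottky double of $W=Y/\sigma$ is, by construction, the orientation cover of $W^\circ=(Y\setminus\on{Fix}(\sigma))/\sigma$, namely $Y\setminus\on{Fix}(\sigma)$ itself, compactified by $\on{Fix}(\sigma)$, which is $Y$ with deck involution $\sigma$, again naturally.

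For (ii)$\Leftrightarrow$(iii): a smooth projective curve $C$ over $\RR$ gives $Y=C(\CC)$, a compact Riemann surface without boundary, with $\sigma$ the action of $\operatorname{Gal}(\CC/\RR)$, necessarily antiholomorphic. Conversely, given $(Y,\sigma)$, the curve $Y$ is smooth projective algebraic over $\CC$ by the Riemann existence theorem (or Chow's theorem), the map $\sigma\colon Y\to\overline Y$ is holomorphic and hence algebraic by GAGA for morphisms, and $\sigma^2=\operatorname{id}$, so $(Y,\sigma)$ is effective descent data along $\operatorname{Spec}\CC\to\operatorname{Spec}\RR$ and defines a smooth projective $C/\RR$ with $C_\CC\cong Y$ compatibly with $\sigma$. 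That these constructions are mutually inverse is classical, with details recorded in \cite{alling-greenleaf, natanzon, braun}.

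I expect the main obstacle to be the bookkeeping in part (i)$\simeq$(ii): establishing functoriality on \emph{morphisms} and naturality of the unit and counit, in particular controlling the dianalytic charts along the boundary $\partial X=\on{Fix}(\sigma)$ and checking that $\sigma$-equivariant holomorphic maps descend and dianalytic maps lift uniquely. The input into (ii)$\simeq$(iii) is standard, the only mild subtlety being to see that the involution, not merely the curve, is algebraic — which is handled by GAGA for morphisms as above.
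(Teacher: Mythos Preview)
Your proposal is correct and complete as an outline. Note, however, that the paper does not actually prove this proposition: it is stated as classical, with a \qed\ in the statement and citations to \cite{alling-greenleaf, natanzon, braun}; the only additional remark is that the inverse to the Schottky double sends $(Y,\sigma)$ to $Y/\sigma$, which is precisely your reverse functor. So there is no ``paper's own proof'' to compare against --- your write-up is what one would find upon unpacking those references, and the key steps (connectedness of $X^\#$ iff $X$ is non-classical, the local model $\sigma(z)=\bar z$ near fixed points giving the boundary charts, the dichotomy $\on{Fix}(\sigma)\neq\emptyset$ vs.\ free action forcing non-classicality of the quotient, and GAGA plus Galois descent for (ii)$\simeq$(iii)) are all correctly identified.
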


Pairs $(Y,\sigma)$ as in (ii) are known as {\em symmetric Riemann surfaces}. The Klein surface
corresponding to a symmetric Riemann surface  $(Y,\sigma)$ is the orbit space $Y/\sigma$.

\subsubsection{Teichm\"uller spaces of marked Klein surfaces}

Let $(S,M)$ be an oriented marked surface without boundary. We denote by $\Riem^+(S)$ the space of
all complex structures on $S$ compatible with the orientation.
Let $\Diff(S,M)_e$ be the identity component of $\Diff(S,M)$. Diffeomorphisms in this component
preserve the orientation, so $\Diff(S,M)_e$ acts on $\Riem^+(S)$. The classical
{\em Teichm\"uller space} of $(S,M)$ is defined as the quotient
\[
\Teich^+(S,M) \,\,=\,\, \Diff(S,M)_e \backslash \Riem^+(S). 
\]
The following is well known. 

\begin{thm}
Let $g$ be the genus of $S$ and suppose that $(S,M)$ is stable, i.e., $2g-2+|M|>0$. Then:
\begin{enumerate}
\item[(a)] $\Teich^+(S,M)$ has a natural structure of a complex manifold of dimension $3g-3+|M|$. As
a $C^\infty$-manifold, it is diffeomorphic to Euclidean space. 

\item[(b)] Let $\sigma:S\to S$ be an orientation reversing involution preserving $M$ as a set. Conjugation
with $\sigma$ defines an anti-holomorphic involution
\[
\sigma_{\Teich}: \Teich^+(S,M) \lra \Teich^+(S,M). \qed
\]

\end{enumerate}
\end{thm}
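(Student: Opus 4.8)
This statement is classical, and I would present it as a consequence of the Ahlfors--Bers theory of Teichm\"uller spaces, together with the elementary fact that an orientation reversing diffeomorphism acts anti-holomorphically on the space of complex structures. The plan is the following.

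\emph{Part (a).} Fix a reference complex structure $J_0$ on $S$ compatible with the orientation and set $X_0=(S,J_0)$. By the smooth version of the measurable Riemann mapping theorem, $\Riem^+(S)$ is identified with the open unit ball $\{\mu\colon \|\mu\|_\infty<1\}$ in the complex vector space of smooth Beltrami differentials on $X_0$; the quotient by $\Diff(S,M)_e$ is $\Teich^+(S,M)$. The stability hypothesis $2g-2+|M|>0$ enters precisely to guarantee that $\Diff(S,M)_e$ acts freely and properly on $\Riem^+(S)$: an element of $\Diff(S,M)_e$ fixing a point $[J]$ is a holomorphic automorphism of the punctured hyperbolic surface $(S\setminus M,J)$ that induces an inner automorphism of $\pi_1$; lifting to the universal cover and using that a non-elementary Fuchsian group has trivial centralizer in $PSL(2,\RR)$ forces it to be the identity. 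Standard slice arguments (Ahlfors--Bers, Bers) then endow $\Teich^+(S,M)$ with a complex manifold structure whose tangent space at $[X]$ is the Kodaira--Spencer space $H^1(X,\Theta_X(-M))$ of infinitesimal deformations of $X$ fixing the marked points; by Serre duality this is dual to $H^0(X,\omega_X^{\otimes 2}(M))$, the space of quadratic differentials on $X$ with at worst simple poles along $M$, which by Riemann--Roch has dimension $3g-3+|M|$. For contractibility, the free proper action produces a fibration $\Diff(S,M)_e\to\Riem^+(S)\to\Teich^+(S,M)$ with convex (hence contractible) total space and contractible fiber (Proposition \ref{prop:diff-contract}), so $\Teich^+(S,M)$ is weakly contractible; that it is in fact $C^\infty$-diffeomorphic to $\RR^{6g-6+2|M|}$ then follows from Teichm\"uller's theorem (the Teichm\"uller-metric exponential map is a global homeomorphism onto a ball) or, more explicitly, from the Fenchel--Nielsen coordinates attached to a pants decomposition.

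\emph{Part (b).} Given an orientation reversing involution $\sigma$ of $S$ with $\sigma(M)=M$, define $\sigma_{\Riem}\colon\Riem^+(S)\to\Riem^+(S)$ by $J\mapsto -\sigma^*J$: since $\sigma$ reverses orientation, $\sigma^*J$ induces the opposite orientation, so $-\sigma^*J$ is again orientation-compatible, and $\sigma^2=\id$ makes this an involution. Choosing the reference $J_0$ so that $\sigma$ is anti-holomorphic for it (such structures exist, e.g.\ by averaging a metric), the map $\sigma_{\Riem}$ reads, in Beltrami coordinates, as $\mu\mapsto\overline{\sigma^*\mu}$, which is complex-antilinear in $\mu$; hence $\sigma_{\Riem}$ is anti-holomorphic. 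Conjugation by $\sigma$ is a homeomorphism of $\Diff(S,M)$ and therefore preserves the identity component $\Diff(S,M)_e$, so $\sigma_{\Riem}$ descends to an involution $\sigma_{\Teich}$ of $\Teich^+(S,M)$; since the projection $\Riem^+(S)\to\Teich^+(S,M)$ is holomorphic, $\sigma_{\Teich}$ is anti-holomorphic.

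\emph{Main obstacle.} The only genuinely substantial inputs are the Ahlfors--Bers construction of the complex-manifold structure on $\Teich^+(S,M)$ --- in particular the freeness and properness of the $\Diff(S,M)_e$-action, which is exactly where stability is used --- and, in (b), the bookkeeping required to conclude anti-holomorphy rather than mere smoothness of $\sigma_{\Teich}$. Both are classical (Ahlfors--Bers, Bers, Imayoshi--Taniguchi, Hubbard), so the proof amounts to assembling these known facts, and the paper presumably simply records the statement as ``well known''.
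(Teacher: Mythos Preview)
Your anticipation is exactly right: the paper does not prove this theorem at all. It introduces the statement with ``The following is well known'' and closes it with a \qed, giving no argument. Your sketch is the standard one (Beltrami differentials and Ahlfors--Bers for the complex structure, Riemann--Roch/Serre duality for the dimension count, Fenchel--Nielsen or Teichm\"uller's theorem for contractibility, and the $J\mapsto -\sigma^*J$ computation for anti-holomorphy), and it is correct. There is nothing to compare.
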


Let now $(S,M)$ be an arbitrary stable marked surface. Denote by $\Klein(S)$ the space of all
possible conformal structures on $S$. The {\em Teichm\"uller space}
of $(S,M)$ is defined by
\[
\Teich(S,M) = \Diff(S,M)\backslash \Klein(S). 
\]

\begin{thm}\label{thm:klein-teich}
\begin{enumerate}
\item[(a)] If $S$ is classical (i.e., orientable without boundary), then the space $\Teich(S,M)$ is the disjoint union of
two connected components, corresponding to the two orientations of $S$. Each component is canonically
identified with the corresponding classical Teichm\"uller space $\Teich^+(S,M)$. 

\item[(b1)] Suppose $S$ is non-classical, let $\pi: S^\#\to S$ be its Schottky double with involution
$\sigma$, and let $M^\#=\pi^{-1}(M)$. Let $g^\#$ be the genus of $S^\#$. Then
\[
\Teich(S,M) =  (\Teich^+(S^\#, M^\#))^{\sigma_{\Teich}}
\]
is the fixed point locus of the anti-holomorphic involution $\sigma_{\Teich}$. 
In particular, it is a real analytic manifold of dimension $3g^\#-3+|M^\#|$. 

\item[(b2)] Suppose $S$ is non-orientable, let $\varpi: \widetilde S\to S$ be its
orientation cover with the orientation reversing deck involution $\tau$, and let $\widetilde M=\varpi^{-1}(M)$.
Then
\[
	\Teich(S,M) = (\Teich^+(\widetilde S, \widetilde M))^{\tau_{\Teich}}
\]
is the fixed point locus of the involution $\tau_{\Teich}$. 

\item[(c)] In the situation (b1), the manifold $\Teich(S,M)$ is diffeomorphic to Euclidean space.

\end{enumerate}
\end{thm}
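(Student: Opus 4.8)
\emph{Plan of proof.} The idea is to deduce everything from the classical Teichm\"uller theory of oriented marked surfaces recalled just above, by means of the two devices already introduced: the description of conformal structures on an orientable surface in terms of complex structures compatible with an orientation, and the Schottky double / orientation cover for the remaining cases. Part (a) is the most direct: when $S$ is classical, a conformal structure on $S$ is, after choosing one of the two orientations $o$ of $S$, the same as a complex structure compatible with $o$; since $\Diff(S,M)_e$ consists of orientation-preserving diffeomorphisms it acts separately on the two resulting pieces of $\Klein(S)$, and passing to the quotient identifies $\Teich(S,M)$ with the disjoint union, over the two orientations, of the classical Teichm\"uller spaces $\Teich^+$, each a Euclidean space by the theorem just quoted.

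For (b1) the crux is to upgrade the pointwise remark made above --- that a conformal structure on the interior $X^\circ$ of a Klein surface induces a complex structure on the orientation cover $\widetilde{X^\circ}$, extending to the Schottky double $X^\#$ and making the deck involution $\sigma$ anti-holomorphic --- into an identification of spaces carrying group actions. Concretely, I would check that pulling back along the ramified double cover $\pi\colon S^\#\to S$ defines a bijection of $\Klein(S)$ onto the subspace of $\Riem^+(S^\#)$ consisting of complex structures $J$ with $\sigma^{*}J=-J$, the inverse being ``pass to the quotient $S^\#/\sigma=S$''; and that this bijection is equivariant once $\Diff(S,M)$ is identified, via unique lifting of diffeomorphisms, with the centralizer of $\sigma$ in $\Diff(S^\#,M^\#)$, compatibly with identity components. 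Passing to quotients by identity components, and invoking Proposition \ref{prop:diff-contract} to know that the components of all diffeomorphism groups in sight are contractible --- so that every $\sigma_{\Teich}$-fixed point of $\Teich^+(S^\#,M^\#)$ is in fact represented by a $\sigma$-anti-invariant complex structure, with no residual obstruction in the passage between orbit spaces --- then identifies $\Teich(S,M)$ with the fixed locus of $\sigma_{\Teich}$ in $\Teich^+(S^\#,M^\#)$, where $\sigma_{\Teich}$ is the anti-holomorphic involution ``conjugation by $\sigma$'' appearing in the quoted theorem. The dimension $3g^\#-3+|M^\#|$ and the real-analytic manifold structure are then inherited from that theorem, using that stability of $(S,M)$ is exactly stability of the marked surface $(S^\#,M^\#)$. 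Part (b2) is the same argument run with the orientation cover $\varpi\colon\widetilde S\to S$ and its deck involution $\tau$ in place of $S^\#$ and $\sigma$; when in addition $\partial S=\emptyset$ one has $\widetilde S=S^\#$ and $\tau=\sigma$, so in that case (b2) is literally a special instance of (b1).

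It remains to prove (c), the main point: the fixed-point set of the anti-holomorphic involution $\sigma_{\Teich}$ on $\Teich^+(S^\#,M^\#)$ is diffeomorphic to a Euclidean space. Since $(S,M)$ is stable this fixed set is non-empty, so I may pick a base point $X_0$ in it --- a complex structure on $S^\#$ for which $\sigma$ is anti-holomorphic. Then I would invoke Teichm\"uller's theorem in the form: the Teichm\"uller geodesic rays issuing from $X_0$ fill $\Teich^+(S^\#,M^\#)$ bijectively, yielding a homeomorphism $\Phi\colon Q(X_0)\to\Teich^+(S^\#,M^\#)$ from the finite-dimensional real vector space $Q(X_0)$ of integrable holomorphic quadratic differentials on $X_0$. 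Because $\sigma$ is anti-holomorphic and fixes $X_0$, it acts $\RR$-linearly on $Q(X_0)$, and this action is an anti-$\CC$-linear involution; since $\sigma_{\Teich}$ sends Teichm\"uller rays to Teichm\"uller rays, $\Phi$ intertwines $\sigma_{\Teich}$ with this linear involution. Hence the fixed set is the $\Phi$-image of a linear subspace of $Q(X_0)$ --- the real form cut out by an anti-$\CC$-linear involution, of real dimension $\dim_\CC Q(X_0)=3g^\#-3+|M^\#|$ --- so it is homeomorphic, and (comparing with the real-analytic structure from the quoted theorem, or more directly by using Fenchel--Nielsen coordinates adapted to a $\sigma$-invariant pair-of-pants decomposition of $S^\#$, in which $\sigma_{\Teich}$ merely negates and permutes coordinates) diffeomorphic, to $\RR^{\,3g^\#-3+|M^\#|}$. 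Compare \cite{seppala, natanzon} for the theory of Teichm\"uller spaces of Klein surfaces, where such cell structures are established.

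The step I expect to be the genuine obstacle is precisely this fixed-point analysis in (c). One must either justify carefully that Teichm\"uller's ray parametrisation from $X_0$ can be taken $\sigma_{\Teich}$-equivariantly and that it linearises the involution at the level of quadratic differentials --- and then upgrade ``homeomorphic to $\RR^{d}$'' to ``diffeomorphic'', the value of $d$ being the one place where exotic smoothings could a priori enter --- or else set up Fenchel--Nielsen-type coordinates adapted to a $\sigma$-invariant decomposition, which brings in the extra bookkeeping of boundary lengths and of interior marked points when $S$ has boundary or punctures. A more mundane nuisance running through (a)--(b2) is keeping track of which flavour of diffeomorphism group (full versus identity component, pure versus not, with versus without boundary) enters each quotient, and invoking Proposition \ref{prop:diff-contract} at the right moment to average conformal structures onto the symmetric locus.
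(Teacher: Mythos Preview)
Your proposal is correct and in fact considerably more detailed than the paper's own proof, which simply declares parts (a), (b1), (b2) ``obvious'' and for (c) cites Natanzon \cite{natanzon} (case $\partial S=M=\emptyset$) and Sepp\"al\"a \cite{seppala} (case $M=\emptyset$), remarking that the extension to $M\neq\emptyset$ goes through by the same methods. Your sketch for (c) via Teichm\"uller rays and $\sigma$-equivariant quadratic differentials is essentially Natanzon's argument, and your alternative via $\sigma$-adapted Fenchel--Nielsen coordinates is Sepp\"al\"a's; so you are supplying the content behind the paper's citations rather than taking a different route.
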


\begin{proof} Parts (a), (b1-2) are obvious. Part (c) was proved by Natanzon \cite{natanzon}
for $\partial S = M=\emptyset$ and Sepp\"al\"a \cite{seppala} for $M=\emptyset$. The extension 
to the case $M\neq\emptyset$ is can be done by the same methods.
\end{proof}

\subsubsection {Moduli spaces of marked $G_\conf$-structured surfaces}

\begin{defi}
A $G_\conf$-{\em structured marked surface} is a datum consisting of:
\begin{enumerate}
\item[(1)] A conformal surface $S$.

\item[(2)] A finite subset $M\subset S$.

\item[(3)] A $G_\conf$-structure on $S-(M\cap S^\circ)$ compatible with the conformal
structure restricted from $S$. 
\end{enumerate}
\end{defi} 

 Given a $C^\infty$ marked surface $(S,M)$, we have a  topological groupoid   $G_\conf\on{Str}(S,M)$
 formed by pairs consisting of a conformal structure on $S$ and a compatible $G_\conf$-structure on
 $S-(M\cap S^\circ)$. By Proposition \ref {prop:prod-of-grpds} we have
 \be\label{eq:prod-str}
 G_\conf\on{Str}(S,M)\,\,\simeq \,\, \GG\on{Str}(S-(M\cap S^\circ))\times O(2)_\conf\on{Str}(S). 
 \ee

Let $(S, M)$ be a stable marked surface. The {\em moduli spaces} of $G_\conf$-structures on
$(S,M)$ are defined as the groupoid quotients
 \be
\begin{gathered}
\Mc^{G_\conf}(S,M) \,\,=\,\, \Diff(S,M) \Bbs \Gcstr(S,M)),  \\
\Mc^{G_\conf}_\pure (S,M) \,\,=\,\, \Diff_\pure(S,M) \Bbs \Gcstr(S,M). 
\end{gathered}
\ee
Thus they are topological groupoids, and $\Mc^{G_\conf}(S,M)$ is a further quotient of
$\Mc^{G_\conf}_\pure(S,M)$ by a finite subgroup of permutations of $M$.

\begin{rem}
Equivalently, $\Mc^{G_\conf}(S,M)$ can be seen as the groupoid formed by all $G_\conf$-structured marked
surfaces of topological type $(S,M)$ and by their structured diffeomorphisms. 
\end{rem}

\begin{exa}
Let $M$ be a finite set and suppose that $2g-2+|M|>0$. Let $\Mc(g,M)$ be the Deligne-Mumford moduli stack
formed by smooth projective algebraic curves $X$ of genus $g$ together with an embedding $M\hookrightarrow X$. 
Let $(S,M)$ be a classical marked surface of genus $g$. Any choice of an orientation of $S$ identifies
$\Mc^{SO(2)_\conf}_{\pure}(S,M)$ with the groupoid (orbifold) of $\CC$-points $\Mc(g,M)(\CC)$. Change of orientation corresponds to the
action of $\on{Gal}(\CC/\RR)$ on $\Mc(g,M)(\CC)$ which comes from the fact that $\Mc(g,M)$ is defined
over $\RR$ (in fact, over $\QQ$). The  orbifold $\Mc^{SO(2)_\conf}$ is therefore the quotient of $\Mc(g,M)(\CC)$
by the symmetric group of $M$. As is well known,
\[
\Mc(g,M)(\CC) = \Mod^+_\pure(S,M) \Bbs \Teich^+(S,M)
\]
is the quotient of the topological cell $\Teich^+(S,M)$ by the mapping class group
of orientation preserving diffeomorphisms fixing all points of $M$. In what follows, we realize a general
$\Mc^{G_\conf}(S,M)$ in terms of quotients of cells by discrete groups.
\end{exa}

Let $(S,M)$ be stable, and define
\begin{equation}
	\label{eq:top-moduli}
	\Mc^\GG(S,M) = \Diff(S,M)\Bbs \Gstr(S-(M\cap S^\circ)) \simeq
	\Mod(S,M) \Bbs \Gstr(S-(M\cap S^\circ)) 
\end{equation}
as the ``topological" analog of the moduli space, in which we consider $\GG$ instead of $G_\conf$ as
a structure group. Here the last equivalence is a homotopy equivalence of topological
groupoids coming from contractibility of $\Diff(S,M)_e$
(Proposition \ref {prop:diff-contract}). So we can and will consider $\Mc^\GG(S,M)$
as a set-theoretic, non-topological, groupoid given by the right-hand side of \eqref{eq:top-moduli}. 
Similarly, we define $\Mc^\GG_\pure(S,M)$, using $\Mod_\pure(S,M)$. 

\begin{prop}
We have a homotopy equivalence of topological groupoids
\[
\Mc^{G_\conf}(S,M) \simeq \coprod_{ \{F\} \in \Mc^\GG(S,M)} \Mod^\GG(S_F, M) \Bbs \Teich(S,M)
\]
and a similar identification of $\Mc^{G_\conf}_\pure(S,M)$. Here $\{ F\}$ runs over
isomorphism classes of objects of $\Mc^\GG(S,M)$, and $S_F$ is the $\GG$-structured
surface corresponding to $F$. 
\end{prop}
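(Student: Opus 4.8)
The plan is to start from the product decomposition \eqref{eq:prod-str}
\[
 G_\conf\on{Str}(S,M)\,\,\simeq\,\, \GG\on{Str}(S-(M\cap S^\circ))\times O(2)_\conf\on{Str}(S)
\]
and take the groupoid quotient by $\Diff(S,M)$ on both sides. Since $\Diff(S,M)$ acts diagonally and the right-hand factor $O(2)_\conf\on{Str}(S)$ is just $\Klein(S)$ with trivial automorphisms (Proposition \ref{prop:prod-of-grpds}(b)), forming the quotient stepwise first over the conformal-structure factor and then over the $\GG$-structure factor will rewrite $\Mc^{G_\conf}(S,M)$ as a quotient of a fibration over $\Gstr(S-(M\cap S^\circ))$ by $\Diff(S,M)$; the fibre over a $\GG$-structure $F$ is the quotient of $\Klein(S)$ by the stabilizer of $F$, which is $\Diff^\GG(S_F,M)$ by the definition of structured diffeomorphisms in \S\ref{subsec:c-infty}. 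First I would therefore decompose the $\Diff(S,M)$-set of isomorphism classes of objects of $\GG\on{Str}(S-(M\cap S^\circ))$ into orbits indexed by $\{F\}\in\Mc^\GG(S,M)$, using Proposition \ref{prop:str-non-str-diff}, which tells us that the image of $\Diff^\GG(S,M)\to\Diff(S,M)$ consists of whole connected components, so each orbit of $\GG$-structures is acted on transitively by a union of components of $\Diff(S,M)$.

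Next I would handle the conformal factor: for a fixed $\GG$-structure $F$ with stabilizer (in $\Diff(S,M)$) equal to the image of $\pi:\Diff^\GG(S_F,M)\to\Diff(S,M)$, the contribution to the coproduct is
\[
 \on{Im}(\pi) \,\Bbs\, \Klein(S).
\]
Here I would invoke contractibility of the identity component $\Diff(S,M)_e$ (Proposition \ref{prop:diff-contract}, via Proposition \ref{prop:str-non-str-diff} reducing to $\GG=GL(2,\RR)$), together with the fact that $\Klein(S)$ is contractible on each component and that $\Teich(S,M)=\Diff(S,M)\backslash\Klein(S)$ is a cell (Theorem \ref{thm:klein-teich}(a),(c)). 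This lets me replace $\on{Im}(\pi)\Bbs\Klein(S)$, up to homotopy equivalence of topological groupoids, by $\Mod^\GG(S_F,M)\Bbs\Teich(S,M)$: the continuous part of the action is absorbed by contractibility, leaving the discrete mapping-class quotient. One must be slightly careful that $\Teich(S,M)$ as defined is the full $\Diff(S,M)$-quotient while we are quotienting by the possibly proper subgroup $\on{Im}(\pi)$; but since that subgroup is a union of components and contains $\Diff(S,M)_e$, the residual discrete group acting on $\Teich(S,M)$ is exactly $\on{Im}(\pi)/\Diff(S,M)_e$, which is the image of $\Mod^\GG(S_F,M)=\pi_0\Diff^\GG(S_F,M)$ in $\Mod(S,M)$, and by Proposition \ref{prop:str-non-str-diff} the kernel of $\pi$ on $\pi_0$ is the discrete abelian group $\Aut(F)$, which reappears as the automorphisms of the object $\{F\}$; assembling these pieces gives precisely the stated groupoid quotient $\Mod^\GG(S_F,M)\Bbs\Teich(S,M)$.

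Finally I would assemble: summing over orbit representatives $\{F\}$ gives
\[
 \Mc^{G_\conf}(S,M)\,\simeq\,\coprod_{\{F\}\in\Mc^\GG(S,M)}\Mod^\GG(S_F,M)\,\Bbs\,\Teich(S,M),
\]
and the pure version is obtained verbatim by replacing $\Diff(S,M)$ with $\Diff_\pure(S,M)$ and $\Mod^\GG$ with $\Mod^\GG_\pure$ throughout, using the pure parts of Propositions \ref{prop:str-non-str-diff} and \ref{prop:diff-contract}. I expect the main obstacle to be the bookkeeping in the second paragraph: correctly matching the stabilizer subgroup $\on{Im}(\pi)$ of a $\GG$-structure with the structured diffeomorphism group $\Diff^\GG(S_F,M)$ and verifying that passing to $\pi_0$ is compatible with the homotopy-quotient manipulation (i.e.\ that the $\Aut(F)$-gerbe structure is faithfully reflected), rather than anything analytically deep — the analysis is entirely packaged into the contractibility statements already proved.
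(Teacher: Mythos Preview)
Your proposal is correct and follows essentially the same route as the paper: start from the product decomposition \eqref{eq:prod-str}, take the $\Diff(S,M)$-quotient, decompose over isomorphism classes of $\GG$-structures, and then use contractibility of the identity component (Proposition \ref{prop:diff-contract}) to pass from the topological stabilizer to its $\pi_0$. The only stylistic difference is that the paper absorbs your $\Aut(F)$-bookkeeping into a single step by taking $\on{Stab}(F)$ to mean the \emph{groupoid} stabilizer $\Diff^\GG(S_F,M)$ (pairs $(\phi,\alpha)$ with $\alpha:\phi^*F\cong F$) rather than its image $\on{Im}(\pi)\subset\Diff(S,M)$; this makes the identification $\pi_0(\on{Stab}(F))=\Mod^\GG(S_F,M)$ immediate and avoids having to reinstate $\Aut(F)$ at the end. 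Your observation about the definition of $\Teich(S,M)$ is well taken: the intended definition is the quotient by the identity component $\Diff(S,M)_e$, which is what both your argument and the paper's actually use.
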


\begin{proof} Using  \eqref{eq:prod-str}, we have
\begin{align*}
\Mc^{G_\conf}(S,M) & = \Diff(S,M) \Bbs \bigl( \Gstr(S-(M\cap S^\circ)) \times O(2)_\conf\on{Str}(S)\bigr)\\
& = \Diff(S,M) \biggl\backslash \hskip -.25cm \biggl\backslash \coprod_{ \{F\}\in \Gstr(S-(M\cap S^\circ))} O(2)_\conf\on{Str}(S)\\
& = \coprod _{ \{F\}\in \Diff(S,M) \bbs \Gstr(S-(M\cap S^\circ))} \biggl( \on{Stab}(F) \Bbs O(2)_\conf\on{Str}(S)\biggr).
\end{align*}
Note that at the  bottom of last coproduct the quotient by $\Diff(S,M)$ is $\Mc^\GG(S,M)$. At the same time,
$\on{Stab}(F)$ is the union of connected components of $\Diff(S,M)$, in particular,
$\on{Stab}(F)_e = \Diff(S,M)_e$. 
Therefore the above coproduct can be identified with
\[
\begin{gathered}
\coprod_{ \{F\}\in\Mc^\GG(S,M)} \pi_0 (\on{Stab}(F)) 
\biggl\backslash \hskip -.25cm \biggl\backslash \biggl(  \on{Stab}(F)_e \Bbs O(2)_\conf\on{Str}(S)\biggr)\,\,=
\\ =\,\,
\coprod_{ \{F\} \in\Mc^\GG(S,M)} \Mod^\GG(S_F, M) \Bbs \Teich(S,M)
\end{gathered}
\]
as claimed. In the last identification, we identified the topological groupoid $O(2)_\conf\on{Str}(S)$
with the topological space $\Klein(S)$ of its isomorphism classes. This is possible because
all automorphism groups of objects in this groupoid are trivial by Proposition 
\ref{prop:prod-of-grpds}(b).\end{proof}

\newpage
\section{Structured graphs} \label{sec:structuredgraphs}
\subsection{Structured graphs and structured surfaces}\label{subsec:structuredgraphs}
   
\begin{defi}
A {\em graph} $\Gamma$ is a pair of sets $(H,V)$ equipped with
\begin{itemize}
	\item an involution $\tau: H \to H$,
	\item a map $s: H \to V$.
\end{itemize}
The elements of $H$ are called {\em halfedges} of $\Gamma$, the fixed points of $\tau$ are called
{\em external halfedges}, and the nonfixed points of $\tau$ are called {\em internal halfedges}. 
The $2$-element orbits of $\tau$, which are hence comprised of a pair of internal halfedges, are called {\em edges}. 
The elements of $V$ are called the {\em vertices} of $\Gamma$.  Given a vertex $v \in V$, we define the set $H(v) =
s^{-1}(v)$ of {\em halfedges incident to $v$}. The cardinality $|H(v)|$ is called the valency of
$v$.
\end{defi}

\begin{defi}
Let $\Gamma$ be a graph. The {\em incidence category} $I(\Gamma)$ of $\Gamma$ has, as objects,
all the vertices and edges of $\Gamma$ with a non-identity morphism $s(h) \to
\{h,\tau(h)\}$ for every internal halfedge $h$. The {\em incidence diagram of $\Gamma$} is the functor
\[
	I_{\Gamma}: I(\Gamma) \lra \Set
\]
given on objects by assigning to a vertex $v$ the set $H(v)$ and to an edge $e$ with half-edges $h,h'$,  the
$2$-element set  $\{h,h'\}$. To a morphism $s(h) \to \{h,h'\}$ we associate the map of sets
$H(s(h)) \to \{h,h'\}$ given by mapping $h$ to $h'$ and collapsing the remaining halfedges in
$H(s(h))$  (denoted by A in Fig. \ref{fig:F1}) to $h$.  
\end{defi}

We say a graph is {\em compact} if all its halfedges are internal. Unless stated otherwise, we will
assume all graphs to be compact.

\begin{figure}[h!]
\centering
 \begin{tikzpicture}[scale =0.5]
    
    \node (me)  at (0,0){$\star$}; 
\node (v') at (7,0){$\bullet$}; 
\draw (me) -- (v'); 
\node (v) at (-7,0){$\bullet$};
\draw (v) -- (me); 

\draw (v) -- (-11,0); 
\draw (v) -- (-10, 3); 
\draw (v) -- (-10, -3); 

\node at (-3.5, .5) {$h$}; 
\node at (3.5, .5) {$h'$}; 
\node at (0, .7) {$m(e)$}; 

\node at (-6.5,0.5) {$v$}; 
\node at (7.5,0.5) {$v'$}; 

\draw [decorate,decoration={brace,amplitude=10pt},xshift=-12pt,yshift=0pt]
(-12,-3.5) -- (-12,3.5) node [black,midway,xshift=-0.6cm] 
{ $A$};

\end{tikzpicture}
\caption{The incidence map: $m(e)$ is the midpoint of the edge $e=\{h,h'\}$. }
\label{fig:F1}
\end{figure}
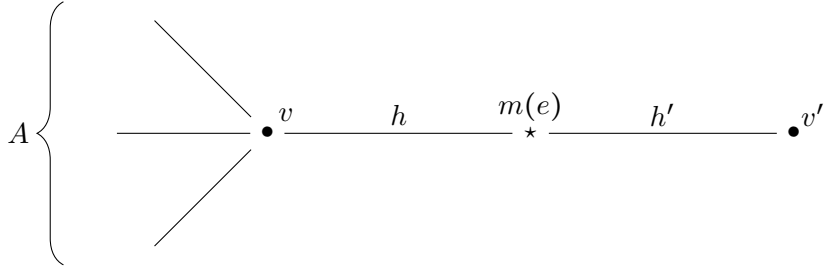

\begin{defi}
	Given a graph $\Gamma$, we define its {\em realization} $|\Gamma| = |\N(I(\Gamma)|$ to be
	the geometric realization of its incidence category. We denote by $\partial |\Gamma|\subset|\Gamma|$ 
	the set of 1-valent vertices and by $|\Gamma|^\circ$ the complement $|\Gamma| - \partial|\Gamma|$. 
\end{defi}

\begin{rem}\label{rem:midpoint}
 	Note that the realization $|\Gamma|$ comes equipped with two kinds
	of $0$-cells. On the one hand, we have a $0$-cell for each vertex $v$ of $\Gamma$, on the
	other hand, we have a $0$-cell $m(e)$ for each edge $e$ of $\Gamma$ which can be regarded as a
	chosen midpoint of the geometric edge which connects the two vertices of $\Gamma$ incident
	to $e$, see Figure \ref{fig:F1}. 
\end{rem}

\begin{defi} \label{defi:ggraph} Let $\Gamma$ be a graph and let $\G$ be a small category equipped with a functor $\G
	\to \Set$. A {\em $\G$-structure on $\Gamma$} is a lift 
	\[
	\xymatrix{
			& \G \ar[d] \\
			I(\Gamma) \ar[r]^{I_{\Gamma}} \ar[ur]^{\widetilde{I_{\Gamma}}} & \Set
	}
	\]
	of the incidence diagram of $\Gamma$ to $\G$. 
\end{defi}

We now fix a planar Lie group $\rho: G \to O(2)$, the corresponding crossed
simplicial group $\DG$, and the larger (but equivalent) category $\G$ of $\DG$-structured sets. 
Note that the interpretation of the objects of $\G$ as sets with
extra structure gives a forgetful functor $\G \to \Set$. In this context, we will slightly abuse
notation, and refer to $\G$-structured graphs as $\DG$-structured graphs.
   
\begin{exas}\label{defi:exggraphs}  
	\begin{enumerate}
		\item A graph with a $\Lambda$-structure is called a {\em ribbon graph}.
		Explicitly, a $\Lambda$-structure on a graph $\Gamma$ is the datum of a cyclic order
		on the $H(v)$ for each vertex $v\in V$. Each  set $\{h,h'\}$ of half-edges of any edge $e$
		has a trivial cyclic order, since it has cardinality 2. 
		Thus, our definition reduces to the usual one
		\cite{penner:book}. 
		
		\item A graph with a $\Xi$-structure is called a {\em M\"obius graph}.
		Explicitly, a $\Xi$-structure on a graph $\Gamma$  consists,
		first, of a dihedral order on each $H(v)$ and, second, of identification
		of the orientation torsors 
		\[
		O(H(v))\buildrel\sim\over \lla O(e)\buildrel\sim\over \lra O(H(v'))
		\]
		for each edge $e$ with vertices
		$v$ and $v'$ (the case $v=v'$ is allowed).
		 This structure  
		is equivalent to the (somewhat more cumbersome)  concept of a M\"obius graph defined
		 in terms of ``ribbon graphs with  $\ZZ/2$-grading
		on edges" as in \cite{braun, mulase-waldron, mulase-yu}.  
								
		\item A graph with a $\Lambda_{\infty}$-structure is called a {\em framed graph}.
		\item A graph with a $\Lambda_N$-structure is called an {\em $N$-spin graph}.
		Very recently, surfaces with 2-spin structure were studied, from a combinatorial point of
		view by Novak and Runkel \cite{novak-runkel}. They introduced a concept of
		a combinatorial spin structure on a triangulation $T$ of a surface $S$. Such a structure
		equips the dual graph of $T$ with a $\Lambda_2$-structure in our sense. 
	\end{enumerate}
\end{exas}

For a 2-dimensional $\RR$-vector space $V$, we denote by
\[
C(V) \,\,=\,\, (V-\{0\})/\RR^*_{>0}
\]
the circle of directions of $V$. In particular, for any surface $S$ and any $x\in S$ we have the circle
$C(T_xS)$ of tangent directions at $x$.

\begin{defi} \label{exa:embedding} Let $S$ be a surface. By an {\em embedding}
	of a graph  $\Gamma$ into $S$ we mean an injective, continous map $\gamma: |\Gamma| \to S$ such
	that
	\begin{itemize}
		\item $\gamma(|\Gamma|^\circ)$ is contained in the interior of $S$, and $\gamma(\partial|\Gamma|)\subset\partial S$.
		\item $\gamma$ is smooth along every edge of $|\Gamma|$.
		\item For every vertex $x$ of $|\Gamma|$ (corresponding to a vertex or
			and edge of $\Gamma$), the tangent directions of the edge germs on
		$S$ which leave $\gamma(x)$, are distinct.
	\end{itemize}
\end{defi}
	
\begin{prop}\label{prop:emb-graph-str}
	Let $S$ be a surface with a $\GG$-structure. An embedding of a graph $\Gamma$ into $S$ endows $\Gamma$
	with a $\DG$-structure. 
\end{prop}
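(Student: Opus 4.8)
The plan is to produce, for each object of the incidence category $I(\Gamma)$, a $\DG$-structured set refining the set assigned by the incidence diagram $I_\Gamma$, and to check that the incidence morphisms upgrade to morphisms in $\G$. The key geometric input is Corollary \ref{cor:structured-circle-orders}: a nonempty finite subset of a $G$-structured circle carries a canonical $\DG$-structure, natural with respect to homeomorphisms of $G$-structured circles. Since the embedding $\gamma:|\Gamma|\to S$ lands in a $\GG$-structured surface, the first step is to manufacture, at every relevant point of $S$, a $G$-structured circle whose marked subset is the set of edge-germs leaving that point.

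Concretely, first I would recall that a $\GG$-structure $F\to\Fr_S$ on $S$ restricts, at each point $x\in S^\circ$, to a $G$-structure on the circle of tangent directions $C(T_xS)$: the fiber $F_x$ is a $G$-torsor mapping equivariantly (along $p:G\to O(2)$, after choosing a metric, or more invariantly along $\GG\to GL(2,\RR)$ acting on $C(\RR^2)$) to $\Fr_{S,x}=\Isom(\RR^2,T_xS)$, and this exhibits $C(T_xS)$ as a $G$-structured circle in the sense of \S\ref{sec:topmodel}. Next, for a vertex $v$ of $\Gamma$, set $x=\gamma(v)$; the embedding hypothesis (distinct tangent directions of edge germs) gives an injection $H(v)\hookrightarrow C(T_xS)$, so by Corollary \ref{cor:structured-circle-orders} the set $H(v)$ acquires a canonical $\DG$-structure. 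Similarly, for an edge $e=\{h,h'\}$ realized via its midpoint $m(e)$ (Remark \ref{rem:midpoint}), the two edge-germs at $\gamma(m(e))$ give an injection $\{h,h'\}\hookrightarrow C(T_{\gamma(m(e))}S)$ and hence a $\DG$-structure on the two-element set $\{h,h'\}$ (which, in the cyclic/dihedral cases, is the unique one, but in general this is genuine data). This defines the object-level lift $\widetilde{I_\Gamma}$ of the incidence diagram.

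Then I would check functoriality, i.e. that each incidence morphism $s(h)\to\{h,\tau(h)\}$, whose value under $I_\Gamma$ is the collapse map $H(s(h))\to\{h,h'\}$ sending $h\mapsto h'$ and the rest of $H(s(h))$ to $h$, lifts to a morphism in $\G$. This is the place where the smoothness of $\gamma$ along the edge is used: the germ of $\gamma$ along the edge $e$ gives a path in $S$ from $\gamma(v)$ to $\gamma(m(e))$, and parallel transport of the $\GG$-structure along this path (the $\GG$-bundle $F$ restricted to the contractible arc is canonically trivialized) identifies the $G$-structured circles $C(T_{\gamma(v)}S)$ and $C(T_{\gamma(m(e))}S)$ compatibly with the two embedded finite subsets — except that at $m(e)$ only the sub-circle "near the edge direction" is seen. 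The cleanest way to encode the required morphism is via Proposition \ref{prop:morphisms-Gc}: one must exhibit the map of underlying sets ($\psi_*$ = the collapse map) together with the pullback of structured frames ($\psi^*$) satisfying conditions (i) and (ii) there. The pullback of structured frames should be read off from the arc in $S$: a structured frame of $\{h,h'\}$ (an $(n'=1)$-structured frame, i.e. roughly a choice of which interstice is "distinguished") is transported, via the trivialization over the arc and the orientation/position of the edge $e$ inside the cyclic-ish picture at $\gamma(v)$, to a structured frame of $H(v)$; concretely one uses the topological model of Theorem \ref{thm:CG=DG} and the description in Lemma \ref{lem:pi0} of $\DG$-orders via homeomorphisms of marked circles. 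Compatibility with composition of incidence morphisms is vacuous since $I(\Gamma)$ has no nontrivial composites of non-identity morphisms, so only conditions (i)–(ii) of Proposition \ref{prop:morphisms-Gc} need verification.

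The main obstacle is the second step: correctly and coherently transporting $\DG$-structures along the edge-arc from the "full" tangent circle at a vertex to the "truncated" picture at the midpoint, and verifying that the resulting map of structured-frame torsors really satisfies the equivariance condition (ii) of Proposition \ref{prop:morphisms-Gc} — i.e. that the collapse of all halfedges of $H(v)$ except the one pointing along $e$ is compatible with the $\Gen$-actions via the restriction-of-$\DG$-structures functor. Here the natural tool is Corollary \ref{cor:structured-circle-orders}'s naturality statement together with the restriction functors $\Res^I_{I'}$ on $\DG$-structured sets: the collapse map $H(s(h))\to\{h,h'\}$ is, up to relabelling, exactly the restriction of a $\DG$-structure to a two-element subset, so once one identifies the two $G$-structured circles via the arc, the incidence morphism becomes an instance of the canonical morphism $\psi^u$ of the restriction proposition. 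I expect the remaining verifications — smoothness ensuring well-definedness of the transport, the interior/boundary condition matching $\gamma(|\Gamma|^\circ)\subset S^\circ$ with the use of $C(T_xS)$ only at interior points (boundary vertices being $1$-valent, where all $\DG$-structures are trivial anyway) — to be routine once this identification is set up.
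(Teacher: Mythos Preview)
Your object-level construction is correct and matches the paper exactly: the $\GG$-structure on $S$ makes each tangent circle $C(T_xS)$ a $G$-structured circle, and Corollary~\ref{cor:structured-circle-orders} then equips $H(v)\hookrightarrow C(T_{\gamma(v)}S)$ and $\{h,h'\}\hookrightarrow C(T_{\gamma(m(e))}S)$ with $\DG$-structures.

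The gap is in the morphism part. Parallel transport along the half-edge gives a structured homeomorphism $\widetilde T$ lifting $T:C_v\to C_e$, but $T$ does \emph{not} send the marked set $H(v)$ into (arcs around) $\{h,h'\}$: only the single direction $h\in H(v)$ is carried to $h'$, while the remaining half-edge directions at $v$ land at arbitrary points of $C_e$. Hence $\widetilde T$ by itself does not represent a morphism of marked $G$-structured circles, and your claim that parallel transport ``identifies the $G$-structured circles compatibly with the two embedded finite subsets'' is not correct. What the paper does is deform $T$ through homeomorphisms $C_v\to C_e$ until it lands in the (contractible) space $\Phi$ of maps that do respect the markings (sending the arc around $h$ into the arc around $h'$ and all remaining arcs into the arc around $h$); the homotopy classes of such deformations form a $\ZZ$-torsor, and the distinguished class is singled out by requiring that the image of the arc around $h$ meet the arc around $h'$ throughout the deformation. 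Transporting $\widetilde T$ along this distinguished path yields a canonical component of $\Homeo^G((C_v,J_v),(C_e,J_e))$, which via Theorem~\ref{thm:CG=DG} is the required morphism in $\G$.

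Your attempt to replace this by the restriction functors $\Res^I_{I'}$ cannot work as stated: the canonical morphism $\psi^u$ produced by that proposition is an \emph{embedding} from the smaller $\DG$-structured set into the larger one, whereas the incidence morphism $H(v)\to\{h,h'\}$ is a surjective collapse, going in the opposite direction in $\G$. Even after identifying $C_v\cong C_e$ via $T$, the two-element set $T^{-1}(\{h,h'\})\subset C_v$ is (up to isotopy) the pair of antipodal directions $\{h,-h\}$, which is generically not a subset of $H(v)$, so there is no restriction morphism available. You have correctly identified the ingredients (parallel transport, the topological model, Proposition~\ref{prop:morphisms-Gc}), but the construction of $\psi^*$ genuinely requires the deformation-and-canonical-component argument sketched above.
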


\begin{proof} 
	Let $h$ be a half-edge of $\Gamma$ incident to the vertex $v$ and with corresponding edge
	$e=\{h,h'\}$. We denote by $\gamma(|h|)$ the corresponding path in $S$ which runs from
	$\gamma(v)$ to the midpoint $m(e)= \gamma(e)$.

	A $\GG$-structure on $S$ gives, via the homotopy equivalence $\GG\to\Homeo^G(S^1)$, a
	$\Homeo^G(S^1)$-structure on each circle $C(T_xS)$. Therefore, by Corollary
	\ref{cor:structured-circle-orders}, each finite subset of $C(T_xS)$ becomes, canonically, a
	$\DG$-structured set. In particular, for each vertex $v \in V$, we have a canonical
	embedding of sets $H(v)\hookrightarrow C(T_{\gamma(v)} S)$ given by association to a
	half-edge $h$ the direction given by the germ of the path $\gamma(|h|)$ leaving $v$.  By
	the above, this provides a canonical $\DG$-structure on the set $H(v)$.  Further, let $e$ be
	an edge of $\Gamma$ given by a pair of half-edges $h, h'$  as above. 
	The two tangent directions to $|\Gamma|$ at $\gamma(m(e))$ correspond to the two half-edges
	$h$ and $h'$ of $e$. As before, the embedding $|\Gamma|\to S$ gives a $\DG$-structure on the
	2-element set of these directions, i.e., on $\{h,h'\}$. This defines the values of the
	functor $\widetilde I_\Gamma: I(\Gamma)\to \Gc$ on objects.

	To define $\widetilde I_\Gamma$ on morphisms, consider one of the half-edges of $e$, say
	$h$, and let $v=s(h)$ be the corresponding vertex. Consider the $\Homeo^G(S^1)$-structured
	circles
	\[
	C_v = C(T_{\gamma(v)}S), \quad C_e = C(T_{\gamma(m(e))} S). 
	\]
	Let $J_c\subset C_v$ be the union of small non-intersecting closed arcs centered around
	images of elements of $H(v)$.  We write $J_v=A\sqcup B$, where $B$ is the arc corresponding
	to $h\in H(v)$, and $A$ is the union of all the other arcs, see Figure \ref{fig:F2}.

	 \def\centerarc[#1] (#2) (#3:#4:#5)
	{ \draw[#1] (#2) ++(#3:#5) arc (#3:#4:#5);
	}
	
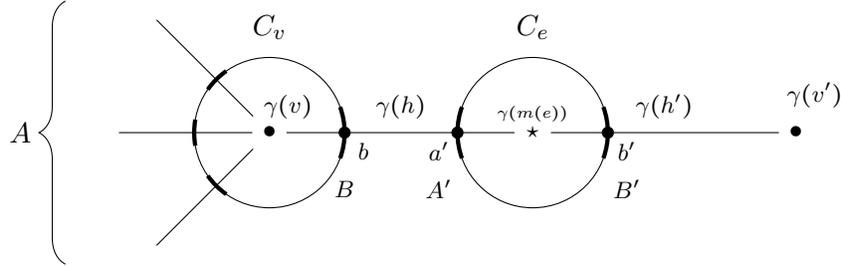
\begin{figure}[h!]
\centering
 \begin{tikzpicture}[scale =0.5]
    
    \node (me)  at (0,0){$\star$}; 
    \node at (0,.5){\tiny$\gamma(m(e))$}; 
\node (v') at (7,0){$\bullet$}; 
\draw (me) -- (v'); 
\node (v) at (-7,0){$\bullet$};
\draw (v) -- (me); 

\draw (v) -- (-11,0); 
\draw (v) -- (-10, 3); 
\draw (v) -- (-10, -3); 

\node at (-3.5, .7) {\footnotesize $\gamma(h)$}; 
\node at (3.5, .7) {\footnotesize$\gamma(h')$}; 
\node at (0, .7) {}; 

\node at (-6.5,0.7) {\footnotesize$\gamma(v)$}; 
\node at (7.5,1) {\footnotesize$\gamma(v')$}; 

\draw [decorate,decoration={brace,amplitude=10pt},xshift=-12pt,yshift=0pt]
(-12,-3.5) -- (-12,3.5) node [black,midway,xshift=-0.6cm] 
{ $A$};

\draw (0,0) circle (2cm); 

\draw (-7,0) circle (2cm);

 \centerarc[ultra thick]  (0,0) (-20:20:2cm);
  \centerarc[ultra thick]  (0,0) (160:200:2cm);
   \centerarc[ultra thick]  (-7,0) (-20:20:2cm);
    \centerarc[ultra thick]  (-7,0) (170:190:2cm);
     \centerarc[ultra thick]  (-7,0) (125:145:2cm);
      \centerarc[ultra thick]  (-7,0) (215:235:2cm);
      
 \node at (-2.5,-1.5) {\footnotesize$A'$}; 
 \node at (2.5,-1.5) {\footnotesize$B'$}; 
  \node at (-5,-1.5) {\footnotesize$B$}; 
  
  \centerarc [ultra thick]  (2,0) (0:360:3pt);
   \centerarc [ultra thick]  (-2,0) (0:360:3pt);
    \centerarc [ultra thick]  (-5,0) (0:360:3pt);
    
    \node at (2.5, -.5) {\footnotesize$b'$};
      \node at (-2.5, -.5) {\footnotesize$a'$};
        \node at (-4.5, -.5) {\footnotesize$b$};
        
        \node at (0,2.8){$C_e$}; 
          \node at (-7,2.8){$C_v$}; 

\end{tikzpicture}
\caption{  $\DG$-structure on an embedded graph. }
\label{fig:F2}
\end{figure}

	Similarly, let $J_e=A'\sqcup B'\subset C_e$ where $A'$ is a small closed arc around the
	image of $h$ in $C_e$ (denoted $a'$) and $B'$ is a small closed arc around the image of $h'$
	in $C_e$ (denoted $b'$). Note that the orientation torsors of $C_v$ and $C_e$ are identified
	by parallel transport along $\gamma(|h|)$. We now consider the space $\Phi$ formed by all
	orientation preserving homeomorphisms
	\[
	\phi: (C_v, J_v) \lra (C_e, J_e)
	\quad \text{s.t. } \phi(A)\subset A', \, \phi(B)\subset B'.
	\]
	These requirements define $\phi$ uniquely up to an isotopy (i.e., $\Phi$ is connected).
	Further, by Proposition \ref{prop:homeo-contr}, $\Phi$ is contractible. Consider now the
	unramified covering
	\[
	\pi: \Homeo^G(C_v, C_e) \lra \Homeo(C_v, C_e).
	\]
	As $\Phi$ is contractible, $\pi^{-1}(\Phi)$ is a disjoint union of components homeomorphic
	to $\Phi$.  We now show that the $\GG$-structure on $S$ defines a canonical choice of a
	component $\widetilde\Phi\subset\pi^{-1}(\Phi)$. By Theorem \ref{thm:CG=DG},
	$\widetilde\Phi$ will define a structure morphism $H(v)\to \{e,e'\}$ in $\Gc$. 

	Choose a Riemannian metric on $S$ (a contractible choice). This reduces the structure group
	of $S$ from $\GG$ to $G$. Let $T: C_v\to C_e$ be the Riemannian parallel transport along
	$\gamma(|h|)$.  It lifts canonically to a morphism of $\Homeo^G(S^1)$-structured circles.
	Denote this structured morphism by $\widetilde T$.  Let $\Pi$ be the space of paths
	$(T_s)_{s\in[0,1]}$ in $\Homeo(C_v, C_e)$ starting at $T$ and ending somewhere in $\Phi$.
	Any path $(T_s)\in\Pi$ defines a lift $\widetilde\Phi$ by transporting $\widetilde T$ along
	this path. Clearly, this $\widetilde \Phi$ depends only on the image of $(T_s)$ in
	$\pi_0(\Pi)$.  Note that $\pi_0(\Pi)$ is a torsor over $\ZZ = \pi_1 \,\Homeo^+(C_v, C_e)$.
	Note that $T$ sends the midpoint $b\in B$ (i.e, the image of $h$ in $C_v$) into the midpoint
	$b'\in B'$ (i.e., the image of $h'$ in $C_e$). Therefore we have a distinguished component
	$\Pi_0$ of $\Pi$ containing paths $(T_s)$ such that $T_s(B)$ meets $B'$ for all $s$. The
	transport of $\widetilde T$ along any path from $\Pi_0$ gives the component
	$\widetilde\Phi$. This concludes the argument. 
\end{proof}

\begin{exas}
\begin{enumerate}
	\item Suppose $\DG=\Lambda$. If $\Gamma$ be embedded into an oriented surface $S$, then it is classical that $\Gamma$
		is canonically a ribbon graph. Explicitly, each $H(v)$ has a cyclic order from the embedding
		into the oriented circle $C(T_{\gamma(v)} S)$, while the set of half-edges of any edge
		has a unique cyclic order since it has cardinality $2$. 
			
	\item Suppose $\DG=\Xi$.
		 Let $\Gamma$ be embedded into an unoriented surface $S$. Then $\Gamma$
		naturally admits the structure of a M\"obius graph.  The set 		$H(v)$ of half-edges incident to $v$
		inherits a dihedral order from the embedding into the circle $C(T_{\gamma(v)} S)$.
		The set $\{h,h'\}$ of half-edges of an edge $e$ is made into a dihedral set by looking at
		the orientation cover $\varpi: \widetilde \to S$: we define the orientation torsor $O(\{h,h'\})$
		of this set to be the set of sections of $\pi$ over the image, under $\gamma$, of the interior of $|e|$. 
		 Finally, we have to provide a lift of 
		the incidence map $i: H(s(h)) \to \{h,h'\}$ to a morphism of dihedrally ordered
		sets. Note that, by the path lifting property of the orientation cover $\pi$, 
		the orientation torsors of $H(s(h))$ and $\{h,h'\}$ are canonically
		identified. For every choice of orientation, there is now a unique linear
		order on $i^{-1}(h')$ compatible with the corresponding cyclic order of
		$H(s(h))$. These two linear orders are opposite providing $i$ with the
		structure of a morphism in ${\Xi}$.
			
	\item  Let $\DG={\Lambda}_\infty$. 
		Assume that $\Gamma$ is embedded into a framed surface $S$. This means, in
		particular, that $S$ is oriented so that we have a principal $GL^+(2,\RR)$-bundle $\Fr^+_S$ of positive frames on $S$. 
		Further, we are given a principal $\widetilde{GL^+(2,\RR)}$-bundle $\widetilde\Fr^+_S$ covering $\Fr^+_S$. 
		Thus, each fiber of $\widetilde\Fr^+_S$ is a universal cover of the corresponding fiber of $\Fr^+_S$.
		Let
		\[
		B^+ \,\,=\,\, \biggl\{
		\begin{pmatrix}
		a&b\\0&c
		\end{pmatrix}\,\in GL^+(2,\RR)\bigl\| \,\, a,c >0\biggr\}. 
		\]
		Then $B^+\backslash GL^+(2,\RR)=S^1$ is the circle of directions of $\RR^2$. Therefore 
		$B^+\backslash \Fr^+_S=C(T_S)$ is the circle bundle of directions of $T_S$. Further, $B^+$, being
		contractible, lifts canonically to a subgroup $\widetilde B^+\subset\widetilde{GL^+(2,\RR)}$.
		The quotient $\widetilde B^+\backslash \widetilde{GL^+(2,\RR)}\simeq\RR$ is the universal covering of
		$S^1$. So each fiber $\widetilde C(T_xS)= \widetilde B^+\backslash \widetilde\Fr^+_{S,x}$ is an oriented
		real line, equipped with the projection $\pi_x: \widetilde C(T_xS)\to C(T_xS)$
		exhibiting it as a universal covering of $C(T_xS)$. Therefore, for each vertex $v\in\Gamma$
		the preimage of $H(v)\subset C(T_{\gamma(v)}S)$ is naturally a $\ZZ$-torsor, thus giving a paracyclic
		order on $H(v)$. Similarly for each edge $\{h,h'\}$. We leave remaining details to the reader. 
		
	\item  Assume that $\Gamma$ is embedded into a surface $S$ which has an $N$-spin
		structure. Then, similarly to (3) , we consider an $N$-fold covering 
		$\pi_x: \widetilde C_N(T_xS)\to C(T_xS)$ for each $x\in S$. Taking $\pi_{\gamma(v)}^{-1}(H(v))$,
		we make each $H(v)$ into an $N$-cyclic set, and similarly for each set $\{h,h'\}$. 
\end{enumerate}
	
\end{exas}
	
%

\subsection {Structured graphs and mapping class groups of structured surfaces}
 
Let $\DG$ be a planar crossed simplicial group, $G$ its corresponding planar Lie group, $\GG$ its
thick variant, and let $\Gc$ denote the category of $\DG$-structured sets. 
 
\begin{defi} Let $\Gamma$, $\Gamma'$ be graphs. A {\em weak equivalence} between $\Gamma$
	and $\Gamma'$ is a functor $\varphi: I(\Gamma) \to I(\Gamma')$ of incidence
	categories such that
	\begin{enumerate}
		\item $\varphi$ induces a bijection of the sets of vertices of valency $1$,
		\item the induced map $|\varphi|: |I(\Gamma)| \to |I(\Gamma')|$ is homotopy
			equivalence.
	\end{enumerate}
\end{defi}

Given a weak equivalence $\varphi: \Gamma \to \Gamma'$ of graphs we obtain an induced natural
transformation $\varphi^* I_{\Gamma'} \to I_{\Gamma}$ of incidence diagrams as follows: Given an
object $x$ of $I(\Gamma)$ corresponding to a vertex of $\Gamma$, we may represent each halfedge $e$ 
incident to $\varphi(x)$ by a path $\alpha$ in $|I(\Gamma')|$ which starts at $\varphi(x)$ and 
parametrizes the edge corresponding to $e$. The germ of the pullback of $\alpha$ at $x$ determines a
unique halfedge at $x$. This construction canonically determines a map $I_{\Gamma'}(\varphi(x)) \to
I_{\Gamma}(x)$. By a similar construction, one obtains maps $I_{\Gamma'}(\varphi(y)) \to
I_{\Gamma}(y)$ for every object $y$ corresponding to an edge of $\Gamma$ which assemble to the
desired natural transformation.

\begin{defi}\label{def:weak-eq-str-graphs}
	Let $\Gamma$, $\Gamma'$ be $\DG$-structured graphs.
	A {\em weak equivalence} $\varphi: \Gamma \to \Gamma'$ of $\DG$-structured graphs is a weak
	equivalence of underlying graphs together with a lift of the pullback morphism 
	$\varphi^* I_{\Gamma'} \to I_{\Gamma}$ to a morphism of $\G$-diagrams $\varphi^*
	\widetilde{I_{\Gamma'}} \to \widetilde{I_{\Gamma}}$.
	We define $\DGgraph$ to be the category with objects given by (compact) $\DG$-structured
	graphs without vertices of valency $2$ and morphisms given by weak equivalences.
\end{defi}

Note that this definition is analogous to that of a morphism of ringed spaces. 
Sometimes we will refer to weak equivalences of structured graphs as {\em contractions}. 

\begin{thm}\label{thm:g-graph} 
	The topological realization $|N(\DGgraph)|$ is a classifying space of $\GG$-structured surfaces so
	that we have a homotopy equivalence
	\[
		|\DGgraph| \simeq \coprod_{(S,M)} B\, \Mod^\GG(S,M),
	\]
	where the coproduct is taken over all topological types of stable marked $\GG$-structured surfaces $(S,M)$.
\end{thm}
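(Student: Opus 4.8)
The plan is to follow the classical strategy of Penner--Kontsevich and its category-theoretic refinements (as in \cite{HSS-triangulated}), adapting each step to incorporate the $\DG$-structure. The main tool is the contractibility of each component of the structured diffeomorphism groups (Proposition \ref{prop:diff-contract}), which already reduces the right-hand side to a single homotopy type $\coprod_{(S,M)} B\,\Mod^\GG(S,M) \simeq \coprod_{(S,M)} \Diff^\GG(S,M)\Bbs\{\on{pt}\}$, i.e.\ to the classifying groupoid of $\GG$-structured marked surfaces up to structured diffeomorphism. First I would recall the unstructured statement: the nerve of the category of graphs (without bivalent vertices) and contractions is homotopy equivalent to $\coprod_{(S,M)} B\,\Mod(S,M)$, proved by building a contractible space --- the \emph{arc complex} or, dually, the space of metric ribbon-type graphs filling $(S,M)$ --- on which $\Mod(S,M)$ acts properly discontinuously with quotient the relevant piece of the graph category. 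This is where Proposition \ref{prop:emb-graph-str} enters: an embedded spine of $(S,M)$ acquires a canonical $\DG$-structure from the $\GG$-structure, functorially in structured diffeomorphisms, so the $\GG$-structured arc/ribbon complex maps to $\N(\DGgraph)$.

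The key steps, in order: (1) For a fixed $\GG$-structured stable marked surface $(S,M)$, introduce the poset (or topological category) $\on{Fill}^\GG(S,M)$ of isotopy classes of embedded spines $\gamma:|\Gamma|\hookrightarrow S$ filling $(S,M)$ in the sense of Definition \ref{exa:embedding}, with morphisms given by edge contractions; show that the forgetful map to the unstructured filling poset $\on{Fill}(S,M)$ is an \emph{isomorphism} of posets, because by Proposition \ref{prop:emb-graph-str} the $\DG$-structure on the spine is determined by, and varies continuously with, the $\GG$-structure --- there is no extra choice. Hence $\on{Fill}^\GG(S,M)$ inherits from the classical theory (Penner, Harer--Mumford--Thurston, Bowditch--Epstein) the property that its realization is contractible and carries a proper $\Diff^\GG(S,M)$-action (the action factoring through $\Mod^\GG(S,M)$ once we divide by the contractible identity component). (2) Assemble over all $(S,M)$: the quotient groupoid $\coprod_{(S,M)} \Mod^\GG(S,M)\ltimes \on{Fill}^\GG(S,M)$ has realization $\simeq \coprod_{(S,M)} B\,\Mod^\GG(S,M)$ by Step (1)'s contractibility. (3) Construct a functor from this quotient groupoid to $\DGgraph$ sending $(\gamma:|\Gamma|\hookrightarrow S)$ to the $\DG$-structured graph $\Gamma$ of Proposition \ref{prop:emb-graph-str}, and a contraction of embedded spines to the corresponding contraction of $\DG$-structured graphs (using the naturality clause in Prop.\ \ref{prop:emb-graph-str} and the pullback construction of $\varphi^*\widetilde{I_{\Gamma'}}\to\widetilde{I_\Gamma}$ from Definition \ref{def:weak-eq-str-graphs}). (4) Prove this functor induces a homotopy equivalence on nerves: surjectivity up to homotopy amounts to the fact that every $\DG$-structured graph $\Gamma$ without bivalent vertices arises as a spine of a canonical $\GG$-structured surface --- thicken $\Gamma$ according to its $\DG$-structure, using the topological model $\CG$ of \S\ref{sec:topmodel}: each vertex $v$ with its $\DG$-ordered set $H(v)$ gives, via Theorem \ref{thm:CG=DG}, a $G$-structured marked disk, and the edge-data glues these disks into a $\GG$-structured surface $S_\Gamma$ with $\Gamma\hookrightarrow S_\Gamma$ a filling spine; the gluing is well-defined precisely because the edge-morphisms in the incidence diagram $\widetilde{I_\Gamma}$ specify how the boundary $\Homeo^G(S^1)$-structures match. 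Fullness/faithfulness up to homotopy then follows from Step (1) (the fibers of the forgetful map over a fixed $\Gamma$ are the homotopy-discrete set of $\GG$-structures on $S_\Gamma$ modulo structured isotopy, matched on the graph side by the rigidity in Prop.\ \ref{prop:emb-graph-str}).

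The main obstacle I expect is Step (4), specifically showing that the thickening $\Gamma\mapsto S_\Gamma$ is well-defined as a $\GG$-structured (not merely $G$-structured) surface and that it is genuinely inverse, up to coherent homotopy, to the spine construction. The subtlety is that a $\GG$-structure on a surface is a \emph{discrete} datum (an isotopy class, cf.\ \S\ref{subsec:c-infty}) and the local-to-global gluing must respect this; one must check that the $\DG$-structure on $\Gamma$ carries exactly the gluing data needed to reconstruct the $\GG$-structure, no more and no less. Concretely this reduces to a compatibility between the ``restriction of $\DG$-structures'' functor $\Res$ on $\Gc$ and the Riemannian parallel-transport argument in the proof of Prop.\ \ref{prop:emb-graph-str}, together with the fact (Proposition \ref{prop:homeo-contr}) that the relevant homeomorphism spaces are contractible so that all choices made in the thickening are coherently unique. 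Once this rigidity is in place, the homotopy equivalence follows formally by comparing two contractible-fiber functors over the common base of unstructured graphs, and the remaining verifications are the routine bookkeeping of Quillen's Theorem A applied degreewise.
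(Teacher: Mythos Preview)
Your proposal is correct and matches the paper's architecture: the paper states explicitly (right after Theorem~\ref{thm:graph-contr}) that the proof reduces to showing that $\rho$ extends to an equivalence of categories $\widetilde\rho:\coprod_{(S,M)}\Mod^\GG(S,M)\ltimes P(S,M)\to\DGgraph$, which is exactly your Steps~(2)--(4) with $P(S,M)$ playing the role of your $\on{Fill}^\GG(S,M)$. The key technical input you flag as the main obstacle --- that $\DG$-structures on a spanning graph $\Gamma$ are equivalent to $\GG$-structures on $S\setminus(M\cap S^\circ)$ --- is precisely the lemma the paper isolates and proves at the end of the argument.

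There is one difference in execution worth noting. For the equivalence $\widetilde\rho$ in the case of a general planar $G$, you propose a direct thickening of a $\DG$-structured graph into a $\GG$-structured surface using the topological model $\CG$. The paper instead packages the argument as a quotient-regular orbicell decomposition of the conformal moduli space $\Mc^{G_\conf}$ (Theorem~\ref{thm:decomp}) and handles general $G$ by reduction: the forgetful map $\Mc^{G_\conf}\to\Mc^{O(2)_\conf}$ (or to $\Mc^{SO(2)_\conf}$ in the orientation-preserving case) is an unramified covering of orbifolds, so one pulls back the known orbicell decomposition from the base case and compares the resulting cell category with $\DGgraph$ fiberwise over ${\Xi}\text{-}\on{Graph}$ (resp.\ ${\Lambda}\text{-}\on{Graph}$), using the same key lemma to identify the fibers. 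Your route is more direct and avoids Teichm\"uller theory except through the contractibility of $P(S,M)$; the paper's route has the bonus of simultaneously producing the orbicell decomposition of the structured moduli space, which is of independent interest. One small caveat on your Step~(1): the contractibility of $P(S,M)$ for \emph{non-orientable} $S$ is not quite covered by the classical references you cite --- the paper extends Harer's triangulation to this case in the Appendix by passing to the orientation double cover and identifying the fixed locus of the induced involution on the arc complex.
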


The proof will be given in \S \ref{sub sub:decom-str}. 

\begin{defi}
Let $(S,M)$ be a stable marked surface. A {\em spanning graph} for $(S,M)$
is a graph $\Gamma$ together with an embedding $\gamma: |\Gamma|\to S$ such that:
\begin{enumerate}
\item $\gamma$ is a homotopy equivalence.
\item The restriction of $\gamma$ defines a homotopy equivalence between $\partial |\Gamma|$
(a finite set of points) and $\partial S - M$ (a finite set of open intervals). 
\end{enumerate}
We define the category $P(S,M)$ whose objects are isotopy classes of spanning graphs for $(S,M)$ and
with a unique morphism $[\Gamma] \to [\Gamma']$ if $[\Gamma']$ is obtained from $[\Gamma]$
by collapsing a forest. Note that $P(S,M)$ is in fact a poset.
\end{defi}
 
\begin{thm}\label{thm:graph-contr}
   For any stable marked surface $(S,M)$ the poset $P(S,M)$ is contractible. 
\end{thm}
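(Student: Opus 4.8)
The plan is to deduce the contractibility of $P(S,M)$ from the contractibility of the classical ribbon graph / arc complex combinatorics, using the orientation double cover to reduce the $\GG$-structured (equivalently, merely topological, since by Proposition \ref{prop:str-non-str-diff} the $\GG$-structure is a discrete datum that does not affect the space of spanning graphs) case to the oriented case. First I would observe that $P(S,M)$ depends only on the underlying $C^\infty$-marked surface $(S,M)$, not on the $\GG$-structure, since a spanning graph and its isotopy class are purely topological notions; thus it suffices to prove contractibility of $P(S,M)$ for an arbitrary stable marked surface. The classical input is the contractibility of the space of filling ribbon graphs (equivalently of Penner's decorated Teichm\"uller / arc complex), due to Harer, Penner, and Bowditch--Epstein: for a marked oriented surface $(S,M)$ the realization $|P(S,M)|$ is homotopy equivalent to the corresponding Teichm\"uller space $\Teich^+(S,M)$, which by the theorem on Teichm\"uller spaces quoted above is diffeomorphic to Euclidean space, hence contractible.

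The key steps, in order, would be as follows. (1) Establish a homotopy equivalence $|P(S,M)| \simeq \Teich(S,M)$ by realizing a spanning graph as the spine of an ideal-cell decomposition of $S$ cut out by the choice of a hyperbolic (or conformal) metric together with a weighting; this is the standard ``dual spine'' construction, carried out on the Schottky double $S^\#$ when $S$ is non-classical and equivariantly for the deck involution $\sigma$ (resp.\ $\tau$ on $\widetilde S$ when $S$ is non-orientable), so that $\sigma$-invariant spanning graphs on $S^\#$ correspond to spanning graphs on $S$. Concretely, $|P(S^\#,M^\#)| \simeq \Teich^+(S^\#,M^\#)$ equivariantly, and passing to $\sigma$-fixed points on both sides gives $|P(S,M)| \simeq \Teich^+(S^\#,M^\#)^{\sigma_{\Teich}} = \Teich(S,M)$ by Theorem \ref{thm:klein-teich}. (2) Invoke Theorem \ref{thm:klein-teich}(a),(c): in the classical case $\Teich(S,M)$ is a disjoint union of two Euclidean cells, but the two components are interchanged under reversal of the spanning graph's embedding — here one must be slightly careful, since $P(S,M)$ as defined uses \emph{isotopy classes} of embedded graphs and an unoriented surface has a connected space of spanning graphs; in fact for $S$ classical the poset $P(S,M)$ is connected (isotopies can reverse nothing), so $|P(S,M)| \simeq \Teich^+(S,M)$ is a single cell, and in the non-classical case $\Teich(S,M)$ is diffeomorphic to Euclidean space by part (c). In all cases the target is contractible. (3) Conclude $|P(S,M)| \simeq *$.

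The main obstacle I anticipate is step (1): making the passage between the combinatorial poset $P(S,M)$ and Teichm\"uller space genuinely rigorous in the present generality, i.e.\ for surfaces with boundary and with marked points allowed both on the boundary and in the interior (``punctures''). The cleanest route is probably to avoid hyperbolic geometry altogether and instead directly prove that $P(S,M)$ is contractible by an Allen/Hatcher-style ``canonical expansion'' or ``surgery toward a standard graph'' argument on the poset itself: fix one spanning graph $\Gamma_0$, and for any $\Gamma$ produce a canonical zig-zag of collapses connecting $\Gamma$ to $\Gamma_0$, showing that $\Gamma_0$ is a ``cone point'' in a homotopical sense (a deformation retraction of $|P(S,M)|$ onto the point $[\Gamma_0]$ built from the poset structure). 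This is the style of argument used by Hatcher for the arc complex and by Hatcher--Vogtmann for outer space, and it adapts to the non-orientable case essentially verbatim because the arguments are local on the surface and do not use orientability. I would therefore present step (1) via this intrinsic poset deformation-retraction, citing the classical contractibility results of Harer, Penner, Bowditch--Epstein, and Hatcher as the model, and noting that the Schottky-double device of Theorem \ref{thm:klein-teich} provides an alternative reduction to the oriented case for readers who prefer the Teichm\"uller-theoretic formulation. The remaining verifications — that collapsing a forest is well-defined on isotopy classes, that the resulting relation makes $P(S,M)$ a poset (already asserted in the statement), and that the retraction is continuous on realizations — are routine and I would leave them to the reader.
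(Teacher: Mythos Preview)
Your approach is essentially the paper's: reduce contractibility of $P(S,M)$ to contractibility of Teichm\"uller space via the Harer arc-complex decomposition, and handle the non-orientable case by passing to the orientation cover and taking fixed points under the deck involution. The paper makes this chain explicit through an intermediary, the \emph{tessellation complex} $\TT(S,M)$ (glued from Stasheff polytopes indexed by tesselations), which is homotopy equivalent to $|P(S,M)|$ and identified with the dual cell complex $|(A,A_\infty)^\vee|$ of the arc complex, hence with $|A|-|A_\infty|$; Harer's homeomorphism $|A|-|A_\infty|\cong\Teich(S,M)$ then finishes.

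One point to tighten: in your step (1) you assert an equivariant homotopy equivalence $|P(S^\#,M^\#)|\simeq\Teich^+(S^\#,M^\#)$ and then ``pass to $\sigma$-fixed points on both sides.'' Fixed points do not in general commute with homotopy equivalence, so this step as stated is a gap. The paper avoids it by working with Harer's genuine equivariant \emph{homeomorphism} at the level of the arc complex, and then proving the concrete lemma that the $\tau$-fixed locus of $|A(\widetilde S,\widetilde M)|-|A_\infty(\widetilde S,\widetilde M)|$ coincides with $|A(S,M)|-|A_\infty(S,M)|$ (checking that a $\tau$-invariant sub-tesselation upstairs descends to one downstairs). Your suggested Hatcher-style direct deformation retraction on the poset would sidestep this issue entirely and is a legitimately different route, but the paper does not take it.
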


The proof will be given in \S \ref{subsub:harer}. 

Assume that $(S,M)$ carries a $\GG$-structure. We then have a functor 
\begin{equation}\label{eq:rho}
	\rho: P(S,M) \lra \DGgraph
\end{equation}
which chooses for a graph $(\Gamma,\gamma)$ in every isotopy class and associates to it the
$\G$-structure on $\Gamma$ induced from the embedding $\gamma$ (Proposition \ref
{prop:emb-graph-str}). 
Further, the structured mapping class group $\Mod^{\GG}(S,M)$ acts on $P(S,M)$ and we can form the
semidirect product
\[
	\Mod^{\GG}(S,M) \ltimes P(S,M)
\]
as defined in Definition \ref{defi:semidirect}. The proof of Theorem \ref{thm:g-graph} amounts to
the statement that the functor $\rho$ extends to an equivalence of categories
\[
	\widetilde{\rho}: \Mod^{\GG}(S,M) \ltimes P(S,M) \lra \DGgraph,
\]
see Theorem \ref{thm:decomp}.

\subsection{Augmented structured trees and operads}

The definition of a compact graph given in \S \ref{subsec:structuredgraphs} includes $0$-valent
vertices which can be interpreted as endpoints of external halfedges. This concept is suitable for studying
a fixed structured marked surface with boundary. However, when interpreting such structured surfaces as bordisms which
can be composed, we need to include combinatorial analogues of collar neighborhoods near the
boundary. We realize this by using noncompact structured graphs equipped with an augmentation map
for every external halfedge.

\begin{defi}
An {\em augmented $\DG$-structured graph} is a possibly noncompact $\DG$-structured graph $\Gamma$ equipped with
\begin{itemize}
	\item for every external halfedge $e \in H(v)$, an {\em augmentation map} $\varphi_e: H(v)
		\to [1]$ of $\DG$-structured sets, satisfying $\varphi_e^{-1}(\varphi(e)) = \{e\}$,
\end{itemize}
such that every vertex of $\Gamma$ has valency $\ge 2$.
\end{defi}

Given an augmented $\DG$-structured graph $\Gamma$, we obtain a partition of the set of external
halfedges into {\em incoming} ($\varphi_e(e) = 1$) and {\em outgoing} ($\varphi_e(e) = 0$).
We typically enlarge the incidence diagram $I(\Gamma) \to \G$ of $\Gamma$, also including, for every
external halfedge $e$, the corresponding morphism $\varphi_e$. A {\em contraction} of augmented
$\DG$-structured graphs is a contraction of the underlying $\DG$-structured graphs which commutes with
the augmentations.
 
\subsubsection{Augmented structured intervals} 
\label{subsec:interval}

A noncompact {\em interval} is a connected graph with two external halfedges and all vertices of valency $2$:   
\[
	\xymatrix{
		\ar@{-}[r] & \bullet \ar@{-}[r] & \bullet \ar@{-}[r]  & \cdots \ar@{-}[r] & \bullet
		\ar@{-}[r]  & 
	}
\]
Consider an augmented $\DG$-structured interval $\Gamma$ with one vertex $v$, $H(v) = \{e,f\}$ and incidence diagram given by
\begin{equation}\label{eq:interval}
    [1] \overset{\varphi_e}{\lla} H(v) \overset{\varphi_f}{\lra} [1]
\end{equation}
where $e$ is incoming and $f$ is outgoing. Given another augmented structured interval 
\[
  [1] \overset{\varphi_{e'}}{\lla} H(v') \overset{\varphi_{f'}}{\lra} [1]
\]
with $e'$ incoming and $f'$ outgoing, we can concatenate with \eqref{eq:interval} to obtain the
augmented structured interval
\[
  [1] \overset{\varphi_e}{\lla} H(v) \overset{\varphi_f}{\lra} [1] \overset{\varphi_{e'}}{\lla} H(v') \overset{\varphi_{f'}}{\lra} [1]
\]
with one internal edge formed by $\{f,e'\} \cong [1]$. Further, there is a contraction from this
interval to the interval 
\[
  [1] \overset{\varphi_{e}}{\lla} H(v) \overset{\varphi_{f'} \varphi_{e'}^{-1} \varphi_{f}}{\lra} [1].
\]

We define $\Tr_{\DG}(1)$ to be the category of augmented $\DG$-structured intervals with external
halfedges labelled by $0$ (outgoing) and $1$ (incoming) and with contractions as morphisms. We denote 
by $\pi_0\Tr_{\DG}(1)$ the set of isomorphism classes in the groupoid completion of $\Tr_{\DG}(1)$ so 
that two objects $\Gamma$,$\Gamma'$ are in the same class if and only if there exists a chain of zigzag 
contractions 
\[
	\Gamma \lla \Gamma_1 \lra \Gamma_2 \lla \dots \lra \Gamma'.
\]
The above concatenation operation endows the set $\pi_0 \Tr_{\DG}(1)$ with the structure of a
monoid.

\begin{prop} The monoid $\pi_0 \Tr_{\DG}(1)$ can be canonically identified with the group $\Gen_1^0$.  
\end{prop}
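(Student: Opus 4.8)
The plan is to produce an explicit monoid isomorphism $G\colon\pi_0\Tr_\DG(1)\to\Gen_1^0$ by reading off the ``total monodromy'' of the zigzag of isomorphisms carried by an augmented $\DG$-structured interval. I would first note that if $\Gamma$ is such an interval, with vertices $v_1,\dots,v_k$ in linear order and internal edges $c_1,\dots,c_{k-1}$, then every object of the (augmented) incidence diagram $\widetilde{I_\Gamma}$ — each $H(v_i)$, each edge-set, and the two terminal copies of $[1]$ supplied by the augmentations — is a $\DG$-structured set of cardinality two, and every structural morphism is a bijection on underlying sets: the incidence maps because all $v_i$ have valency two, the augmentation maps $\varphi_e,\varphi_f$ because of the condition $\varphi_e^{-1}(\varphi_e(e))=\{e\}$. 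Via the equivalence $\Gc\simeq\DG$ these are therefore all isomorphisms in $\Gc$, and composing the resulting zigzag from the outgoing end of $\Gamma$ back to the incoming end produces an element $G(\Gamma)\in\Gen_1=\Aut_\Gc([1])$; for a one-vertex interval $[1]\xleftarrow{\varphi_e}H(v)\xrightarrow{\varphi_f}[1]$ it is simply $\varphi_e\varphi_f^{-1}$. Since in each augmentation the incoming halfedge occupies the slot $1$ and the outgoing one the slot $0$, tracking the two marked points along the composite shows that the induced permutation $\lambda_1(G(\Gamma))\in S_2$ is trivial; hence $G(\Gamma)\in\Gen_1^0$.

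Next I would establish the three properties that turn $G$ into a monoid isomorphism. For invariance under contractions: collapsing one of the internal edges of an interval is, by the construction of $\Tr_\DG(1)$ recalled in \S\ref{subsec:interval} (the passage from a concatenated interval to $[1]\xleftarrow{\varphi_e}H(v)\xrightarrow{\varphi_{f'}\varphi_{e'}^{-1}\varphi_f}[1]$), exactly the operation of composing the zigzag across that edge; associativity of composition then gives $G(\Gamma')=G(\Gamma)$ for a contraction $\Gamma'\to\Gamma$, so $G$ descends to $\pi_0\Tr_\DG(1)$. For multiplicativity: the one-vertex contraction of the concatenation of $\Gamma$ (augmentations $\varphi_e,\varphi_f$) with $\Gamma'$ (augmentations $\varphi_{e'},\varphi_{f'}$) has $G$-value $\varphi_e(\varphi_{f'}\varphi_{e'}^{-1}\varphi_f)^{-1}=(\varphi_e\varphi_f^{-1})(\varphi_{e'}\varphi_{f'}^{-1})=G(\Gamma)G(\Gamma')$, and the one-vertex unit interval has $\varphi_e=\varphi_f$, hence $G=1$. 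So $G$ is a homomorphism of monoids, the monoid law being oriented by concatenation.

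It remains to see $G$ is a bijection. Every interval receives a contraction onto a one-vertex interval — collapse all of its internal edges — so every class of $\pi_0\Tr_\DG(1)$ is represented by a one-vertex interval. On one-vertex intervals, using $\varphi_f$ to trivialize the $\Gen_1$-torsor of structured frames of $H(v)$ exhibits $\varphi_e\varphi_f^{-1}\in\Gen_1^0$ as a complete invariant up to isomorphism in $\Tr_\DG(1)$: an isomorphism must preserve the incoming/outgoing labels, is then forced to be induced by $\varphi_f^{-1}\varphi_{f'}$, and exists precisely when the two invariants coincide (in particular each such interval is rigid). Finally, for $g\in\Gen_1^0$ the one-vertex interval with $H(v)=[1]$ (standard structure), $\varphi_f=\id$ and $\varphi_e=g$ lies in $\Tr_\DG(1)$ and has $G=g$. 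Combining, $G$ is a well-defined bijective monoid homomorphism $\pi_0\Tr_\DG(1)\to\Gen_1^0$, which is the asserted identification.

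The step that will require genuine care — everything else being formal zigzag bookkeeping — is the contraction invariance: one must unwind the definition of concatenation to see that the $\DG$-structure it installs on the newly created internal edge $\{f,e'\}\cong[1]$ is precisely the one for which the two incidence morphisms $H(v)\to\{f,e'\}$ and $H(v')\to\{f,e'\}$ equal $\iota\circ\varphi_f$ and $\iota\circ\varphi_{e'}$ for one and the same isomorphism $\iota\colon[1]\xrightarrow{\sim}\{f,e'\}$, so that $\iota$ cancels when the zigzag is composed. Once this (built-in) compatibility is made explicit, the equality $G(\Gamma)=G(\text{any contraction of }\Gamma)$ — and with it the whole statement — follows.
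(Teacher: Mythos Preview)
Your proposal is correct and follows essentially the same approach as the paper: reduce to one-vertex intervals via contraction, read off the automorphism of $[1]$ determined by the two augmentations, and check compatibility with concatenation. The paper's proof is terser --- it trivializes via $\varphi_e$ to obtain $[1]\xleftarrow{\id}[1]\xrightarrow{g}[1]$ with $g=\varphi_f\varphi_e^{-1}\in\Gen_1^0$ and then asserts compatibility with the group law --- whereas you compose in the opposite order to get $\varphi_e\varphi_f^{-1}$ and spell out contraction invariance, multiplicativity, and bijectivity explicitly; the two invariants differ by inversion, which is harmless for establishing the identification.
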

\begin{proof} Given an augmented $\DG$-structured interval as in \eqref{eq:interval}, note that
	$\varphi_e$ and $\varphi_f$ are isomorphisms in $\G$. Using $\varphi_e$ to identify $H(v)$
	with $[1]$, we obtain that the augmented $\DG$-strutured interval is isomorphic to \[ [1]
	\overset{\id}{\lla} [1] \overset{g}{\lra} [1] \] where $g$ is an automorphism of $[1]$ which
	induces the identity on the underlying set $\{0,1\}$. Therefore, we have $g \in \Gen_1^0$
	where $\Gen_1^0$ denotes the kernel of the map $\lambda: \Gen_1 \to S_2$ from Proposition
	\ref{prop:forget}. It is immediate to verify that this identitication is compatible with the
	group law on $\Gen_1^0$.
\end{proof}

Since, for a planar crossed simplicial groups $\DG$, we have $\Gen_1^0 \cong \Gen_0$, we obtain the
following result.

\begin{cor} For a planar crossed simplicial group, the monoid $\pi_0 \Tr_{\DG}(1)$ 
	is canonically isomorphic to the group $\Gen_0$.
\end{cor}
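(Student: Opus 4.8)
The plan is to combine the preceding proposition, which identifies $\pi_0\Tr_{\DG}(1)$ with the group $\Gen_1^0 = \Ker(\lambda_1\colon \Gen_1\to S_2)$, with the general fact that for a planar crossed simplicial group one has a canonical isomorphism $\Gen_1^0\cong\Gen_0$. So the corollary is essentially immediate once we produce this last isomorphism in a way that respects group structure, and the only real content is to pin down \emph{why} $\Gen_1^0\cong\Gen_0$ canonically in the planar case.

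First I would invoke the duality results of \S\ref{sec:order}: by Corollary \ref{cor:duality}, a planar crossed simplicial group $\DG$ is self-dual, and by Remark \ref{rems:duality} choosing a duality functor gives a bijection of sets
\[
\Gen_n \;\buildrel \on{C.F.}\over =\; \Hom_\DG([n],[0]) \;\simeq\; \Hom_\DG([0],[n]) \;\buildrel \on{C.F.}\over =\; \Gen_0\times\Hom_\Delta([0],[n]),
\]
with the $\Gen_n$-action on $\Hom_\DG([0],[n])$ being simply transitive. For $n=1$, $\Hom_\Delta([0],[1])$ has two elements (the two vertices), and the subset of $\Gen_1$ mapping to a fixed vertex — equivalently acting trivially on the two-element vertex set $\lambda([1])$, i.e.\ the kernel $\Gen_1^0$ of $\lambda_1\colon\Gen_1\to S_2$ — is identified with the fiber, which is a torsor over $\Gen_0$. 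To make this a canonical \emph{group} isomorphism rather than merely a torsor identification, I would instead use the homomorphism $\omega_1^*\colon\Gen_0\to\Gen_1$ attached to the unique map $\omega_1\colon[1]\to[0]$ in $\Delta$ (Proposition \ref{prop:skeleton}(c) shows this is a homomorphism): its image lands in $\Gen_1^0$ since $\lambda$ is functorial and $\lambda(\omega_1)$ collapses $\{0,1\}$ to a point, hence $\lambda_1(\omega_1^*g)$ fixes both elements. One then checks this homomorphism $\omega_1^*\colon\Gen_0\to\Gen_1^0$ is an isomorphism; for the planar examples ($\Gen_0 = \ZZ, \ZZ/N, D_N, Q_N, \ZZ/2\ltimes\ZZ$) this is a direct case-by-case verification against the explicit descriptions in \S\ref{sub:cross-examples}, or more conceptually it follows from the description $\Gen_n = p^{-1}(D_{n+1})$ in Theorem \ref{thm:crossed-groups}(b2): $\Gen_1^0$ is the preimage in $G$ of $D_2\cap SO(2) = \{1\}$'s... more precisely of the subgroup of $D_2$ fixing both roots of unity, which is trivial in $O(2)$, so $\Gen_1^0 = p^{-1}(\{1\}) = K = \Gen_0^0$, and in the planar case one reads off that $\omega_1^*$ realizes the canonical $\Gen_0\cong\Gen_1^0$.

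Next I would carry out the bookkeeping matching the monoid structure on $\pi_0\Tr_{\DG}(1)$ with the group law. By the preceding proposition an augmented $\DG$-structured interval with vertex $v$, $H(v)=\{e,f\}$, is isomorphic via $\varphi_e$ to $[1]\xleftarrow{\id}[1]\xrightarrow{g}[1]$ with $g\in\Gen_1^0$; concatenation of two such intervals produces, after the contraction described in \S\ref{subsec:interval}, the interval labelled by the composite $\varphi_{f'}\varphi_{e'}^{-1}\varphi_f$, which under the identification becomes multiplication $g\cdot g'$ in $\Gen_1^0$ (here one must be slightly careful about left versus right, but this is forced by the composition convention and is routine). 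Composing with the isomorphism $\omega_1^*\colon\Gen_0\buildrel\cong\over\to\Gen_1^0$ then gives the canonical monoid isomorphism $\pi_0\Tr_{\DG}(1)\cong\Gen_0$, and since $\Gen_0$ is a group, $\pi_0\Tr_{\DG}(1)$ is a group too.

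The main obstacle is the canonicity and group-compatibility of $\Gen_1^0\cong\Gen_0$: the naive duality argument only yields a torsor structure, so one must either go through $\omega_1^*$ (cleanest) or through the topological identification $\Gen_n = p^{-1}(D_{n+1})$ and check that the covering $p\colon G\to O(2)$ restricts to an isomorphism $\Gen_1^0\to\Gen_0^0\hookrightarrow\Gen_0$ compatibly. Everything else — the matching of concatenation with multiplication, the reduction of an arbitrary augmented interval to normal form — is a direct unravelling of the definitions in \S\ref{subsec:interval} and needs only care with orientation/left-right conventions.
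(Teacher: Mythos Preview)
Your overall strategy is exactly the paper's: combine the preceding proposition ($\pi_0\Tr_{\DG}(1)\cong\Gen_1^0$) with the planar fact $\Gen_1^0\cong\Gen_0$. The paper states the latter without further justification, so supplying one is reasonable. However, both of your detailed arguments for $\Gen_1^0\cong\Gen_0$ contain errors.

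First, the map $\omega_1^*\colon\Gen_0\to\Gen_1$ does \emph{not} land in $\Gen_1^0$. By the description of the semiconstant subgroup $\Delta\{\Gen_0\}\subset\DG$ in \S\ref{section:semiconstant}, an odd element $g\in\Gen_0$ acts on morphisms in $\Delta$ via the involution $\daleth$, and $\daleth$ acts on $\Hom_\Delta([0],[1])\cong\{0,1\}$ by the swap $i\mapsto 1-i$. Hence $\lambda_1(\omega_1^*g)=(0\,1)\neq\id$ whenever $g$ is odd, so $\omega_1^*g\notin\Gen_1^0$. Your functoriality argument fails precisely because $\lambda(\omega_1)\colon\{0,1\}\to\{0\}$ is not injective, so the commutative square places no constraint on $\lambda_1(\omega_1^*g)$. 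Concretely, for $\DG=\Xi$ one has $\Gen_0=\ZZ/2$, $\Gen_1^0=\ZZ/2$, but $\omega_1^*$ sends the reflection to the element of $D_2$ swapping the two vertices.

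Second, in your topological argument the pointwise stabilizer of $\mu_2=\{1,-1\}$ inside $D_2\subset O(2)$ is not trivial: complex conjugation fixes both roots of unity, so the stabilizer is $\{1,\text{conj}\}\cong\ZZ/2$. Thus $\Gen_1^0=p^{-1}(\{1,\text{conj}\})$, which is not $K=p^{-1}(1)$.

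What does work is the argument you dismissed too quickly. By Remark~\ref{rems:duality} the left $\Gen_1$-action on $\Hom_\DG([0],[1])$ is simply transitive, and it commutes with the right $\Gen_0$-action (both are composition in $\DG$). Fixing $\phi_0\in\Hom_\Delta([0],[1])$, the assignment $g\mapsto h$ determined by $g\circ\phi_0=\phi_0\circ h$ is then a group isomorphism from the stabilizer of $[\phi_0]$ in $\lambda([1])$---which for $n=1$ equals $\Gen_1^0$---onto $\Gen_0$. This is canonical up to the (harmless) choice of $\phi_0$. Your case-by-case check is also a valid, if less illuminating, route.
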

 
%
%
   
\subsubsection{Augmented structured trees}

The relation between twisted $\Gen_0$-actions on categories or algebras on one side, and
$\DG$-structured nerves or Hochschild complexes on the other, can be given a more conceptual
explanation. We assume familiarity with the language of operads, referring to \cite{loday-vallette}
for general background. 

Let $\Tr_{\DG}(n)$ denote the category of augmented $\DG$-structured trees with
external halfedges labelled by $0,1,\dots,n$ such that the halfedge $0$ is outgoing and all
remaining halfedges are incoming. The morphisms in $\Tr_{\DG}(n)$ are given by contractions of
augmented $\DG$-structured graphs. We define $P_{\DG}(n)$ to be the set $\pi_0 \Tr_{\DG}(n)$ of
isomorphism classes in the groupoid completion of $\Tr_{\DG}(n)$. Note that $P_{\DG}(1)$ is the
monoid of $\DG$-structured intervals from \S \ref{subsec:interval}. Given augmented $\DG$-structured trees
$\Gamma$, $\Gamma'$, an outgoing halfedge $e \in H(v)$ of $\Gamma$, and an incoming halfedge $e' \in
H(v')$ of $\Gamma'$, we obtain a canonical diagram of $\DG$-structured sets
\[
	H(v) \overset{\varphi_e}{\lra} [1] = \{e,e'\} = [1] \overset{\varphi_{e'}}{\lla} H(v')
\]
so that we can concatenate $\Gamma$ and $\Gamma'$ forming an augmented $\DG$-structured tree with
internal edge $\{e,e'\}$. Similarly, given an augmented $\DG$-structured tree $\Gamma$, an outgoing (resp.
incoming) halfedge $e$, and $g \in P_{\DG}(1) = \Gen_1^0$, we can postcompose the augmentation
$\varphi_e$ with $g$ (resp. $g^{-1}$). This equips the family of sets $\{P_\DG(n)\}_{n\geq 1}$
with operations
\[
 m_{a_1, \dots, a_n}:  P_\DG(n) \times P_\DG(a_1)\times \cdots \times P_\DG(a_n) \lra 
 P_\DG(a_1+\cdots + a_n), \quad n, a_1, \dots, a_n \geq 1. 
\]

\begin{prop} \label{prop:Gc-operad} Let $\DG$ be a crossed simplicial group.
\begin{enumerate}
	\item The action of $S_n$ on $P_\Gc(n)$ by relabelling the incoming external halfedges and
		the maps $\{ m_{a_1, \dots, a_n} \}$ make $P_\DG= \{P_\DG(n)\}_{n\geq 1}$ an operad in the
		category of sets. 
	\item Assume that $\DG$ is planar. Then algebras over $P_\DG$ are precisely monoids with a
		twisted action of $\Gen_0$ in the sense of \S \ref{section:semiconstant}.
\end{enumerate}
\end{prop}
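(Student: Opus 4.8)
The plan is to verify both statements by direct inspection of the combinatorics of augmented $\DG$-structured trees, using the identification $P_\DG(1) \cong \Gen_1^0$ established just above and the structure theory of semiconstant crossed simplicial groups from \S\ref{section:semiconstant}.

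For part (1), the first step is to check the operad axioms for $P_\DG = \{P_\DG(n)\}_{n \ge 1}$. The unit is the class of the trivial one-vertex interval in $P_\DG(1)$, and equivariance of the $S_n$-action under the $m_{a_1,\dots,a_n}$ is immediate from the fact that the action merely relabels incoming external halfedges, which commutes with grafting. Associativity and unitality reduce to the observation that grafting augmented $\DG$-structured trees at outgoing/incoming halfedge pairs is, up to contraction, an associative operation: two ways of iteratively grafting a collection of trees produce isomorphic augmented $\DG$-structured trees, since both yield the same underlying tree with the same $\DG$-structure on each vertex and the same augmentation data, and the only subtlety --- the interposition of an interval (an element of $\Gen_1^0$) along an internal edge --- is absorbed into the $P_\DG(1)$-action exactly as in the concatenation discussion preceding the proposition. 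I would phrase this as: every augmented $\DG$-structured tree is, after contracting its internal edges and recording the residual $\Gen_1^0$-data, canonically a grafting of corollas (one-vertex trees), and the contraction relation identifies the two bracketings.

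For part (2), assume $\DG$ is planar, so $\Gen_1^0 \cong \Gen_0$ and $P_\DG(1) \cong \Gen_0$ by the corollary above. An algebra over $P_\DG$ in $\Set$ (or $\Vect$, for the monoid-with-underlying-object version) is a set $A$ together with structure maps $P_\DG(n) \times A^n \to A$. The key computation is to identify $P_\DG(2)$: an augmented $\DG$-structured tree with two incoming halfedges and one outgoing, modulo contraction, is determined by a single vertex $v$ of valency $3$ with a $\DG$-order on $H(v) = \{e_0, e_1, e_2\}$ together with augmentation isomorphisms; using the augmentation on the outgoing halfedge $e_0$ to rigidify, one finds that $P_\DG(2)$ is a torsor-like set whose distinguished element gives a binary operation $\mu: A \times A \to A$, while the residual freedom is precisely a $\Gen_0$-action (via the $P_\DG(1)$-slots attached to each of the three legs). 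The associativity constraint in the operad --- comparing the two trees with two internal edges and three incoming halfedges --- forces $\mu$ to be associative; the unit element of the operad forces a two-sided unit for $\mu$; and the compatibility of the $\Gen_0 = P_\DG(1)$-action with $\mu$, together with the parity homomorphism $\rho: \Gen_0 \to \Zt$ arising from how each planar $\Gen_0$ acts on the orientation of the circle (Proposition \ref{prop:orient-G-0}), is exactly the condition that even elements of $\Gen_0$ act by automorphisms and odd elements by anti-automorphisms of the monoid $A$. Conversely, given a monoid with twisted $\Gen_0$-action, one builds the $P_\DG$-algebra structure by evaluating on trees, reading off $\mu$ from corollas and the twisting from the $\Gen_0$-labels --- this is essentially the same bookkeeping that appears in Theorem \ref{thm:hochschild}, where $C_\bullet(A)$ was equipped with its $\DG$-structure via the lift $L_n: \Gen_n \to S_{n+1} \wr \Gen_0$.

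The main obstacle I anticipate is the careful verification that the twisting datum extracted from the tree combinatorics is genuinely the \emph{parity}-compatible $\Gen_0$-action and not some other action of $\Gen_0$ on $A$: this requires matching the way $\Gen_1^0 \cong \Gen_0$ acts on an internal edge of a structured tree (reversing or preserving the two ends) with the canonical parity $\rho: \Gen_0 \to \Zt$, and this is where planarity is essential --- it is what guarantees $\Gen_1^0 \cong \Gen_0$ and pins down the parity as in Proposition \ref{prop:orient-G-0}. Everything else is routine diagram-chasing with graftings and contractions, and I would relegate it to ``the remaining verifications are left to the reader,'' consistent with the style of the surrounding section.
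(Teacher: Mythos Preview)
Your overall strategy for part (2) --- reduce every tree to a corolla, identify the set of labelled augmented corollas as a torsor under $\Gen_0^n \times S_n$, and extract a binary operation $\mu$ plus a $\Gen_0$-action --- is the same as the paper's. But there is a genuine gap at the step where you produce the ``distinguished element'' of $P_\DG(2)$ giving $\mu$. You rigidify using the outgoing augmentation, but that still leaves a $\Gen_0 \times \Gen_0$-torsor of choices for the two incoming augmentations, and you never say which one you take. More importantly, associativity of $\mu$ is \emph{not} automatic from the operad axioms: it is the statement that your chosen element of $P_\DG(2)$ satisfies $\mu \circ_1 \mu = \mu \circ_2 \mu$ in $P_\DG(3)$, and this depends on the choice. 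The paper resolves this by choosing an interstice duality functor $D: \DG \to \DG^{\op}$ and applying it to the explicit fan diagram
\[
[1] \xrightarrow{\{0,1\}} [n], \quad [1] \xrightarrow{\{1,2\}} [n], \quad \dots, \quad [1] \xrightarrow{\{n-1,n\}} [n], \quad [1] \xrightarrow{\{0,n\}} [n]
\]
in $\Delta \subset \DG$. Functoriality of $D$ then guarantees, for free, that the resulting corollas are closed under operadic composition, so they form a genuine copy of $\on{Ass}$ inside $P_\DG$. Your argument lacks any mechanism of this kind, and the appeal to ``diagram-chasing with graftings and contractions'' does not supply one.

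A second, smaller gap: you take for granted that every internal edge can be contracted. The paper points out that this requires the diagram $H(v) \to \{e,e'\} \leftarrow H(v')$ to have a limit in $\G$ with the expected underlying set $(H(v)\setminus\{e\}) \cup (H(v')\setminus\{e'\})$, and notes that this is verified using the topological model of \S\ref{sec:topmodel} --- in particular it uses planarity. Finally, your remark about the unit is off: the operad is indexed by $n \ge 1$, so there is no nullary operation and the unit of the monoid does not come from $P_\DG(0)$.
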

\begin{proof}
	Part (1) is clear. We show (2). Given an augmented $\DG$-structured tree $\Gamma$ in
	$\Tr_{\DG}(n)$ with augmented incidence diagram $I(\Gamma) \to \G$, we first argue that we
	can contract all internal edges to obtain a corolla. To this end, we note that, given an
	internal edge $\{e,e'\}$ incident to vertices $v$ and $v'$, the corresponding diagram
	\[
		H(v) \lra \{e,e'\} \lla H(v') 
	\]
	has a limit in $\G$ whose underlying set is given by $(H(v)\setminus \{e\}) \cup (H(v')
	\setminus \{e'\})$. This holds for all planar crossed simplicial groups and can be 
	verified using the topological model of $\DG$ from \S \ref{sec:topmodel}.
	We can use the cone diagram of this limit in $\G$ to produce a morphism of
	$\DG$-structured graphs $\Gamma \to \Gamma'$ which collapses the edge $\{e,e'\}$. After
	collapsing all internal edges of $\Gamma$, we obtain an augmented $\DG$-structured 
	$n$-corolla $\Gamma_0$. Using the labels we can identify the set $H(v)$ of halfedges
	incident to the single vertex of $\Gamma_0$ with $\{0,1,\dots,n\}$. We can find an
	isomorphism of $\DG$-structured sets $f: (\{0,1,\dots,n\}, \O) \cong [n]$ and since, for planar
	crossed simplicial groups, $\Gen_n$ acts (simply) transitively on $\Hom_{\DG}([0],[n])$ we
	may assume that $f$ maps $0$ to $0$. Therefore, after permuting the labels $1,2, \dots, n$,
	we may assume that $H(v) = [n]$. Every morphism $[n] \to [1]$ such that $\varphi^{-1}(0) =
	\{0\}$ (corresponding to the outgoing halfedge labeled $0$) can be written as a composite 
	$\varphi g$ where $\varphi: [n] \to [1]$ is the morphism in $\Delta$ which maps $0$ to $0$
	and all remaining elements to $1$, and $g \in \on{Stab}(0)
	\subset \Gen_n$. Therefore, we can assume that the augmentation $H(v) = [n] \to [1]$ of the
	single outgoing halfedge $0$ is given by $\varphi$. 
	For every $1 \le i \le n$, the subset of $\Hom_{\DG}([n],[1])$ given by those morphisms such
	that $\varphi^{-1}(1) = \{i\}$ forms a torsor under the action of $\Gen_1^0 \cong \Gen_0$. 
	Note that this action of $\Gen_0$ is precisely the operadic action of the monoid
	$P_{\DG}(1)$ on the incoming halfedges. Therefore, the collection of isomorphism classes of
	labelled augmented $\DG$-structured $n$-corollas forms a torsor under the action of 
	\[
		\underbrace{P_{\DG}(1) \times P_{\DG}(1) \times \dots \times P_{\DG}(1)}_n \times S_{n}.
	\]
	We will now trivialize this torsor, for every $n$, by choosing a specific $\DG$-structured
	$n$-corolla in such a way that these chosen $n$-corollas are closed under operadic
	composition (for $n>1$). To this end, choose a duality functor $D: \DG \to \DG^{\op}$ and consider the
	diagram
	\[
		\xymatrix{ [1] \ar[drr]_{\{0,1\}}& [1] \ar[dr]^{\{1,2\}} & \dots & [1] \ar[dl]^{\{n-1,n\}}\\
			& & [n] & \\
			& & [1]\ar[u]^{\{0,n\}} & 
		}
	\]
	in $\Delta \subset \DG$ and apply $D$ to obtain the incidence diagram of a labeled augmented
	$\DG$-structured $n$-corolla which we define to be our chosen trivialization. It is easy to
	verify that, due to the functoriality of $D$, the collection of corollas thus obtained is
	closed under operadic composition. This exhibits a copy of the associative operad $\on{Ass}$
	in $P$ such that every element of $P$ can be uniquely expressed as an element of $\on{Ass}$
	precomposed with elements of the monoid $P_{\DG}(1)$ acting on the incoming halfedges. An
	explicit computation of the action of $P_{\DG}(1)$ on the outgoing halfedges of $\on{Ass}$
	in terms of the actions on the incoming halfedges implies the claim that $P$ parametrizes
	algebras with twisted $\Gen_0$-action. 
\end{proof}

\begin{rem} Proposition \ref{prop:Gc-operad} can be enhanced by considering the geometric
	realization of the categories $\Tr_\DG(n)$ instead of the set $P_{\DG}(n)$ of its connected
	components. In this way, we obtain an operad $\N\Tr_\DG$ in the category of topological
	spaces which describes $A_\infty$-monoids with a twisted coherent action of $\Gen_0$.  
\end{rem}

\begin{rem} The results of this section suggest that augmented $\DG$-structured {\em graphs} can be
	interpreted in modular operadic terms generalizing the well-known approaches to
	$2$-dimensional topological field theories in the cyclic and dihedral cases (cf. \cite{costello-TFT, braun}). 
	We leave the systematic treatment of these generalizations to future work.
\end{rem}

\def\Tess{\on{Tess}}
\def\Bar{{\on{Bar}}}
\def\bar{{\on{bar}}}
 
\subsection{Structured graphs and structured moduli spaces}
  
  \def\Cell{{\on{Cell}}}
  
  \subsubsection{Orbicell decompositions}
  
  By a $d$-{\em cell} we mean a topological space $\sigma$ homeomorphic to
  an open ball  $B^d \subset \RR^d$. By a $d$-{\em orbicell} we mean a topological groupoid equivalent
  to $D\bbs\sigma$ where $\sigma\simeq B^d$ is a $d$-cell and $D$ is a discrete group acting on $\sigma$ via a
  homomorphism $\phi: D\to\Homeo(\sigma)$ whose image is identified with  a finite group  
   of linear transformations of $B^d$. Note that we do not require $\Ker(\phi)$ to be finite.
   
   \begin{defi}
   Let $X$ be a topological groupoid. An {\em orbicell decomposition} of $X$ is a finite filtration
   \[
   \Xc = \bigl( \Xc_0\subset\Xc_1 \subset\cdots\subset \Xc_n = X)
   \]
   of $X$ by closed topological subgroupoids $\Xc_d$ such that:
   \begin{enumerate}
   \item[(1)] Each $\Xc_d$ is a full subcategory in $X$.
   
   \item[(2)] Each component of $\Xc_d-\Xc_{d-1}$ is a disjoint union of $d$-orbicells.
   \end{enumerate}
   
   \end{defi}
  
  \begin{exa}\label{ex:regular-cell}
  Let $Y$ be a topological space with a cell decomposition $\Yc$. Let $D$ be a discrete
  group acting on $Y$, preserving $\Yc$ and such that for each cell $\sigma$ of $\Yc$ the stabilizer $\on{Stab}(\sigma)$
 acting on $\sigma$, makes $\on{Stab}(\sigma)\bbs\sigma$ into an orbicell. Then the groupoid
 $D\bbs Y$ has an orbicell decomposition into orbicells $\on{Stab}(\sigma)\bbs\sigma$ where $\sigma$ runs
 over orbits of $D$ on the set of cells of $\Yc$. 
  \end{exa}
  
  Recall that for ordinary spaces (not groupoids) $Y$ there is a concept of a {\em regular cell decomposition}
  which is a cell decomposition $\Yc$ such that the closure of each $d$-cell $\sigma$ of $\Yc$ is
  homeomorphic to a closed ball in $\RR^d$ (whose boundary is therefore a $(d-1)$-sphere with
  further cell decomposition induced by $\Yc$). We were not able to find in the literature an
  intrinsic analog of this concept for orbicell decompositions. The following concept will be sufficient
  for our purpose.
  
  \begin{defi}
  An orbicell decomposition $\Xc$ of a topological groupoid $X$ is called {\em quotient-regular},
  if there is an equivalence of $X$ with a quotient groupoid $D\bbs Y$ and a regular $D$-invariant
  cell decomposition $\Yc$ of $Y$ as in Example \ref{ex:regular-cell}, which induces $\Xc$. 
  \end{defi}
  
  For a quotient-regular cell decomposition $\Xc$ we have a category $\Cell(\Xc)$ whose objects
  are cells of $\Yc$ and morphisms are induced by action of $D$ and inclusion of cells.
  In other words, $\Cell(\Xc)$ is the semi-direct product of $D$ and the poset of cells of $\Yc$.
  Note that isomorphism classes of objects of $\Cell(\Xc)$ are in bijection with $D$-orbits on cells
  of $Y$, i.e., with distinct orbicells in $\bigsqcup \Xc_d- \Xc_{d-1}$. The following is then straightforward
  and follows from Proposition \ref{prop:nerve-classical}. 
  
  \begin{prop}\label{prop:orbicell-cell}
  If $\Xc$ is a quotient-regular orbicell decomposition of $X$, then the classifying space
  of the (non-topological) category $\Cell(\Xc)$ is homotopy equivalent to the classifying space of $X$ as a topological
  category. \qed
  \end{prop}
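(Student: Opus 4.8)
The plan is to peel off the definitions and reduce the statement to two standard facts: that the classifying space of an action groupoid is a homotopy orbit space, and that a regular CW complex is equivariantly homeomorphic to the order complex of its face poset. By hypothesis $\Xc$ is quotient-regular, so we may replace $X$ by an equivalent action groupoid $D\bbs Y$, where $\Yc$ is a regular, $D$-invariant cell decomposition of $Y$ as in Example \ref{ex:regular-cell}. By construction, $\Cell(\Xc)$ is then the semidirect product $D\ltimes P$, where $P=P(\Yc)$ is the poset of cells of $\Yc$ ordered by the face relation (inclusion of closures). Since an equivalence of topological groupoids induces a homotopy equivalence of classifying spaces, it suffices to exhibit a homotopy equivalence $B(D\ltimes P)\simeq B(D\bbs Y)$.

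First I would rewrite $B(D\bbs Y)$ as the Borel construction $ED\times_D Y$; this is Proposition \ref{prop:nerve-classical} in the form that the classifying space of an action groupoid is its homotopy orbit space (equivalently, one may take this as the definition of the classifying space of the topological groupoid $D\bbs Y$, via the geometric realization of its nerve, a simplicial space). Next, because $\Yc$ is a regular cell decomposition, $Y$ is homeomorphic to $|\N(P)|$, the realization of the nerve of the face poset: each open cell $\sigma$ is the union of the open simplices indexed by the chains $\tau_0<\tau_1<\dots<\tau_k=\sigma$. This homeomorphism is canonical in the cell structure, and $D$ acts on $Y$ by cellular homeomorphisms, hence on $P$ by order automorphisms; therefore the homeomorphism is $D$-equivariant and induces $ED\times_D Y\cong ED\times_D|\N(P)|$. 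Finally, $D\ltimes P$ is the Grothendieck construction of the functor $BD\to\Cat$ given by the $D$-action on $P$ (the Remark following Definition \ref{defi:semidirect}), so that $B(D\ltimes P)=|\N(D\ltimes P)|\simeq \ho\colim_{BD}|\N(P)|=ED\times_D|\N(P)|$; this is Thomason's homotopy-colimit theorem, and in this special case it is also a direct bisimplicial–diagonal computation, $\N_\bullet(D\ltimes P)$ having realization $ED\times_D|\N(P)|$. Concatenating the three identifications gives $B\Cell(\Xc)=B(D\ltimes P)\simeq ED\times_D|\N(P)|\cong ED\times_D Y\simeq B(D\bbs Y)\simeq BX$.

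The one point that requires care — and the only plausible obstacle — is the equivariance of the homeomorphism $Y\cong|\N(P)|$: this forces one to use the canonical triangulation of a regular CW complex by (the order complex of) its face poset rather than an arbitrary triangulation, and to observe that the induced $D$-action on the simplicial complex $|\N(P)|$ is \emph{regular} (a group element fixing a simplex setwise fixes it pointwise, since it preserves the ordering of its vertices), so that the homotopy orbit space is the honest cell-type object and no barycentric subdivision is needed. One should also confirm that the chosen formulation of Proposition \ref{prop:nerve-classical} is strong enough to cover both the passage from the topological groupoid $D\bbs Y$ to its Borel construction and the passage from the discrete category $D\ltimes P$ to its classifying space. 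Everything else is a routine chaining of standard identifications, which is precisely why the statement can be called straightforward.
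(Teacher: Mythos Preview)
Your argument is correct and, once unpacked, is essentially the same route the paper has in mind: identify $Y$ with the order complex of its face poset, note that this is $D$-equivariant, and pass to homotopy quotients. The paper's one-line proof just cites Proposition~\ref{prop:nerve-classical} and leaves the groupoid-level steps implicit; you have spelled them out via the Borel construction and Thomason's theorem, which is exactly what is needed.

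One small correction: Proposition~\ref{prop:nerve-classical} is \emph{not} the statement that the classifying space of an action groupoid is its homotopy orbit space, as you speculate. It is the classical nerve-of-a-cover lemma: if a space is covered by closed subsets with contractible (or empty) finite intersections, then the nerve of the cover computes the homotopy type, and in the saturated case the monotone nerve does as well. The paper's intended application is to cover $Y$ by the closed cells $\overline{\sigma}$ of the regular decomposition $\Yc$; this cover is saturated (intersections of closed cells are closed cells by regularity), each closed cell is contractible, and the monotone nerve is precisely $\N(P)$. This yields $|\N(P)|\simeq Y$, which is exactly the identification you obtained by direct appeal to the barycentric subdivision of a regular CW complex. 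So your use of Proposition~\ref{prop:nerve-classical} was misplaced, but harmlessly so: your proof does not actually depend on it, since you supply the Borel and Thomason steps independently.
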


  Note that such a statement cannot be true without some regularity assumptions already in the case of
  ordinary (not orbi) cell decompositions as can be seen by considering $S^2$ decomposed into $\RR^2$ and $\infty$. 
  
\subsubsection{Decompositions of the structured moduli spaces}\label{sub sub:decom-str}

We now prove the following fact which, in virtue of Proposition 
\ref {prop:orbicell-cell}, implies Theorem \ref{thm:g-graph}. 

\begin{thm} \label{thm:decomp}
Consider the orbifold
\[
\Mc^{G_\conf} \,\,=\,\,\coprod_{(S,M)} \Mc^{G_\conf}(S,M)
\]
where $(S,M)$ runs over all possible topological types of stable marked surfaces. Then 
$\Mc^{G_\conf}$ has a quotient-regular orbicell decomposition $\Xc$ such that we have an equivalence
of categories $\Cell(\Xc)\simeq \DGgraph$. 
\end{thm}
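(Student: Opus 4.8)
The plan is to reduce Theorem \ref{thm:decomp} to the classical unstructured situation via the product decomposition established earlier and the Harer--Mumford--Thurston--Penner theory of ribbon graph decompositions. First I would recall the chain of equivalences already built up: by the proposition immediately preceding this section, $\Mc^{G_\conf}(S,M) \simeq \coprod_{\{F\}\in\Mc^\GG(S,M)} \Mod^\GG(S_F,M)\Bbs\Teich(S,M)$, and by Theorem \ref{thm:klein-teich} each $\Teich(S,M)$ is diffeomorphic to Euclidean space (using the Schottky double / orientation double identifications in the non-classical cases). So the moduli groupoid is, up to equivalence, a disjoint union of quotients $\Mod^\GG(S,M)\Bbs\Teich(S,M)$ of cells by discrete groups. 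The task is to produce a $\Mod^\GG(S,M)$-invariant regular cell decomposition of $\Teich(S,M)$ whose cells are indexed, compatibly with the group action, by isotopy classes of embedded $\DG$-structured spanning graphs.

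The key steps, in order. (1) Start from the classical Penner--Harer decomposition of the decorated (or ordinary) Teichm\"uller space $\Teich^+(S^\#,M^\#)$ of the oriented double into cells indexed by isotopy classes of ribbon spanning graphs (equivalently, ideal cell decompositions / fatgraphs) — this is the regular cell decomposition underlying the statement $|\on{Ribbon graphs}|\simeq\coprod B\Mod^+$. (2) In the non-classical cases, intersect this decomposition with the fixed locus of the anti-holomorphic involution $\sigma_{\Teich}$ (resp. $\tau_{\Teich}$); since the involution is induced by an (anti-)holomorphic involution of the surface it permutes ribbon graphs and one obtains, after a standard subdivision to restore regularity on the fixed locus, a regular cell decomposition of $\Teich(S,M)$ whose cells correspond to isotopy classes of spanning graphs on $S$ itself (this is exactly where M\"obius graphs / the dihedral structure enter for unoriented surfaces). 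Theorem \ref{thm:graph-contr}, the contractibility of $P(S,M)$, is the combinatorial shadow of this step and should be invoked to control the poset of cells. (3) Now lift the decomposition along the covering $\Mc^{G_\conf}\to \Mc^{GL(2,\RR)}_{\text{top}}$: by Proposition \ref{prop:str-non-str-diff} the structured mapping class group sits in an extension of (a union of components of) the ordinary one by the finite abelian group $\Aut(F)$, and the $\DG$-structure on each spanning graph is canonically induced by Proposition \ref{prop:emb-graph-str}. This upgrades each cell $\sigma$ (indexed by $[\Gamma]$) to an orbicell $\on{Stab}(\sigma)\Bbs\sigma$, and the whole thing to a quotient-regular orbicell decomposition $\Xc$ in the sense of the preceding subsection. (4) Finally, identify $\Cell(\Xc)$ with $\DGgraph$: objects on both sides are ($\DG$-)structured graphs without $2$-valent vertices (the cells of the Penner complex correspond precisely to trivalent-and-higher fatgraphs, and collapsing forests matches inclusion of cells), morphisms are contractions / face inclusions composed with the group action, and the $\DG$-structure matching is exactly Proposition \ref{prop:emb-graph-str} together with the equivalence $\widetilde\rho:\Mod^\GG(S,M)\ltimes P(S,M)\to\DGgraph$ announced as Theorem \ref{thm:decomp}'s reformulation. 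Assembling over all topological types $(S,M)$ and using Theorem \ref{thm:graph-contr} to see that each $P(S,M)$ contributes a single connected piece finishes the identification.

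The main obstacle I expect is step (2): ensuring that passing to the fixed-point locus of the (anti-)holomorphic involution, and simultaneously passing from ribbon graphs on $S^\#$ to $\DG$-structured graphs on $S$, genuinely produces a \emph{regular} (not merely CW) decomposition with the correct cell poset. The fixed locus of a cellular involution on a regular complex need not be a subcomplex, so one must either choose the Penner decomposition equivariantly from the start (using a $\sigma$-invariant system of weights / ideal triangulations) or perform a controlled equivariant subdivision; keeping track of orbifold isotropy ($\Aut(F)$ acting possibly non-freely but through a finite quotient) while doing so is the delicate point. A secondary, more bookkeeping-heavy obstacle is verifying that the induced $\DG$-structure on graphs is locally constant on each cell of $\Teich(S,M)$ — i.e.\ that deforming the conformal/$G_\conf$ structure within a cell does not change the isotopy class of the $\DG$-structured spanning graph — but this follows from the discreteness of $\GG$-structures (Proposition \ref{prop:str-non-str-diff}) and the naturality clause in Proposition \ref{prop:emb-graph-str}. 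I would leave the explicit regular-subdivision argument and the routine functoriality checks for $\Cell(\Xc)\simeq\DGgraph$ to the detailed write-up.
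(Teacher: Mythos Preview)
Your overall architecture matches the paper's: reduce to the oriented/unoriented base cases via Harer's triangulation of Teichm\"uller space, then lift along the unramified covering $\Mc^{G_\conf}\to\Mc^{O(2)_\conf}$ (or $\Mc^{SO(2)_\conf}$) to the general $G$. Two points deserve sharpening, however.

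First, your main stated obstacle in step~(2) dissolves. The paper does not subdivide: it shows directly (Appendix, Lemma after Theorem~\ref{thm:harer-triang}) that the $\tau_A$-fixed locus of $|A(\widetilde S,\widetilde M)|-|A_\infty(\widetilde S,\widetilde M)|$ is \emph{equal} to $|A(S,M)|-|A_\infty(S,M)|$, the arc complex of $S$ itself. The reason is that a $\tau$-invariant arc system on the orientation cover descends to an arc system on $S$ (one checks that $\varpi(\widetilde\gamma)$ is embedded because $\widetilde\gamma$ and $\tau\widetilde\gamma$ are disjoint, and that a tessellation descends to a tessellation). So the fixed locus is already a simplicial complex in its own right, and the dual cell decomposition into Stasheff polytopes $\TT(\Pc)$ is regular without further work. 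Your worry that ``the fixed locus of a cellular involution need not be a subcomplex'' is legitimate in general but does not arise here.

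Second, your step~(4) is where the real content of the general case lies, and your sketch underplays it. The paper's argument compares the fibers of $\pi:\Cell(\Xc^G)\to\Cell(\Xc)$ with the fibers of $\varpi:\DGgraph\to\Xi\text{-}\on{Graph}$, and the key input is a separate lemma: for any spanning graph $\Gamma\hookrightarrow S$, the functor $q_\Gamma:\Gstr(S-(M\cap S^\circ))\to\DG\on{Str}(\Gamma)$ is an equivalence of groupoids. This is proved by identifying both automorphism sheaves with the twisted local system $\underline K^{\orr}$ on $\Gamma$, where $K=\Ker(\GG\to GL(2,\RR))$. You gesture at this via Proposition~\ref{prop:str-non-str-diff} and~\ref{prop:emb-graph-str}, but neither of those gives essential surjectivity or the automorphism match; that lemma is the missing ingredient you should isolate and prove.
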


\begin{proof} \underbar{(a) Oriented case } $G=SO(2)$. In this case, the statement (that the usual
	moduli space of stable marked {\em Riemann surfaces} has an orbicell decomposition labelled
	by ribbon graphs) is well known, see, e.g., \cite{arbarello, penner:book} and references
	therein. We indicate here the main steps which we later analyze to deduce the general case.
 
	First, each component of $\Mc^{SO(2)_\conf}$ is a quotient groupoid 
	\[
	 \Mc^{SO(2)_\conf}(S,M) \,\,=\,\, \Mod^+(S,M)\Bbs\Teich^+(S,M).
	\]
	Second, $\Teich^+(S,M)$  has a $\Mod^+(S,M)$-invariant regular cell decomposition
	(triangulation)  constructed by Harer \cite{harer} and described in Appendix \ref
	{subsub:harer}.  Denote this cell decomposition by $\Yc$.  The simplices of $\Yc$ correspond
	to tesselations of $(S,M)$ or, dually, to isotopy classes of spanning graphs for $(S,M)$.
	Inclusions of simplices correspond to coarsenings of tesselations or, dually, to contractions of
	spanning graphs.  Any spanning graph for $(S,M)$ carries an induced ribbon structure.
	Therefore, orbicells in the quotient orbicell decomposition, which we denote $\Xc_{S,M}$,
	form a full subcategory in $\Lambda-\on{Graph}$ with objects those graphs that appear as
	spanning graphs for $(S,M)$.  Finally, each ribbon graph $\Gamma$ gives rise to an oriented
	marked surface $(S,M)$ obtained by ``thickening" $\Gamma$, so that $\Gamma$ becomes a
	spanning graph for $(S,M)$, see, e.g., \cite{HSS-triangulated} \S 3.3.4. This shows that for
	$\Xc=\coprod_{(S,M)} \Xc_{S,M}$, we have $\Cell(\Xc)\simeq {\Lambda}-\on{Graph}$. 

	Note that Costello \cite{costello-dual} introduced another topological groupoid $\Nc$ with
	an orbicell decomposition labelled by ${\Lambda}-\on{Graph}$ which is homotopy equivalent,
	but not homeomorphic to $\Mc^{SO(2)_\conf}$. 

	\vskip .3cm

	\underbar{(b) Unoriented case} $G=O(2)$. It is treated similarly to the oriented case, using
	the identification \[ \Mc^{O(2)_\conf}(S,M) \,\,=\,\,\Mod(S,M) \Bbs \Teich(S,M) \] and the
	triangulation $\Yc$ of $\Teich(S,M)$ described in \S \ref{subsub:harer}. As before, the
	simplices of $\Yc$ correspond to triangulations of $(S,M)$ and give M\"obius graphs as
	$\Mod(S,M)$-orbits. Further, any M\"obius graph $\Gamma$ can be ``thickened" producing an
	unoriented surface $(S,M)$ in which it is spanning. So, for the resulting quotient orbicell
	decomposition $\Xc$ of $\Mc^{O(2)_\conf}$ we have $\Cell(\Xc)\simeq {\Xi}-\on{Graph}$. 

	Note that an analog of Costello's approach \cite{costello-dual} for Klein surfaces was developed
	by Braun \cite{braun}. 

	\vskip .3cm

	\underbar{(c) General case:} $G$ an arbitrary planar Lie group. Depending on whether $G$
	preserves orientation or not, we consider one of the two projections
	\[
	\pi: \Mc^{G_\conf} \lra \Mc^{SO(2)_\conf}, \quad  \pi: \Mc^{G_\conf} \lra \Mc^{O(2)_\conf}
	\]
	obtained by forgetting the extra structure. In each case, the projection $\pi$ is an
	unramified covering of orbifolds, so the preimage of any orbicell in the target is a
	disjoint union of orbicells in the source.  We thus obtain an orbicell decomposition of
	$\Mc^{G_\conf}$ which we now analyze. We sketch the case when $G$ does not preserve the
	orientation, the other case being similar but easier. 
	Denote by $\Xc$ the orbicell decomposition of $\Mc^{O(2)_\conf}$ and by $\Xc^G$ the preimage
	decomposition of $\Mc^{G_\conf}$. Since any $G_\conf$-structured marked surface is
	$\GG$-structured, any tessellation $\Pc$ of the surface makes the dual graph of $\Pc$ into
	a $\DG$-structured graph. We obtain therefore a commutative diagram of categories
	\[
	\xymatrix{
	\Cell(\Xc^G)
	\ar[d]_{q_G} \ar[r]^{\pi} & \Cell(\Xc)
	\ar[d]^q
	\\
	\DGgraph \ar[r]_\varpi & {\Xi}-\on{Graph}
	}
	\]
	where $q$ is an equivalence by the case (b) above. We prove that $q_G$ is an equivalence as well.
	For this, we note that any morphism $\Gamma \to \Gamma'$ in $\DGgraph$ can be expressed
	uniquely as the composition of a contraction of a set of $I$ of edges of $\Gamma$ (which
	results in a canonical contraction morphism $\Gamma \to \Gamma_I$) followed by an isomorphism 
	$\Gamma_I \to \Gamma'$. This implies that, assuming $\Hom_{\DGgraph}(\Gamma,\Gamma') \ne
	\emptyset$, any
	fiber of the map 
	\[
		\Hom_{\DGgraph}(\Gamma,\Gamma') \lra
		\Hom_{{\Xi}-\on{Graph}}(\varpi(\Gamma),\varpi(\Gamma'))
	\]
	is acted upon simply transitively by the group $\Aut(\Gamma'/\varpi(\Gamma'))$ of automorphisms of $\Gamma'$ which induce
	the identity on $\varpi(\Gamma')$. A similar analysis of the morphisms in $\Cell(\Xc^G)$
	implies that the fibers of the map 
	\[
		\Hom_{\Cell(\Xc^G)}(\sigma,\sigma') \lra
		\Hom_{\Cell(\Xc)}(\pi(\sigma),\pi(\sigma'))
	\]
	are torsors for the group $\Aut(\sigma'/\pi(\sigma'))$ of automorphisms of $\sigma'$
	inducing the identity on $\pi(\sigma')$.
	Since we already know that the map $q$ is fully faithful, to show that $q^G$ is fully
	faithful, it therefore suffices to show that the induced map 
	\[
		\Aut(\sigma'/\pi(\sigma')) \lra \Aut(q^G(\sigma')/q^G(\pi(\sigma')))
	\]
	is bijective. Showing this statement and the essential surjectivity of $q^G$ amounts to  
	showing that $q^G$ induces an equivalence of all strict fiber categories of $\pi$ and
	$\varpi$. This is shown in the following lemma.
\end{proof}
 
\begin{lem}
Let $(S,M)$ be a stable marked surface and $\Gamma$ be a spanning graph for $(S,M)$.
Then the functor
\[
	q_\Gamma: \Gstr(S-(M\cap S^\circ) )\to \DG\on{Str}(\Gamma)
\]
is an equivalence of categories. 
\end{lem}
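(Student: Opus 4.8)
The plan is to construct explicit inverse equivalences, exploiting the fact that both sides parametrize essentially the same combinatorial data. The key point is that the spanning graph $\Gamma$ is a deformation retract of $S - (M\cap S^\circ)$, so a $\GG$-structure on the surface can be transported along the embedding $\gamma$ to a $\DG$-structure on $\Gamma$, and conversely a $\DG$-structure on $\Gamma$ thickens up to a $\GG$-structure on a neighborhood, hence on all of $S-(M\cap S^\circ)$ by the retraction. More precisely, the functor $q_\Gamma$ is defined on objects by sending a $\GG$-structure $F$ on $S-(M\cap S^\circ)$ to the $\DG$-structured graph produced in Proposition \ref{prop:emb-graph-str} (applied to the embedding $\gamma:|\Gamma|\to S$), and on morphisms (isomorphisms of $\GG$-structures) by functoriality of that construction. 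So the task is to show this is an equivalence of groupoids, which by standard category theory amounts to checking that $q_\Gamma$ is essentially surjective and fully faithful.

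First I would establish essential surjectivity. Given a $\DG$-structure $\widetilde{I_\Gamma}$ on $\Gamma$, I need a $\GG$-structure $F$ on $S-(M\cap S^\circ)$ whose induced $\DG$-structure on $\Gamma$ is isomorphic to $\widetilde{I_\Gamma}$. Here I would use the topological model of \S\ref{sec:topmodel}: the $\DG$-structure on each vertex set $H(v)$ is, via Theorem \ref{thm:CG=DG} and Corollary \ref{cor:structured-circle-orders}, the same as a $\Homeo^G(S^1)$-structure on (a finite-subset version of) the circle of tangent directions $C(T_{\gamma(v)}S)$, and similarly the edge data and incidence morphisms record compatible $\Homeo^G(S^1)$-structures along the edges. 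Since $\GG \to \Homeo^G(S^1)$ is a homotopy equivalence and the nerve $|\Gamma|$ is $1$-dimensional, such structures glue to a $\Homeo^G(S^1)$-structure — equivalently, by Proposition \ref{prop:summary-covers}(c), a $\GG$-structure — on a regular neighborhood $U$ of $\gamma(|\Gamma|)$ in $S$; obstruction-theoretically there is nothing to check beyond dimension $1$. Since $U$ deformation-retracts onto $\gamma(|\Gamma|)$ and $S-(M\cap S^\circ)$ deformation-retracts onto $U$ (as $\Gamma$ is spanning), the $\GG$-structure extends uniquely up to isomorphism to all of $S-(M\cap S^\circ)$. By construction its induced $\DG$-structure on $\Gamma$ agrees with the given one.

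Next I would establish full faithfulness. By Proposition \ref{prop:G-structures}, the automorphism group of any object $F$ of $\Gstr(S-(M\cap S^\circ))$ is $H^0(S-(M\cap S^\circ),\underline K^\orr)$ (with $\underline K^\orr$ the twisted form of $K=\Ker(\rho:\Gen_0\to\ZZ/2)$, cf. Proposition \ref{prop:group-K}), while the automorphism group of a $\DG$-structured graph $\Gamma$ in $\DG\on{Str}(\Gamma)$ — automorphisms covering the identity of the underlying graph — is likewise a group of flat sections of the same local system over $|\Gamma|$, since such an automorphism is precisely a compatible collection of elements of $\Gen_n^0\cong K$ (via Theorem \ref{thm:classification}) at the vertices, agreeing across edges. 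Because $|\Gamma|\hookrightarrow S-(M\cap S^\circ)$ is a homotopy equivalence and $H^0$ of a local system depends only on the homotopy type, $q_\Gamma$ induces an isomorphism on automorphism groups of each object; essential surjectivity already established, and the empty-or-torsor structure of isomorphism sets (Proposition \ref{prop:G-structures}(a2)(b2) versus the corresponding count for $\DG$-structures on $\Gamma$) then forces the bijection on all $\Hom$-sets.

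The main obstacle I anticipate is the gluing step in essential surjectivity: making precise that a $\DG$-structured graph genuinely encodes a $\Homeo^G(S^1)$-structure on a neighborhood. One must handle carefully the two types of $0$-cells of $|\Gamma|$ (vertices and edge-midpoints, cf. Remark \ref{rem:midpoint}), matching the $\DG$-structure on $H(v)$ at a vertex with the $\DG$-structure on the $2$-element set $\{h,h'\}$ at an edge midpoint via the incidence morphisms, and then checking that the parallel-transport identifications used in the proof of Proposition \ref{prop:emb-graph-str} are exactly the ones produced by the neighborhood's $\GG$-structure. This is really a bookkeeping argument — a reversal of the construction in Proposition \ref{prop:emb-graph-str} — and contractibility of the relevant homeomorphism spaces (Propositions \ref{prop:homeo-contr}, \ref{prop:homeo-contr}) guarantees there are no hidden choices, but it is the part that requires genuine care rather than formal nonsense.
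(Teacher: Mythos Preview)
Your proposal is correct and follows essentially the same approach as the paper: essential surjectivity via the topological model (Theorem \ref{thm:CG=DG}) and the homotopy equivalence $|\Gamma|\simeq S-(M\cap S^\circ)$, and full faithfulness by identifying automorphism groups on both sides with global sections of the local system $\underline{K}^{\orr}$ and invoking homotopy invariance. One small caution: the identification $\Gen_n^0\cong K$ holds for planar $\DG$ only when $n\ge 2$; at edges ($n=1$) and $1$-valent vertices ($n=0$) one has $\Gen_n^0\supsetneq K$, and it is the naturality constraint across incidence morphisms that cuts the automorphism data down to sections of $\underline{K}^{\orr}$ --- this is precisely the ``final identification'' the paper leaves to the reader.
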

 
\begin{proof} We first show that $q_\Gamma$ is essentially surjective. Let $F_\Gamma$ be a
	$\DG$-structure on $\Gamma$.  By Theorem \ref{thm:CG=DG}, $F_\Gamma$ can be seen as a datum,
	for each $x\in\Gamma\subset S'$, of a $\Homeo^G(S^1)$-structure on the circle of directions
	$C_x(S)$. Moreover, these structures are compatible as $x$ moves in $\Gamma$, thus providing
	a $\Homeo^G(S^1)$-structure on $C(T_S)|_\Gamma$, the restriction to $\Gamma$ of the circle
	bundle $C(T_S)$. Since $\Gamma$ is homotopy equivalent to $S-M$ and hence to $S-(M\cap
	S^\circ)$, we get a unique, up to isomorphism, $\Homeo^G(S^1)$-structure on $C(T_{S-(M\cap
	S^\circ)})$. Finally, since $\GG\to\Homeo^G(S^1)$ is a homotopy equivalence, we obtain from
	this a $\GG$-structure on $S-(M\cap S^\circ)$. Thus $q_\Gamma$ is essentially surjective. 
 
	 Second, we show that $q_\Gamma$ induces bijections on Hom-sets. Because $\Gstr(S)$ is a
	 groupoid, it is enough to show that for each $\GG$-structure $F$ on $S-(M\cap S^\circ)$ the
	 induced map of automorphism groups
	 \[
		 q_*: \Aut(F)\lra\Aut(F_\Gamma)
	 \]
	 is an isomorphism. Note that $q_*$ actually comes (by passing to global sections) from a
	 homomorphism of sheaves of groups on $\Gamma$ formed by local isomorphisms:
	 \[
		  \underline q_*: \underline \Aut(F)\lra\underline \Aut(F_\Gamma). 
	 \]
	 So it is enough to prove that $  \underline q_*:$ is an isomorphism of sheaves. By
	 Proposition \ref{prop:G-structures}(b1), $\underline\Aut(F)=\underline K^\orr$ is the local
	 system obtained by the orientation twist of $K=\Ker\{\GG\to GL(2,\RR)\}$. Now note that $K$
	 can also be identified with $\Ker\{\Gen_n\to D_{n+1}\}$ for any $n$ by Theorem
	 \ref{thm:crossed-groups}(b2). An automorphism of the $\DG$-structure $F_\Gamma$ (identical
	 on $\Gamma$) is therefore a section of a local system $\Lc$ on $\Gamma$ whose stalk at each
	 point is isomorphic to $K$. The final identification of $\Lc$ with $\underline K^\orr$ and
	 of $q_*$ with the identity of  $\underline K^\orr$ is left to the reader. 
\end{proof}


\newpage
\section{2-Segal $\Delta\Gen$-objects and invariants of $\GG$-structured surfaces}
   
\subsection{The 1- and 2-Segal conditions}\label{subset:simplicial-cyclic}

Let $\Delta$ be the {\em simplex category} of finite nonempty linearly ordered sets of the form $[n] = \{0,1,\dots,n\}$ with
morphisms given by monotone maps. Given categories $I$ and $\Cb$, we introduce the notation
\begin{align*}
	\Cb^I & = \Fun(I, \Cb), & \Cb_I & = \Fun(I^{\op}, \Cb)
\end{align*}
for the categories of covariant and contravariant functors.
The objects of $\Cb_{\Delta}$ are called {\em simplicial objects} in $\Cb$. 
If $\Cb$ admits limits, then we can use the formalism of Kan extension, to enlarge the domain of
definition of a simplicial object from $\Delta$ to $\Set_{\Delta}$. More precisely, let 
\[
	\Upsilon: \Delta^{\op} \to (\Set_{\Delta})^{\op}
\]
be the opposite of the Yoneda embedding. Then there exists an adjunction
\[
	\Upsilon^*: \Cb_{\Set_\Delta} \longleftrightarrow \Cb_{\Delta} : \Upsilon_*
\]
where, for a simplicial set $K$ and a simplicial object $X$, we have the explicit formula
\[
	\Upsilon_* X (K) = \pro_{\{\Delta^n \to K\}}^{\Cb} X_n
\]
given by a limit in $\Cb$ over the category of simplices of $K$. We introduce the notation
$(K,X) := \Upsilon_* X (K)$ and refer to it as the {\em object of $K$-membranes in $X$}.
Assume further, that $\Cb$ is equipped with a combinatorial model structure. Then we can derive the
above adjunction to obtain
\[
	L\Upsilon^*: \Ho(\Cb_{\Set_\Delta}) \longleftrightarrow \Ho(\Cb_{\Delta}): R \Upsilon_*
\]
where, as above, we have an explicit formula 
\[
	R \Upsilon_* X(K) \cong \hopro_{\{\Delta^n \to K\}}^{\Cb} X_n
\]
in terms of a homotopy limit in $\Cb$. We call $(K,X)_R := R \Upsilon_* X(K)$ the {\em derived
object of $K$-membranes in $X$}.

We will be particularly interested in membranes parametrized by the simplicial sets introduced in the following
examples.
\begin{exa} 
	 For $n \ge 1$, we denote by $I[n] \subset \Delta^n$ the simplicial subset corresponding to the subdivided
	 interval
	\[
	\begin{tikzpicture}[>=stealth,scale=1.5]

	\begin{scope}[ >=stealth]

	\coordinate (A0) at (0,0);
	\coordinate (A1) at (1,0);
	\coordinate (B1) at (2,0);
	\coordinate (DOTS) at (2.5,0);
	\coordinate (A2) at (3,0);
	\coordinate (A3) at (4,0);

	\begin{scope}[decoration={
	    markings,
	    mark=at position 0.55 with {\arrow{>}}}
	    ] 
	\draw[postaction={decorate}] (A0) -- (A1);
	\draw[postaction={decorate}] (A1) -- (B1);
	\draw[postaction={decorate}] (A2) -- (A3);
	\end{scope}

	{\scriptsize
	\draw (A0) node[anchor=south] {$0$};
	\draw (A1) node[anchor=south] {$1$};
	\draw (B1) node[anchor=south] {$2$};
	\draw (DOTS) node {$\cdots$};
	\draw (A2) node[anchor=south] {$n-1$};
	\draw (A3) node[anchor=south] {$n$};
	}
	\fill (A0) circle[radius=1pt];
	\fill (A1) circle[radius=1pt];
	\fill (B1) circle[radius=1pt];
	\fill (A2) circle[radius=1pt];
	\fill (A3) circle[radius=1pt];

	\end{scope}

	\end{tikzpicture}
	\]
	which can be more formally described as the pushout of simplicial sets
	\[
		\textstyle
		\Delta^1 \coprod_{\Delta^0} \Delta^1 \coprod_{\Delta^0} \dots \coprod_{\Delta^0}
		\Delta^1.
	\]
\end{exa}
\begin{exa}
	 Let $P_{n+1}$ be a convex plane $(n+1)$-gon with the set of vertices labelled
	 counterclockwise by $M = \{0,1,\cdots , n\}$. Let $\Tc$ be any triangulation of $P_{n+1}$ 
	 with vertex set $M$. A geometric triangle $\sigma$ of $\Tc$ with vertices $i,j,k$ gives
	 rise to a unique full simplicial subset $\Delta^2 \subset \Delta^n$ with vertices $\{i\},
	 \{j\},\{k\}$. The union, or more formally the pushout, of the simplicial subsets of $\Delta^n$ corresponding to all
	 triangles of $\Tc$ defines a simplicial subset $\Delta^\Tc \subset \Delta^n$ whose
	 geometric realization is homeomorphic to $P_{n+1}$. For example, the two simplicial subsets
	 of $\Delta^3$ corresponding to the two triangulations of a planar square can be visualized
	 as follows:
	 \[
		\tdplotsetmaincoords{100}{170}
		\begin{tikzpicture}[>=latex,scale=1.0, baseline=(current  bounding  box.center)]
		\begin{scope}[scale=1.4]

		\coordinate (A0) at (0,0);
		\coordinate (A1) at (0,1);
		\coordinate (A2) at (1,1);
		\coordinate (A3) at (1,0);

		\path[fill opacity=0.4, fill=blue!50] (A0) -- (A3) -- (A2) -- (A1) -- cycle;

		\begin{scope}[decoration={
		    markings,
		    mark=at position 0.55 with {\arrow{>}}}
		    ] 
		\draw[postaction={decorate}] (A0) -- (A3);
		\draw[postaction={decorate}] (A3) -- (A2);
		\draw[postaction={decorate}] (A0) -- (A1);
		\draw[postaction={decorate}] (A2) -- (A1);
		\draw[postaction={decorate}] (A0) -- (A2);
		\end{scope}

		{\scriptsize
		\draw (A0) node[anchor=east] {$0$};
		\draw (A1) node[anchor=east] {$3$};
		\draw (A2) node[anchor=west] {$2$};
		\draw (A3) node[anchor=west] {$1$};
		}
		\draw (1.7,0.5) node {$\hookrightarrow$};
		\end{scope}

		\begin{scope}[tdplot_main_coords, >=latex, xshift=4.5cm, yshift=0.5cm]

		\coordinate (A0) at (1,-1,0);
		\coordinate (A1) at (0,1,1);
		\coordinate (A2) at (0,1,-1);
		\coordinate (A3) at (-1,-1,0);


		\path[fill opacity=0.4, fill=blue!50] (A0) -- (A2) -- (A3) -- cycle;
		\path[fill opacity=0.4, fill=blue!50] (A0) -- (A2) -- (A1) -- cycle;

		\begin{scope}[decoration={
		    markings,
		    mark=at position 0.55 with {\arrow{>}}}
		    ] 
		\draw[postaction={decorate}] (A0) -- (A1);
		\draw[postaction={decorate}] (A0) -- (A2);
		\draw[postaction={decorate}] (A1) -- (A2);
		\draw[postaction={decorate}] (A1) -- (A3);
		\draw[postaction={decorate}] (A2) -- (A3);
		\draw[postaction={decorate},dashed] (A0) -- (A3);
		\end{scope}

		{\scriptsize
		\draw (A0) node[anchor=east] {$0$};
		\draw (A1) node[anchor=south] {$1$};
		\draw (A2) node[anchor=north] {$2$};
		\draw (A3) node[anchor=west] {$3$};
		}
		\end{scope}
		\end{tikzpicture}
		\quad\quad\quad\quad
		\begin{tikzpicture}[>=latex,scale=1.0, baseline=(current  bounding  box.center)]
		\begin{scope}[scale=1.4]

		\coordinate (A0) at (0,0);
		\coordinate (A1) at (0,1);
		\coordinate (A2) at (1,1);
		\coordinate (A3) at (1,0);

		\path[fill opacity=0.4, fill=blue!50] (A0) -- (A3) -- (A2) -- (A1) -- cycle;

		\begin{scope}[decoration={
		    markings,
		    mark=at position 0.55 with {\arrow{>}}}
		    ] 
		\draw[postaction={decorate}] (A0) -- (A3);
		\draw[postaction={decorate}] (A3) -- (A2);
		\draw[postaction={decorate}] (A0) -- (A1);
		\draw[postaction={decorate}] (A2) -- (A1);
		\draw[postaction={decorate}] (A3) -- (A1);
		\end{scope}

		{\scriptsize
		\draw (A0) node[anchor=east] {$0$};
		\draw (A1) node[anchor=east] {$3$};
		\draw (A2) node[anchor=west] {$2$};
		\draw (A3) node[anchor=west] {$1$};
		}
		\draw (1.7,0.5) node {$\hookrightarrow$};
		\end{scope}

		\begin{scope}[tdplot_main_coords, >=latex, xshift=4.5cm, yshift=0.5cm]

		\coordinate (A0) at (1,-1,0);
		\coordinate (A1) at (0,1,1);
		\coordinate (A2) at (0,1,-1);
		\coordinate (A3) at (-1,-1,0);


		\path[fill opacity=0.4, fill=blue!50] (A1) -- (A2) -- (A3) -- cycle;
		\path[fill opacity=0.4, fill=blue!50] (A0) -- (A1) -- (A3) -- cycle;

		\begin{scope}[decoration={
		    markings,
		    mark=at position 0.55 with {\arrow{>}}}
		    ] 
		\draw[postaction={decorate}] (A0) -- (A1);
		\draw[postaction={decorate}] (A0) -- (A2);
		\draw[postaction={decorate}] (A1) -- (A2);
		\draw[postaction={decorate}] (A1) -- (A3);
		\draw[postaction={decorate}] (A2) -- (A3);
		\draw[postaction={decorate},dashed] (A0) -- (A3);
		\end{scope}

		{\scriptsize
		\draw (A0) node[anchor=east] {$0$};
		\draw (A1) node[anchor=south] {$1$};
		\draw (A2) node[anchor=north] {$2$};
		\draw (A3) node[anchor=west] {$3$};
		}
		\end{scope}
		\end{tikzpicture}
	\]
\end{exa}

We recall the following definitions from \cite{HSS1}, the first one being a variant of Rezk's
Segal condition \cite{rezk}.

\begin{defi}\label{def:segal} 
Let $\Cb$ be a combinatorial model category, and let $X \in \Cb_\Delta$ be a simplicial object.
\begin{enumerate}
	\item We say that $X$ is {\em 1-Segal} if, for every $n\geq 1$, the morphism
	\[
	f_n: X_n \lra (I[n], X)_R = X_1\times^R_{X_0} X_1 \times^R_{X_0} \cdots \times^R_{X_0} X_1,
	\]
	induced by the embedding $I[n] \hookrightarrow \Delta^n$, is a weak equivalence in $\Cb$. 

	\item We say that $X$ is {\em 2-Segal} if, for every $n\geq 2$ and every triangulation $\Tc$ of $P_{n+1}$,
	the morphism
	\[
	f_\Tc: X_n \lra (\Delta^\Tc, X)_R,
	\]
	induced by the embedding $\Delta^\Tc\hookrightarrow \Delta^n$, is a weak equivalence in $\Cb$. 

\end{enumerate}
\end{defi}

\begin{rem}
	Let $\Cb$ be an ordinary category with limits and colimits. We can equip $\Cb$ with the
	trivial model structure such that the weak equivalences are given by the class of isomorphisms.
	The above Segal conditions then involve underived membrane spaces require the corresponding
	morphisms $f_n$, respectively $f_{\Tc}$, to be isomorphisms in $\Cb$.
\end{rem}

\subsection{Categorified structured state sums}

Let $\DG$ be a planar crossed simplicial group. Then the category $\G$ of $\DG$-structured sets
comes equipped with an interstice duality functor $D: \G \to \G^{\op}$.
Let $\Gamma$ be a $\DG$-structured graph with $\DG$-structure $I(\Gamma) \to \G$. Postcomposing with $D$
and the opposite of the Yoneda embedding 
\[
  \Upsilon: \G^{\op} \lra (\Set_{\G})^{\op} \overset{|\DG}{\lra} (\Set_{\DG})^{\op},
\]
we obtain a diagram $I(\Gamma) \to \Set_{\DG}^{\op}$. 

\begin{defi} Given a $\DG$-structured graph $\Gamma$, the limit of the associated diagram $I(\Gamma) \to
	(\Set_{\DG})^{\op}$, denoted by $\DG^{\Gamma}$, is called the {\em $\DG$-set realization of $\Gamma$}. 
\end{defi}

\begin{prop} The $\DG$-set realization extends to a functor
	\[
		\DGgraph \lra (\Set_{\DG})^{\op}.
	\]
\end{prop}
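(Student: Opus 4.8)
The statement asserts that the $\DG$-set realization $\Gamma \mapsto \DG^{\Gamma}$ is functorial in $\Gamma$ with respect to morphisms in $\DGgraph$, i.e.\ contractions of $\DG$-structured graphs. The strategy is to exhibit the construction as a composite of functorial operations. First I would recall that a $\DG$-structured graph $\Gamma$ is, by Definition \ref{defi:ggraph}, a lift $\widetilde{I_\Gamma}\colon I(\Gamma) \to \G$ of the incidence diagram. A contraction $\varphi\colon \Gamma \to \Gamma'$ in $\DGgraph$ consists (Definition \ref{def:weak-eq-str-graphs}) of a functor $I(\Gamma) \to I(\Gamma')$ together with a lift of the pullback natural transformation $\varphi^*I_{\Gamma'} \to I_\Gamma$ to a morphism $\varphi^*\widetilde{I_{\Gamma'}} \to \widetilde{I_\Gamma}$ of $\G$-valued diagrams. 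Postcomposing everything with the functor
\[
  \Psi\colon \G \overset{D}{\lra} \G^{\op} \overset{\Upsilon}{\lra} (\Set_{\DG})^{\op},
\]
where $D$ is the interstice duality of Corollary \ref{cor:duality} and $\Upsilon$ is the opposite Yoneda embedding followed by $|\DG|$, the data of $\Gamma$ yields a diagram $\Psi \circ \widetilde{I_\Gamma}\colon I(\Gamma) \to (\Set_{\DG})^{\op}$, and $\DG^{\Gamma}$ is by definition its limit in $(\Set_{\DG})^{\op}$ (equivalently, a colimit in $\Set_{\DG}$, which exists since $\Set_{\DG}$ is cocomplete).

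The key observation is that taking the limit of a diagram indexed by a varying category is functorial in an appropriate sense. Concretely, given a contraction $\varphi\colon \Gamma \to \Gamma'$, I would proceed in two steps. \emph{Step 1: change of diagram shape.} The functor $\varphi\colon I(\Gamma) \to I(\Gamma')$ induces a restriction $\varphi^*$ on diagram categories, and for any target category with limits there is a canonical comparison morphism $\lim_{I(\Gamma')} (\Psi\circ\widetilde{I_{\Gamma'}}) \to \lim_{I(\Gamma)} \varphi^*(\Psi\circ\widetilde{I_{\Gamma'}})$; equivalently, restriction along $\varphi$ together with the universal cone gives a cone over the restricted diagram. \emph{Step 2: the structure morphism.} The lift datum gives a morphism of $I(\Gamma)$-indexed diagrams $\varphi^*\widetilde{I_{\Gamma'}} \to \widetilde{I_\Gamma}$ in $\G$; applying $\Psi$ (which, being a functor, sends this to a morphism of diagrams in $(\Set_{\DG})^{\op}$) and passing to limits produces a morphism $\lim_{I(\Gamma)} \varphi^*(\Psi\circ\widetilde{I_{\Gamma'}}) \to \lim_{I(\Gamma)} (\Psi\circ\widetilde{I_\Gamma}) = \DG^{\Gamma}$. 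Composing the two steps yields the desired morphism $\DG^{\Gamma'} \to \DG^{\Gamma}$ in $(\Set_{\DG})^{\op}$, i.e.\ $\DG^{\Gamma} \to \DG^{\Gamma'}$ in $\Set_{\DG}$ — note the variance is correct because we land in $(\Set_{\DG})^{\op}$, consistent with the claimed functor $\DGgraph \to (\Set_{\DG})^{\op}$.

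Finally I would check the functoriality axioms: the assignment respects identities (both steps reduce to identities when $\varphi$ is the identity) and composition. For composition, given $\Gamma \overset{\varphi}{\to} \Gamma' \overset{\varphi'}{\to} \Gamma''$, one must verify that the comparison morphism for $\varphi'\varphi$ factors as the composite of the two comparison morphisms; this is a standard bookkeeping check using the universal property of limits and the fact that the lift of a composite weak equivalence is the composite of the lifts (which is part of the composition law in $\DGgraph$, itself ultimately the composition in the semidirect product description of $\DGgraph$, cf.\ Theorem \ref{thm:decomp} and the discussion around Definition \ref{defi:semidirect}). I expect the main obstacle to be purely organizational rather than deep: one must be careful that the two sources of variance — change of diagram shape (contravariant in the index functor direction in a subtle way, since a contraction collapses a subforest and hence $I(\Gamma') $ is a quotient-like target) and the structure morphism of $\G$-diagrams — combine coherently, and that the limits in question genuinely exist in $(\Set_{\DG})^{\op}$. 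The latter is immediate since $\Set_{\DG} = \Fun(\DG^{\op},\Set)$ is complete and cocomplete, so all the relevant (co)limits are computed pointwise. There is no homotopical subtlety here: everything takes place in ordinary (not $\infty$-) categories, so the argument is a direct application of the elementary functoriality of limits in a diagram that varies.
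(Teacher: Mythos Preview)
Your proposal is correct and follows essentially the same approach as the paper: the paper's proof likewise factors the induced map as the composite $\lim \Upsilon D \widetilde{I}_{\Gamma'} \to \lim \Upsilon D \widetilde{I}_{\Gamma'}\varphi \to \lim \Upsilon D \widetilde{I}_{\Gamma}$, where the first arrow is the canonical comparison $\lim F \to \lim F\varphi$ and the second comes from the natural transformation $\eta$, and then declares the functoriality check to be straightforward. Your write-up is simply a more detailed version of the same two-step argument.
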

\begin{proof} A morphism $\varphi$ between $\DG$-structured graphs $\Gamma$ and $\Gamma'$ consists of a functor
	$\varphi: I(\Gamma) \to I(\Gamma')$ of incidence categories together with 
	a natural transformation $\eta: \widetilde{I}_{\Gamma'} \circ \varphi \to
	\widetilde{I}_\Gamma$. We obtain a canonical sequence of morphisms
	\[
		\lim \Upsilon D \widetilde{I}_{\Gamma'} \lra 
		\lim \Upsilon D \widetilde{I}_{\Gamma'} \varphi \overset{\eta}{\lra}
		\lim \Upsilon D \widetilde{I}_{\Gamma}
	\]
	whose composite defines the desired morphism $\DG^{\Gamma'} \to \DG^{\Gamma}$. Here the first
	morphism is the natural morphism $\lim F \to \lim F \varphi$ which exists for any
	functor $F: I(\Gamma') \to \Cb$ provided that both limits exist. It is straightforward to
	verify the functoriality of this association.
\end{proof}

Let $\Cb$ be a combinatorial model category, and let $X: \DG^{\op} \to \Cb$ a $\G$-structured
$2$-Segal object in $\Cb$. We form the right homotopy Kan extension of $X$ along the Yoneda embedding 
$\Upsilon: \DG^{\op} \to (\Set_{\DG})^{\op}$ and take the composite with the above functors 
to obtain a canonical functor
\begin{equation}\label{eq:RX}
	RX: P(S,M) \lra \Ho(\Cb),\; \Gamma \mapsto RX({\Gamma})
\end{equation}
where $RX(\Gamma) := R \Upsilon_*X (\DG^{\Gamma})$ and $\Gamma$ is equipped with its natural
$\DG$-structured from Proposition \ref{prop:emb-graph-str}. Finally, we define
\[
	X(S,M) := \ind RX.
\]

\begin{thm}\label{thm:state-sum} The object $X(S,M)$ comes equipped with a canonical isomorphism
	\begin{equation}\label{eq:statesum}
			RX({\Gamma}) \overset{\cong}{\lra} X(S,M), 
	\end{equation}
	for every object $\Gamma$ in $P(S,M)$.
\end{thm}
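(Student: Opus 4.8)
The statement asserts that the functor $RX\colon P(S,M) \to \Ho(\Cb)$ of \eqref{eq:RX} sends every morphism to an isomorphism, so that its colimit $X(S,M)$ receives a canonical isomorphism from each value $RX(\Gamma)$. The plan is to reduce this to the $2$-Segal property of $X$ together with the combinatorics of contractions of spanning graphs. First I would observe that $P(S,M)$ is connected (indeed contractible, by Theorem \ref{thm:graph-contr}), so it suffices to show that $RX$ sends each generating morphism---a single edge contraction $\Gamma \to \Gamma'$ collapsing one non-loop edge---to a weak equivalence; the colimit over a connected diagram of objects with all transition maps invertible is then canonically identified with any one of them, which is exactly \eqref{eq:statesum}.

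The core of the argument is local. Fix an edge contraction $\Gamma \to \Gamma'$ at an internal edge $e = \{h, h'\}$ joining distinct vertices $v, v'$. Under interstice duality $D\colon \G \to \G^{\op}$ and the realization functor of this section, the $\DG$-set $\DG^{\Gamma}$ is the limit of the diagram $I(\Gamma) \to (\Set_\DG)^{\op}$; contracting $e$ replaces the pushout-type corner $H(v) \to \{h,h'\} \leftarrow H(v')$ by the merged vertex set, and---dually---replaces a certain gluing of two $\DG$-simplices along a common face by a single larger $\DG$-simplex. Concretely, on the $i^*$-underlying simplicial sets, $i^*\DG^{\Gamma}$ and $i^*\DG^{\Gamma'}$ differ precisely by subdividing one of the polygonal cells: passing from $\Gamma'$ to $\Gamma$ cuts a polygon $P_{n+1}$ into two subpolygons along a diagonal, i.e. it is a single ``flip''/subdivision step of the kind appearing in Definition \ref{def:segal}(2). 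I would make this precise using the combinatorial model $\G$ of $\DG$-structured sets from \S\ref{sec:order} and the description of limits of corner diagrams $H(v) \to \{e,e'\} \leftarrow H(v')$ in $\G$ established in the proof of Proposition \ref{prop:Gc-operad} (whose underlying set is $(H(v)\setminus\{e\}) \cup (H(v')\setminus\{e'\})$).

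Granting this, the map $RX(\Gamma') \to RX(\Gamma)$ is identified with a derived membrane comparison map $(\Delta^{\Tc'}, X)_R \to (\Delta^{\Tc}, X)_R$ between two triangulations/subdivisions of the same polygonal complex that differ by one diagonal; by the $2$-Segal condition on $X$ (applied one triangle-flip at a time, exactly as membrane spaces for $2$-Segal objects are shown to be triangulation-independent in \cite{HSS1}), this map is a weak equivalence in $\Cb$, hence an isomorphism in $\Ho(\Cb)$. This handles every edge contraction, and since $P(S,M)$ is connected, $RX$ factors through the groupoid completion of $P(S,M)$; the colimit $X(S,M) = \ind RX$ is then canonically isomorphic to $RX(\Gamma)$ for every $\Gamma$, giving \eqref{eq:statesum}. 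Functoriality of the isomorphisms in $\Gamma$ follows from the universal property of the colimit, since the structure maps $RX(\Gamma) \to X(S,M)$ are the colimit coprojections.

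The main obstacle I anticipate is the bookkeeping in the second paragraph: matching the $\DG$-set realization $\DG^{\Gamma}$ of an embedded spanning graph with an honest polygonal/triangulated complex, and checking that an edge contraction corresponds to a single elementary subdivision move rather than something more complicated (e.g. when $e$ is a loop, or when the two endpoints carry nontrivial $\Gen_0$-twisting, so that the duality functor $D$ and the gluing data interact). For loop edges and boundary subtleties one should either argue that such contractions do not occur among the generators of $P(S,M)$ for stable $(S,M)$, or extend the $2$-Segal comparison to the cyclic/$\DG$-flavored membrane spaces directly; in the oriented case this is the cyclic $2$-Segal membrane calculus of \cite{HSS-triangulated}, and the general case should go through verbatim once the structured interstice duality $D$ is carried along. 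I would isolate that compatibility as the single lemma to prove carefully, with everything else being formal.
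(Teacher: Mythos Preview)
Your approach is essentially the same as the paper's: show that the $2$-Segal property forces $RX$ to invert every contraction, then use contractibility of $P(S,M)$ to identify the colimit with each value; the paper states this in three lines and outsources the ``$2$-Segal $\Rightarrow$ edge-contraction maps are equivalences'' step to \cite{HSS-triangulated}, while you spell out the local polygon-subdivision picture. One correction: your sentence ``the colimit over a \emph{connected} diagram of objects with all transition maps invertible is then canonically identified with any one of them'' is false as stated (monodromy can obstruct this); you need \emph{simple} connectedness, which is exactly what the contractibility of $P(S,M)$ (Theorem \ref{thm:graph-contr}/\ref{thm:contractible}) gives you and what the paper invokes. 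Your concern about loop edges is moot: morphisms in $P(S,M)$ collapse forests, so loops are never contracted.
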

\begin{proof}
	The claimed isomorphisms are simply the maps which consitute the colimit cone under the
	diagram $RX$. These maps are isomorphisms since, by Theorem \ref{thm:contractible}, the
	diagram $RX$ is indexed by a simply connected index category and, due to the $2$-Segal
	property satisfied by $X$, maps all morphisms to isomorphisms in $\Ho(\Cb)$ (see
	\cite{HSS-triangulated}).
\end{proof}

\begin{rem} Theorem \ref{thm:state-sum} shows that the structured surface invariant $X(S,M)$ can be explicitly 
	computed via the formula 
	\[
		X(S,M) \cong \hopro_{\{\DG^n \to \DG^{\Gamma}\}} X_n
	\]
	where $\Gamma$ is any chosen spanning graph embedded in $S$. This formula can be regarded as
	a categorified variant of the state sum formulas for surface invariants in the context of
	topological field theory.
\end{rem}

Consider the functor
\[
  \DGgraph \lra \Cb, \; \Gamma \mapsto RX(\Gamma).
\]
Due to the $2$-Segal property of $X$, it maps weak equivalences of $\DG$-structured graphs to weak
equivalence in $\Cb$. We can pass to classifying spaces to obtain a map of topological spaces.
\[
  |\DGgraph| \lra |W_{\Cb}|
\]
where $W_{\Cb}$ denotes the category of weak equivalences in $\Cb$. By Theorem \ref{thm:g-graph}, we
have
\[
	|\DGgraph| \simeq \coprod_{(S,M)} B\, \Mod^\GG(S,M)
\]
so that we obtain, for every stable $\GG$-structured surface $(S,M)$, a map
\[
	\rho: B\, \Mod^\GG(S,M) \lra |W_{\Cb}|.
\]
The topological space $|W_{\Cb}|$ is a model for the maximal $\infty$-groupoid in the simplicial
localization of $\Cb$ with respect to $W$ (cf. \cite{dwyer-kan}). Therefore, the map $\rho$ realizes
a coherent action of the mapping class group $\Mod^\GG(S,M)$ on $\rho(*) \cong X(S,M)$. 

\begin{thm}\label{thm:mod-act} The structured mapping class group $\Mod^\GG(S,M)$ acts coherently on
  the object $X(S,M)$.
\end{thm}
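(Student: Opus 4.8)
The plan is to assemble the statement from pieces already established, the only genuinely new ingredient being the passage from a map out of a classifying space to a coherent group action. First I would recall the functor
\[
	\DGgraph \lra \Cb, \quad \Gamma \mapsto RX(\Gamma) = R\Upsilon_* X(\DG^{\Gamma}),
\]
introduced just before the statement via \eqref{eq:RX}. Because $X$ is $2$-Segal and the $\DG$-set realization sends contractions of $\DG$-structured graphs to maps which become isomorphisms after applying $R\Upsilon_* X$ — this is precisely the mechanism producing the isomorphisms \eqref{eq:statesum} in Theorem \ref{thm:state-sum} — this functor carries every morphism of $\DGgraph$ into the subcategory $W_{\Cb} \subset \Cb$ of weak equivalences. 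Taking nerves and geometric realizations yields a continuous map $|\DGgraph| \to |W_{\Cb}|$.

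Next I would invoke Theorem \ref{thm:g-graph} to identify $|\DGgraph| \simeq \coprod_{(S,M)} B\,\Mod^\GG(S,M)$; concretely, on the component of a fixed topological type $(S,M)$ this identification is induced by the equivalence of categories $\widetilde{\rho}\colon \Mod^\GG(S,M) \ltimes P(S,M) \to \DGgraph$ of Theorem \ref{thm:decomp} together with the contractibility of $P(S,M)$ (Theorem \ref{thm:graph-contr}), which identifies the realization of the semidirect product with $B\,\Mod^\GG(S,M)$. Composing with the projection $\Mod^\GG(S,M) \ltimes P(S,M) \to \B\Mod^\GG(S,M)$ and with the above realization map gives, for each stable $\GG$-structured surface $(S,M)$, a continuous map
\[
	\rho\colon B\,\Mod^\GG(S,M) \lra |W_{\Cb}|.
\]
Its basepoint is represented, under the identification above, by a chosen spanning graph $\Gamma \in P(S,M)$, and $\rho$ sends it to $RX(\Gamma)$, which by Theorem \ref{thm:state-sum} is canonically isomorphic to $X(S,M) := \varinjlim RX$. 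Thus $\rho$ is a pointed map with value $X(S,M)$.

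To conclude, I would read off the coherent action from $\rho$ using the fact that $|W_{\Cb}|$ models the maximal $\infty$-groupoid of the Dwyer--Kan simplicial localization of $\Cb$ along its weak equivalences (cf. \cite{dwyer-kan}) — equivalently, the space underlying the homotopy category-with-coherences in which the object $X(S,M)$ lives. A based map from $B\,\Mod^\GG(S,M) = K(\Mod^\GG(S,M),1)$ into this space is, by definition, a coherent action of the discrete group $\Mod^\GG(S,M)$ on $X(S,M)$: on $\pi_1$ it recovers the naive action by automorphisms in $\Ho(\Cb)$ (induced by the self-equivalences of $\Gamma$ in $\DGgraph$ coming from the mapping class group factor of the semidirect product), while the higher homotopy of $|W_{\Cb}|$ supplies the compatibility data.

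The hard part will be Step 3: making precise the equivalence ``$|W_{\Cb}| \simeq$ maximal $\infty$-groupoid of the localization'' at the level of generality used here, and matching the $\pi_1$-action with the functor $\Gamma \mapsto RX(\Gamma)$ restricted to $\Aut_{\DGgraph}(\Gamma)$. A clean way to avoid set-theoretic and fibrancy issues with $W_{\Cb}$ is to replace $\Cb$ by a small full subcategory of bifibrant objects containing all the $RX(\Gamma)$ and closed under the relevant homotopy limits, or to argue directly with the hammock localization; with the combinatorial input of Theorems \ref{thm:g-graph} and \ref{thm:state-sum} granted, the remaining verifications are routine and formally identical to the oriented case treated in \cite{HSS-triangulated}.
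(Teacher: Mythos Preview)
Your proposal is correct and follows exactly the approach of the paper: the theorem is stated there without a separate proof, as a direct consequence of the paragraph immediately preceding it, which constructs the map $\rho\colon B\,\Mod^\GG(S,M)\to |W_{\Cb}|$ via the functor $\Gamma\mapsto RX(\Gamma)$, the $2$-Segal property, and Theorem~\ref{thm:g-graph}, and then interprets $|W_{\Cb}|$ via \cite{dwyer-kan}. Your write-up is in fact more detailed than the paper's own discussion (spelling out the role of the semidirect product, the basepoint identification via Theorem~\ref{thm:state-sum}, and the size issues for $W_{\Cb}$).
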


\subsection{Examples}

We sketch some explicit examples of structured $2$-Segal objects and describe the corresponding
categorified state sum invariants. The examples arise more naturally as cosimplicial objects
\[
	\F: \Delta \lra \Cb
\]
with additional structure. We say that $\F$ is $2$-coSegal if $\F^{\op}$ is $2$-Segal and translate
all other concepts in a similar way.

\subsubsection{Fundamental groupoids}
\label{sec:fundamental}

Let $\Groupoids$ denote the category of small groupoids. Given a marked unoriented surface $(S,M)$,
we define the fundamental groupoid $\Pi_1(S,M)$ to be the groupoid with set of objects $M$ and
morphisms given by homotopy classes of paths in $S$ between points in $M$. The category $\Groupoids$
is equipped with the trivial model structure so that weak equivalences are given by isomorphisms
(as opposed to equivalences) of groupoids.

Consider the codihedral groupoid
\[
	\F: \Xi \lra \Groupoids, \; [n] \mapsto \Pi_1(D, \{0,1,\dots,n\})
\]
where $\Pi_1(D,\{0,1,\dots,n\})$ denotes the fundamental groupoid of the unit disk $D$ in $\CC$, relative
to the subset $\{0,1,\dots,n\} \subset D$ of $(n+1)$st roots of unity. The dihedral functoriality is
most naturally seen using a topological model for $\Xi$: We represent a morphism $[m] \to [n]$ by a homotopy equivalence $S^1
\to S^1$ which maps $\{0,1,\dots,m\}$ to $\{0,1,\dots,n\}$. We attach disks to obtain a map of pairs
$(D,\{0,1,\dots,m\}) \to (D,\{0,1,\dots,n\})$ and pass to fundamental groupoids. 

\begin{prop}\label{prop:kampen} The codihedral groupoid $\F$ is $2$-coSegal. The value $\F(S,M)$ of $\F$ on an
	unoriented marked surface $(S,M)$ is the fundamental groupoid $\Pi_1(S,M)$ of $S$ relative
	to $M$.
\end{prop}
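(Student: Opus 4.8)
The plan is to prove the two assertions of Proposition~\ref{prop:kampen} separately: first the $2$-coSegal property of the codihedral groupoid $\F$, and then the identification $\F(S,M)\cong\Pi_1(S,M)$.

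For the $2$-coSegal condition, I would dualize Definition~\ref{def:segal}(2): I must show that for every $n\geq 2$ and every triangulation $\Tc$ of $P_{n+1}$, the canonical morphism
\[
	(\Delta^\Tc,\F) \lra \F([n]) = \Pi_1(D,\{0,1,\dots,n\})
\]
is an isomorphism of groupoids, where $(\Delta^\Tc,\F)$ is the colimit of $\F$ over the category of simplices of $\Delta^\Tc$. Since $\Groupoids$ carries the trivial model structure, there is no derived subtlety: I just need a genuine colimit of groupoids to be computed correctly. The key input is the van Kampen theorem for fundamental groupoids (relative to a fixed set of base points), in the strong form due to R.\ Brown: the fundamental groupoid functor $\Pi_1(-,A)$ sends suitable open covers / pushouts of spaces to pushouts of groupoids. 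Concretely, each geometric triangle $\sigma$ of $\Tc$ with vertices $i,j,k$ contributes the groupoid $\Pi_1(\Delta^2,\{i,j,k\})$, which is the groupoid of a contractible disk on three marked points; gluing these along the shared edges (each an interval on two marked points, whose relative fundamental groupoid is the contractible groupoid on two objects) exactly reproduces $\Pi_1(D,\{0,1,\dots,n\})$ because the disk $D$ is glued from the triangles along those edges and is itself contractible. I would carry this out by first checking the base case $n=2$ (trivial, a single triangle), then inducting on the number of triangles: a triangulation $\Tc$ of $P_{n+1}$ decomposes as $\Tc'\cup\sigma$ along a single edge, and the colimit over the category of simplices of $\Delta^\Tc$ is the pushout of the colimit over $\Delta^{\Tc'}$ and $\Pi_1(\Delta^2)$ over the interval groupoid; van Kampen identifies this with $\Pi_1$ of the union of the underlying disks, which is again a contractible disk on $n+1$ marked points, i.e.\ $\F([n])$. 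The dihedral (as opposed to merely simplicial) functoriality does not interfere here, since the $2$-Segal maps only involve the simplicial structure.

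For the second assertion, recall that by the construction preceding Theorem~\ref{thm:state-sum}, $\F(S,M)$ is defined as $\ind RX$ over $P(S,M)$ applied to $\F$ viewed via its right Kan extension along $\Upsilon\colon\Xi^{\op}\to(\Set_{\Xi})^{\op}$, evaluated on the $\Xi$-set realizations of spanning graphs; by Theorem~\ref{thm:state-sum} this colimit is computed by any single spanning graph $\Gamma$, giving $\F(S,M)\cong RX(\Gamma)=(\,\Xi^\Gamma,\F\,)$, the limit of $\F$ over the diagram indexed by $\Xi^\Gamma$. Now the $\Xi$-set realization $\Xi^\Gamma$ is, up to the standard combinatorial description of geometric realizations of $\Xi$-sets (cf.\ \S\ref{sec:topmodel} and the proof of Theorem~\ref{thm:CG=DG}), a combinatorial model for the surface $S$ together with the marked set $M$: its simplices correspond to the cells of $\Gamma$ and its realization is homeomorphic to a neighborhood retract of $(S,M)$. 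Thus the limit $(\,\Xi^\Gamma,\F\,)$ glues copies of the disk groupoids $\Pi_1(D,\{0,\dots,n\})$ — one for each vertex of $\Gamma$ of valency $n+1$, i.e.\ for each complementary disk of $\Gamma$ in $S$ — along the interstice groupoids attached to edges. Applying van Kampen once more, this gluing reconstructs $\Pi_1(S,M)$: the surface $S$ is obtained from its cell decomposition dual to $\Gamma$ by attaching a disk for each vertex of $\Gamma$ along the boundary pattern encoded in the edges, and $\Pi_1(-,M)$ turns this into the corresponding pushout of groupoids. Formally I would (i) make precise the homeomorphism between $|\,\Xi^\Gamma\,|$ (or rather its dual cell structure) and $(S,M)$ using Corollary~\ref{cor:structured-circle-orders} and the interstice duality $D$, (ii) observe that the diagram $\F\circ(\text{this cell structure})$ is exactly the van Kampen diagram for $S$, and (iii) invoke Brown's theorem.

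The main obstacle I anticipate is step~(i) of the last paragraph: carefully matching the abstract limit over the diagram $\Xi^\Gamma$ in $(\Set_\Xi)^{\op}$ (which passes through interstice duality $D$ and the Yoneda embedding) with a concrete van Kampen cover of $S$ by disks indexed by the vertices of $\Gamma$. The duality functor $D$ swaps the role of marked points and interstices, so one must check that the disks appearing in $\F([n])$ get glued along the correct boundary arcs — those dual to the edges of $\Gamma$ — and with the correct dihedral (orientation) identifications recorded by the $\Xi$-structure on $\Gamma$. Once the bookkeeping is set up correctly, the actual homotopy-theoretic content reduces to two applications of the van Kampen theorem for fundamental groupoids, and the rest is formal manipulation of limits/colimits of groupoids in the trivial model structure.
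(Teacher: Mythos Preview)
Your proposal is correct and follows essentially the same approach as the paper: both parts reduce to the van Kampen theorem for fundamental groupoids, first for a polygonal subdivision of the disk (the $2$-coSegal condition) and then for the polygonal decomposition of $(S,M)$ dual to a spanning graph. The paper is slightly more economical in the first step, checking only a single subdivision of $P_{n+1}$ into two sub-polygons rather than inducting over a full triangulation, but the content is the same. One small slip: in the second part you write ``the limit of $\F$ over the diagram indexed by $\Xi^\Gamma$'', but since $\F$ is covariant (codihedral) the dualized construction yields a \emph{colimit} in $\Groupoids$; you had this right in the first part.
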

\begin{proof}
To verify the $2$-Segal property, we have to show that, for any subdivision of the $n+1$-gon $P_n$
into two polygons $P_m$ with vertices $\{0,1,\dots, i, j, j+1, \dots, n\}$ and $P_l$ with vertices
$\{i,i+1,\dots, j\}$, the corresponding functor
\[
	f: \Pi_1(D, \{0,1,\dots, i, j, j+1, \dots, n\}) \coprod_{\Pi_1(D, \{i,j\})} \Pi_1(D, \{i,i+1,\dots, j\}) \lra \Pi_1(D,\{0,1,\dots,n\})
\]
is an isomorphism of groupoids. Up to isomorphism of groupoids we can alternatively describe $f$
as the natural functor
\[
	f': \Pi_1(P_m) \coprod_{\Pi_1(P_1)} \Pi_1(P_l) \lra \Pi_1(P_n)
\]
induced from expressing $P_n$ as the union of $P_m$ and $P_l$ with intersection $P_m \cap P_l =
P_1$. Here, all fundamental groupoids are taken with respect to the vertices of the respective
polygon. The fact that $f'$ is an isomorphism of groupoids is an easy instance of van Kampen's theorem
(see, e.g., \cite[6.7.2]{brown}).
Similarly, to compute $\F(S,M)$, note that we can pass to the dual of a spanning graph in $S
\setminus M$ to express $(S,M)$ as a union of polygons. Interpreting formula \eqref{eq:statesum} in
these terms, another application of van Kampen's theorem shows that $\F(S,M)$ is the fundamental
groupoid $\Pi_1(S,M)$ computed as a colimit parametrized by the above polygonal subdivision of
$(S,M)$.
\end{proof}

\subsubsection{Relative homology}

Let $\Ab$ be the category of abelian groups. For $n \ge 0$, we define the abelian group $\F_n$ to be
the kernel of the homomorphism
\[
	\ZZ^{n+1} \to \ZZ,\; (z_0,z_1,\dots,z_n) \mapsto \sum z_i.
\]
These abelian groups organize into a codihedral abelian group
\[
	\F: \Xi \lra \Ab, \; [n] \mapsto \F_n.
\]
This can be seen by forming a larger codihedral abelian group given as the composite
\[
	\Xi \overset{\lambda}{\lra} \Set \lra \Ab
\]
and verifying that $\F$ is a codihedral subgroup. Here, $\lambda$ denotes the functor from
Proposition \ref{prop:forget} and $\Set \to \Ab$ is the functor of taking the free abelian group
on a set.

\begin{prop} The codihedral abelian group $\F$ is $2$-coSegal. The value $\F(S,M)$ of $\F$ on an
	unoriented marked surface $(S,M)$ is the relative homology $H_1(S,M)$ of $S$ relative
	to $M$.
\end{prop}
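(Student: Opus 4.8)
The plan is to combine the two parts of the statement: first verify that $\F\colon \Xi\to\Ab$, $[n]\mapsto \F_n$, is $2$-coSegal, and then identify the value $\F(S,M)$ with $H_1(S,M)$. For the $2$-coSegal property it suffices, by Definition \ref{def:segal} with the trivial model structure on $\Ab$ (weak equivalences $=$ isomorphisms, so homotopy limits are ordinary limits), to check that for every triangulation $\Tc$ of $P_{n+1}$ the map $\F_n\to (\Delta^\Tc,\F)$ is an isomorphism; by the usual reduction it is enough to treat a single chord, i.e.\ a subdivision of $P_{n+1}$ into $P_m$ with vertices $\{0,\dots,i,j,\dots,n\}$ and $P_l$ with vertices $\{i,\dots,j\}$, glued along the edge $[i,j]=P_1$. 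Here I would use the description of $\F$ as a codihedral subgroup of the free-abelian-group-valued functor $[n]\mapsto \ZZ\langle \lambda([n])\rangle$: concretely, $\F_n=\ker(\ZZ^{n+1}\to\ZZ)$ is the group of $0$-chains of total degree $0$ on the vertex set, i.e.\ $\widetilde H_0$ of the vertex set, which is functorial and where the dihedral action through $\lambda$ permutes coordinates. Then the membrane object $(\Delta^\Tc,\F)$ is the fibre product $\F_m\times_{\F_1}\F_l$, and one checks directly that a pair of degree-zero chains on $\{0,\dots,i,j,\dots,n\}$ and on $\{i,\dots,j\}$ agreeing on $\{i,j\}$ glues uniquely to a degree-zero chain on $\{0,\dots,n\}$: injectivity because a chain is determined by its restriction to the two vertex subsets which cover $\{0,\dots,n\}$ and overlap exactly in $\{i,j\}$, surjectivity because an arbitrary degree-zero chain on $\{0,\dots,n\}$ can be split (e.g.\ by transferring the excess mass of its restriction to one piece onto the separating vertex $i$). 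This is an elementary Mayer--Vietoris computation for reduced $H_0$ of a cover by two subsets of cardinality $\ge 1$ with two-point intersection, entirely parallel to the van Kampen argument in the proof of Proposition \ref{prop:kampen}.

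For the second assertion, I would invoke the general machinery of \S\ref{subsec:str-nerv} and Theorem \ref{thm:state-sum}: since $\F$ is $2$-coSegal, the invariant $\F(S,M)$ is computed by the colimit $\varinjlim R\F(\Gamma)$ over $P(S,M)$ (which is contractible by Theorem \ref{thm:graph-contr}), and by \eqref{eq:statesum} it agrees with $R\F(\Gamma)$ for any single spanning graph $\Gamma$ of $(S,M)$, equivalently with the membrane colimit $\varinjlim_{\{\Delta^n\to\DG^\Gamma\}}\F_n$ associated with the polygonal decomposition of $(S,M)$ dual to $\Gamma$. Passing to the dual of a spanning graph expresses $(S,M)$ as a union of polygons glued along edges, each vertex landing in $M$. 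Under $\F$, a single polygon $P_{n+1}$ is sent to $\F_n=\widetilde H_0$ of its vertices, i.e.\ to $H_1$ of the polygon relative to its vertices; since the dihedral gluing of $\F$ along an edge $P_1$ is exactly the pushout $\F_m\amalg_{\F_1}\F_l$ used above, the colimit defining $\F(S,M)$ is precisely the Mayer--Vietoris (co)limit presentation of $H_1(S,M)$ obtained by decomposing $S$ along the $1$-skeleton of this polygonal cell structure. Hence $\F(S,M)\cong H_1(S,M)$, the relative homology of $S$ modulo $M$.

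The main obstacle, as in Proposition \ref{prop:kampen}, is bookkeeping rather than depth: one must be careful about the dihedral (as opposed to merely cyclic) functoriality, i.e.\ that the orientation-reversing generators of $D_{n+1}$ act on $\F_n$ by the expected sign-free permutation of coordinates through $\lambda_n\colon \Gen_n\to S_{n+1}$, so that $\F$ really is a \emph{codihedral} and not just a cocyclic subobject of $[n]\mapsto\ZZ\langle\lambda([n])\rangle$ — this is what makes $\F(S,M)$ well defined on \emph{unoriented} surfaces and forces relative $H_1$ with $\ZZ$ (trivial, i.e.\ orientation-independent) coefficients rather than twisted coefficients. A secondary point requiring care is the precise match between the interstice-duality functor $D\colon\G\to\G^\op$ entering the definition of $\DG^\Gamma$ and the combinatorics of the dual polygonal subdivision, so that the membrane colimit over $\{\Delta^n\to\DG^\Gamma\}$ is literally the colimit over the poset of faces of the polygonal decomposition; once this identification is in place, the computation of $H_1(S,M)$ as a colimit of the $H_1$ of the polygons is the standard cellular/Mayer--Vietoris fact.
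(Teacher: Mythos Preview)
Your overall strategy---identify $\F_n$ with $H_1$ of the disk relative to its marked points (equivalently $\widetilde H_0$ of the vertex set) and invoke Mayer--Vietoris---is exactly the paper's approach; the paper's proof is a one-line reference to Proposition~\ref{prop:kampen} with Mayer--Vietoris replacing van Kampen.

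There is, however, a genuine slip in your first paragraph. Since $\F:\Xi\to\Ab$ is \emph{covariant}, the $2$-coSegal condition (meaning $\F^{\op}$ is $2$-Segal in $\Ab^{\op}$) requires that the natural map
\[
\F_m \amalg_{\F_1} \F_l \lra \F_n
\]
be an isomorphism---a \emph{pushout}, with the arrow pointing toward $\F_n$, not the fibre product $\F_n\to\F_m\times_{\F_1}\F_l$ you write. The ``restriction'' maps $\F_m\to\F_1$ you invoke are not part of the codihedral structure: restricting a degree-zero chain to a two-element subset does not in general give a degree-zero chain, so these maps do not land in $\F_1$. (If one instead forms the fibre product over $\ZZ^2$ using honest coordinate restriction, a rank count gives $m+l-2=n-1$, not $n$, so that cannot be $\F_n$ either.) Your second paragraph silently corrects this when you write ``the pushout $\F_m\amalg_{\F_1}\F_l$'', and once that correction is made throughout, the argument is fine: the Mayer--Vietoris sequence for $H_1$ of the $(n{+}1)$-gon relative to its vertices, cut along a diagonal, gives precisely the short exact sequence
\[
0\lra \F_1 \lra \F_m\oplus\F_l \lra \F_n \lra 0,
\]
exhibiting $\F_n$ as the required pushout. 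The identification $\F(S,M)\cong H_1(S,M)$ then follows as you and the paper both indicate.
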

\begin{proof} Note that $\F_n$ can be intepreted as the relative homology of the pair
	$(D,\{0,1,\dots,\})$ from \S \ref{sec:fundamental}. The argument is now very similar to the
	proof of Proposition \ref{prop:kampen}, using the Mayer-Vietoris sequence instead of van
	Kampen's theorem.
\end{proof}

\subsubsection{Topological Fukaya categories}

Let $\k$ be a field. In \cite{HSS-triangulated}, we constructed a cocyclic differential 
$\Zt$-graded category
\[
	\F: \Lambda \lra \dgcatt_{\k}, \; [n] \mapsto \MF^{\ZZ/(n+1)}(\k[z], z^{n+1})
\]
and verified that $\F$ is $2$-coSegal with respect to the Morita model structure on $\dgcatt$. Here,
$\MF^{\ZZ/(n+1)}(\k[z], z^{n+1})$ denotes the category of $\ZZ/(n+1)$-graded matrix factorizations
of the polynomial $z^{n+1}$.
The value of $\F$ on an oriented marked surface $(S,M)$ is called the topological coFukaya category
of $(S,M)$. See \cite{HSS-triangulated} for some explicit calculations of $\F(S,M)$.

As a slight variation, there is a coparacyclic differential $\ZZ$-graded category
\[
	\F_{\infty}: \Lambda_{\infty} \lra \dgcat_{\k}, \;,  [n] \mapsto \MF^{\ZZ}(\k[z], z^{n+1})
\]
given by $\ZZ$-graded matrix factorizations.
The value of $\F_{\infty}$ on a framed marked surface $(S,M)$ provides a $\ZZ$-graded variant of the
topological coFukaya category.

Interpolating between these examples, we can define an $N$-cocyclic differential $\ZZ/2N$-graded category
\[
	\F_{N}: \Lambda_{N} \lra \dgcat_{\k}^{(2N)}, \;,  [n] \mapsto \MF^{\ZZ/N(n+1)}(\k[z], z^{n+1})
\]
so that $\F_N(S,M)$ provides a $\ZZ/2N$-graded variant of the topological coFukaya category
associated to any marked $N$-spin surface $(S,M)$.

\subsubsection{Structured nerves}

Let $\DG$ be a planar crossed simplicial group. Consider the functor
\[
	\F: \DG \lra \Gen_0-\Cat, \; [n] \mapsto \FC(\DG^n|_{\Delta})
\]
from Proposition \ref{prop:cosimplex}. The category $\Gen_0-\Cat$ can be equipped with a model
structure so that weak equivalences are morphisms which, on the underlying categories, induce
equivalences. An argument similar to \cite[7.5]{HSS1} can be used to prove the following statement.

\begin{prop} The $\DG$-object $\F$ is $2$-coSegal.
\end{prop}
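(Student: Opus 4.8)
The plan is to reduce the statement to a known $2$-coSegal result by transporting structure along the equivalence of Proposition \ref{prop:main}. Recall that the functor $\F: \DG \to \Gen_0\text{-}\Cat$ sends $[n]$ to $\FC(\DG^n|_{\Delta})$, and that by Proposition \ref{prop:main} (in its $\DG$-$\Cat$ incarnation) a $\Gen_0$-category is the same datum as a simplicial category with a twisted $\Gen_0$-action. Under this identification, $\F$ corresponds to the cosimplicial category $[n] \mapsto \FC(\DG^n|_{\Delta})$ together with its twisted $\Gen_0$-action, and the $2$-coSegal condition is a condition on the \emph{underlying cosimplicial object} alone: one must check that for every $n\geq 2$ and every triangulation $\Tc$ of $P_{n+1}$, the natural functor $\FC(\DG^n|_{\Delta}) \to (\Delta^\Tc, \F)$ (a limit in $\Gen_0$-$\Cat$, equivalently a homotopy limit of underlying categories since the model structure detects weak equivalences on underlying categories) is an equivalence. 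First I would record that limits in $\Gen_0$-$\Cat$ are computed on underlying simplicial objects, so it suffices to prove the $2$-coSegal property for the cosimplicial category $[n]\mapsto \FC(\DG^n|_{\Delta})$.

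Next I would unwind $\DG^n|_{\Delta} = \Hom_\DG(-,[n])|_\Delta$ as a simplicial set and identify it. Using the canonical factorization in $\DG$ and the equivalence with $\Gc$ (the category of $\DG$-structured sets, Proposition \ref{prop:model-G/I}, Section \ref{sec:combmodel}), the simplicial set $\DG^n|_\Delta$ in simplicial degree $k$ is $\Hom_\DG([k],[n])$, which by canonical factorization is a disjoint union of copies of $\Gen_k$ indexed by $\Hom_\Delta([k],[n])$; combinatorially it is the nerve of a $\Gen_0$-twisted version of the poset/category underlying $\Delta^n$. Concretely, $\FC(\DG^n|_\Delta)$ has objects the vertices $\{0,1,\dots,n\}$ and is freely generated by the $1$-simplices, which are the edges $i\to j$ for $i\leq j$ decorated by the extra $\Gen_0$-data; this is exactly the free category on the subdivided simplex with $\Gen_0$-labels, and a triangulation $\Tc$ of $P_{n+1}$ cuts it into the pushout of the corresponding pieces $\FC(\DG^{\sigma}|_\Delta)$ over triangles glued along edges $\FC(\DG^1|_\Delta)$.

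The main step is then to verify that this pushout (in $\Cat$, taken as a homotopy colimit on the simplicial-set level and passed through $\FC$, which is a left adjoint hence preserves colimits) recovers $\FC(\DG^n|_\Delta)$. This is where I would mimic the argument of \cite[7.5]{HSS1}: the key point is that $\FC$ applied to the relevant simplicial sets gives free categories, and for free categories pushouts along \emph{full subcategory inclusions generated by faces} are computed combinatorially by simply gluing generators and relations; the triangulation gives a planar gluing of simplices whose $1$-skeleton already contains all generators of $\DG^n|_\Delta$, and the higher relations follow because the only relations in $\FC$ of a simplicial set come from $2$-simplices, each of which lies in some triangle of $\Tc$. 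The $\Gen_0$-decorations are carried along formally because they sit entirely on the $1$-simplices and their compatibility is dictated by the simplicial identities, which are respected by the inclusions $\Delta^\sigma \hookrightarrow \Delta^n$. I expect the main obstacle to be the careful bookkeeping of the $\Gen_0$-labels under the pushout — verifying that the limit $(\Delta^\Tc, \F)$ computed in $\Gen_0$-$\Cat$ does not acquire extra automorphisms or fail to glue the twisted action coherently; this reduces, via Proposition \ref{prop:main} and the fact that the $\Gen_0$-action on $\FC(\DG^n|_\Delta)$ is itself free/combinatorial (coming from the action on frames in $\Gc$), to the plain cosimplicial $2$-coSegal statement, which is then the $\Cat$-valued analogue of \cite[7.5]{HSS1} and follows by the same nerve-theoretic computation.
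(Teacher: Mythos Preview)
Your high-level plan---pass to the underlying cosimplicial object and then invoke the argument of \cite[7.5]{HSS1}---is precisely the paper's approach; the paper's entire proof is the sentence pointing to that reference. But several of the concrete claims you use to flesh out the argument are wrong.

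First, a directional slip: for a cosimplicial $\F$ the $2$-coSegal condition asks that the comparison map run from a homotopy \emph{colimit} over $\Delta^\Tc$ to $\F_n$, not from $\F_n$ to a limit as you write in the first paragraph; you tacitly correct this when you switch to pushouts, so the two paragraphs are inconsistent. More seriously, your description of $\FC(\DG^n|_\Delta)$ as ``freely generated by the edges $i\to j$ for $i\leq j$ with $\Gen_0$-decorations'' is incorrect. For $\DG=\Lambda$ this category represents $\C\mapsto\NC_n(\C)$ and is therefore the free category on the $(n{+}1)$-cycle $0\to 1\to\cdots\to n\to 0$; the diagonals appear only as composites, not as generators. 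Your subsequent claims are then internally inconsistent: if the generators really were all edges $i\to j$, then $1\to 3$ would be a generator absent from the $1$-skeleton of $\Tc=\{012,023\}$, and the $2$-simplex $\{1,2,3\}$ of $\Delta^3$ lies in neither triangle of that $\Tc$, contrary to what you assert. The actual mechanism is that the generators of $\FC(\DG^n|_\Delta)$ correspond to the \emph{boundary} arcs of $P_{n+1}$ (which lie in every triangulation), the diagonals of $\Tc$ are sent to composites by the cosimplicial structure maps, and one checks directly that the homotopy pushout of the free-cycle categories for the triangles, glued along the edge categories, collapses back to the free $(n{+}1)$-cycle (with the appropriate $\Gen_0$-twist for general planar $\DG$). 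That verification is the content \cite[7.5]{HSS1} supplies, and it is not the generators-and-relations count you outline.
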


The value of $\F$ on a $\GG$-structured surface can be explicitly computed in various examples.  The
question of finding a general intrinsic interpretation of $\F(S,M)$ in terms of the $\GG$-structured
surface is open.
 
\appendix
 
 \renewcommand{\thesection}{\Alph{section}}
 
 \newpage
\section{The tessellation complex and the Teichm\"uller space}
\subsection{The Stasheff polytopes and the tessellation complex}
 
 Let $P$ be a convex polygon in the plane, with the set of vertices $M$. 
 A {\em tessellation} of $P$ is a (possibly empty) set of non-intersecting diagonals in $P$.
 The set $\Tess(P)$ of tesselations of $P$ is partially ordered by inclusions of
 sets of diagonals. A tessellation $\Pc\in\Tess(P)$ subdivides $P$ into several
 sub-polygons $P'$. We write (allowing a slight abuse of notation) $P'\in\Pc$. Thus maximal elements
 of $\Tess(P)$ are triangulations: tesselations into triangles. 
 
 As well known, there is a convex polytope $\TT(P)$ canonically associated to $P$
 and called the {\em Stasheff polytope} of $P$. The poset of faces of $\TT(P)$ is
 anti-isomorphic with $\Tess(P)$. Thus, vertices of $\TT(P)$ correspond to triangulations 
 and edges to tesselations into all triangles and one 4-gon. The face of $\TT(P)$ corresponding
 to a tessellation $\Pc$, is identified with
 \[
 \TT(\Pc) \,\, := \,\, \prod_{P'\in\Pc}\TT(P'). 
 \]
 The combinatorial structure of $\TT(P)$ depends only on the combinatorial structure of $P$,
 i.e., on the number of vertices. Taking $P$ to be the regular polygon $P_{n+1}$ in $\CC=\RR^2$ with
 the set of vertices 
 \[
 M \,\,=\,\, \cn \,\, = \,\, \bigl\{ \exp(2\pi i k /(n+1)), \,\, k=0, 1, \cdots, n\bigr\}, 
 \]
 we see that the dihedral group $D_{n+1}$ acts on $\TT(P)$ by affine automorphisms.
 This means that we can associate a Stasheff polytope $\TT_M$ to any finite dihedral ordinal
 $M\in\bf\Xi$, $|M|\geq 3$. 
 
 More generally, by a {\em (curvilinear) polygon} we will mean a stable marked
 surface $(S,M)$ such that $S$ is, topologically,  a disk and $M\subset\partial S$. In  this
 case $M$ has a canonical dihedral order and we associate to $(S,M)$ the
 Stasheff polytope $\TT(S,M):= \TT_M$. 
 
 Let now $(S,M)$ be an arbitrary stable marked surface. Following \cite{harer, fomin-shapiro-thurston},
 we call an {\em arc} on $(S,M)$ an isotopy class of unoriented paths $\gamma$ beginning
 and ending on $M$ (possibly at the same point) such that:
 \begin{enumerate}
 \item[(1)] $\gamma$ does not meet any  elements of $M$ other than containing them as endpoints.
 
 \item[(2)] $\gamma$ does not have any self-intersection points  except possibly endpoints which
 coincide. 
 
 \item[(3)] $\gamma$ is not homotopic (rel. endpoints) to a point, nor to a boundary segment
 between two adjacent marked points. 
 
 \end{enumerate}
 
 We denote by $\Ac(S,M)$ the set of arcs for $(S,M)$. Thus, in the case when $(S,M)$ is a polygon, $\Ac(S,M)$
 is identified with the set of diagonals.
 
 A {\em sub-tesselation} (or an {\em arc system}) of $(S,M)$ is a finite subset $\Pc\subset \Ac(S,M)$ which can
 be represented by a system of pairwise non-intersecting paths. Note that these paths must also be 
 pairwise non-homotopic, since they represent different elements of $\Ac(S,M)$. A system of paths
 representing a sub-tesselation $\Pc$
 cuts $S$ into open pieces $S_i^\circ$. By the standard ``bordification" procedure we complete
 each $S_i^\circ$ to a compact surface with boundary $S'_i$ and equip it with a marking
 $M'_i$ formed by points of $M$ lying inside or on the boundary of $S_i^\circ$. Here we use the standard 
 conventions about degenerate cases:
 \begin{enumerate}
 \item[(1)] If a component $S_i^\circ$ approaches an arc $\gamma$ on both sides, $\gamma$ contributes two
 intervals to $\partial S'_i$.
 
 \item[(2)] If, moreover. an endpoint of $\gamma$ is not fully encircled by $S^\circ_i$, it contributes two
 elements to $M'_i$. See Figure \ref{fig:F3}. 
 \end{enumerate}
 
 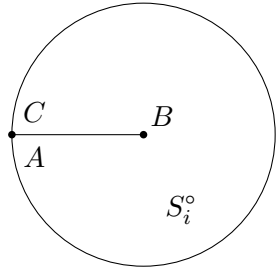
\begin{figure}[h!]
 \centering
 \begin{tikzpicture}[scale =0.5]
 
 \node (B2) at (20,0){}; 
\fill (B2) circle (0.1);
\draw (B2) circle (3.5);

\node(A2) at (16.5,0){};
\fill (A2) circle (0.1); 
\draw (16.5,0) -- (20,0); 

\node at (20.5,0.5){$B$}; 

\node at (17.1 , 0.6){$C$}; 
\node at (17.1 , -0.6){$A$}; 

\node at (21, -2){$S^\circ_i$}; 
 \end{tikzpicture}
 \caption{$\partial S'_i$ consists of three segments $[A,B], [B,C], [C,A]$ and $M_i'=\{A,B,C\}$. 
 \label{fig:F3}
 }
 \end{figure}
 Note that each $(S'_i, M'_i)$ is a stable marked surface. By a slight abuse of notation we write
 $(S'_i, M'_i)\in\Pc$. 
 
 \begin{defi}
A {\em tessellation} of $(S,M)$ is a sub-tesselation $\Pc$ such that each $(S'_i, M'_i)\in\Pc$ is a polygon. 
 For a tesselation $\Pc$ we define its Stasheff polytope as the product
 \[
 \TT(\Pc) \,\,=\,\, \prod_{(S', M')\in\Pc} \TT(S', M').
 \]
 \end{defi}
 
 The set $\Tess(S,M)$ of tesselations of $(S,M)$ is partially ordered by inclusion of subsets in
 $\Ac(S,M)$. Maximal elements are {\em triangulations} of $(S,M)$: tesselations for which
 each $(S',M')$ is a triangle, i.e., $|M'|=3$, $M'\subset\partial S'$. If $\Pc\let \Pc'$, then
 $\TT(\Pc')$ is canonically identified with a face of $\TT(\Pc)$.
 
 \begin{defi}
 The {\em tessellation complex} $\TT(S,M)$ is a polyhedral complex obtained by gluing the polytopes $\TT(\Pc)$ for
 all $\Pc\in\Tess(S,M)$ using the face embeddings $\TT(\Pc')\hookrightarrow \TT(\Pc)$ for $\Pc\leq\Pc'$> 
 \end{defi}
 
 Note that unlike the above references as well as \cite{HSS-triangulated, fock-goncharov}, we do not assume 
 that $S$ is orientable. 
 
 \vskip .2cm
 
 Given a tessellation $\Pc\in\Tess(S,M)$, we associate to it its {\em dual graph} $\Gamma_\Pc$
 by putting one vertex $v_{S',M'}$ inside  each polygon $(S',M')\in\Pc$ and connected the
 adjacent $v_{(S',M')}$ be edges, similarly to \cite[\S 3.3.3]{HSS-triangulated}. When $\Pc$ varies,
 the $\Gamma_\Pc$ run over all isotopy classes of spanning graphs for $(S,M)$.
 This implies the following. 
 
 \begin{prop}\label{prop:tess-graph}
 $\TT(S,M)$ is homotopy equivalent to the nerve of the category $P(S,M)$. 
 \qed
 \end{prop}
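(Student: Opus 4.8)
The plan is to recognize $\TT(S,M)$ as a regular polyhedral (hence regular CW) complex and then to invoke the classical fact that such a complex is homotopy equivalent — indeed homeomorphic, after barycentric subdivision — to the order complex of its poset of faces.

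First I would pin down the face poset of $\TT(S,M)$. By construction $\TT(S,M)$ is glued from the Stasheff polytopes $\TT(\Pc)$, one for every $\Pc\in\Tess(S,M)$, along the face embeddings $\TT(\Pc')\hookrightarrow\TT(\Pc)$ for $\Pc\subseteq\Pc'$. The combinatorial input is that the poset of faces of a single Stasheff polytope $\TT(P)$ is anti-isomorphic to $\Tess(P)$, so that a face of $\TT(\Pc)=\prod_{P'\in\Pc}\TT(P')$ is precisely $\TT(\Pc'')$ for the tessellation $\Pc''\supseteq\Pc$ obtained by refining the pieces of $\Pc$; moreover, if the arc systems of two tessellations $\Pc_1,\Pc_2$ have a compatible union, then $\Pc_1\cup\Pc_2$ is again a tessellation (adjoining pairwise non-crossing arcs to a tessellation only cuts polygons into polygons), and otherwise $\TT(\Pc_1)\cap\TT(\Pc_2)=\emptyset$. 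Together with the fact that isotopy classes of arc systems on $(S,M)$ are distinguished by their underlying sets of arcs, this shows that closed cells are honest polytopes and that the intersection of two closed cells is again a closed cell; hence $\TT(S,M)$ is a regular CW complex whose face poset, ordered by inclusion of closed cells, is canonically anti-isomorphic to $(\Tess(S,M),\subseteq)$, i.e.\ isomorphic to $\Tess(S,M)^{\op}$.

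Next I would use the standard statement that for any regular CW complex $X$ the barycentric subdivision yields a homeomorphism $|X|\cong|\N(\mathcal F(X))|$, where $\mathcal F(X)$ is the poset of faces (see, e.g., Lundell--Weingram). Combined with the previous step this gives $\TT(S,M)\cong|\N(\Tess(S,M)^{\op})|$. It then remains to identify the poset $\Tess(S,M)^{\op}$ with $P(S,M)$. This is done by the dual-graph construction $\Pc\mapsto\Gamma_\Pc$: it is well defined on isotopy classes, and by the remark preceding the statement it is a bijection onto the set of isotopy classes of spanning graphs for $(S,M)$. Under the canonical bijection between the edges of $\Gamma_\Pc$ and the arcs of $\Pc$, a set $T$ of edges of $\Gamma_\Pc$ spans a forest if and only if $\Pc\setminus T$ is again a tessellation (a cycle in $\Gamma_\Pc$ — in particular a loop at a vertex — corresponds exactly to a configuration of arcs whose removal produces a non-disk piece), in which case collapsing $T$ yields $\Gamma_{\Pc\setminus T}$. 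Therefore $P(S,M)$ has a unique morphism $[\Gamma_{\Pc'}]\to[\Gamma_\Pc]$ precisely when $\Pc\subseteq\Pc'$, so $\Pc\mapsto\Gamma_\Pc$ is an isomorphism of posets $\Tess(S,M)^{\op}\buildrel\sim\over\lra P(S,M)$, and hence $\TT(S,M)\cong|\N(P(S,M))|$, a fortiori $\TT(S,M)\simeq|\N(P(S,M))|$.

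The main obstacle is the first step: one must check carefully that gluing the Stasheff polytopes along their faces really produces a regular CW complex, paying attention to the degenerate cases in the bordification conventions of the appendix (an arc having the same polygon on both sides, an arc both of whose endpoints lie at a single marked point, and so on), and one must verify that the dictionary ``collapsing a forest $\leftrightarrow$ deleting arcs'' matches exactly these degenerate cases — equivalently, that loops and multiple edges of dual graphs correspond precisely to the non-disk configurations excluded from $\Tess(S,M)$. Once the regular-CW structure and its face poset are in place, the remaining steps are formal.
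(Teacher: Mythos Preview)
Your proposal is correct and is exactly the argument the paper leaves implicit behind its bare ``\qed'': identify the face poset of the regular polyhedral complex $\TT(S,M)$ with $\Tess(S,M)^{\op}$, transport this to $P(S,M)$ via the dual-graph bijection $\Pc\mapsto\Gamma_\Pc$ (stated just before the proposition), and invoke the barycentric-subdivision homeomorphism for regular CW complexes. You have simply written out the details the paper omits, including the honest check that degenerate arcs/loops match the forest-contraction condition.
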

 
 We now explain  how to prove the following fact which has been widely used in the oriented case, e.g., in
 \cite {HSS-triangulated, fock-goncharov}. 
 
 \begin{thm}\label{thm:contractible}
	For any stable marked surface $(S,M)$ (orientable or not) the tessellation complex $\TT(S,M)$ is contractible. 
\end{thm}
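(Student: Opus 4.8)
The plan is to reduce the statement for an arbitrary stable marked surface $(S,M)$ to the known contractibility in the orientable case, and in fact to reprove everything uniformly via the standard ``flow towards a triangulation'' argument of Harer/Penner, adapted so that orientability is never used. First I would recall that, by Proposition \ref{prop:tess-graph}, $\TT(S,M)$ is homotopy equivalent to the nerve of the poset $P(S,M)$, so it suffices to prove Theorem \ref{thm:graph-contr}, i.e. that $P(S,M)$ is contractible; this is the dual formulation in terms of spanning graphs rather than tessellations. The key point is that all the combinatorial objects involved --- arcs, arc systems, tessellations, their Stasheff-polytope cells, and the dual spanning graphs --- are defined purely in terms of the topological pair $(S,M)$ and isotopy classes of embedded paths, with no reference to an orientation. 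So the only thing that needs checking is that the classical proof of contractibility genuinely does not invoke orientability.

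The main step is therefore to run the ``ideal triangulation / convexity'' argument directly on $(S,M)$. Concretely: fix a complete hyperbolic (or conformal) metric on $S^\circ = S - \partial S$ compatible with the marked/punctured structure; one exists since $(S,M)$ is stable, i.e. $\chi(S^\# - M^\#) < 0$, and this works whether or not $S$ is orientable (pass to the orientation cover $\widetilde S$ if one prefers an oriented model, carrying along the deck involution, but the argument below can be phrased downstairs). Each marked point or puncture gives a horocycle, and to each arc $\gamma \in \Ac(S,M)$ one assigns its hyperbolic length, or equivalently a ``$\lambda$-length''/weight; Penner's convex-hull construction then produces, from the horocyclic data, a canonical ideal tessellation $\Pc_0$ of $(S,M)$ --- the one whose edges are the arcs appearing on the boundary of the convex hull of the horocycles in Minkowski space. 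Contractibility of $P(S,M)$ (equivalently of $\TT(S,M)$) now follows by a deformation retraction: one flows any point of the tessellation complex towards the cell $\TT(\Pc_0)$ corresponding to the Penner tessellation, using that the simplicial/polytopal complex of weighted arc systems fibers over the decoration space, which is a cell, with contractible fibers. The only features of $S$ used are: existence of a suitable metric (stability), properness of the arc-length functions, and the fact that the convex-hull construction is canonical and isotopy-equivariant --- none of which distinguish orientable from non-orientable surfaces.

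Finally I would assemble the pieces. Theorem \ref{thm:contractible} follows from Theorem \ref{thm:graph-contr} via Proposition \ref{prop:tess-graph}; and Theorem \ref{thm:graph-contr} follows from the Penner-style retraction just described, together with the observation that for $(S,M)$ orientable this is literally the statement proved in \cite{harer, penner:book} and in \cite{HSS-triangulated, fock-goncharov}, while the non-orientable case is obtained either by the same metric argument run directly on $S$, or by descent: $\widetilde S \to S$ is a double cover with deck involution $\tau$, the arc/tessellation complex of $(S,M)$ is the $\tau$-fixed (more precisely, $\tau$-equivariant) subcomplex of that of $(\widetilde S, \widetilde M)$, and the Penner tessellation of $(\widetilde S,\widetilde M)$ is canonical hence $\tau$-invariant, so the retraction restricts. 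The step I expect to be the genuine obstacle is verifying that the convex-hull/flow argument is fully insensitive to orientation when carried out directly on a non-orientable $S$ --- in particular that the ``canonical'' tessellation $\Pc_0$ is well defined without a choice of orientation, and that the retraction onto $\TT(\Pc_0)$ is continuous across the walls of the complex where the combinatorial type of $\Pc_0$ jumps. Everything else (stability $\Rightarrow$ existence of the metric, isotopy-invariance, the homotopy equivalence of $\TT(S,M)$ with $N(P(S,M))$) is routine given the results already established in the excerpt.
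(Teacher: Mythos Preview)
Your descent-from-the-orientation-cover idea at the end is essentially what the paper does, but the paper organizes it differently and never builds a retraction. The chain is: $\TT(S,M)\cong |(A,A_\infty)^\vee|$ (Prop.~\ref{prop:tess-A}) $\simeq |A|-|A_\infty|$ (Prop.~\ref{prop:dual-CW}) $\cong \Teich(S,M)$ (Thm.~\ref{thm:harer-triang}) $\cong \RR^d$ (Thm.~\ref{thm:klein-teich}(c), Natanzon/Sepp\"al\"a). The non-orientable case of the Harer homeomorphism is handled exactly by descent: the oriented homeomorphism $\Psi_{\widetilde S,\widetilde M}$ is $\Mod$-equivariant, hence intertwines the deck involution $\tau_A$ on the arc-complex side with $\tau_{\Teich}$, and a short lemma identifies $(|A(\widetilde S,\widetilde M)|-|A_\infty(\widetilde S,\widetilde M)|)^{\tau_A}$ with $|A(S,M)|-|A_\infty(S,M)|$. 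Contractibility is thus borrowed wholesale from Teichm\"uller space, not produced by a flow on $\TT(S,M)$.

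Your primary argument---fix one hyperbolic metric, take Penner's canonical tessellation $\Pc_0$, and retract $\TT(S,M)$ onto the single cell $\TT(\Pc_0)$---does not work as stated. A single metric yields a single $\Pc_0$, but there is no evident deformation retraction of the entire tessellation complex onto that one Stasheff polytope; the convex-hull construction runs the other way, sending points of decorated Teichm\"uller space to weighted arc systems, so $\Pc_0$ \emph{varies} with the metric and what one actually obtains is a homeomorphism $|A|-|A_\infty|\cong\Teich^{\mathrm{dec}}$, not a retraction to a fixed cell. Your sentence about ``fibers over the decoration space with contractible fibers'' conflates the bundle $\Teich^{\mathrm{dec}}\to\Teich$ with the tessellation complex itself. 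The salvageable content of your sketch is precisely the descent step, which the paper makes rigorous via the fixed-point lemma; the worry you flag about orientation-independence of the canonical tessellation is resolved there by canonicity plus equivariance, not by running the metric argument downstairs.
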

  
  Similarly to the oriented case, the proof can be extracted from the results of Harer \cite{harer},
  suitably adjusted,  as we now explain.

 \subsection {The dual cell complex of a triangulated manifold}
 
 By a {\em (regular) simplicial complex}  we will mean a datum $A=(V,\Sigma)$
 where $V$ is a set whose elements are called {\em vertices} of $A$, and $\Sigma\subset 2^V$
 is a family of finite subsets called {\em simplices} of $A$. It is required that:
 \begin{itemize}
 \item All 1-element subsets of $V$ are in $\Sigma$.
 
 \item If $I\in \Sigma$ and $I'\subset I$, then $I'\in\Sigma$. 
 \end{itemize}
 
 The {\em realization} $|A|$ of a simplicial complex $X$ is obtained in a standard way by
 gluing the geometric simplices $\Delta^I$ associated to $I\in\Sigma$. Explicitly,
 \[
 \Delta^I \,\,=\,\,\left\{ (p_a)_{a\in I}\in\RR^I \bigl| \,\, p_a\geq 0, \,\,\sum_{a\in I} p_a =1\right\}. 
 \]
 A {\em simplicial subcomplex} of $A=(V,\Sigma)$ is a simplicial complex $A'=(V',\Sigma')$
 s.t. $V'\subset V$ and $\Sigma'\subset\Sigma$. In this case $|A'|$ is a closed subset of $|A|$. 
 
 By a {\em space} we will always mean a topological space homeomorphic to $|A|-|A'|$ where
 $A$ is a simplicial complex and $A'\subset A$ is a subcomplex.
 
 \begin{exas}

 (a) Let $X$ be a space and $\Uc = (U_a)_{a\in V}$ be a locally finite covering of $X$
 by closed subspaces $U_a$. The {\em classical nerve}  of $\Uc$ is the simplicial complex
 $\N\Uc=(V, \Sigma)$ where $\Sigma$ consists of finite $I\subset V$ s.t. $\bigcap_{v\in I} U_v\neq\emptyset$.

  \vskip .2cm
  
  (b) In the situation (a), assume that the covering $\Uc$ is {\em saturated}, i.e., for any $U,U'\in \Uc$
  the intersection $U\cap U'$ is either empty or belongs to $\Uc$. The {\em monotone nerve} of $\Uc$
  is the simplicial complex
  $\N^\leq\Uc = (V.\Sigma^\leq)$, where $\Sigma^\leq$ consists of $I=\{v_0, \cdots, v_p\}\subset V$
  such that $U_{v_{s(0)}}\subset U_{v_{s(1)}} \subset \cdots\subset U_{v_{s(p)}}$ for some permutation
  $s$ of $\{0,1,\cdots, p\}$. Thus $|\N^\leq(\Uc)|$ is the geometric realization of the simplicial
  set, given by the nerve of the poset of nonempty $U\in\Uc$. 
  \end{exas}
 
 The following is classical.

 \begin{prop}\label{prop:nerve-classical}
 \begin{enumerate}
 \item[(a)] Assume that any finite intersection $U_{v_0}\cap\cdots\cap U_{v_p}$ is either
 empty or contractible. Then $|\N\Uc|$ is homotopy equivalent to $X$.
 
 \item[(b)] If, moreover, $\Uc$ is saturated, then $|\N^\leq \Uc|$ is homotopy
 equivalent to $X$. \qed

 \end{enumerate}
 
 \end{prop}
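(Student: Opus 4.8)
The plan is to prove (a) by the classical ``bar construction of a cover'' argument, producing a space $E$ together with two projections, onto $X$ and onto $|\N\Uc|$, each of which is shown to be a homotopy equivalence by a fibrewise analysis; part (b) then follows by feeding the inclusion poset of $\Uc$ into the same machine.

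For (a), first I would form the simplicial space $E_\bullet$ with $E_p=\coprod U_{v_0\cdots v_p}$, the coproduct running over tuples $(v_0,\dots,v_p)\in V^{p+1}$ whose underlying set $\{v_0,\dots,v_p\}$ is a simplex of $\N\Uc$, where $U_{v_0\cdots v_p}:=\bigcap_i U_{v_i}$. Its geometric realization $E$ carries a projection $q\colon E\to|\N\Uc|$ (collapse each $U_{v_0\cdots v_p}$ to a point) and an augmentation $p\colon E\to X$ (include each $U_{v_0\cdots v_p}$ into $X$). The fibre of $q$ over an interior point of a simplex $|I|$ of $|\N\Uc|$ is $U_I:=\bigcap_{v\in I}U_v$, which is contractible whenever it is nonempty by hypothesis; the fibre of $p$ over a point $x\in X$ is the realization of the nerve of the family $\Uc_x=\{U_a:x\in U_a\}$, which is finite by local finiteness of $\Uc$ and is a full simplex because every sub-intersection contains $x$, hence is contractible.

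Next I would upgrade these fibre computations to statements about the maps. Since $X$ has the homotopy type of a CW complex (it is $|A|-|A'|$, which after a subdivision deformation retracts onto a subcomplex), and each $U_a$ may likewise be taken to be a subcomplex of a subdivision of $A$, the simplicial space $E_\bullet$ is Reedy cofibrant and $E_\bullet\to\N\Uc$ is a levelwise weak equivalence, so $q$ is a homotopy equivalence. For $p$ one invokes the nerve theorem: replace each closed subcomplex $U_a$ by an open mapping--cylinder neighbourhood $V_a\subset|A|-|A'|$ which deformation retracts onto $U_a$, all of whose finite intersections $V_{v_0}\cap\cdots\cap V_{v_p}$ are nonempty, are contractible exactly when the corresponding $U_{v_0\cdots v_p}$ are, and which carry the same nerve; the open cover $(V_a)$ of the paracompact space $X$ is then numerable with contractible finite intersections, and a partition of unity subordinate to it yields an explicit homotopy inverse to the corresponding augmentation, which is homotopic to $p$. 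Combining, $X\simeq E\simeq|\N\Uc|$.

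Finally, for (b) I would run the analogous argument over the poset $\Pc$ of nonempty members of $\Uc$: let $E'$ be the realization of the simplicial space whose $p$-simplices are $\coprod U_p$ indexed by chains $U_0\subseteq\cdots\subseteq U_p$ in $\Pc$, with projections $p'\colon E'\to X$ and $q'\colon E'\to|\N\Pc|=|\N^{\leq}\Uc|$. The fibre of $q'$ over an interior point of the simplex of a chain $U_0\subsetneq\cdots\subsetneq U_p$ is $U_p$, a finite intersection of members of $\Uc$, hence a member of $\Uc$ by saturation, hence contractible; the fibre of $p'$ over $x$ is the nerve of $\{U\in\Pc:x\in U\}$, a finite poset with least element $\bigcap\{U\in\Uc:x\in U\}$ (a member of $\Uc$ by saturation, nonempty since it contains $x$), hence contractible. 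The same thickening and partition-of-unity argument as in (a) makes $p'$ and $q'$ homotopy equivalences, giving $X\simeq|\N^{\leq}\Uc|$. The one genuinely technical point, and the main obstacle, is the passage from the given closed cover to an open cover with the same nerve and the same homotopy types of intersections; this is precisely where the hypothesis that $X$ is of the form $|A|-|A'|$ is used, and it is routine in that combinatorial setting.
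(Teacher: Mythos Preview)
The paper does not actually prove this proposition: it labels the result as classical and supplies only the \qed. Your sketch is the standard nerve-theorem argument (realize the cover as a simplicial space, project both ways, analyze fibres, then pass to an open thickening to invoke a partition of unity), so there is no substantive divergence of approach to discuss.

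One small slip in part (b): with your convention $U_0\subseteq\cdots\subseteq U_p$, the component of $E'_p$ sitting over this chain must be the \emph{smallest} member $U_0$, not $U_p$, so that the face map $d_0$ becomes the inclusion $U_0\hookrightarrow U_1$ and the remaining face maps are identities. Its contractibility then needs no appeal to saturation, since each single $U_a$ is already contractible by the $p=0$ case of the intersection hypothesis. Saturation is genuinely used only where you invoke it next, to guarantee that the poset $\{U\in\Uc:x\in U\}$ has a least element and is therefore contractible. With that indexing corrected, the argument goes through.
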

 
 The {\em barycentric subdivision} of a simplicial complex $A=(V,\Sigma)$ is the simplicial complex
 $\on{Bar}(A)=(V_\Bar,\Sigma_\Bar)$, where $V_\Bar=\Sigma$ and $\Sigma_\Bar$ consists of finite subsets
 $\{I_0, \cdots, I_p\}$ such that $I_{s(0)} \subset I_{s(1)} \subset \cdots\subset I_{s(p)}$ 
for some permutation
  $s$ of $\{0,1,\cdots, p\}$. As well known, $|\Bar(A)|$ is canonically homeomorphic to $|A|$.
  Notationally, it will be convenient for us to think that vertices of $\Bar(A)$ are formal
  symbols  $\bar(I)$ for $I\in\Sigma$. 
  
  For each simplex $I\in\Sigma$ we have the sub complex $A_I\subset\Bar(A)$ called the
  {\em link} of $I$. By definition, $A_I=(V_I, \Sigma_I)$, where
  \[
  V_I \,\,=\,\,\bigl\{ \bar(J)\bigl| \,\, J\supset I,\,\, J\in\Sigma \bigr\}, \quad 
  \Sigma_I \,\,=\,\, \bigl\{ \{ \bar(J_0), \cdots, \bar(J_p)\}, \,\,\, I\subset J_0\subset\cdots \subset J_p\bigr\}. 
  \]
  If $A'$ is a subcomplex of $A$ as above, we define the {\em dual subcomplex} to $(A,A')$
  as
  \[
  (A,A')^\vee \,\,=\,\,\bigcup_{I\in\Sigma-\Sigma'} A_I \,\,\,\subset \,\,\, \Bar(A). 
  \]
  
  \begin{prop}\label{prop:dual-CW}
  The realization $|(A,A')^\vee|$ is homotopy equivalent to $|A|-|A'|$. 
  
  \end{prop}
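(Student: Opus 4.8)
The plan is to transport the question into the barycentric subdivision $\Bar(A)$, where both $(A,A')^\vee$ and $|A|-|A'|$ acquire transparent descriptions, and then to invoke a standard deformation retraction between complementary full subcomplexes. Recall that $\Bar(A)$ is the order complex of the poset $(\Sigma,\subseteq)$: its vertices are the symbols $\bar(I)$, $I\in\Sigma$, and its simplices are the flags $I_0\subsetneq I_1\subsetneq\cdots\subsetneq I_p$. Let $N$ be the full subcomplex of $\Bar(A)$ spanned by $\{\bar(I):I\in\Sigma-\Sigma'\}$, and $N'$ the full subcomplex spanned by $\{\bar(I):I\in\Sigma'\}$; these span complementary vertex sets, although as subcomplexes they need not cover $\Bar(A)$. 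Using only that $\Sigma'$ is downward closed, one checks the following three points. First, a flag lies in $N$ iff its minimum $I_0$ is not in $\Sigma'$, while a flag lies in $N'$ iff its maximum $I_p$ is in $\Sigma'$. Second, the link $A_I$ consists exactly of those flags whose minimum contains $I$, so $(A,A')^\vee=\bigcup_{I\in\Sigma-\Sigma'}A_I$ is precisely the set of flags with $I_0\notin\Sigma'$; that is, $(A,A')^\vee=N$. Third, under the canonical homeomorphism $|A|\cong|\Bar(A)|$ one has $|A'|=|N'|$, since a point of $|A|$ whose carrier simplex in $A$ is $J$ lies in $|A'|$ iff $J\in\Sigma'$, and its carrier flag in $\Bar(A)$ has maximum $J$, so by the first point this is the condition that the carrier flag lie in $N'$. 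It therefore remains to prove that $|\Bar(A)|-|N'|$ deformation retracts onto $|N|$.

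For this I would argue as follows. Every point $x\in|\Bar(A)|$ lies in the relative interior of a unique simplex $\tau$, its \emph{carrier}, all of whose barycentric coordinates $x_v$ ($v\in\tau$) are positive. Split the vertex set $\tau=\tau'\sqcup\tau''$ according to whether a vertex belongs to $N'$ or to $N$, and set $b(x)=\sum_{v\in\tau''}x_v$. Since a face of $\tau$ lies in $N'$ exactly when all of its vertices do, the locus $\{b>0\}$ inside a closed simplex $\overline{\tau}$ of $\Bar(A)$ is precisely $\overline{\tau}-|N'|$; in particular $x\notin|N'|$ iff $\tau''\neq\varnothing$ iff $b(x)>0$. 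Define $r(x)$ by setting the $\tau'$-coordinates of $x$ to $0$ and rescaling the $\tau''$-coordinates by $1/b(x)$. Then $r(x)$ is supported on $\tau''$, a face of $\tau$ all of whose vertices lie in $N$, hence a simplex of $N$ by fullness, so $r(x)\in|N|$; moreover $r$ fixes $|N|$ pointwise. On each $\overline{\tau}-|N'|$ the assignment $r$ is affine, and the splittings $\tau=\tau'\sqcup\tau''$ are compatible with passing to faces, so these local formulas glue to a well-defined map $r\colon|\Bar(A)|-|N'|\to|N|$, continuous because it is continuous on each closed simplex and $|\Bar(A)|$ carries the weak topology. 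Finally the straight-line homotopy $H(x,t)=(1-t)x+t\,r(x)$ equals the identity at $t=0$ and $r$ at $t=1$, fixes $|N|$ throughout, and never meets $|N'|$ because it only increases the positive $\tau''$-coordinates. This yields the desired deformation retraction, whence $|(A,A')^\vee|=|N|\simeq|\Bar(A)|-|N'|=|A|-|A'|$.

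The one place I would spell out with care — and the only plausible pitfall — is the continuity of $r$ (equivalently of $H$) across the simplices of $\Bar(A)$: one must check that the carrier of a point and the induced partition of its support into $N'$- and $N$-vertices are intrinsic, i.e.\ independent of the particular closed simplex in which the point is viewed, so that the affine formulas defined on distinct closed simplices agree on their overlaps. Granting this, the remainder is routine bookkeeping with the downward-closedness of $\Sigma'$, and I do not anticipate any substantive difficulty.
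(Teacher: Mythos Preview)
Your proof is correct and takes a different route from the paper. The paper covers $|A|-|A'|$ by the truncated closed simplices $\Delta^I_0 := \Delta^I \cap (|A|-|A'|)$ for $I \in \Sigma \setminus \Sigma'$, observes that this covering is saturated with contractible members (each $\Delta^I_0$ is star-shaped about the barycenter of $\Delta^I$), identifies its monotone nerve with $(A,A')^\vee$, and invokes the nerve theorem (Proposition~\ref{prop:nerve-classical}(b)). You instead work entirely inside $\Bar(A)$: after the identifications $(A,A')^\vee = N$ and $|A'| = |N'|$ you build an explicit straight-line deformation retraction of $|\Bar(A)| - |N'|$ onto $|N|$ by pushing barycentric mass off the $N'$-vertices. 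Your approach is more elementary and self-contained, avoiding the nerve theorem and producing an actual deformation retraction rather than a mere homotopy equivalence; the paper's is shorter once that lemma is granted. The combinatorial identification $(A,A')^\vee = N$ is essentially the paper's identification of the monotone nerve, so the two arguments share their setup and diverge only in how the homotopy equivalence is finally established.
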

  
  \noindent {\sl Proof:} We consider the covering $\Uc$ of  $|A|-|A'|$ by closed subsets $\Delta^I_0 =
  \Delta^I \cap (|A|-|A'|)$ for $I\in\Sigma-\Sigma'$. This covering is saturated, and $(A,A')^\vee=\N^\leq\Uc$.
  As each $\Delta^I_0$ is contractible, our statement follows from Proposition 
  \ref{prop:nerve-classical}. \qed
  
  \vskip .2cm
  
  By a {\em triangulated manifold} we mean a pair $(A, A')$ consisting of a simplicial complex $A$ and
  a subcomplex $A'$ such that $|A|-|A'|$ is a topological manifold. In this case each $|A_I|$, $I\in\Sigma-\Sigma'$,
  is a topological cell which we call the {\em dual cell} to $I$. Thus $|(A,A')^\vee|$ is equipped with a cell
  decomposition with cells $|A_I|$. 
  
  \subsection{Harer's triangulation of the Teichm\"uller space}
  \label{subsub:harer} 
  
  Let $(S,M)$ be a stable marked surface. The {\em arc complex} $A=A(S,M)$ is the
  simplicial complex with the set of vertices $\Ac(S,M)$ and simplices being sub-tesselations.
  It has a subcomplex $A_\infty=A_\infty(S,M)$ formed by sub-tesselations which are not tesselations.
  
  \begin{prop}\label{prop:tess-A}
  The tessellation complex $\TT(S,M)$ is homeomorphic to $|(A, A_\infty)^\vee|$ so that each
  cell $\TT(\Pc)$ is identified with $A_\Pc$, the link of $\Pc$.
  \end{prop}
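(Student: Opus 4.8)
The plan is to identify both sides of the asserted homeomorphism, \emph{as simplicial complexes}, with the order complex $\N(\Tess(S,M))$ of the poset of tessellations of $(S,M)$ (ordered by inclusion of diagonals), and then pass to geometric realizations. All the cell‑by‑cell bookkeeping demanded by the statement ($\TT(\Pc)\leftrightarrow A_\Pc$) will come out of this identification automatically.

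First I would record the combinatorial structure of the tessellation complex. By construction $\TT(S,M)$ is glued from the products of Stasheff polytopes $\TT(\Pc)$, $\Pc\in\Tess(S,M)$, along the canonical face embeddings $\TT(\Pc')\hookrightarrow\TT(\Pc)$ for $\Pc\subseteq\Pc'$. One needs that this gluing produces a \textbf{regular polyhedral complex}: each closed cell $\TT(\Pc)$ is embedded, and $\TT(\Pc')$ is a face of $\TT(\Pc)$ exactly when $\Pc\subseteq\Pc'$ (equivalently, when the polygons cut out by $\Pc'$ refine those cut out by $\Pc$) — this is precisely the standard good behaviour of arc systems underlying Harer's complex, and I would cite \cite{harer, penner:book, fomin-shapiro-thurston}. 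Granting it, the face poset of $\TT(S,M)$ is anti‑isomorphic to $(\Tess(S,M),\subseteq)$, and the faces of a single cell $\TT(\Pc)=\prod_{P'\in\Pc}\TT(P')$ are anti‑isomorphic to $\Tess(S,M)_{\ge\Pc}$. Now I would invoke the general fact that the barycentric subdivision $\Bar(Y)$ of a regular polyhedral complex $Y$ is, as a simplicial complex, the order complex $\N$ of its face poset, with $|\Bar(Y)|=|Y|$ and with the closed cell indexed by a face $F$ corresponding to the subcomplex $\N$ of the poset of faces $\ge F$. Applied to $Y=\TT(S,M)$ (and using that the order complex of a poset equals that of its opposite), this gives $\Bar(\TT(S,M))=\N(\Tess(S,M))$, with the cell $\TT(\Pc)$ carried onto the subcomplex $\N(\Tess(S,M)_{\ge\Pc})$.

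Next I would compute the dual complex $(A,A_\infty)^\vee=\bigcup_{\Pc\in\Tess(S,M)}A_\Pc\subset\Bar(A)$ directly. A simplex of $\Bar(A)$ is a flag $\Pc_0\subsetneq\Pc_1\subsetneq\cdots\subsetneq\Pc_p$ of sub‑tessellations, and it lies in the link $A_\Pc$ iff $\Pc\subseteq\Pc_0$. The key observation is that \emph{a sub‑tessellation which contains a tessellation is itself a tessellation}: adjoining a family of pairwise non‑crossing arcs to a decomposition of $(S,M)$ into polygons only subdivides those polygons further. Hence such a flag lies in $(A,A_\infty)^\vee$ iff $\Pc_0$ is a tessellation, and then so are all the $\Pc_i$. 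This shows $(A,A_\infty)^\vee=\N(\Tess(S,M))$, and more precisely $A_\Pc=\N(\Tess(S,M)_{\ge\Pc})$ for every tessellation $\Pc$. Comparing with the previous paragraph, $(A,A_\infty)^\vee=\Bar(\TT(S,M))$ as simplicial complexes, with $A_\Pc$ matching $\Bar(\TT(\Pc))$; passing to realizations and using $|\Bar(\TT(S,M))|=|\TT(S,M)|$ and $|\Bar(\TT(\Pc))|=|\TT(\Pc)|$ then yields the asserted homeomorphism $\TT(S,M)\cong|(A,A_\infty)^\vee|$ carrying each $\TT(\Pc)$ onto $|A_\Pc|$.

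The part I expect to be the main obstacle is the regularity assertion of the second paragraph: one must be sure that the abstract colimit defining $\TT(S,M)$ is a genuine regular polyhedral complex whose face poset is $\Tess(S,M)^{\op}$, and correspondingly that each $A_\Pc$ is an honest subcomplex of $\Bar(A)$ of the expected type. Once that input from the theory of arc systems is in place, everything else is a formal manipulation of order complexes and barycentric subdivisions.
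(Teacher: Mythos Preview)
Your proof is correct and is precisely the detailed unpacking of what the paper records in a single line (``Clear by comparing the definitions''). The identification of both sides with the order complex $\N(\Tess(S,M))$ via barycentric subdivision, together with the observation that any sub-tessellation containing a tessellation is itself a tessellation, is exactly the content the paper leaves implicit; your concern about regularity of the gluing is legitimate but is also part of the standard setup the paper takes for granted.
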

  
  \noindent {\sl Proof:} Clear by comparing the definitions. \qed
  
  \vskip .2cm
  
  The following result was stated and proved by Harer \cite{harer} in the oriented case.
  
 \begin{thm}\label{thm:harer-triang}
 There is a homeomorphism $\Psi=\Psi_{S,M}: |A|-|A_\infty| \to \Teich(S,M)$,
 equivariant with respect to the mapping class group $\Mod(S,M)$. 
 \end{thm}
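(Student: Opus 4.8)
\noindent\emph{Proof plan for Theorem~\ref{thm:harer-triang}.} The strategy is to establish the orientable case directly and then transport it to the non-orientable case along the Schottky double. If $S$ is orientable (closed, or with boundary, with marked points permitted on the boundary), the existence of the $\Mod(S,M)$-equivariant homeomorphism $\Psi_{S,M}\colon |A(S,M)|-|A_\infty(S,M)| \to \Teich(S,M)$ is Harer's theorem \cite{harer}; the extension to bordered surfaces and to boundary marked points is obtained by the same spine / convex-hull construction and is available in the literature (cf.\ \cite{fomin-shapiro-thurston} and \cite[\S3.3]{HSS-triangulated}). The one feature of this case I would emphasize is that, since arcs are \emph{unoriented}, the construction of $\Psi_{S,M}$ is natural with respect to \emph{all} diffeomorphisms of $(S,M)$, so the equivariance holds for the full mapping class group and not merely its orientation-preserving subgroup; this is precisely what allows the orientation-reversing involution of the double to be fed into the argument below.

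Now suppose $S$ is non-orientable. Let $\pi\colon S^\#\to S$ be its Schottky double (\S\ref{subsec:c-infty}), $\sigma\colon S^\#\to S^\#$ the deck involution, and $M^\#=\pi^{-1}(M)$. The surface $S^\#$ is orientable — closed if $\partial S=\emptyset$, with boundary $\pi^{-1}(\partial S)$ otherwise — so the orientable case above furnishes a $\Mod(S^\#,M^\#)$-equivariant homeomorphism $\Psi_{S^\#,M^\#}$. The next step is to build a combinatorial dictionary: pulling back arcs along $\pi$ should identify $\Ac(S,M)$ with the set of $\sigma$-orbits of arcs of $(S^\#,M^\#)$, sub-tessellations of $(S,M)$ with $\sigma$-invariant sub-tessellations of $(S^\#,M^\#)$, and tessellations with tessellations, compatibly with the degenerate-case conventions of \S\ref{subsub:harer} (an arc of $S$ with an endpoint on $\partial S$, or bounding a region approached from one side, lifts to the corresponding $\sigma$-invariant arc of $S^\#$, which may lie along or cross the branch locus $\pi^{-1}(\partial S)$). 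This dictionary yields an embedding of simplicial complexes identifying $|A(S,M)|-|A_\infty(S,M)|$ with the $\sigma$-fixed subcomplex $\bigl(|A(S^\#,M^\#)|-|A_\infty(S^\#,M^\#)|\bigr)^{\sigma}$.

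Granting the dictionary, the remainder is formal. By naturality of the spine construction, $\Psi_{S^\#,M^\#}$ intertwines the combinatorial involution induced by $[\sigma]\in\Mod(S^\#,M^\#)$ with the analytic involution $\sigma_{\Teich}$ on $\Teich^+(S^\#,M^\#)$, hence restricts to a homeomorphism
\[
\bigl(|A(S^\#,M^\#)|-|A_\infty(S^\#,M^\#)|\bigr)^{\sigma} \;\overset{\cong}{\lra}\; \Teich^+(S^\#,M^\#)^{\sigma_{\Teich}} .
\]
Composing with the dictionary on the left and with the identification $\Teich(S,M)=\Teich^+(S^\#,M^\#)^{\sigma_{\Teich}}$ from Theorem~\ref{thm:klein-teich}(b1) on the right produces $\Psi_{S,M}$. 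For equivariance, note that $\Mod(S,M)$ lifts into the centralizer of $[\sigma]$ in $\Mod(S^\#,M^\#)$, and every ingredient above (pull-back of arcs, $\Psi_{S^\#,M^\#}$, Theorem~\ref{thm:klein-teich}(b1)) is natural for this action; hence $\Psi_{S,M}$ is $\Mod(S,M)$-equivariant. One could equally run this argument with the orientation cover $\varpi\colon\widetilde S\to S$ and the involution $\tau$, invoking Theorem~\ref{thm:klein-teich}(b2) instead; the two constructions agree.

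The step I expect to be the actual work is the combinatorial dictionary near the branch locus. One must match the paper's conventions for arcs that touch $\partial S$, that bound a region approached on both sides, or that split an endpoint of an arc into two, against $\sigma$-invariant arc systems on $S^\#$ that may run along or cross $\pi^{-1}(\partial S)$; and then check (i) that ``filling'' corresponds to ``filling'' in both directions and (ii) that the assignment from arc systems on $S$ to $\sigma$-invariant arc systems on $S^\#$ is surjective, i.e.\ that every $\sigma$-invariant filling system on $S^\#$ descends to $S$. This is a careful but essentially input-free unwinding of the definitions; once it is in place, the transport of Harer's theorem along the double goes through without further difficulty.
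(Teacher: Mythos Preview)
Your ``alternative'' via the orientation cover is exactly what the paper does, so the heart of your proposal is right. But there is a definitional slip that matters: in the paper's convention the Schottky double $S^\#$ is \emph{always closed}---it is the orientation cover of $S^\circ$ compactified by a single copy of $\partial S$, and $\pi:S^\#\to S$ is a branched double cover ramified along $\partial S$. Your description (``$S^\#$ \ldots with boundary $\pi^{-1}(\partial S)$ otherwise'') is that of the orientation cover $\widetilde S$, not $S^\#$. If what you have in mind is $\widetilde S$, then your argument \emph{is} the paper's: one invokes Theorem~\ref{thm:klein-teich}(b2), the deck involution $\tau$ is free, and the combinatorial dictionary is precisely the Lemma proved there. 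One further difference in presentation: the paper first writes down $\Psi_{S,M}$ directly for every $(S,M)$ (building a conformal structure from arc weights) and only then uses the cover to transport the homeomorphism property from the orientable case, rather than defining $\Psi_{S,M}$ as a restriction.

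If instead you really intend the branched Schottky double, the dictionary you sketch breaks once $\partial S\neq\emptyset$, and not just as a bookkeeping nuisance. The $\sigma$-fixed locus of $|A(S^\#,M^\#)|-|A_\infty(S^\#,M^\#)|$ is strictly larger than the image of $|A(S,M)|-|A_\infty(S,M)|$: it contains simplices supported on $\sigma$-invariant arc systems with (i) arcs lying inside the ramification locus $\partial S\subset S^\#$ (pointwise $\sigma$-fixed, projecting to boundary segments of $S$, hence not arcs of $(S,M)$), and (ii) single arcs $\tilde\gamma$ with $\sigma(\tilde\gamma)=\tilde\gamma$ setwise but not pointwise, which then meet $\partial S$ at a $\sigma$-fixed interior point typically not in $M$. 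Such $\sigma$-invariant tessellations genuinely occur (e.g.\ on the once-marked torus arising as $S^\#$ for the M\"obius band with one boundary marked point) and do not descend. So the step you flag as ``the actual work'' is not an input-free unwinding: the naive identification of fixed loci is false. The unramified orientation cover avoids all of this, which is why the paper (and, implicitly, your alternative) uses it.
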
 
 
 \noindent {\sl Proof: }
  The construction and the argument of \cite{harer}  extend to the general case as follows.
  A point $p\in |A|-|A_\infty|$ lies, by definition, strictly inside a simplex $\Delta^\Pc$ for some
  tessellation $\Pc$ and thus is represented by the collection of its barycentric coordinates
  \[
  (p_\gamma)_{\gamma\in\Pc}, \,\, p_\gamma > 0,\,\, \sum p_\gamma = 1,
  \]
  one for each arc $\gamma\in\Pc$. 
  
  Consider a polygon $(S'_i, M'_i)$ of $\Pc$. To each edge $e$ of $S'_i$ we associate the ``length"
  $p_e:=p_\gamma$ where $\gamma$ is the arc of $\Pc$ that gave rise to $e$. Let $l_e=\sum_{e\subset\partial S'_i} p_e$
  be the total ``length" of $\partial S'_i$. 
  
  Let $S''_i$ be a disk in $\CC$ with center $0$ and circumference $l_e$. We can map $M'_i$ bijectively
  to a subset $M''_i\subset \partial S''_i$ so that the distances (on $\partial S''_i$) between neighboring
  elements of $M''_i$ match the corresponding numbers $p_e$ for the edges of $S'_i$. In this way we
  get
  a stable marked Klein surface $(S''_i, M''_i)$. We now glue the $(S''_i, M''_i)$ together in the same
  fashion as the $(S'_i, M'_i)$ are glued together to form $(S, M)$. For this, we use the identifications
  of the boundary arcs of different $S''_i$ which are affine linear in the standard angle (or arc length)
  coordinates on the circles $\partial S''_i$. In this way we get a Klein surface $(S'', M'')$, identified with
  $(S,M)$ by a diffeomorphism unique up to isotopy. In other words, we get a point of $\Teich(S,M)$.
  This defines the map $\Psi_{S,M}$. Its $\Mod(S,M)$-equivariance is clear.
  
  The fact that $\Psi_{S,M}$ is a homeomorphism, was proved in \cite{harer} in the oriented case.
  Suppose $S$ is non-orientable, and let $\varpi: (\widetilde S, \widetilde M)\to (S,M)$
  be its orientation cover, with the deck involution $\tau$ preserving $\widetilde M$.
  By equivariance, the homeomorphism
  \[
  \Psi_{\widetilde S, \widetilde M}: \,\, |A(\widetilde S, \widetilde M)|-|A_\infty(\widetilde S, \widetilde M)| 
  \lra \Teich (\widetilde S, \widetilde M)
  \]
  sends  the involution $\tau_A: |A(\widetilde S, \widetilde M)|\to |A(\widetilde S, \widetilde M)|$
  induced by $\tau$,  to $\tau_{\Teich}$.
  Because of Proposition \ref{thm:klein-teich}(b2), we are reduced to the following.
  
  \begin{lem}
  We have
  \[
  \bigl( |A(\widetilde S, \widetilde M)|-|A_\infty(\widetilde S, \widetilde M)|\bigr)^{\tau_A} \,\,=\,\,
  |A(S,M)| - |A_\infty(S,M)|. 
  \]
  \end{lem}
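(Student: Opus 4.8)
The plan is to identify both sides of the claimed equality as subsets of $|A(\widetilde S, \widetilde M)|$ and to compare them combinatorially via the covering $\varpi$. First I would recall that an arc on $(S,M)$ pulls back under $\varpi$ to a $\tau$-invariant arc system on $(\widetilde S, \widetilde M)$: a single arc $\gamma$ on $S$ either lifts to two disjoint arcs swapped by $\tau$, or to a single $\tau$-invariant arc, depending on whether $\gamma$ is one-sided or two-sided in $S$. This gives a simplicial map from $A(S,M)$ into the fixed-point subcomplex of $\tau_A$ acting on the barycentric subdivision, and I would check that this map is in fact an isomorphism onto $\on{Bar}(A(\widetilde S, \widetilde M))^{\tau_A}$; the point is that a $\tau$-invariant sub-tessellation of $(\widetilde S, \widetilde M)$ descends to a sub-tessellation of $(S,M)$, and conversely.

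Next I would handle the fixed points of a simplicial involution carefully. The subtlety is that $\tau_A$ acts simplicially on $|A(\widetilde S, \widetilde M)|$ but need not fix any vertex of a given simplex pointwise: if $\tau$ swaps two arcs $\gamma_1,\gamma_2$ appearing in a $\tau$-invariant sub-tessellation $\widetilde\Pc$, then the barycenter of the edge $[\gamma_1,\gamma_2]$ is fixed even though neither vertex is. This is exactly why one must pass to the barycentric subdivision: for a simplicial complex $B$ with simplicial involution $\sigma$, the fixed subspace $|B|^\sigma$ is naturally the realization of a subcomplex of $\on{Bar}(B)$, namely the span of those barycenters $\bar(I)$ with $\sigma(I)=I$. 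So I would write $|A(\widetilde S, \widetilde M)|^{\tau_A} = |\on{Bar}(A(\widetilde S, \widetilde M))^{\tau_A}|$ and then identify the latter with $|A(S,M)|$ via the arc-pullback correspondence above; the same identification restricted to the non-tessellation locus identifies $|A_\infty(\widetilde S, \widetilde M)|^{\tau_A}$ with $|A_\infty(S,M)|$, because a $\tau$-invariant arc system on the cover fails to be a tessellation of the cover iff its descent fails to be a tessellation of $(S,M)$ (each complementary piece upstairs is a polygon iff the corresponding piece downstairs is, using that $\varpi$ is an unramified double cover and polygons are simply connected, so they lift trivially).

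Finally I would take complements: since $|A_\infty(S,M)|$ is a subcomplex of $|A(S,M)|$ and the identification above is compatible with inclusions, intersecting the fixed-point locus with the open subset $|A(\widetilde S, \widetilde M)| - |A_\infty(\widetilde S, \widetilde M)|$ yields
\[
\bigl(|A(\widetilde S, \widetilde M)| - |A_\infty(\widetilde S, \widetilde M)|\bigr)^{\tau_A} = |A(S,M)| - |A_\infty(S,M)|,
\]
which is the assertion. I expect the main obstacle to be the bookkeeping in the arc-pullback correspondence, specifically verifying that one-sided arcs and arcs with coinciding endpoints behave correctly under the degenerate-case conventions for bordification (cf. Figure~\ref{fig:F3}), and that the resulting bijection between $\tau$-invariant sub-tessellations upstairs and sub-tessellations downstairs is an isomorphism of posets — everything else is the standard fixed-point-of-a-simplicial-involution argument plus the elementary topology of the orientation double cover.
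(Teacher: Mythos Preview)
Your proposal is correct and follows essentially the same route as the paper: both arguments rest on the bijection between $\tau$-invariant sub-tessellations of $(\widetilde S,\widetilde M)$ and sub-tessellations of $(S,M)$ via $\widetilde\Pc \mapsto \varpi(\widetilde\Pc)$, together with the observation that polygon pieces correspond under the unramified cover. The only difference is packaging: the paper works directly with barycentric coordinates on a $\tau$-invariant simplex, while you invoke the standard fact that $|B|^\sigma$ is realized by the $\sigma$-fixed subcomplex of $\on{Bar}(B)$ and then identify that poset with the poset of simplices of $A(S,M)$ --- this is cleaner and makes explicit why arcs swapped by $\tau$ cause no trouble, but it is the same argument.
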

  
  \noindent {\sl Proof of the lemma:} A point $p\in |A(\widetilde S, \widetilde M)|^{\tau_A}$ lies
  in a $\tau$-invariant simplex $\Delta^{\widetilde\Pc}$ corresponding to a $\tau$-invariant
  sub-tesselation $\widetilde\Pc$ of $(\widetilde S, \widetilde M)$. Let $\Pc$ be the set of
  arcs $\varpi(\widetilde\gamma)$, $\widetilde\gamma\in\widetilde\Pc$. 
  We claim that $\Pc$ is a sub-tesselation for $(S,M)$. Indeed, the fact that $\widetilde \gamma$ does not
  meet $\tau(\widetilde \gamma)$ (which both belong to $\widetilde\Pc$) means that $\varpi(\widetilde\gamma)$
  does not intersect itself. Further, a homotopy  between $\varpi(\widetilde\gamma_1)$ and 
  $\varpi(\widetilde\gamma_2)$ implies that that $\widetilde \gamma_2$ is homotopic to either
  $\widetilde \gamma_1$ or to $\tau(\widetilde \gamma_1)$ (because $\varpi$ is an unramified covering).
  So $\Pc$ is a sub-tesselation. 
  
  Further, suppose that $\widetilde\Pc$ is a tessellation, Then any polygon $(\widetilde S'_i, \widetilde M'_i)$
  of $\widetilde\Pc$ must be an unramified covering of its image which therefore must also be a polygon. 
  Therefore $\Pc$ is a tessellation. 
  
  We conclude that 
  \[
  p\,\,\in \,\,  \bigl( |A(\widetilde S, \widetilde M)|-|A_\infty(\widetilde S, \widetilde M)|\bigr)^{\tau_A}
  \]
  must lie in a simplex $\Delta^{\widetilde\Pc}$ where $\widetilde\Pc=\varpi^{-1}(\Pc)$
  is the preimage of a tessellation $\Pc$ of $(S,M)$ and be given by a collection of barycentric coordinates
  $(p_{\widetilde\gamma})_{{\widetilde\gamma}\in\widetilde\Pc}$ which is $\tau$-invariant. Such collections
  of coordinates are in bijection (homeomorphism) with collections of barycentric coordinates
  $(p_\gamma)_{\gamma\in\Pc}$, $\sum p_\gamma=1$, i.e., with points of the
  simplex $\Delta^\Pc$ in $|A(S,M)| - |A_\infty(S,M)|$. This proves the lemma and Theorem 
  \ref{thm:harer-triang}. 
  
  Now, Propositions \ref{prop:tess-graph},  \ref{prop:tess-A} and \ref{prop:dual-CW} together with the fact that
  $\Teich(S,M)$ is homeomorphic to a Euclidean space (Theorem \ref{thm:klein-teich}(c))
  imply Theorem \ref{thm:graph-contr}. 
 
 \vfill\eject

\end{document}